\documentclass[10pt,a4paper]{article}

\usepackage[tt=false]{libertine}

\usepackage[utf8]{inputenc}
\usepackage{answers}
\usepackage{setspace}

\usepackage[nottoc]{tocbibind}
\usepackage{graphicx}
\usepackage{enumitem}
\usepackage{multicol}
\usepackage{csquotes}
\usepackage[margin=2cm]{geometry}
\usepackage{tikz}
\usetikzlibrary{arrows.meta}
\usepackage{tikz-cd}
\usepackage{amsmath,amsthm}
\usepackage{newtxmath}

\usepackage{pst-node}
\usepackage{mathtools}
\usepackage{diagbox}
\usepackage{quiver}
\usepackage{hyperref}
\usepackage{mathpartir}
\usepackage{xcolor}
\usepackage{tcolorbox}
\usepackage{abraces}
\usepackage{relsize}

\usepackage{xcolor}
\definecolor{newpurple}{RGB}{120, 0, 40}

\usepackage{hyperref}
\hypersetup{
	citecolor=newpurple,
	linkcolor=newpurple,
	colorlinks=true,
}

\usepackage[backend=biber,style=alphabetic]{biblatex}
\setlist[itemize]{itemsep=-1pt, topsep=3pt}
\setlist[enumerate]{itemsep=-1pt, topsep=3pt}

\theoremstyle{definition}
\newtheorem{theorem}{Theorem}[section]
\newtheorem{lemma}[theorem]{Lemma}
\newtheorem{definition}[theorem]{Definition}
\newtheorem{example}[theorem]{Example}
\newtheorem*{definition*}{Definition}
\newtheorem*{proposition*}{Proposition}
\newtheorem{proposition}[theorem]{Proposition}

\newtheorem{remark}[theorem]{Remark}
\newtheorem{notation}[theorem]{Notation}
\newtheorem*{notation*}{Notation}

\newtheorem*{theorem*}{Theorem}
\newtheorem*{lemma*}{Lemma}
\newtheorem*{convention*}{Convention}
\newtheorem*{example*}{Example}

\newtheorem{corollary}[theorem]{Corollary}
\newtheorem{construction}[theorem]{Construction}

\numberwithin{equation}{section}

\newcommand{\Id}{\text{Id}}

\newcommand{\Ar}{\text{Ar}}
\newcommand{\iiAr}{\textbf{Ar}}

\newcommand{\Gpd}{\text{Gpd}}

\newcommand{\op}{\text{op}}

\newcommand{\Set}{\text{Set}}

\newcommand{\Ho}{\text{Ho}}
\newcommand{\iiHo}{\textbf{Ho}}
\newcommand{\Ob}{\text{Ob}}
\newcommand{\iiOb}{\textbf{Ob}}

\newcommand{\Cat}{\text{Cat}}
\newcommand{\iiCat}{\textbf{Cat}}

\newcommand{\dom}{\text{dom}}

\newcommand{\SSet}{\text{SSet}}

\newcommand{\GAT}{\text{GAT}}
\newcommand{\iiLFP}{\textbf{LFP}}

\newcommand{\Cont}{\text{Cont}}
\newcommand{\iiCont}{\textbf{Cont}}

\newcommand{\iiDMC}{\textbf{DMC}}
\newcommand{\dmc}{\text{dmc}}

\newcommand{\pre}{\text{pre}}

\newcommand{\Precont}{\text{Precont}}
\newcommand{\iiPrecont}{\textbf{Precont}}

\newcommand{\cont}{\text{cont}}

\newcommand{\Mod}{\text{Mod}}
\newcommand{\iiMod}{\textbf{Mod}}

\newcommand{\gpd}{\text{gpd}}

\newcommand{\Att}{\text{Att}}
\newcommand{\att}{\text{att}}

\newcommand{\hatotimes}{\mathrel{\widehat{\otimes}}}

\renewcommand{\lim}{\underleftarrow{\text{lim }}}
\newcommand{\colim}{\underrightarrow{\text{lim }}}

\newcommand{\tp}{\;\mathsf{sort}}
\newcommand{\tm}{\;\mathsf{term}}

\newcommand{\ctx}{\;\textsf{ctx}}

\newcommand{\Fam}{\text{Fam}}

\newcommand{\bbA}{\mathbb A}
\newcommand{\bbB}{\mathbb B}

\newcommand{\bbE}{\mathbb E}

\newcommand{\bbK}{\mathbb K}
\newcommand{\bbN}{\mathbb N}
\newcommand{\bbO}{\mathbb O}

\newcommand{\bbS}{\mathbb S}
\newcommand{\bbT}{\mathbb T}

\newcommand{\frakL}{\mathfrak L}
\newcommand{\frakR}{\mathfrak R}

\newcommand{\btle}{\blacktriangleleft}

\newcommand{\llp}{\mathfrak L}
\newcommand{\rlp}{\mathfrak R}

\newcommand{\cof}{\text{cof}}
\newcommand{\fib}{\text{fib}}
\newcommand{\acof}{\text{acof}}
\newcommand{\afib}{\text{afib}}
\newcommand{\bcof}{\text{cof}^*}
\newcommand{\bacof}{\text{acof}^*}
\newcommand{\cell}{\text{cell}}

\def\circrightarrow{\mathrel{{
			\setbox0\hbox{$\longrightarrow$}
			\rlap{\hbox to \wd0{\hss$\circ$\hss}}\box0
}}}

\def\bulletrightarrow{\mathrel{{
			\setbox0\hbox{$\longrightarrow$}
			\rlap{\hbox to \wd0{\hss$\bullet$\hss}}\box0
}}}

\newcommand{\squa}[8]{\begin{tikzcd}[ampersand replacement=\&]
		{#1} \& {#2} \\
		{#3} \& {#4}
		\arrow["{#7}"', from=1-1, to=2-1]
		\arrow["{#6}"', from=2-1, to=2-2]
		\arrow["{#8}", from=1-2, to=2-2]
		\arrow["{#5}", from=1-1, to=1-2]
	\end{tikzcd}
}

\newcommand{\widesqua}[8]{\begin{tikzcd}[ampersand replacement=\&,column sep=2.5cm]
		#1 \arrow[swap]{d}{#7} \arrow[]{r}{#5} \& #2 \arrow[]{d}{#8}\\
		#3 \arrow[swap]{r}{#6} \& #4
	\end{tikzcd}
}

\newcommand{\dsqua}[8]{\begin{tikzcd}[ampersand replacement=\&]
		#1 \arrow[swap,->>]{d}{#7} \arrow[]{r}{#5} \& #2 \arrow[->>]{d}{#8}\\
		#3 \arrow[swap]{r}{#6} \& #4
	\end{tikzcd}
}

\makeindex

\begin{document}

\title{A categorical model structure\\
for generalized algebraic theories}

\author{Daniel Almeida\thanks{email: ddeal056@uottawa.ca}}

\date{}

\maketitle

\begin{abstract}
We describe a (combinatorial, monoidal, $\Cat$-enriched) Quillen model structure on the category of Cartmell's generalized algebraic theories (gats); its homotopy bicategory consists essentially of Taylor's rooted display map categories. This allows us to compare two kinds of morphisms of gats: one where sort dependency and substitution are preserved strictly, thus directly matching the syntax, and one where the given structure is preserved up to isomorphism.

We prove a strictification result for morphisms out of cofibrant theories, which are the retracts of theories without sort equality axioms. Along the way, we give a structural characterization of when a contextual category can be presented without sort equality axioms. Our results also imply that when restricted to cofibrant objects, the tensor product of gats has the expected semantic behaviour, namely, it corresponds to the tensor product of locally finitely presentable categories equipped with a cofibrantly generated weak factorization system.

Strict and weak morphisms specialize, respectively, to two familiar concepts of model of a gat $\mathbb A$: ones valued in iterated families of sets, with substitution interpreted as reindexing, and set-valued models of the contextual category $\mathcal C(\bbA)$ viewed as a finite-limit sketch. We characterize strictifiability of a model of the latter kind via a loop freeness condition on a certain map of functors out of the category of context projections of $\mathbb A$.
\end{abstract}

\setcounter{tocdepth}{2}
\tableofcontents

\newpage

\section{Introduction}
\label{sec: introduction}

Generalized algebraic theories (gats for short), introduced by J. Cartmell in the late 1970s (\cite{Car78}, \cite{Car86}), provide a way of extending the framework of equational theories (or, in categorical language, Lawvere theories), in which classical universal algebra had been developed, by allowing the presence of dependent sorts -- that is, sorts (or types) whose specification depends, in a recursive manner, on a set of variables of previously introduced sorts.

For example, we can formulate the concept of a category via a gat having
\begin{itemize}[noitemsep]
	\item a sort $O$, to be interpreted as a set (of ``objects of a category");
	
	\item a sort $A(x,y)$, where $x$, $y$ are variables of sort $O$, to be interpreted as an assignment of a set (of ``arrows from $x$ to $y$") to each pair of elements of the set assigned to $O$.
\end{itemize}

The operations present in a category are encoded by terms that have a specified sort and depend on variables of previously constructed sorts. Composition is introduced, in sequent notation, by
$$
x, y, z:O,\; f:A(x,y),\; g:A(y,z) \vdash g \circ f: A(x,z),
$$
and the assignment of identity arrows to objects by $x:O \vdash i(x):O$, where $\circ$ and $i$ are newly introduced symbols.\footnote{We are making an abuse of notation, which is harmless in this example, by making the dependency of $g \circ f$ on $x$, $y$, $z$ implicit. This is what Cartmell refers to as the ``informal syntax" of gats; see \cite{Car86} for a discussion of when this is admissible.} Then we add axioms that encode the expected behaviour of these operations, such as
$$
x, y, z, w:O,\; f:A(x,y),\; g:A(y,z),\; h:A(z,w) \vdash (h \circ g) \circ f \equiv h \circ (g \circ f): A(x,w)
$$

for associativity. Generally, we can also consider gats that include axioms stating equalities between sorts.

As the above example suggests, we can consider a family-based semantics for gats. Fixing a universe ``set of small sets" $\mathscr U$, a ($\mathscr U$-)model of a given generalized algebraic theory sends, for instance,
\begin{itemize}[noitemsep]
	\item a sort $X$ with no free variables (a ``closed" sort) to a set $\mathbf{X} \in \mathscr U$,
	
	\item a sort $x:X \vdash Y(x) \tp$ to a family $\mathbf{Y}:\mathbf{X} \rightarrow \mathscr U$,
	
	\item a sort $x:X, y:Y(x) \vdash Z(x,y)$ to a family $\mathbf{Z}:\coprod_{\mathbf{x} \in \mathbf{X}} \mathbf{Y}(\mathbf{x}) \rightarrow \mathscr U$,
	
	\item a term $\vdash k:X$ to an element of $\mathbf{X}$,
	
	\item a term $x:X \vdash f(x):Y(x)$ to a family $\varphi \in \prod_{\mathbf{x} \in \mathbf{X}}\mathbf{Y}(\mathbf{x})$,
\end{itemize}
and similarly for further dependent sorts and terms, compatibly with the equality axioms of the theory. Following this approach, Cartmell defined (\cite{Car86}, \S11) the \emph{category of (``family-valued") models} of a given gat, and sketched a proof that the category of set-valued models of any essentially algebraic theory (as in \cite{Fre72}) can be obtained, up to equivalence, as the category of models of some gat. This implies that categories of models of gats include, up to equivalence, all categories of models of finite-limit theories, thus all locally finitely presentable categories (\cite{GabUlm71}).

Generalized algebraic theories can be assembled into a category $\GAT$ where morphisms $\bbA \rightarrow \bbB$ are equivalence classes, with respect to equality derivable from the axioms of $\bbB$, of interpretations of $\bbA$ in $\bbB$. Interpretations, in turn, are (roughly) certain assignments of expressions in $\bbB$ to symbols in the alphabet of $\bbA$ in such a way that the recursively induced map sending judgments in $\bbA$ to judgments in $\bbB$ preserves derivability. See \cite[\S12]{Car86}.

Contextual categories, also introduced by Cartmell in \cite{Car78}, are certain category-like objects whose structure matches what is preserved by morphisms of generalized algebraic theories. For a gat $\bbA$, we have a contextual category $\mathcal C(\bbA)$ -- its syntactic category -- intended to retain information on the equivalence classes of derivable judgments and the interactions between them\footnote{This perspective appears more explicitly in Voevodsky's theory of B-systems (\cite{Voe14}) and Garner's description of gats as algebras for a certain monad on the category of type-and-term structures (\cite{Gar15}). See \cite{AhrEmmNorRij25} for further discussion of these different approaches.}, but not on the raw syntax. Let us briefly (and informally) recall the structure present in a contextual category, say $\mathcal A$. It has an underlying category $|\mathcal A|$, whose objects are thought of as contexts taken up to derivable equality. It comes with a \emph{length} function $\ell:\Ob(|\mathcal A|) \rightarrow \bbN$ such that there exists a unique object of length $0$, corresponding to the empty context. There is a special set of arrows, the \emph{display maps}\footnote{This terminology is due to \cite{Tay86}.}, that encode type dependency: a display map $\Gamma' \twoheadrightarrow \Gamma$ is viewed as the standard projection out of an extension of $\Gamma$ by a free variable of some sort derivable in $\Gamma$; this map is only available when $\ell(\Gamma') \ge 1$ and, in that case, it is determined up to equality, rather than up to isomorphism, by $\Gamma'$. Substitution of a dependent sort $\Gamma' \twoheadrightarrow \Gamma$ along a context morphism $f:\Delta \rightarrow \Gamma$ is encoded by a cartesian square
\[
\tag{\texttt{*}}
\dsqua{\Delta'}{\Gamma'}{\Delta}{\Gamma,}{f'}{f}{}{}
\]
which is specified up to equality by $\Delta \overset{f}{\rightarrow} \Gamma \twoheadleftarrow \Gamma'$. Moreover, the set of such diagrams -- which we call \emph{distinguished pullback squares} -- is required to be \emph{strictly} closed under horizontal composition; in particular, writing $f^*(\Gamma')$ for $\Delta'$ in the above diagram, we have
\[
\tag{$\pitchfork$}
g^*f^*(\Gamma') = (fg)^*(\Gamma')
\]
for any $g:\Omega \rightarrow \Delta$. The corresponding syntactic property is the (strict) functoriality of substitution of expressions along context morphisms.

Defining morphisms of contextual categories as functors that strictly preserve all available structure -- the \emph{contextual functors} --, we obtain a category $\Cont$, and the assignment $\bbA \mapsto \mathcal C(\bbA)$ lifts to an equivalence of categories $\GAT \simeq \Cont$. This correspondence can be used, for example, to (re)define a family-valued model of a gat $\bbA$ as a morphism $\mathcal C(\bbA) \rightarrow \Fam$ where $\Fam$ is the contextual category of iterated families of (small) sets from \cite{Car86}:
\begin{itemize}[noitemsep]
	\item Its objects are defined recursively\footnote{We work inside the (large) set of all finite sequences of elements of $\{(\mathbf{E},\mathbf{K}) \mid \mathbf{E} \in \mathscr U,\; \mathbf{K}:\mathbf{E} \rightarrow \mathscr U\}$} alongside their ``total sets" as follows:
	\begin{itemize}[noitemsep]
		\item[-] the unique length-$0$ object is the empty sequence $()$, with total set $\Sigma() = \{*\}$;
		
		\item[-] a length-$(n+1)$ object is a tuple $(\textbf{X}_i)_{i \le n+1}$ where $(\textbf{X}_i)_{i \le n}$ is a length-$n$ object and $\textbf{X}_{n+1}:\Sigma(\textbf{X}_i)_{i \le n} \rightarrow \mathscr U$; the total set is $\Sigma(\mathbf{X}_i)_{i \le n+1} = \coprod_{\overline{\mathbf{x}} \in \Sigma(\mathbf{X}_i)_{i \le n}}\mathbf{X}_{n+1}(\overline{\mathbf{x}})$.
	\end{itemize}
	
	\item A morphism between two objects of $\Fam$ is defined as a map between their total sets. Display maps are the projections $\Sigma(\mathbf{X}_i)_{i \le n+1} \rightarrow \Sigma(\mathbf{X}_i)_{i \le n}$, and distinguished pullbacks are given by reindexing.
\end{itemize}

Lifting $\Cont$ to a strict $2$-category $\iiCont$ whose $2$-cells are the natural transformations between contextual functors, the category of models of $\bbA$ can be described (working, if needed, in a universe with respect to which $\Fam$ is small) as $\iiCont(\mathcal C(\bbA),\Fam)$; thus for a contextual category $\mathcal A$, we define the \emph{category of family-valued models} of $\mathcal A$ as $\iiCont(\mathcal A,\Fam)$.

Another possible approach is to consider functors $|\mathcal A| \rightarrow \Set$ that send distinguished squares (as in (\texttt{*}) above) to pullback squares, and the length-$0$ terminal object to a singleton. We call them \emph{set-valued models} of $\mathcal A$. These are the models of $\mathcal A$ viewed as a finite-limit sketch (specifying a set of pullbacks and a terminal object) or as a clan (closing the set of display maps under composition and isomorphism) in the sense \cite{Joy17}; see, e.g., \cite{Fre25}.

The underlying category of the contextual category $\Fam$ is canonically equivalent to $\Set$ via the total set functor $\Sigma:|\Fam| \rightarrow \Set$. Composing with $\Sigma$ defines a full-and-faithful functor from the category of family-valued models of $\mathcal A$ to that of set-valued models. The latter concept of model can thus be seen as a weakening of the former one. We will denote these categories of models by $\iiMod_s(\mathcal A)$ and $\iiMod_w(\mathcal A)$ ($s$ and $w$ stand for ``strict" and ``weak"). As it turns out, there are cases where
$$
\iiMod_s(\mathcal A) \xrightarrow{\Sigma \circ -} \iiMod_w(\mathcal A)
$$
is not an equivalence of categories: there exists a set-model $|\mathcal A| \rightarrow \Set$ that is not strictifiable in that it is not isomorphic to $\Sigma \circ F$ for any contextual functor $F:\mathcal A \rightarrow \Fam$. An example is given in \cite[Rem. B.54]{BarHen25}: consider the gat $\bbK$ specified by
$$
\vdash X \tp \qquad\quad x:X \vdash f(x):X \qquad\quad x:X \vdash Y(x) \tp \qquad\quad x:X \vdash Y(f(x)) \equiv Y(x) \tp
$$
A family-valued model of $\bbK$ consists of a set $\mathbf{X}$, a function $\mathbf{f}:\mathbf{X} \rightarrow \mathbf{X}$, and a family $\mathbf{Y}:\mathbf{X} \rightarrow \mathscr U$ such that $\mathbf{Y}(\mathbf{f}(\mathbf{x})) = \mathbf{Y}(\mathbf{x})$ for all $\mathbf{x} \in \mathbf{X}$. Up to isomorphism, these data correspond to a set $\textbf{X}$, a map $\textbf{f}:\textbf{X} \rightarrow \textbf{X}$, and a set $\textbf{E}$ equipped with a map $\pi:\textbf{E} \rightarrow \textbf{X}/\sim$ to the set of orbits of the action of $\textbf{f}$.

On the other hand, a set-valued model $\mathcal C(\bbK) \rightarrow \Set$ corresponds\footnote{It is not immediate that the data below extend (essentially uniquely) to a functor $\mathcal C(\bbK) \rightarrow \Set$ preserving the required finite limits. We prove this in Example \ref{example: gat from a category} as an instance of a more general result.} to a pullback square as in
% https://q.uiver.app/#q=WzAsMTAsWzAsMCwiW3g6WCx5OlgoeCldIl0sWzAsMiwiW3g6WF0iXSxbMiwyLCJbeDpYXSJdLFsyLDAsIlt4OlgseTpYKHgpXSJdLFsyLDFdLFs1LDFdLFs1LDAsIlxcdGV4dGJme0V9Il0sWzcsMCwiXFx0ZXh0YmZ7RX0iXSxbNSwyLCJcXHRleHRiZntYfSJdLFs3LDIsIlxcdGV4dGJme1h9Il0sWzEsMiwiW2YoeCldIiwyXSxbMCwzLCJbZih4KSx5XSJdLFswLDEsIiIsMSx7InN0eWxlIjp7ImhlYWQiOnsibmFtZSI6ImVwaSJ9fX1dLFszLDIsIiIsMSx7InN0eWxlIjp7ImhlYWQiOnsibmFtZSI6ImVwaSJ9fX1dLFs0LDUsIiIsMSx7InNob3J0ZW4iOnsic291cmNlIjoyMCwidGFyZ2V0IjoyMH0sInN0eWxlIjp7InRhaWwiOnsibmFtZSI6Im1hcHMgdG8ifSwiYm9keSI6eyJuYW1lIjoiZGFzaGVkIn19fV0sWzcsOSwiXFxwaSJdLFs2LDgsIlxccGkiLDJdLFs4LDksIlxcbWF0aGJme2Z9IiwyXSxbNiw3LCJcXG1hdGhiZntnfSJdLFs2LDksIiIsMSx7InN0eWxlIjp7Im5hbWUiOiJjb3JuZXIifX1dXQ==
\[\begin{tikzcd}[ampersand replacement=\&,cramped]
	{[x:X,\; y:X(x)]} \&\& {[x:X,\; y:X(x)]} \&\&\& {\textbf{E}} \&\& {\textbf{E}} \\
	\&\& {} \&\&\& {} \\
	{[x:X]} \&\& {[x:X]} \&\&\& {\textbf{X}} \&\& {\textbf{X}}
	\arrow["{[f(x),y]}", from=1-1, to=1-3]
	\arrow[two heads, from=1-1, to=3-1]
	\arrow[two heads, from=1-3, to=3-3]
	\arrow["{\mathbf{g}}", from=1-6, to=1-8]
	\arrow["\pi"', from=1-6, to=3-6]
	\arrow["\lrcorner"{anchor=center, pos=0.125}, draw=none, from=1-6, to=3-8]
	\arrow["\pi", from=1-8, to=3-8]
	\arrow[dotted, maps to, from=2-3, to=2-6, shorten <=2em, shorten >=2em]
	\arrow["{[f(x)]}"', from=3-1, to=3-3]
	\arrow["{\mathbf{f}}"', from=3-6, to=3-8]
\end{tikzcd}\]
In other words, we have sets $\textbf{X}$, $\textbf{E}$ equipped with endomorphisms $\textbf{f}$, $\textbf{g}$, respectively, and a map $\pi:\textbf{E} \rightarrow \textbf{X}$ such that $\pi\textbf{g} = \textbf{f}\pi$ and for all $\textbf{x} \in \textbf{X}$, the map
\[
\tag{\texttt{**}}
\pi^{-1}(\textbf{x}) \longrightarrow \pi^{-1}(\textbf{f}(\textbf{x})), \qquad \textbf{y} \longmapsto \textbf{g}(\textbf{y})
\]
is bijective. The distinction between family-valued and set-valued models of $\bbK$ lies in the fact that in the former case, we have a family of sets indexed by the set-quotient of $\textbf{X}$ by $\textbf{f}$, while in the latter, we have a family indexed by the corresponding groupoid-quotient. More precisely, in the latter case we can encode the actions of $\textbf{f}$, $\textbf{g}$ on $\textbf{X}$, $\textbf{E}$ via functors
$$
\alpha_\textbf{f},\; \alpha_\textbf{g}:B\bbN \longrightarrow \Set,
$$
where $B\bbN$ is the monoid $(\bbN,+)$ viewed as a one-object category; $\pi$ defines a natural transformation $\pi_*:\alpha_\textbf{g} \Rightarrow \alpha_\textbf{f}$ which, as the maps (\texttt{**}) above are bijective, is cartesian in the sense that every naturality square (in this case, the only one) is a pullback. Taking categories of elements, we obtain a discrete opfibration
$$
\smallint \pi_*:\smallint \alpha_\textbf{g} \longrightarrow \smallint \alpha_\textbf{f}
$$
which, as $\pi_*$ is cartesian, is such that its fiber functor, say
$$
\Phi:\smallint \alpha_\textbf{f} \longrightarrow \Set,
$$
sends every arrow to an isomorphism. As a consequence, $\Phi$ factors uniquely as
$$
\smallint \alpha_\textbf{f} \longrightarrow \gpd(\smallint \alpha_\textbf{f}) \overset{\Phi}{\longrightarrow} \Set
$$
where $\gpd:\Cat \rightarrow \Gpd$ is the functor (left adjoint to the inclusion) that sends a category to the groupoid obtained by localizing it at the set of all arrows. The groupoid $\gpd(\smallint \textbf{f})$ is the pseudo-quotient of the composite $2$-functor $B\bbN \overset{\alpha_\textbf{f}}{\rightarrow} \Set \hookrightarrow \textbf{Gpd}$, and the given set-valued model is strictifiable (as we will see in \S\ref{sec: strictifying set-valued models}) if and only if $\Phi'$ sends every automorphism to an identity arrow. For instance, the quotient map $\pi:\mathbb Z \longrightarrow \mathbb Z/n\mathbb Z$ where each set is equipped with the endomorphism $n \mapsto n+1$ gives a non-strictifiable model of $\bbK$ since there is a loop in $\mathbb Z/n$ that lifts to a non-trivial automorphism of $\mathbb Z$:
% https://q.uiver.app/#q=WzAsMTAsWzAsMCwiMCJdLFswLDEsIjAiXSxbMSwwLCIxIl0sWzEsMSwiMSJdLFsyLDEsIlxcY2RvdHMiXSxbMiwwLCJcXGNkb3RzIl0sWzMsMCwibi0xIl0sWzMsMSwibi0xIl0sWzQsMSwibiA9IDAiXSxbNCwwLCJuIFxcbmVxIDAiXSxbMCwxLCJcXHBpIiwyLHsic3R5bGUiOnsidGFpbCI6eyJuYW1lIjoibWFwcyB0byJ9fX1dLFswLDIsIisxIiwwLHsic3R5bGUiOnsidGFpbCI6eyJuYW1lIjoibWFwcyB0byJ9fX1dLFsyLDUsIisxIiwwLHsic3R5bGUiOnsidGFpbCI6eyJuYW1lIjoibWFwcyB0byJ9fX1dLFs1LDYsIisxIiwwLHsic3R5bGUiOnsidGFpbCI6eyJuYW1lIjoibWFwcyB0byJ9fX1dLFs2LDksIisxIiwwLHsic3R5bGUiOnsidGFpbCI6eyJuYW1lIjoibWFwcyB0byJ9fX1dLFsxLDMsIisxIiwyLHsic3R5bGUiOnsidGFpbCI6eyJuYW1lIjoibWFwcyB0byJ9fX1dLFszLDQsIisxIiwyLHsic3R5bGUiOnsidGFpbCI6eyJuYW1lIjoibWFwcyB0byJ9fX1dLFs0LDcsIisxIiwyLHsic3R5bGUiOnsidGFpbCI6eyJuYW1lIjoibWFwcyB0byJ9fX1dLFs3LDgsIisxIiwyLHsic3R5bGUiOnsidGFpbCI6eyJuYW1lIjoibWFwcyB0byJ9fX1dLFs5LDgsIlxccGkiLDIseyJzdHlsZSI6eyJ0YWlsIjp7Im5hbWUiOiJtYXBzIHRvIn19fV0sWzIsMywiXFxwaSIsMix7InN0eWxlIjp7InRhaWwiOnsibmFtZSI6Im1hcHMgdG8ifX19XSxbNSw0LCJcXHBpIiwyLHsic3R5bGUiOnsidGFpbCI6eyJuYW1lIjoibWFwcyB0byJ9fX1dLFs2LDcsIlxccGkiLDIseyJzdHlsZSI6eyJ0YWlsIjp7Im5hbWUiOiJtYXBzIHRvIn19fV1d
\[
\tag{\texttt{***}}
\begin{tikzcd}[ampersand replacement=\&,cramped]
	0 \& 1 \& \cdots \& {n-1} \& {n \neq 0} \\
	0 \& 1 \& \cdots \& {n-1} \& {n = 0}
	\arrow["{+1}", maps to, from=1-1, to=1-2]
	\arrow["\pi"', maps to, from=1-1, to=2-1]
	\arrow["{+1}", maps to, from=1-2, to=1-3]
	\arrow["\pi"', maps to, from=1-2, to=2-2]
	\arrow["{+1}", maps to, from=1-3, to=1-4]
	\arrow["\pi"', maps to, from=1-3, to=2-3]
	\arrow["{+1}", maps to, from=1-4, to=1-5]
	\arrow["\pi"', maps to, from=1-4, to=2-4]
	\arrow["\pi"', maps to, from=1-5, to=2-5]
	\arrow["{+1}"', maps to, from=2-1, to=2-2]
	\arrow["{+1}"', maps to, from=2-2, to=2-3]
	\arrow["{+1}"', maps to, from=2-3, to=2-4]
	\arrow["{+1}"', maps to, from=2-4, to=2-5]
\end{tikzcd}\]
For any contextual category $\mathcal A$, we can characterize its strictifiable set-valued models by adapting the above discussion. Let $\textbf{D}(\mathcal A)$ be the category of display maps and distinguished pullback squares, composed horizontally. Working as in the example yields for a model $M:\mathcal A \rightarrow \Set$ a functor
\[
\tag{\texttt{****}}
\gpd(K) \longrightarrow \Set
\]
where $K$ is the category of elements of the composite $\textbf{D}(\mathcal A) \xrightarrow{\text{cod}} \mathcal A \xrightarrow{M} \Set$. We will prove that $M$ is strictifiable precisely (\texttt{****}) sends every arrow to an identity map; we say that a model with the latter property is \emph{loop-free}.

The underlying idea is that a family-valued model of a gat is required to interpret the fiberwise isomorphisms in distinguished pullback squares as fiberwise identities, which relies on the non-categorical structure of the universe $\mathscr U$: we use it not just a category (namely, $\Set$), but as a category equipped with an essentially surjective functor from an essentially discrete category (namely, $\text{disc}(\mathscr U) = \text{disc}(\Ob(\Set)) \hookrightarrow \Set$). The loop freeness condition in our characterization of strictifiable set-valued models of $\mathcal A$ comes from the fact that for a category $K$, the following conditions on a functor $F:K \rightarrow \text{core}(\Set)$\footnote{We write $\text{core}(A)$ for the core groupoid of a category $A$, that is, its subcategory consisting of all isomorphisms.} are equivalent:
\begin{itemize}[noitemsep]
	\item if factors up to isomorphism through the subcategory $\text{disc}(\mathscr U) \hookrightarrow \text{core}(\Set)$;
	
	\item it factors strictly through a functor $V \rightarrow \text{core}(\Set)$ where $V$ is a coproduct of codiscrete categories (which is the case iff $V$ is equivalent to a set);
	
	\item the induced functor $\gpd(K) \rightarrow \text{core}(\Set)$ sends every automorphism to an identity map.
\end{itemize}

This phenomenon reflects the observation, which appears e.g. in \cite{Tay99}, \cite{Voe15}\footnote{In particular, it motivated Voevodsky to refer to contextual categories as \emph{C-systems} in order to avoid a categorical connotation.}, \cite{Voe16}, \cite{AhrNorShuTse25}, that contextual categories are more appropriately viewed as set-level rather than categorical structures since their definition uses the equality predicate on the set of objects. For instance, recall the equality $g^*f^*(\Gamma') = (fg)^*(\Gamma')$ from the statement that distinguished pullback squares are strictly closed under composition (see $\pitchfork$ above), or the fact that for display maps
$$
\Gamma' \twoheadrightarrow \Gamma, \qquad \Delta' \twoheadrightarrow \Delta
$$
(which are uniquely determined by their domains), $\Gamma' \cong \Delta'$ does not imply $\Gamma \cong \Delta$. Also, we generally cannot transfer a contextual category structure on $A \in \Cat$ along an equivalence $A \simeq A'$. As suggested by our previous discussion, the non-categorical content of a contextual category $\mathcal A$ that affects its family-valued semantics in an essential way is encoded by the category of display maps $\textbf{D}(\mathcal A)$.

This motivates looking for a categorical theory of contextual categories, that is, a setting where the above issues of non-invariance under isomorphism disappear and where the passage from family-valued to set-valued models becomes more transparent. One such framework is that of categories equipped with a ``class of displays", or display map categories, as studied by Taylor in \cite[Chap. VIII]{Tay99}.

\begin{definition}[Taylor]
\label{def: display map category}
A \emph{display map category}, dmc for short, is a pair $\mathcal A = (A,P)$ consisting af a category $A$ and a set of arrows $P$ (\emph{display maps}) such that arrows in $P$ have pullbacks along arbitrary arrows, and all such pullbacks belong to $P$ (in particular, $P$ is closed under isomorphism in the arrow category $\iiAr(A)$).

$\mathcal A$ is \emph{rooted} if its underlying category has a terminal object (not chosen), and every object $\Gamma$ fits into a finite sequence of display maps
$$
\Gamma = \Gamma_n \twoheadrightarrow \Gamma_{n-1} \twoheadrightarrow \cdots \twoheadrightarrow \Gamma_1 \twoheadrightarrow \Gamma_0
$$
where $\Gamma_0$ is terminal.\footnote{Note that the (possible rooted) dmc structure on $A$ and the corresponding properties are invariant under isomorphism in a suitable sense, and any equivalence of categories $A \simeq A'$ can be used to induce from $(A,P)$ a weak contextual category structure on $A'$.}

\vspace{0.5em}

For dmcs $\mathcal A$, $\mathcal A'$, a \emph{(weak) morphism} $\mathcal A \rightarrow \mathcal A'$ is a functor that preserves display maps and their pullbacks; restricting to small dmcs, this gives a $2$-category $\iiDMC$ with natural transformations as $2$-cells. We write $\iiDMC_r$ for the locally full sub-$2$-category spanned by the rooted dmcs and terminal-object-preserving morphisms between them.
\end{definition}

\begin{definition}
\label{def: weakening functor}
Let $\mathcal A$ be a contextual category. Its \emph{weakening} is the rooted dmc $\mathcal A_w$ that has the same underlying category as $\mathcal A$, and where an arrow is a display map if and only it is isomorphic to a strict display map of $\mathcal A$.

Note that this defines a locally full-and-faithful $2$-functor $\iiCont \rightarrow \iiDMC_r$.
\end{definition}

Rephrased in our language, one of Taylor's contributions is a proof (see \cite[\S8.4]{Tay99}) that every rooted dmc is equivalent in $\iiDMC$ to $\mathcal C(\bbA)_w$ for some gat $\bbA$ without sort equality axioms. In fact, he only ever considers gats of this kind, a choice that hints at the potential problems (as for the gat $\bbA$ described above) caused by the presence of sort equality axioms; see \cite[Rem. 8.1.9]{Tay99} and the footnote on page 451. He justifies this via two ideas: (1) that interchangeability of objects should be mediated by isomorphisms, and (2) that interpreting a gat without sort equality axioms is made simpler by the fact that every sort is of the form $A(f_1, ..., f_n)$ for a unique sort symbol $A$ and an essentially unique tuple of terms $(f_1, ..., f_n)$.\footnote{This latter idea led us, as we will explain later, to introduce the concept of a \emph{basis} of a contextual category.}

\vspace{0.5em}

For a dmc $\mathcal A = (A,P)$, we let $\textbf{D}_w(\mathcal A)$ be the subcategory of $\iiAr(A)$ formed by the elements of $P$ and the pullback squares between them. Observe that a morphism $F:\mathcal A \rightarrow \mathcal B$ in $\textbf{DMC}$ is an equivalence if and only if its underlying functor is an equivalence of categories and $\textbf{D}_w(F):\textbf{D}_w(\mathcal A) \rightarrow \textbf{D}_w(\mathcal B)$ is essentially surjective (thus also an equivalence). It follows that, viewing $\Set$ as rooted dmc with every arrow as a display map, the total set functor defines an equivalence $\Fam_w \simeq \Set$ in $\iiDMC_r$ (possibly passing to a larger universe). For a contextual category $\mathcal A$, this gives
$$
\iiMod_w(\mathcal A) \simeq \iiDMC_r(\mathcal A_w,\Set) \simeq \iiDMC_r(\mathcal A_w,\Fam_w).
$$
In particular, the comparison functor $\iiMod_s(\mathcal A) \rightarrow \iiMod_w(\mathcal A)$ can be described as $\iiCont(\mathcal A,\Fam) \rightarrow \iiDMC_r(\mathcal A_w,\Fam_w)$. Moreover, if a contextual functor $\mathcal A \rightarrow \mathcal B$ is an equivalence as a morphism $\mathcal A_w \rightarrow \mathcal B_w$, then $\iiMod_w(\mathcal A) \simeq \iiMod_w(\mathcal B)$.

\begin{definition}
\label{def: weak equivalence}
A contextual functor $F \in \Cont(\mathcal A,\mathcal B)$ is a \emph{weak equivalence} if it is an equivalence $\mathcal A_w \rightarrow \mathcal B_w$ in $\iiDMC$.
\end{definition}

Summarizing some of the main results of the paper, we will connect contextual categories and dmcs as follows:
\begin{theorem}
\label{th: summary model structure}
There exists a $\omega$-combinatorial Quillen model structure on $\Cont$ (equivalently, on $\GAT$) whose weak equivalences are as above, and with generating cofibrations whose pushouts encode adding sorts, terms, and term equalities (no sort equalities); every object is fibrant. Moreover:
\begin{itemize}
	\item it is a monoidal and $\Cat$-enriched model category (with respect to the monoidal structure from \cite{Alm26}, the $\Cat$-enrichment $\iiCont$, and the categorical model structure on $\Cat$);
	
	\item its homotopy $(\infty,1)$-category is the underlying $(2,1)$-category of $\iiDMC_r$;
\end{itemize}
\end{theorem}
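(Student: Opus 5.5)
We will build the model structure by a recognition theorem for combinatorial model categories whose weak equivalences are detected by a functor to $\Cat$-like data. We take the approach of transferring/recognizing via a set of generating cofibrations $I$ together with the class $W$ of weak equivalences. The first ingredient is the local finite presentability of $\Cont$ (equivalently $\GAT$), so that Smith's recognition theorem applies.

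\textbf{Generating cofibrations.} Take $I$ to consist of four families of maps between finitely presentable contextual categories:
\begin{itemize}
\item the maps $\mathcal C(\bbA) \to \mathcal C(\bbA + \text{sort})$ freely adjoining a sort in a context;
\item the maps freely adjoining a term of a given sort;
\item the maps adjoining a term equality $s \equiv t$;
\item a ``boundary'' map $\mathcal C(\emptyset\text{-theory}) \to 1$-type map encoding the terminal object, if needed.
\end{itemize}
We then characterize $I$-injectives, i.e. trivial fibrations, as contextual functors $F$ that satisfy three conditions. They are surjective on sorts over every context, which is lifting against sort adjunction. They are full on terms. They are faithful on terms, which is lifting against equality adjunction.

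\textbf{Trivial fibrations are weak equivalences.} We check that any such $F$ is a weak equivalence in the sense of Definition~\ref{def: weak equivalence}. By induction on length, every context of $\mathcal B$ lifts strictly, so $|F|$ is surjective on objects. Term fullness and faithfulness give full faithfulness. Display maps of $\mathcal B_w$ are, up to isomorphism, images of strict display maps, so $\textbf{D}_w(F)$ is essentially surjective. This gives $\mathrm{inj}(I)\subseteq W$.

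\textbf{Accessibility and 2-out-of-3.} The class $W$ satisfies 2-out-of-3 and closure under retracts, since it is defined as the class of equivalences under the $2$-functor $(-)_w$. It is accessible, being the preimage under an accessible functor $\Cont \to \iiDMC$, viewed through the arrow data $|{-}|$ and $\textbf{D}_w$, of equivalences of categories. Equivalences form an accessible, accessibly embedded class in $\Ar(\Cat)$.

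\textbf{Main obstacle: cofibrations in $W$ are stable under pushout.} We expect the hard step to be showing that $\mathrm{cof}(I)\cap W$ is closed under pushout and transfinite composition. Our plan is to give explicit generating trivial cofibrations $J$ that witness the isomorphism-invariance absent from strict contextual categories. These are ``adjoin a sort $B$ together with an isomorphism of contexts $[\Gamma,B] \cong [\Gamma,A]$ over $\Gamma$'', and ``adjoin an isomorphism $X \cong Y$'' modelled on the interval groupoid.

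We would then show that the $J$-injectives, which are the fibrations, are exactly the isofibration-type maps. These lift isomorphisms of contexts and isomorphisms of display maps. Every object is fibrant, because in any contextual category an isomorphism out of $[\Gamma,A]$ over $\Gamma$ can be realized by choosing the sort $A$ itself.

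Finally, we prove the key lemma: a map in $W$ that is $J$-injective is $I$-injective. The argument strictifies essential surjectivity to strict surjectivity using isomorphism lifting, proceeding by induction on context length. By the standard criterion (Jardine/Smith), this yields the model structure with $\mathrm{cof}(J)=\mathrm{cof}(I)\cap W$.

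**Enrichment, monoidality, and the homotopy category.** For $\Cat$-enrichment and monoidality, we verify the pushout-product axiom on generators. This reduces to finitely many explicit computations with $I$, $J$, the generators of the categorical model structure on $\Cat$, and the tensor of \cite{Alm26}.

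For the homotopy $(\infty,1)$-category, we use Taylor's theorem that every rooted dmc is equivalent to $\mathcal C(\bbA)_w$ with $\bbA$ free of sort equalities, hence cofibrant. This makes $(-)_w$ essentially surjective. We then identify $\Cat$-enriched mapping spaces between cofibrant objects, $\text{core}\,\iiCont(\mathcal A,\mathcal B)$, with $\text{core}\,\iiDMC_r(\mathcal A_w,\mathcal B_w)$. This last step is a strictification of weak morphisms out of cofibrant theories, obtained by lifting against the $I$-cells one generator at a time.
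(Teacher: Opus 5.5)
\paragraph{The gap.} The gap is the step you yourself call the main obstacle, and your plan renames it rather than resolving it. Once you fix an explicit $J$, the recognition theorem still needs condition (3) of Theorem~\ref{th: recognition cof gen model categories}, namely $\cell(J)\subset W$. Your first family of $J$ is the paper's $\iota_n^\triangledown:\mathcal O_n\rightarrow\mathcal O_n^\triangledown$. So every pushout of $\iota_n^\triangledown$ along an arbitrary strict morphism must be a weak equivalence, and $W$ must be closed under transfinite composition.

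What you do check is $\frakR(I)\subset W$, $J\subset\cof$, $\frakR(J)\cap W\subset\frakR(I)$, and that all objects are fibrant. This does not imply acyclicity of $J$-cells. In $\Set$ take $I=\{\varnothing\rightarrow 1,\;2\rightarrow 1\}$, $W=$ bijections and $J=\{1\rightarrow 2\}$: all of your conditions hold, yet $1\rightarrow 2\notin W$. In $\Cont$ this step cannot be waved through either, since pushouts interact badly with weak equivalences (Remark~\ref{rem: not left-proper}).

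\paragraph{The missing idea.} It is to use the $\Cat$-tensors and powers of $\iiCont$:
\begin{itemize}
\item $\iota_n^\triangledown$ exhibits $\mathcal O_n$ as a strong deformation retract of $\mathcal O_n^\triangledown$. The retraction sends $o'_n\mapsto o_n$ and $f,g\mapsto id$. The isomorphism $\iota_n^\triangledown\pi_n^\triangledown\cong\Id$, trivial on $\mathcal O_n$, is a map $\textbf{I}_\cong\otimes\mathcal O_n^\triangledown\rightarrow\mathcal O_n^\triangledown$.
\item Because $\textbf{I}_\cong\otimes-$ is a left adjoint, strong deformation retract inclusions are stable under pushout.
\item Such inclusions are equivalences in $\iiCont$, hence lie in $W$.
\item Closure of $W$ under transfinite composites follows from $|{-}|$ and $(-)[n]$ preserving filtered colimits.
\end{itemize}
A path-object argument would also work, using fibrancy of all objects and showing that $\mathcal B^{\textbf{I}_\cong}\rightarrow\mathcal B\times\mathcal B$ is a fibration, but you would have to set it up. Your second family ``adjoin an isomorphism $X\cong Y$'' is unnecessary, and it would need the same acyclicity argument.

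\paragraph{Secondary gaps.} The same ingredient is missing from your monoidal and $\Cat$-enriched step. Reducing to generators still leaves you needing $I\hatotimes J$ to be weak equivalences, which is not a finite explicit computation. The paper proceeds as follows:
\begin{itemize}
\item For an acyclic cofibration $F$, it shows $F\otimes\Id_{\mathcal B}$ is an equivalence in $\iiCont$. A retraction of $F$ exists because every object is fibrant, and $\otimes$ is $\Cat$-enriched.
\item It then uses pushout-stability of deformation retracts and 2-out-of-3.
\item It obtains the $\Cat$-enrichment from the left Quillen functor $\iota:\Cat\rightarrow\Cont$, together with $A\otimes\mathcal C\cong\iota(A)\otimes\mathcal C$.
\end{itemize}

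Finally, ``lifting against the $I$-cells one generator at a time'' understates the strictification. After freely adding a sort, the isomorphism to the weak morphism must be defined on all new objects, including every pullback of the new sort and everything built over it. It must also be compatible with distinguished squares. Since $F$ is only weak, the pushout's universal property does not supply this. The paper instead proves that cellular contextual categories have a basis, meaning each component of $\textbf{D}(\mathcal A)$ has a terminal object. It builds a local strictification from that basis and passes to cofibrant objects as retracts.
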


A cell complex with respect to the above generating cofibrations is a contextual category isomorphic to $\mathcal C(\bbA)$ for some gat $\bbA$ that doesn't have any sort equality axioms. If $\mathcal A$ is cofibrant (a retract of a cell complex), then
$$
\iiCont(\mathcal A,\mathcal S) \simeq \iiDMC_r(\mathcal A_w,\mathcal S_w) \text{ for any } \mathcal S \in \iiCont,\footnote{However, we will prove the theorem using this equivalence, not conversely.} \qquad\quad \iiMod_s(\mathcal A) \simeq \iiMod_w(\mathcal A).
$$

The key property of cell complexes -- which we will refer to as \emph{cellular} theories/contextual categories -- needed to prove the first equivalence above is the existence of what we will call a \emph{basis}: a set $\textbf{P}$ of display maps such that every display map can be obtained in a unique way as a distinguished pullback of an element of $\textbf{P}$. A contextual category $\mathcal A$ has a basis if and only if every connected component of $\textbf{D}(\mathcal A)$ has a terminal object, in which case a choice of such terminal objects gives a basis.

For a gat $\bbA$ with without sort equality axioms, the display maps associated with the sort introduction axioms form a basis of $\mathcal C(\bbA)$, as already observed (in other terms) in \cite{Tay99}. This is all we need to know about bases to characterize the homotopy category of $\Cont$, but we also give a structural characterization, which may be of independent interest, of cellular contextual categories: they are the ones that have a basis satisfying a certain well-foundedness condition (Theorem \ref{th: cellular iff basis}).

\vspace{0.5em}

Other than those already mentioned, many ideas that we use have appeared in some form in the literature. For instance, our study relies on the fact that sorts over each context form a category rather than just a set: for an object $\Gamma$ of a contextual category $\mathcal A$, we can consider the full subcategory $\mathcal A_\Gamma$ of $|\mathcal A|/\Gamma$ spanned by the display maps with codomain $\Gamma$. Taking substitution along context morphisms, the assignment $\Gamma \mapsto \mathcal A_\Gamma$ extends to a functor $\mathcal A_\bullet:|\mathcal A|^{\op} \rightarrow \Cat$ that refines the presheaf of sorts of $\mathcal A$. This is the construction of the context-indexed category of types from \cite[Prop. 2.7]{ClaDyb14} applied to the category with families associated with $\mathcal A$.

Weak equivalences in $\Cont$, as defined above, are precisely the contextual functors $F:\mathcal A \rightarrow \mathcal B$ that induce for each display map $p$ in $\mathcal A$ a bijection $\{\text{sections of } p\} \cong \{\text{sections of } Fp\}$ (syntactically, this means that $F$ acts bijectively on equivalence classes of terms) and, for each $\Gamma \in |\mathcal A|$, an essentially surjective functor $\mathcal A_\Gamma \rightarrow \mathcal B_{F\Gamma}$. Hence, our weak equivalences are analogous to the type-theoretic equivalences from \cite{KapLum18}; but the latter are introduced in a much more structured homotopical setting (with intensional identity types, etc.) that leads to the $\infty$-categorical world by viewing such a structured contextual category as a category of fibrant objects in the sense of \cite{Bro73}. In contrast, our framework is intended to express the role of judgmental equality (which is the only notion of equality between terms and sorts available to us by default) in the problem of realizing contextual categories as $1$-categorical rather than set-level structures.

We also note that it might be possible to exhibit the passage from $\iiCont$ to $\iiDMC_r$ as an instance of Lack's presentation of the $2$-category of pseudoalgebras for a strict $2$-monad via a $\Cat$-enriched model structure on the $2$-category of strict algebras, but we don't pursue that direction (or even construct the required $2$-monad, which we expect to be on a $2$-category of suitably structured categories).

\subsection*{Organization of the text}

\begin{itemize}
	\item In Section \ref{sec: contextual categories}, we fix notation and introduce a few constructions related to (pre)contextual categories and display map categories that will be used throughout the paper.
	
	\item In Section \ref{sec: model structure}, we start by discussing full-and-faithful contextual functors and weak equivalences in more detail. Then we construct the ``categorical" Quillen model structure on the category of contextual categories, recalling, for that, the construction of $\Cat$-tensors/powers of contextual categories. We connect the model structure on $\Cont$ with the monoidal structure from \cite{Alm25, Alm26} by proving that we actually have a $\Cat$-enriched monoidal model category, where $\Cat$ is equipped with the categorical model structure.
	
	\item In Section \ref{sec: cellular theories}, we give a structural characterization of those contextual categories that are cellular with respect to the generating cofibrations from \S\ref{sec: model structure}. Except for the fact that it relies on the choice of generating cofibrations, this section is independent from the previous one.
	
	\item Section \ref{sec: strictification} is devoted to the strictification problem for weak morphisms of contextual categories. In \S\ref{subsec: what is needed to strictify} we provide a general criterion for strictifiability based on what we call \emph{local strictifications}; in \S\ref{subsec: strictification basis or cofibrant} we use this criterion to prove that weak morphisms whose domain is cofibrant (or has a basis) are strictifiable, which allows us to conclude that the model structure on $\Cont$ presents the $(2,1)$-category of rooted display map categories.
	
	\item In Section \ref{sec: strictifying set-valued models}, we specialize the discussion from \S\ref{sec: strictification} to the question of whether a set-valued model of a gat is isomorphic to a family-valued one. In this setting, we obtain a further characterization of strictifiability in terms of an explicit loop freeness condition. We also provide examples to illustrate certain aspects of the strictification problem for set-valued models. Some generalities on cartesian natural transformations needed in this section are reviewed in the \hyperref[sec: appendix]{appendix}.
	
	\item Finally, in Section \ref{sec: remarks closed monoidal structure and set-valued semantics} we discuss the closed monoidal structure on $\iiDMC_r$ presented by the one on $\Cont \simeq \GAT$, as well as the compatibility of this structure with the passage, via set-valued models, from gats to locally finitely presentable categories equipped with a suitable class of arrows induced from the display maps.
\end{itemize}

\subsection*{Conventions}

\begin{itemize}[noitemsep]
	\item We work within ZFC set theory with, additionally, two uncountable Grothendieck universes $\mathscr U$, $\mathscr U^+$ such that $\mathscr U \in \mathscr U^+$. Elements of $\mathscr U$ will be called \emph{small sets}.
	
	\item We assume familiarity with category theory (as in \cite{MLa98}, \cite{Rie16}), the theory of locally presentable categories (\cite{AdaRos94}), and basics of Quillen model categories (see \cite{Hov99}, \cite[A.2]{Lur09})\footnote{What we'll need is far from the whole content of these. We need to know about weak factorization systems (and the small object argument), the definition of a (cofibrantly generated) model category and its homotopy category, Quillen adjunctions and Quillen bifunctors, monoidal model categories and enriched model categories. We sometimes talk about combinatorial model categories (a reference for which is \cite[A.2]{Lur09}), but we only use local presentability to ensure that the sets of arrows $I$, $J$ as in \cite[Th. 2.1.19]{Hov99} permit the small object argument.}.
	
	\item Categories will usually be denoted by letters such as $A$, $B$, etc. For us, a category $A$ has sets $\Ob(A)$ and $\Ar(A)$ of objects and of arrows, respectively, neither of which is required to belong to $\mathscr U$ or $\mathscr U^+$. We often abbreviate $a \in \Ob(A)$ as $a \in A$, and we write $A(a,b)$ for the set of arrows from $a$ to $b$. The category of arrows and commutative squares is denoted by $\iiAr(A)$.
	
	\item For categories $A$ and $B$, we write $B^A$ for the category of functors and natural transformations between them.
	
	\item We say that a category $A$ is \emph{small} if $\Ob(A)$ and $\Ar(A)$ are small; we say that it is \emph{locally small} if $\Ob(A) \in \mathscr U^+$ and for all $a$, $b \in \Ob(A)$, the set $A(a,b)$ is small.
	
	\item We denote by $\Set$ (resp. $\Set^+$) the category of small sets (resp. of elements of $\mathscr U^+$) and functions. We let $\Cat$ (resp. $\iiCat$) be the category (resp. $2$-category) of small categories and functors (resp. and natural transformations). Their counterparts with locally small categories as objects are denoted by $\Cat^+$, $\iiCat^+$.
\end{itemize}

\subsection*{Acknowledgement}

I want to thank Simon Henry for introducing me to this topic and for his invaluable guidance throughout the process of learning the ideas that made their way into this paper.
\section{Contextual categories}
\label{sec: contextual categories}

In this section we review the definition of a contextual category, due to Cartmell, and present some constructions and results involving contextual categories and display map categories. For more detail on contextual categories, generalized algebraic theories, and the relationship between them, we refer the reader to the original references on the subject, \cite{Car78} and \cite{Car86}.

\vspace{0.5em}

We will consider two kinds of morphisms between contextual categories: strict and weak ones. What we call \emph{strict morphisms} are the morphisms originally considered by Cartmell, known as contextual functors; \emph{weak morphisms} will be the morphisms between the associated display map categories.

\begin{definition}
\label{def: (pre)contextual category}
A \emph{precontextual category} is a quadruple $\mathcal A = (A,\ell,\partial,\textbf{p},\textbf{Q})$ consisting of:
\begin{enumerate}[label=(\roman*)]
	\item A category $A$.
	
	\item A function $\ell:\Ob(A) \rightarrow \bbN$, assigning to each object its \emph{length}.
	
	\item A function $\partial:\Ob(A) \rightarrow \Ob(A)$ such that $\ell(\partial a) = \ell(a) - 1$ if $\ell(a) \ge 0$, and $\ell(\partial a) = 0$ if $\ell(a) = 0$.
	
	\item A family $\textbf{p}$ assigning to each $a \in \Ob(A)$ such that $\ell(a) \ge 1$ a morphism
	$$
	\textbf{p}_a: a \longrightarrow \partial a.
	$$
	Morphisms of this form will be called \emph{strict display maps}, and we denote them using arrows such as $\twoheadrightarrow$.
	
	\item A set $\textbf{Q}$ of commutative squares in $A$ of the form
	\[
	\dsqua{a}{b}{\partial a}{\partial b.}{f'}{f}{\textbf{p}_a}{\textbf{p}_b}
	\]
	Elements of $\textbf{Q}$ will be called \emph{distinguished squares}.
\end{enumerate}

We say that $\mathcal A$ is a \emph{contextual category} (\cite{Car78}; called a \emph{C-system} in \cite{Voe16}) if
\begin{enumerate}[label=(\roman*)]
\setcounter{enumi}{5}
	\item It has unique length-$0$ object, which is terminal. We denote it by $1_\mathcal A$ and call it the \emph{distinguished terminal object}.
	
	\item Any diagram
	\[
	\begin{tikzcd}
		& b \arrow[two heads]{d}{\textbf{p}_b} \\
		a \arrow[swap]{r}{f} & \partial b
	\end{tikzcd}
	\]
	can be uniquely extended into a square in $\textbf{Q}$.

	\item $Q$ is closed under horizontal composition and contains
	\[
	\dsqua{a}{a}{\partial a}{\partial a}{id_a}{id_{\partial a}}{\textbf{p}_a}{\textbf{p}_a}
	\]
	for all $a \in \Ob(A)$ of length $\ge 1$.
\end{enumerate}
\end{definition}

\begin{notation}
We write the quadruple $\mathcal A = (A,\ell,\partial,\textbf{p},\textbf{Q})$ as $(|\mathcal A|,\ell_\mathcal A,\textbf{p}_\mathcal A,\textbf{Q}_\mathcal A)$. However, we will drop the subscript $\mathcal A$ whenever this causes no ambiguity.

For $a \in \Ob(|\mathcal A|)$, say of length $n$, we have a unique chain of ($n$ many) display maps joining $a$ and $1_\mathcal A$. We denote the corresponding objects by
$$
a = \partial_n a \twoheadrightarrow \cdots \twoheadrightarrow \partial_i a \twoheadrightarrow \cdots \twoheadrightarrow \partial_1 a \twoheadrightarrow \partial_0 a = 1_\mathcal A.
$$

We say that $\mathcal A$ is (\emph{locally}) \emph{small} when its underlying category $|\mathcal A|$ is (locally) small.
\end{notation}

\begin{definition}[The category of contextual categories and strict morphisms]
\label{def: category of contextual categories}
Let $\mathcal A$ and $\mathcal B$ be contextual categories. A \emph{strict morphism} (contextual functor in the terminology of \cite{Car78}) from $\mathcal A$ to $\mathcal B$ is a functor $F:|\mathcal A| \rightarrow |\mathcal B|$ that strictly preserves all additional structure:
$$
\ell(F(a)) = \ell(a), \qquad \partial(F(a)) = F(\partial a), \qquad \textbf{p}_{F(a)} = F(\textbf{p}_a), \qquad F(\textbf{Q}_\mathcal A) \subset \textbf{Q}_\mathcal B.
$$
Composing functors as usual, we obtain a category $\Cont$ (resp. $\Cont^+$) of small (resp. locally small) contextual categories and strict morphisms. Regarding all natural transformations between strict morphisms as $2$-cells, we have $2$-categories $\iiCont$ and $\iiCont^+$ (which can also be viewed as $\Cat$- and $\Cat^+$-enriched categories, respectively).
\end{definition}

\begin{remark}
\label{rem: use of precontextual categories}
Let $\Precont$ be the category whose objects are the small precontextual categories and whose morphisms are the functors as in Definition \ref{def: category of contextual categories}; taking all natural transformations between those as $2$-cells, we have a $2$-category $\iiPrecont$. Our motivation for talking about precontextual categories is the fact, proved in \cite[Cor. 5.33]{Alm26}, that the inclusion $2$-functor
$$
\iiCont \hookrightarrow \iiPrecont
$$
has a strict left $2$-adjoint (which implies that $\Cont \hookrightarrow \Precont$ has a left adjoint\footnote{The $1$-categorical adjunction, however, is obtained directly from the orthogonal reflection construction for locally presentable categories; see \cite[Chap. 1]{AdaRos94}}). This gives a way of diagrammatically presenting contextual categories with a certain kind of $\Cat$-enriched universal property: letting $L:\iiPrecont \rightarrow \iiCont$ be a left adjoint of the inclusion, for $\mathcal A \in \iiPrecont$ and $\mathcal S \in \iiCont$ we have
$$
\iiPrecont(\mathcal A,\mathcal S) \cong \iiCont(L\mathcal A,\mathcal S).
$$
The vast majority of this paper can be read without any knowledge of precontextual categories (and, in particular, of the above result). The main exceptions are Proposition \ref{prop: cont is cat-tensored}, where we describe $\Cat$-tensors of contextual categories, and several constructions and results in \S\ref{sec: strictification}. The latter results allow us to recognize strictifiable morphisms out of precontextual categories, and that will be useful for studying some examples in \S\ref{subsec: examples}. Other than that, precontextual categories are useful throughout the text for giving simple diagrammatic descriptions of certain contextual categories (especially in \S\ref{subsec: sorts terms and axioms via contextual categories}), but it is also straightforward to present those using gats.\footnote{We could also use gats instead of precontextual categories in Proposition \ref{prop: cont is cat-tensored}, although the description of tensors would be lengthier and more indirect, and the desired universal property would not follow as clearly.}
\end{remark}

\begin{notation}
We refer to \cite{Car78, Car86} for the definition of a generalized algebraic theory (gat for short). For us, unless stated otherwise, the sets of sort/term symbols of a gat are small. We will denote by $\GAT$ the category of generalized algebraic theories and equivalence classes of interpretations.

Cartmell's thesis establishes an equivalence $\GAT \simeq \Cont$ associating with a gat $\bbA$ its syntactic category, denoted by $\mathcal C(\bbA)$, which consists of equivalence classes (with respect to derivable equality) of contexts in $\bbA$.
\end{notation}

\begin{notation}
Recall the concept of a (rooted) display map category from Definition \ref{def: display map category} (as in \cite{Tay99}), as well as the $2$-categories $\iiDMC$ and $\iiDMC_r$ of (possibly rooted) display map categories. Taking as objects, instead, the locally small (possibly rooted) dmcs, we obtain $2$-categories $\iiDMC^+$ and $\iiDMC_r^+$.

Moreover, recall from Definition \ref{def: weakening functor} the weakening $2$-functor $(-)_w:\iiCont \rightarrow \iiDMC_r$ given by closing the set of strict display maps of a contextual category under isomorphism in its arrow category, and forgetting the length function and the choice of pullbacks of strict display maps. Similarly, we have a $2$-functor $(-)_w:\iiCont^+ \rightarrow \iiDMC_r^+$.

For us, a \emph{display map} in a contextual category $\mathcal A$ is a display map in $\mathcal A_w$, that is, an arrow isomorphic to a strict display map.
\end{notation}

\begin{example}
Some natural examples of display map categories do not arise directly as $\mathcal A_w$ for some contextual category $\mathcal A$. If a category has finite limits (equivalently, pullbacks and a terminal object), then it has a structure of (rooted) dmc in which every arrow is a display map. Any category has a minimal and a maximal (not necessarily rooted) dmc structure: in the first one, the display maps are precisely the isomorphisms, and in the latter, the display maps are the arrows $a' \rightarrow a$ that admit pullbacks along all $b \rightarrow a$. A clan (\cite{Joy17}) is a rooted dmc whose set of display maps contains all isomorphisms and is closed under composition.
\end{example}

\begin{remark}[Equivalences of contextual categories]
\label{rem: equivalences of contextual categories}
Recall from Definition \ref{def: weak equivalence} that a strict morphism of contextual categories $F:\mathcal A \rightarrow \mathcal B$ is a \emph{weak equivalence} if $F_w$ is an equivalence in $\iiDMC$ in the usual sense, that is, if it has a two-sided inverse up to invertible $2$-cells. It follows that $F$ is a weak equivalence if and only if
\begin{enumerate}[label=(\roman*)]
	\item it defines an equivalence between the underlying categories, and
	
	\item it (preserves and) reflects display maps: if $F(p) \in P_{\mathcal B_w}$, then $p \in P_{\mathcal A_w}$.
\end{enumerate}
Indeed, if $F:\mathcal A \rightarrow \mathcal B$ satisfies (i), then it has a weak inverse $G:|\mathcal B| \rightarrow |\mathcal A|$ in $\iiCat$, and in that case, $F$ satisfies (ii) if and only if $G$ preserves display maps (this relies on display maps being closed under isomorphism). Note that if we assume (i), then (ii) is equivalent to
\begin{itemize}
	\item[(ii')] for every strict display map $p$ in $\mathcal B$, there exists a strict display map $p'$ in $\mathcal A$ with $F(p') \cong p$.
\end{itemize}
On the other hand, we can consider equivalences in the $2$-category $\iiCont$. It turns out, as Theorem \ref{th: summary model structure} suggests, that not every weak equivalence of contextual categories is an equivalence in $\iiCont$. For example, let $\mathcal A$ be the syntactic category of the gat
$$
\vdash A \tp \qquad \vdash A' \tp \qquad x:A \vdash B_1(x) \tp \qquad x:A \vdash B_2(x) \tp \qquad x:A' \vdash f(x):A
$$
\begin{align*}
	x:A', \; y:B_1(f(x)) & \vdash g(x,y): B_2(f(x)) & x:A', \; y:B_1(f(x)) & g'(x,g(x,y)) \equiv y:B_1(f(x))\\
	x:A', \; y:B_2(f(x)) & \vdash g'(x,y):B_1(f(x)) & x:A', \; y:B_2(f(x)) & g(x,g'(x,y)) \equiv y:B_2(f(x)),
\end{align*}
and let $\mathcal B$ be the syntactic category of
$$
\vdash A \tp \qquad \vdash A' \tp \qquad x:A \vdash B_1(x) \tp \qquad x:A \vdash B_2(x) \tp \qquad x:A' \vdash f(x):A
$$
$$
x:A' \vdash B_1(f(x)) \equiv B_2(f(x)) \tp.
$$
Diagrammatically, $\mathcal A$ is freely generated by (applying $L:\Precont \rightarrow \Cont$ to the precontextual category consisting of) a single length-$0$ object, length-$1$ objects $a$, $a'$, a morphism $f:a' \rightarrow a$, display maps $b_1 \twoheadrightarrow a$ and $b_2 \twoheadrightarrow a$, and an isomorphism $g:f^*(b_1) \rightarrow f^*(b_2)$ compatible with the respective display maps to $a$. We obtain $\mathcal B$ similarly, but instead of adding the isomorphism $g$, we impose $f^*(b_1) = f^*(b_2)$. The generating data of $\mathcal A$ and $\mathcal B$ can be depicted (omitting the length-$0$ object), respectively, as
% https://q.uiver.app/#q=WzAsMTEsWzEsMiwiYSciXSxbNCwzLCJhIl0sWzMsMSwiYl8xIl0sWzUsMSwiYl8yIl0sWzAsMCwiZl4qKGJfMSkiXSxbMiwwLCJmXiooYl8yKSJdLFs2LDIsImEnIl0sWzksMywiYSJdLFs4LDEsImJfMSJdLFsxMCwxLCJiXzIiXSxbNiwwLCJmXiooYl8xKSA9IGZeKihiXzIpIl0sWzAsMSwiZiIsMl0sWzIsMSwiIiwwLHsic3R5bGUiOnsiaGVhZCI6eyJuYW1lIjoiZXBpIn19fV0sWzMsMSwiIiwwLHsic3R5bGUiOnsiaGVhZCI6eyJuYW1lIjoiZXBpIn19fV0sWzQsMCwiIiwyLHsic3R5bGUiOnsiaGVhZCI6eyJuYW1lIjoiZXBpIn19fV0sWzUsMCwiIiwyLHsic3R5bGUiOnsiaGVhZCI6eyJuYW1lIjoiZXBpIn19fV0sWzQsMl0sWzUsM10sWzQsNSwiXFxjb25nIiwxLHsibGFiZWxfcG9zaXRpb24iOjYwfV0sWzYsNywiZiIsMl0sWzgsNywiIiwwLHsic3R5bGUiOnsiaGVhZCI6eyJuYW1lIjoiZXBpIn19fV0sWzksNywiIiwwLHsic3R5bGUiOnsiaGVhZCI6eyJuYW1lIjoiZXBpIn19fV0sWzEwLDYsIiIsMix7InN0eWxlIjp7ImhlYWQiOnsibmFtZSI6ImVwaSJ9fX1dLFsxMCw4XSxbMTAsOV0sWzQsNSwiZyIsMCx7ImxhYmVsX3Bvc2l0aW9uIjo2MCwic3R5bGUiOnsiYm9keSI6eyJuYW1lIjoibm9uZSJ9LCJoZWFkIjp7Im5hbWUiOiJub25lIn19fV1d
\[\begin{tikzcd}[ampersand replacement=\&,cramped,row sep=tiny, column sep=tiny]
	{f^*(b_1)} \&\& {f^*(b_2)} \&\&\&\& {f^*(b_1) = f^*(b_2)} \&\&\&\& \\
	\&\&\& {b_1} \&\& {b_2} \&\&\& {b_1} \&\& {b_2} \\
	\& {a'} \&\&\&\&\& {a'} \\
	\&\&\&\& a \&\&\&\&\& a
	\arrow["\cong"{description, pos=0.6}, from=1-1, to=1-3]
	\arrow["g"{pos=0.6}, draw=none, from=1-1, to=1-3]
	\arrow[from=1-1, to=2-4]
	\arrow[two heads, from=1-1, to=3-2]
	\arrow[from=1-3, to=2-6]
	\arrow[two heads, from=1-3, to=3-2]
	\arrow[from=1-7, to=2-9]
	\arrow[from=1-7, to=2-11]
	\arrow[two heads, from=1-7, to=3-7]
	\arrow[two heads, from=2-4, to=4-5]
	\arrow[two heads, from=2-6, to=4-5]
	\arrow[two heads, from=2-9, to=4-10]
	\arrow[two heads, from=2-11, to=4-10]
	\arrow["f"', from=3-2, to=4-5]
	\arrow["f"', from=3-7, to=4-10]
\end{tikzcd}\]
We have an evident strict morphism $F:\mathcal A \rightarrow \mathcal B$ that sends $g$ to $id_{f^*(\Delta_1)}$; using the gats that present $\mathcal A$, $\mathcal B$ and Prop. \ref{prop: characterization full-and-faithful}, \ref{prop: characterization acyclic fibrations}, it can be proved that $F$ is an acyclic fibration (Def. \ref{def: cofibrations, acyclic fibrations}), thus a weak equivalence. Now, if $G \in \iiCont(\mathcal B,\mathcal A)$ were a quasi-inverse of $F$, it would send $a$ to $a$, and so on for $a'$, $f$, $b_1$, $b_2$. But as $G$ preserves distinguished squares, we must have
$$
f^*(b_1) = G(f^*(b_1)) = G(f^*(b_2)) = f^*(b_2),
$$
which is not possible.\footnote{In fact, it follows from $F$ being an acyclic fibration for the model structure on $\Cont$ that $\mathcal B$ is not cofibrant.}
\end{remark}

\begin{construction}[Categories of display maps]
\label{constr: categories of display maps}
One of our main techniques for studying contextual categories and display map categories will be assembling display maps into suitable categories.

\vspace{0.5em}

For a contextual category $\mathcal A$, we let $\textbf{D}(\mathcal A)$ be the category whose objects are the strict display maps and whose morphisms are the distinguished squares (with composition performed horizontally).

Note that $\textbf{D}(\mathcal A)$ is canonically isomorphic to the category of elements of the functor $\tp:|\mathcal A|^{\op} \rightarrow \Set$ sending $a \in \Ob(|\mathcal A|)$ to the set of all strict display maps $a' \twoheadrightarrow a$, and acting on morphisms by pullback according to Definition \ref{def: (pre)contextual category}(vii). This is fundamental for connecting contextual categories with other categorical models of dependent type theory. For example, the diagram
\[\begin{tikzcd}[ampersand replacement=\&,row sep=small]
	{\smallint \tp \cong \textbf{D}(\mathcal A)} \&\& {\iiAr(|\mathcal A|)} \\
	\& {|\mathcal A|}
	\arrow["\iota", hook, from=1-1, to=1-3]
	\arrow["\pi"', from=1-1, to=2-2]
	\arrow["{\textbf{t}}", from=1-3, to=2-2]
\end{tikzcd}\]
(with $\iota$ the inclusion, $\textbf{t}$ the codomain functor, and $\pi$ the projection) realizes $\mathcal A$ as a comprehension category (\cite{Jac93}), or, more specifically (as $\pi$ is a discrete fibration), as a category with attributes (\cite{Car78}\footnote{The concept we refer to is a commonly used variant of the one introduced by Cartmell. See e.g. \cite[Def. 4.1]{KapLum18}.}).

\vspace{0.5em}

For a display map category $\mathcal A$, we let $\textbf{D}_w(\mathcal A)$ be the category whose objects are the display maps and whose morphisms are the cartesian squares between them (again, with horizontal composition). The composite
$$
\textbf{D}_w(\mathcal A) \hookrightarrow \iiAr(|\mathcal A|) \overset{\textbf{t}}{\longrightarrow} |\mathcal A|
$$
is a Grothendieck fibration whose fiber over $a \in \Ob(|\mathcal A|)$ is the subcategory of $|\mathcal A|/a$ consisting of all display maps $a' \rightarrow a$ and isomorphisms between them. This realizes $\mathcal A$ as a (non-full, non-split) comprehension category.
\end{construction}

As a first step towards understanding the functor $(-)_w:\iiCont \rightarrow \iiDMC$, we observe that it is $2$-essentially surjective. This follows from \cite[Th. 8.4.10]{Tay99}, but we believe it can be helpful to give a self-contained proof using Giraud-Bénabou's classical strictification construction for Grothendieck fibrations, which allows us to obtain a contextual category without passing through the equivalence $\GAT \simeq \Cont$.\footnote{We don't claim originality of the proof we present, but we did not find a reference for it. We also observe that Taylor's approach gives a stronger result, namely, that every rooted dmc is weakly equivalent to the syntactic category of a gat without sort equality axioms. In our framework, this is obtained by combining Proposition \ref{prop: strictification for a rooted dmc} and the existence of cellular replacements of contextual categories; still, the theory so obtained (say via the small object argument) is much less explicit than Taylor's.}

\begin{proposition}
\label{prop: strictification for a rooted dmc}
For every rooted dmc $\mathcal A$, there exists a contextual category $\mathcal B$ such that $\mathcal B_w \simeq \mathcal A$ in $\iiDMC$.
\end{proposition}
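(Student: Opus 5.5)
Given a rooted dmc $\mathcal A = (A,P)$, I will build $\mathcal B$ directly, with objects given by \emph{formal chains} of display maps. This mimics the Giraud--B\'enabou strictification of the cloven fibration $\textbf{D}_w(\mathcal A) \to A$ into a split one. Fix a terminal object $t$ of $A$. An object of $\mathcal B$ of length $n$ is an $n$-tuple $(p_1,\dots,p_n)$ defined recursively, together with an object $[p_1,\dots,p_n] \in A$. For length $0$, the tuple is empty and $[\,] = t$. For length $n+1$, we add $p_{n+1} = (c \twoheadrightarrow d, h)$, where $c \twoheadrightarrow d$ is a display map in $P$ and $h:[p_1,\dots,p_n] \to d$ is an arbitrary arrow of $A$; the ``interpretation'' $[p_1,\dots,p_{n+1}]$ is a chosen pullback of $c \to d$ along $h$. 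Morphisms are $\mathcal B(\Gamma,\Delta) = A([\Gamma],[\Delta])$, so $[-]$ is fully faithful by construction. The remaining structure is defined as follows:
\begin{itemize}
	\item $\ell$ is the length and $\partial$ drops the last entry;
	\item $\textbf{p}_\Gamma$ is the chosen pullback projection $[\Gamma] \to [\partial\Gamma]$;
	\item the distinguished square over $f:\Gamma' \to \partial\Gamma$, with last entry of $\Gamma$ equal to $(c\twoheadrightarrow d,h)$, has top-left object $f^*\Gamma := (\Gamma', (c \twoheadrightarrow d, h\circ \textbf{p}\circ f))$ (precomposing $h$ with $f$ as an arrow of $A$) and top arrow the induced map into the pullback.
\end{itemize}
This is the key point: substitution acts by \emph{precomposition} on the second component, so $g^*f^*\Gamma = (fg)^*\Gamma$ and $id^*\Gamma = \Gamma$ hold on the nose. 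Closure of $\textbf{Q}$ under horizontal composition then follows from uniqueness of maps into pullbacks. I would check axioms (vi)--(viii) in that order. For (vi), the unique length-$0$ object is $()$ with $[()] = t$, which is terminal. For (vii), existence and uniqueness of the extension hold by definition. For (viii), composites are handled by the pasting lemma together with the strict equality of the formal objects.

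Next I define $E:|\mathcal B| \to A$ by $\Gamma \mapsto [\Gamma]$. It is fully faithful by construction. It is essentially surjective because $\mathcal A$ is rooted: a chain $\Gamma_n \twoheadrightarrow \cdots \twoheadrightarrow \Gamma_0$ with $\Gamma_0$ terminal gives, by induction, an object $(p_1,\dots,p_n)$ with $[p_1,\dots,p_i] \cong \Gamma_i$. Here one uses $p_{i+1} = (\Gamma_{i+1} \twoheadrightarrow \Gamma_i, \text{the iso } [p_1,\dots,p_i] \cong \Gamma_i)$, and a pullback along an isomorphism is isomorphic to the original object. Thus $E$ is an equivalence of categories.

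Finally I verify that $E$ preserves and reflects display maps, which by Remark~\ref{rem: equivalences of contextual categories} suffices for $E$ to be an equivalence $\mathcal B_w \to \mathcal A$ in $\iiDMC$. Strict display maps go to pullbacks of elements of $P$, hence into $P$. Pullbacks of display maps along arbitrary arrows are preserved because distinguished squares go to genuine pullback squares in $A$, and $E$ is an equivalence so cartesianness is detected in $A$. For condition (ii'), take any display map $q:c \to d$ with $d \cong [\Delta]$ via some $h$. Then $(\Delta,(q,h))$ is a strict display map whose image is isomorphic to $q$.

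I expect the main obstacle to be bookkeeping rather than ideas. First, the construction must be set up so that the strict equalities $(fg)^* = g^*f^*$ hold literally: this is exactly why the second component is an arbitrary arrow $h$ rather than a chosen pullback, in the spirit of Giraud--B\'enabou's ``left adjoint splitting''. Second, one must control size, so that $\mathcal B$ stays small when $\mathcal A$ is, which is fine since each tuple ranges over small sets.
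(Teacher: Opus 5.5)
Your proof is correct. It is essentially the paper's argument, but with the two results the paper cites unpacked into one explicit construction, which makes your route more elementary and self-contained.

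The paper proceeds as follows:
\begin{itemize}
\item it regards $\mathcal A$ as the comprehension category $\textbf{D}_w(\mathcal A) \hookrightarrow \iiAr(|\mathcal A|)$;
\item it quotes the fact (Lumsdaine--Warren) that this is equivalent to a split one;
\item it keeps only the specified cartesian arrows, obtaining a category with attributes $\mathcal A'$ on the same base;
\item it sets $\mathcal B = \cont(\mathcal A')$, using the right adjoint of $\att:\Cont\to\Att$ from Kapulkin--Lumsdaine.
\end{itemize}
Full faithfulness of the counit $|\mathcal B|\to|\mathcal A|$ is then automatic. Essential surjectivity and the matching of display maps follow from rootedness and from the two comprehensions having the same essential image.

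Your $\mathcal B$ is what this recipe produces for one particular splitting, the local-universes (left adjoint) one. There a type over $\Gamma$ is a pair $(q,h)$ with $q\in P$ and $h:\Gamma\to\text{cod}(q)$, and reindexing is precomposition. Your tuples are exactly the objects of the contextual core of that category with attributes.

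The paper's route is shorter and works for any choice of splitting. Yours avoids both citations and makes the strict identity $g^*f^*=(fg)^*$ visibly a consequence of associativity of composition in $A$. The price is checking axioms (vi)--(viii) by hand, which you do correctly.

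Some minor points:
\begin{itemize}
\item The reindexed last entry should read $(c\twoheadrightarrow d,\,h\circ f)$, not $h\circ\textbf{p}\circ f$; your parenthetical already says this.
\item Remark~\ref{rem: equivalences of contextual categories} is phrased for strict morphisms between contextual categories. Its argument applies verbatim to any dmc morphism whose underlying functor is an equivalence, so using it for $E:\mathcal B_w\to\mathcal A$ is legitimate.
\item You should state explicitly that your distinguished squares are pullbacks in $|\mathcal B|$. This holds because $E$ is fully faithful and sends them to pullbacks in $A$.
\end{itemize}
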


\begin{proof}
Consider the comprehension category
	% https://q.uiver.app/#q=WzAsMyxbMCwwLCJcXGlpd2RpcyhcXG1hdGhjYWwgQSkiXSxbMiwwLCJcXGlpQXIoXFxtYXRoY2FsIEEpIl0sWzEsMSwifFxcbWF0aGNhbCBBfCJdLFswLDIsIlxcdGV4dGJme3R9IiwyXSxbMCwxLCJcXGlvdGEiLDAseyJzdHlsZSI6eyJ0YWlsIjp7Im5hbWUiOiJob29rIiwic2lkZSI6InRvcCJ9fX1dLFsxLDIsIlxcdGV4dGJme3R9Il1d
	\[\begin{tikzcd}[ampersand replacement=\&,row sep=small]
		{\textbf{D}_w(\mathcal A)} \&\& {\iiAr(|\mathcal A|)} \\
		\& {|\mathcal A|}
		\arrow["\iota", hook, from=1-1, to=1-3]
		\arrow["{\textbf{t}}"', from=1-1, to=2-2]
		\arrow["{\textbf{t}}", from=1-3, to=2-2]
	\end{tikzcd}\]
	where $\iota$ is the inclusion functor and $\textbf{t}$ is the codomain projection. It is known -- see the discussion in \cite[\S2.2]{LumWar15} -- that the above (as any comprehension category) is equivalent to some \emph{split} comprehension category
	% https://q.uiver.app/#q=WzAsMyxbMCwwLCJFIl0sWzIsMCwiXFxpaUFyKEMpIl0sWzEsMSwiQyJdLFswLDIsIlxccGkiLDJdLFswLDEsIlxcY2hpIl0sWzEsMiwiXFx0ZXh0YmZ7dH0iXV0=
	\[\begin{tikzcd}[ampersand replacement=\&,row sep=small]
		E \&\& {\iiAr(|\mathcal A|)} \\
		\& {|\mathcal A|.}
		\arrow["\chi", from=1-1, to=1-3]
		\arrow["\pi"', from=1-1, to=2-2]
		\arrow["{\textbf{t}}", from=1-3, to=2-2]
	\end{tikzcd}\]
	Now, replacing $E$ by its subcategory having the same objects but as arrows only the cartesian ones specified by the splitting of $\pi$, we obtain a category with attributes (or discrete comprehension category) $\mathcal A'$ with underlying category $|\mathcal A|$ and pointed by any chosen terminal object of $|\mathcal A|$.
	
	Denote by $\Att$ the category of categories with attributes, and let $\cont:\Att \rightarrow \Cont$ be the right adjoint of the forgetful functor $\att:\Cont \rightarrow \Att$ (see \cite[Prop. 4.4]{KapLum18}). Letting $\mathcal B = \cont(\mathcal A')$, the adjunction counit component
	$$
	|\mathcal B| = |\att(\mathcal B)| = |\att(\cont(\mathcal A'))| \longrightarrow |\mathcal A'| = |\mathcal A|
	$$
	is full-and-faithful, as is the case for any cwa. Moreover, $|\mathcal B| \rightarrow |\mathcal A|$ is essentially surjective: this is obtained by combining the assumption that $\mathcal A$ is rooted with the fact that $\iota$ and $\chi$ have the same essential image in $\iiAr(|\mathcal A|)$. This also implies that an arrow in $|\mathcal B|$ is isomorphic to a strict display map precisely when its image under $|\mathcal B| \rightarrow |\mathcal A|$ is in the essential image of $\iota$. It follows that $|\mathcal B| \rightarrow |\mathcal A|$ lifts to an equivalence of rooted dmcs $\mathcal B_w \rightarrow \mathcal A$.
\end{proof}

\subsection{Sorts, terms and axioms from the perspective of contextual categories}
\label{subsec: sorts terms and axioms via contextual categories}

Since the syntactic category construction defines an equivalence $\GAT \simeq \Cont$ and, on the other hand, every gat can be built recursively from the empty theory by adding sort, term, sort equality and term equality axioms, we expect that these four kinds of axioms can be translated via the above equivalence of categories into four classes of ``building blocks" for all contextual categories.

\begin{notation}
\label{not: set of terms}
For a contextual category $\mathcal A$ and an object $a \in \mathcal A$ with $\ell(a) \ge 1$, we write $\tm_\mathcal A(a)$, or just $\tm(a)$ when $\mathcal A$ is implicit, for the set of sections of the strict display map $\textbf{p}_a:a \rightarrow \partial a$, i.e. of arrows $t:\partial a \rightarrow a$ such that $\textbf{p}_a \circ t = id_{\partial a}$.
\end{notation}

\begin{construction}
\label{constr: O_n etc}
The following functors are representable:
\begin{itemize}
	\item $\Ob_n:\Cont \rightarrow \Set$, where $n \ge 0$, sending a contextual category to its set of length-$n$ objects.
	
	\item $\Ob_n^+:\Cont \rightarrow \Set$, where $n \ge 1$, sending $\mathcal A$ to the set of all pairs $(a,t)$ where $a \in \Ob_n(\mathcal A)$ and $t \in \tm_\mathcal A(a)$.
\end{itemize}
Indeed, $\Ob_n$ is represented by the syntactic category of the gat $\bbO_n$ with axioms
\begin{align*}
	& \vdash O_1 \tp\\
	x_1:O_1 & \vdash O_2(x_1) \tp\\
	& \cdots\\
	x_1:O_1, \;..., \; x_{n-1}:O_{n-1}(x_1, ..., x_{n-2}) & \vdash O_n(x_1, ..., x_{n-1}) \tp,
\end{align*}
and $\Ob_n^+$ is represented by the syntactic category of the gat $\bbO_n^+$ obtained from $\bbO_n$ by adding a term axiom
$$
x_1:O_1, ..., x_{n-1}:O_{n-1}(x_1, ..., x_{n-2}) \vdash f(x_1, ..., x_{n-1}) : O_n(x_1, ..., x_{n-1}).
$$
In other words, the contextual category $\mathcal C(\bbO_n)$ is freely generated by a sequence of strict display maps
% https://q.uiver.app/#q=WzAsNSxbMCwwLCJvX24iXSxbMSwwLCJvX3tuLTF9Il0sWzIsMCwiXFxjZG90cyJdLFszLDAsIm9fMSJdLFs0LDAsIm9fMCJdLFswLDEsIiIsMSx7InN0eWxlIjp7ImhlYWQiOnsibmFtZSI6ImVwaSJ9fX1dLFsxLDIsIiIsMSx7InN0eWxlIjp7ImhlYWQiOnsibmFtZSI6ImVwaSJ9fX1dLFsyLDMsIiIsMSx7InN0eWxlIjp7ImhlYWQiOnsibmFtZSI6ImVwaSJ9fX1dLFszLDQsIiIsMSx7InN0eWxlIjp7ImhlYWQiOnsibmFtZSI6ImVwaSJ9fX1dXQ==
\[
\tag{\texttt{*}}
\begin{tikzcd}[ampersand replacement=\&]
	{o_n} \& {o_{n-1}} \& \cdots \& {o_1} \& {o_0}
	\arrow[two heads, from=1-1, to=1-2]
	\arrow[two heads, from=1-2, to=1-3]
	\arrow[two heads, from=1-3, to=1-4]
	\arrow[two heads, from=1-4, to=1-5]
\end{tikzcd}\]
where $o_i$ has length $i$. This can be expressed using precontextual categories (see Definition \ref{def: (pre)contextual category} and Remark \ref{rem: use of precontextual categories}): the above sequence specifies a precontextual category $\mathcal O_n^\pre$, and applying $L:\Precont \rightarrow \Cont$ to it yields a contextual category isomorphic to $\mathcal C(\bbO_n)$. The object $o_i$ corresponds to the equivalence class $[x_1:O_1, ..., x_i:O_i(x_1, ..., x_{i-1})]$. Similarly, $\mathcal C(\bbO_n^+)$ is freely generated by (the precontextual category $\mathcal O_n^{+\; pre}$ given by) a diagram
% https://q.uiver.app/#q=WzAsNSxbMCwwLCJvX24iXSxbMSwwLCJvX3tuLTF9Il0sWzIsMCwiXFxjZG90cyJdLFszLDAsIm9fMSJdLFs0LDAsIm9fMCJdLFswLDEsIiIsMSx7InN0eWxlIjp7ImhlYWQiOnsibmFtZSI6ImVwaSJ9fX1dLFsxLDIsIiIsMSx7InN0eWxlIjp7ImhlYWQiOnsibmFtZSI6ImVwaSJ9fX1dLFsyLDMsIiIsMSx7InN0eWxlIjp7ImhlYWQiOnsibmFtZSI6ImVwaSJ9fX1dLFszLDQsIiIsMSx7InN0eWxlIjp7ImhlYWQiOnsibmFtZSI6ImVwaSJ9fX1dLFsxLDAsInMiLDAseyJvZmZzZXQiOi0zLCJzdHlsZSI6eyJ0YWlsIjp7Im5hbWUiOiJob29rIiwic2lkZSI6InRvcCJ9fX1dXQ==
\[
\tag{\texttt{**}}
\begin{tikzcd}[ampersand replacement=\&]
	{o_n} \& {o_{n-1}} \& \cdots \& {o_1} \& {o_0}
	\arrow[two heads, from=1-1, to=1-2]
	\arrow["s", shift left=3, hook, from=1-2, to=1-1]
	\arrow[two heads, from=1-2, to=1-3]
	\arrow[two heads, from=1-3, to=1-4]
	\arrow[two heads, from=1-4, to=1-5]
\end{tikzcd}\]
obtained from the previous one by adjoining a section $s:o_{n-1} \rightarrow o_n$ of $o_n \twoheadrightarrow o_{n-1}$. Here, $s$ corresponds to the morphism
$$
[x_1, ..., x_{n-1}, f(x_1, ..., x_{n-1})] : [x_1:O_1, ..., x_{n-1}:O_{n-1}(x_1, ..., x_{n-2})] \longrightarrow [x_1:O_1, ..., x_n:O_n(x_1, ..., x_{n-1})].
$$
\end{construction}

\begin{notation}
We let
$$
\mathcal O_n = \mathcal C(\bbO_n), \qquad \mathcal O_n^+ = \mathcal C(\bbO_n^+).
$$
\end{notation}

\begin{remark}
In particular, $\mathcal O_0$ is an initial object of $\Cont$: the distinguished terminal object is its only object.
\end{remark}

\begin{construction}
For $n \ge 1$, we have morphisms of gats
$$
\bbO_{n-1} \longrightarrow \bbO_n, \qquad \bbO_n \longrightarrow \bbO_n^+
$$
given by the respective interpretations that send each derivable judgment to itself. We let
$$
\iota_n^S:\mathcal O_{n-1} \longrightarrow \mathcal O_n, \qquad \iota_n^T:\mathcal O_n \longrightarrow \mathcal O_n^+
$$
be their respective images under $\GAT \simeq \Cont$. These morphisms can be used to encode introduction of sorts and of terms, respectively. Consider a contextual category $\mathcal A$ endowed with a length-$(n-1)$ object $a$, and let $\overline{a}:\mathcal O_{n-1} \rightarrow \mathcal A$ be the unique strict morphism that sends the generating object $o_{n-1}$ to $a$. Then the contextual category $\mathcal A'$ as in the pushout
\[
\squa{\mathcal O_{n-1}}{\mathcal A}{\mathcal O_n}{\mathcal A'}{\overline{a}}{}{\iota_n^S}{}
\]
has the following universal property, derived from that of $\mathcal O_n$: strict morphisms $\mathcal A' \rightarrow \mathcal B$ are in natural bijection with pairs consisting of a morphism $F:\mathcal A \rightarrow \mathcal B$ and a strict display map $b \twoheadrightarrow F(a)$. But now, suppose that $\mathcal A = \mathcal C(\bbA)$ for a gat $\bbA$, and write $a = [x_1:X_1, ..., x_{n-1}:X_{n-1}]$ for a context $x_1:X_1, ..., x_{n-1}:X_{n-1}$. Then the universal property of $\mathcal A'$ is precisely that of $\mathcal C(\bbA')$ for a gat $\bbA'$ obtained from $\bbA$ by adding a sort axiom
$$
x_1:X_1, ..., x_{n-1}:X_{n-1} \vdash S(x_1, ..., x_{n-1}) \tp,
$$
so we actually have a pushout square
\[
\squa{\mathcal O_{n-1}}{\mathcal C(\bbA)}{\mathcal O_n}{\mathcal C(\bbA').}{\overline{a}}{}{\iota_n^S}{}
\]
A similar argument shows that if $\bbA$ is a gat endowed with a context $x_1:X_1, ..., x_n:X_n$ with $n \ge 1$, then we have a pushout square
\[
\squa{\mathcal O_n}{\mathcal C(\bbA)}{\mathcal O_n^+}{\mathcal C(\bbA').}{\overline{a}}{}{\iota_n^T}{}
\]
in $\Cont$ where $a = [x_1:X_1, ..., x_n:X_n]$ and $\bbA'$ is obtained from $\bbA$ by adding a term axiom
$$
x_1:X_1, ..., x_{n-1}:X_{n-1} \vdash f(x_1, ..., x_{n-1}): X_n.
$$
\end{construction}

\begin{construction}
For $n \ge 1$, we define contextual categories $\mathcal O_n^\vee$ and $\mathcal O_n^{++}$ via the pushout diagrams
% https://q.uiver.app/#q=WzAsOCxbMCwwLCJcXG1hdGhjYWwgT197bi0xfSJdLFswLDEsIlxcbWF0aGNhbCBPX24iXSxbMSwwLCJcXG1hdGhjYWwgT19uIl0sWzEsMSwiXFxtYXRoY2FsIE9fbl5cXHZlZSwiXSxbMywwLCJcXG1hdGhjYWwgT19uIl0sWzMsMSwiXFxtYXRoY2FsIE9fbl4rIl0sWzQsMCwiXFxtYXRoY2FsIE9fbl4rIl0sWzQsMSwiXFxtYXRoY2FsIE9fbl57Kyt9LiJdLFswLDEsIlxcaW90YV9uXlMiLDJdLFswLDIsIlxcaW90YV9uXlMiXSxbMSwzXSxbMiwzXSxbNCw1LCJcXGlvdGFfbl5UIiwyXSxbNCw2LCJcXGlvdGFfbl5UIl0sWzUsN10sWzYsN11d
\[\begin{tikzcd}[ampersand replacement=\&]
	{\mathcal O_{n-1}} \& {\mathcal O_n} \&\& {\mathcal O_n} \& {\mathcal O_n^+} \\
	{\mathcal O_n} \& {\mathcal O_n^\vee,} \&\& {\mathcal O_n^+} \& {\mathcal O_n^{++}.}
	\arrow["{\iota_n^S}", from=1-1, to=1-2]
	\arrow["{\iota_n^S}"', from=1-1, to=2-1]
	\arrow[from=1-2, to=2-2]
	\arrow["{\iota_n^T}", from=1-4, to=1-5]
	\arrow["{\iota_n^T}"', from=1-4, to=2-4]
	\arrow[from=1-5, to=2-5]
	\arrow[from=2-1, to=2-2]
	\arrow[from=2-4, to=2-5]
\end{tikzcd}\]
The universal properties of $\mathcal O_n^\vee$ and $\mathcal O_n^{++}$ can then be stated as follows: $\mathcal O_n^\vee$ is freely generated by a diagram of display maps
% https://q.uiver.app/#q=WzAsNixbMCwwLCJvX24iXSxbMCwyLCJvX24nIl0sWzEsMSwib197bi0xfSJdLFsyLDEsIlxcY2RvdHMiXSxbMywxLCJvXzEiXSxbNCwxLCJvXzAiXSxbMiwzLCIiLDAseyJzdHlsZSI6eyJoZWFkIjp7Im5hbWUiOiJlcGkifX19XSxbMyw0LCIiLDAseyJzdHlsZSI6eyJoZWFkIjp7Im5hbWUiOiJlcGkifX19XSxbNCw1LCIiLDAseyJzdHlsZSI6eyJoZWFkIjp7Im5hbWUiOiJlcGkifX19XSxbMCwyLCIiLDAseyJzdHlsZSI6eyJoZWFkIjp7Im5hbWUiOiJlcGkifX19XSxbMSwyLCIiLDAseyJzdHlsZSI6eyJoZWFkIjp7Im5hbWUiOiJlcGkifX19XV0=
\[\begin{tikzcd}[ampersand replacement=\&, row sep=tiny]
	{o_n} \&\&\&\& \\
	\& {o_{n-1}} \& \cdots \& {o_1} \& {o_0,} \\
	{o_n'}
	\arrow[two heads, from=1-1, to=2-2]
	\arrow[two heads, from=2-2, to=2-3]
	\arrow[two heads, from=2-3, to=2-4]
	\arrow[two heads, from=2-4, to=2-5]
	\arrow[two heads, from=3-1, to=2-2]
\end{tikzcd}\]
while $\mathcal O_n^{++}$ is freely generated by a diagram
% https://q.uiver.app/#q=WzAsNSxbMCwwLCJvX24iXSxbMSwwLCJvX3tuLTF9Il0sWzIsMCwiXFxjZG90cyJdLFszLDAsIm9fMSJdLFs0LDAsIm9fMCJdLFsxLDIsIiIsMCx7InN0eWxlIjp7ImhlYWQiOnsibmFtZSI6ImVwaSJ9fX1dLFsyLDMsIiIsMCx7InN0eWxlIjp7ImhlYWQiOnsibmFtZSI6ImVwaSJ9fX1dLFszLDQsIiIsMCx7InN0eWxlIjp7ImhlYWQiOnsibmFtZSI6ImVwaSJ9fX1dLFswLDEsIiIsMCx7InN0eWxlIjp7ImhlYWQiOnsibmFtZSI6ImVwaSJ9fX1dLFsxLDAsInMnIiwwLHsib2Zmc2V0IjotMywic3R5bGUiOnsidGFpbCI6eyJuYW1lIjoiaG9vayIsInNpZGUiOiJ0b3AifX19XSxbMSwwLCJzIiwyLHsib2Zmc2V0IjozLCJzdHlsZSI6eyJ0YWlsIjp7Im5hbWUiOiJob29rIiwic2lkZSI6ImJvdHRvbSJ9fX1dXQ==
\[\begin{tikzcd}[ampersand replacement=\&]
	{o_n} \& {o_{n-1}} \& \cdots \& {o_1} \& {o_0}
	\arrow[two heads, from=1-1, to=1-2]
	\arrow["{s'}", shift left=3, hook, from=1-2, to=1-1]
	\arrow["s"', shift right=3, hook', from=1-2, to=1-1]
	\arrow[two heads, from=1-2, to=1-3]
	\arrow[two heads, from=1-3, to=1-4]
	\arrow[two heads, from=1-4, to=1-5]
\end{tikzcd}\]
where $s$, $s'$ are two sections of $o_n \twoheadrightarrow o_{n-1}$. Since $\mathcal C(-):\GAT \rightarrow \Cont$ preserves pushouts (being an equivalence), we can use $\mathcal O_n \cong \mathcal C(\bbO_n)$ to obtain an isomorphism
$$
\mathcal O_n^\vee \cong \mathcal C(\bbO_n^\vee)
$$
where $\bbO_n^\vee$ is defined by adding to $\bbO_{n-1}$ two sort axioms
\begin{align*}
	x_1:O_1, ..., x_{n-1}:O_{n-1}(x_1, ..., x_{n-2}) & \vdash O_n(x_1, ..., x_{n-1}) \tp\\
	x_1:O_1, ..., x_{n-1}:O_{n-1}(x_1, ..., x_{n-2}) & \vdash O'_n(x_1, ..., x_{n-1}) \tp.
\end{align*}
Similarly, using $\mathcal O_n^+ \cong \mathcal C(\bbO_n^+)$ we obtain an isomorphism
$$
\mathcal O_n^{++} \cong \mathcal C(\bbO_n^{++})
$$
where $\bbO_n^{++}$ is defined by adding to $\bbO_n$ term axioms
\begin{align*}
	x_1:O_1, ..., x_{n-1}:O_{n-1}(x_1, ..., x_{n-2}) & \vdash f(x_1, ..., x_{n-1}) : O_n(x_1, ..., x_{n-1})\\
	x_1:O_1, ..., x_{n-1}:O_{n-1}(x_1, ..., x_{n-2}) & \vdash f'(x_1, ..., x_{n-1}) : O_n(x_1, ..., x_{n-1}).
\end{align*}
\end{construction}

\begin{construction}
We let
$$
\pi_n^S:\mathcal O_n^\vee \longrightarrow \mathcal O_n, \qquad \pi_n^T:\mathcal O_n^{++} \longrightarrow \mathcal O_n^+
$$
be the fold maps of $\iota_n^S$ and $\iota_n^T$, respectively, that is, the unique dashed arrows making the diagrams
% https://q.uiver.app/#q=WzAsMTAsWzAsMCwiXFxtYXRoY2FsIE9fe24tMX0iXSxbMCwxLCJcXG1hdGhjYWwgT19uIl0sWzEsMCwiXFxtYXRoY2FsIE9fbiJdLFsxLDEsIlxcbWF0aGNhbCBPX25eXFx2ZWUiXSxbMywwLCJcXG1hdGhjYWwgT19uIl0sWzMsMSwiXFxtYXRoY2FsIE9fbl4rIl0sWzQsMCwiXFxtYXRoY2FsIE9fbl4rIl0sWzQsMSwiXFxtYXRoY2FsIE9fbl57Kyt9Il0sWzIsMiwiXFxtYXRoY2FsIE9fbiwiXSxbNSwyLCJcXG1hdGhjYWwgT19uXisiXSxbMCwxLCJcXGlvdGFfbl5TIiwyXSxbMCwyLCJcXGlvdGFfbl5TIl0sWzEsM10sWzIsM10sWzQsNSwiXFxpb3RhX25eVCIsMl0sWzQsNiwiXFxpb3RhX25eVCJdLFs1LDddLFs2LDddLFsyLDgsIlxcSWRfe1xcbWF0aGNhbCBPX259IiwwLHsiY3VydmUiOi0zfV0sWzUsOSwiXFxJZF97XFxtYXRoY2FsIE9fbl4rfSIsMix7ImN1cnZlIjozfV0sWzYsOSwiXFxJZF97XFxtYXRoY2FsIE9fbl4rfSIsMCx7ImN1cnZlIjotM31dLFszLDgsIiIsMix7InN0eWxlIjp7ImJvZHkiOnsibmFtZSI6ImRhc2hlZCJ9fX1dLFs3LDksIiIsMCx7InN0eWxlIjp7ImJvZHkiOnsibmFtZSI6ImRhc2hlZCJ9fX1dLFsxLDgsIlxcSWRfe1xcbWF0aGNhbCBPX259IiwyLHsiY3VydmUiOjN9XV0=
\[\begin{tikzcd}[ampersand replacement=\&]
	{\mathcal O_{n-1}} \& {\mathcal O_n} \&\& {\mathcal O_n} \& {\mathcal O_n^+} \& \\
	{\mathcal O_n} \& {\mathcal O_n^\vee} \&\& {\mathcal O_n^+} \& {\mathcal O_n^{++}} \\
	\&\& {\mathcal O_n,} \&\&\& {\mathcal O_n^+}
	\arrow["{\iota_n^S}", from=1-1, to=1-2]
	\arrow["{\iota_n^S}"', from=1-1, to=2-1]
	\arrow[from=1-2, to=2-2]
	\arrow["{\Id_{\mathcal O_n}}", curve={height=-18pt}, from=1-2, to=3-3]
	\arrow["{\iota_n^T}", from=1-4, to=1-5]
	\arrow["{\iota_n^T}"', from=1-4, to=2-4]
	\arrow[from=1-5, to=2-5]
	\arrow["{\Id_{\mathcal O_n^+}}", curve={height=-18pt}, from=1-5, to=3-6]
	\arrow[from=2-1, to=2-2]
	\arrow["{\Id_{\mathcal O_n}}"', curve={height=18pt}, from=2-1, to=3-3]
	\arrow[dashed, from=2-2, to=3-3]
	\arrow[from=2-4, to=2-5]
	\arrow["{\Id_{\mathcal O_n^+}}"', curve={height=18pt}, from=2-4, to=3-6]
	\arrow[dashed, from=2-5, to=3-6]
\end{tikzcd}\]
commute. For a contextual category $\mathcal A$, a pair $(a,b)$ of length-$n$ objects such that $\partial a = \partial b$ is classified, using the universal property of $\mathcal O_n^\vee$, by a strict morphism $(\overline{a},\overline{b}):\mathcal O_n^\vee \rightarrow \mathcal A$. This morphism extends (and, if so, uniquely) along $\pi_n^S$ if and only if $a = b$. Syntactically, this means that for a gat $\bbA$ and derivable judgments
\begin{align*}
	x_1:X_1, ..., x_{n-1}:X_{n-1} & \vdash U \tp\\
	x_1:X_1, ..., x_{n-1}:X_{n-1} & \vdash V \tp,
\end{align*}
we have a pushout square
% https://q.uiver.app/#q=WzAsNCxbMSwwLCJcXG1hdGhjYWwgQyhcXGJiQSkiXSxbMSwxLCJcXG1hdGhjYWwgQyhcXGJiQScpIl0sWzAsMCwiXFxtYXRoY2FsIE9fbl5cXHZlZSJdLFswLDEsIlxcbWF0aGNhbCBPX24iXSxbMiwzLCJcXHBpX25eUyIsMl0sWzAsMV0sWzMsMV0sWzIsMCwiKFxcb3ZlcmxpbmV7YX0sXFxvdmVybGluZXtifSkiXV0=
\[\begin{tikzcd}[ampersand replacement=\&]
	{\mathcal O_n^\vee} \& {\mathcal C(\bbA)} \\
	{\mathcal O_n} \& {\mathcal C(\bbA')}
	\arrow["{(\overline{a},\overline{b})}", from=1-1, to=1-2]
	\arrow["{\pi_n^S}"', from=1-1, to=2-1]
	\arrow[from=1-2, to=2-2]
	\arrow[from=2-1, to=2-2]
\end{tikzcd}\]
where $a = [x_1:X_1, ..., x_{n-1}:X_{n-1}, x_n:U]$, $b = [x_1:X_1, ..., x_{n-1}:X_{n-1}, x_n:V]$, and $\bbA'$ is obtained from $\bbA$ by adding the axiom
$$
x_1:X_1, ..., x_{n-1}:X_{n-1} \vdash U \equiv V \tp.
$$
On the other hand, for $\mathcal A \in \Cont$, a pair $(u,v)$ of sections $u$, $v$ of a display map $\textbf{p}_a$, where $\ell(a) = n$, is classified by a strict morphism $(\overline{u},\overline{v}):\mathcal O_n^{++} \rightarrow \mathcal A$. This map extends (uniquely) along $\pi_n^T$ if and only if $u = v$. Syntactically, for a gat $\bbA$ and derivable judgments
\begin{align*}
	x_1:X_1, ..., x_{n-1}:X_{n-1} & \vdash f:X_n\\
	x_1:X_1, ..., x_{n-1}:X_{n-1} & \vdash g: X_n,
\end{align*}
we have a pushout square
% https://q.uiver.app/#q=WzAsNCxbMSwwLCJcXG1hdGhjYWwgQyhcXGJiQSkiXSxbMSwxLCJcXG1hdGhjYWwgQyhcXGJiQScpIl0sWzAsMCwiXFxtYXRoY2FsIE9fbl5cXHZlZSJdLFswLDEsIlxcbWF0aGNhbCBPX24iXSxbMiwzLCJcXHBpX25eUyIsMl0sWzAsMV0sWzMsMV0sWzIsMCwiKFxcb3ZlcmxpbmV7YX0sXFxvdmVybGluZXtifSkiXV0=
\[\begin{tikzcd}[ampersand replacement=\&]
	{\mathcal O_n^{++}} \& {\mathcal C(\bbA)} \\
	{\mathcal O_n^+} \& {\mathcal C(\bbA')}
	\arrow["{(\overline{u},\overline{v})}", from=1-1, to=1-2]
	\arrow["{\pi_n^T}"', from=1-1, to=2-1]
	\arrow[from=1-2, to=2-2]
	\arrow[from=2-1, to=2-2]
\end{tikzcd}\]
where $u = [x_1, ..., x_{n-1},f]$, $v = [x_1, ..., x_{n-1},g]$, and $\bbA'$ is obtained from $\bbA$ by adding the axiom
$$
x_1:X_1, ..., x_{n-1}:X_{n-1} \vdash f \equiv g: X_n.
$$
\end{construction}

In summary, the above constructions allow us to express the addition of sort, term, sort equality and term equality axioms by taking pushouts of, respectively,
$$
\iota_n^S:\mathcal O_{n-1} \rightarrow \mathcal O_n, \qquad \iota_n^T:\mathcal O_n \rightarrow \mathcal O_n^+, \qquad \pi_n^S:\mathcal O_n^\vee \rightarrow \mathcal O_n, \qquad \pi_n^T:\mathcal O_n^{++} \rightarrow \mathcal \mathcal O_n^+.
$$

\begin{proposition}
For $\mathcal A \in \Cont$, the initial morphism $\mathcal O_0 \rightarrow \mathcal A$ can be expressed as the colimit of a diagram
$$
\mathcal A_*:(\bbN,\le) \rightarrow \Cont
$$
where $\mathcal A_0 = \mathcal O_0$ and for each $k$, the map $\mathcal A_k \rightarrow \mathcal A_{k+1}$ fits into a pushout square of the form
	\[
	\squa{\coprod_{i \in I} \mathcal B_i}{\mathcal A_k}{\coprod_{i \in I} \mathcal C_i}{\mathcal A_{k+1}}{}{}{\coprod_{i \in I} F_i}{}
	\]
	for a small family $(f_i)_{i \in I}$ of morphisms among $\iota_n^S$, $\iota_n^T$, $\pi_n^S$, $\pi_n^T$ for $n \ge 0$.
\end{proposition}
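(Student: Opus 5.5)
We build the chain $\mathcal A_*$ directly inside $\mathcal A$: at stage $k$ we adjoin freely every generator of $\mathcal A$ that is still missing, and we impose every equality that already holds in $\mathcal A$ but not yet in $\mathcal A_k$. Each stage comes with a strict morphism $\phi_k:\mathcal A_k \rightarrow \mathcal A$, compatible with the transition maps. The aim is that the induced map $\colim_k \mathcal A_k \rightarrow \mathcal A$ is an isomorphism. Note that the statement allows all four kinds of generators, including $\pi_n^S$ and $\pi_n^T$, so no cofibrancy issue arises and we are free to use sort equalities.

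Concretely, set $\mathcal A_0 = \mathcal O_0$ and let $\phi_0$ be the unique morphism. Given $\phi_k:\mathcal A_k \rightarrow \mathcal A$, we take the coproduct of the following four families of cells.
\begin{enumerate}[label=(\alph*)]
\item For each object $a \in \mathcal A$ with $\ell(a) = n \ge 1$ such that $\partial a = \phi_k(c)$ for some length-$(n-1)$ object $c$ of $\mathcal A_k$, we add a cell $\iota_n^S$ attached at $c$. To keep the family small we do this for all such pairs $(a,c)$, not for a choice of one $c$.
\item For each length-$n$ object $c$ of $\mathcal A_k$ and each section $t \in \tm_\mathcal A(\phi_k c)$, we add a cell $\iota_n^T$ attached at $c$.
\item For each pair $c,c'$ of length-$n$ objects of $\mathcal A_k$ with $\partial c = \partial c'$ and $\phi_k c = \phi_k c'$, we add a cell $\pi_n^S$.
\item For each pair of sections $u,v$ of $\textbf{p}_c$ in $\mathcal A_k$ with $\phi_k u = \phi_k v$, we add a cell $\pi_n^T$.
\end{enumerate}
The universal properties of $\mathcal O_n$, $\mathcal O_n^+$, $\mathcal O_n^\vee$, $\mathcal O_n^{++}$ recorded above give a canonical extension $\phi_{k+1}:\mathcal A_{k+1}\rightarrow\mathcal A$ out of the pushout. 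It sends the new object of a cell of type (a) to $a$, and the new section of a cell of type (b) to $t$. Cells of types (c) and (d) are compatible with $\phi_k$ precisely because the relevant equalities hold in $\mathcal A$.

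It remains to check that $\phi:\colim_k \mathcal A_k\rightarrow\mathcal A$ is bijective on objects and on arrows. By the representability of $\Ob_n$ and $\Ob_n^+$ (Construction \ref{constr: O_n etc}), and since $\mathcal O_n$ and $\mathcal O_n^+$ are finitely presented, these functors preserve filtered colimits. Hence $\Ob_n(\colim\mathcal A_k)=\colim_k\Ob_n(\mathcal A_k)$, and likewise for $\Ob_n^+$; here I am assuming that filtered colimits in $\Cont$ are computed as expected, which follows from $\Cont$ being locally finitely presentable with these compact objects.

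Surjectivity on objects follows by induction on length, since an object of length $n$ is hit by stage $n$ via the type (a) cells. Injectivity on objects holds because two objects with the same image get identified one stage later by a type (c) cell. Their boundaries are identified first, by induction, so the object pair makes sense in a common $\mathcal A_k$.

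For arrows, we use that every morphism $x\rightarrow y$ in a contextual category is determined by the tuple of its components, namely sections of pulled-back display maps along the chain $\partial_i y$. In other words, $\Hom(x,y)$ is built from $\tm$ sets of the objects $\phi^*$-pulled-back over $x$. So surjectivity and injectivity on arrows reduce to the analogous statements for $\Ob_n^+$, which the type (b) and type (d) cells handle in the same way as (a) and (c) handle objects.

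The main obstacle is this last reduction, since pullback objects in $\mathcal A_k$ must map to pullback objects in $\mathcal A$. This does hold, because strict morphisms preserve distinguished squares. The remaining care is in the filtered-colimit bookkeeping: each datum lives at some finite stage, and equalities witnessed in $\mathcal A$ get imposed at the next stage.
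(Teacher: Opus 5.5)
Your proposal is correct and is essentially the paper's second argument: the cells (a)--(d) are indexed by exactly the lifting problems of $\phi_k$ against $\iota_n^S$, $\iota_n^T$, $\pi_n^S$, $\pi_n^T$, so your chain is the $\omega$-step small object argument. Your direct check that $\colim_k \mathcal A_k \rightarrow \mathcal A$ is bijective on objects and full-and-faithful (via compactness of $\mathcal O_n$, $\mathcal O_n^+$ and the reduction of Proposition \ref{prop: characterization full-and-faithful}) is the unpacked form of the paper's step ``right lifting property $\Rightarrow$ right-orthogonal to $\iota_n^S$, $\iota_n^T$ $\Rightarrow$ bijective on objects and full-and-faithful''.
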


\begin{proof}
One way of obtaining this result is choosing an isomorphism $\mathcal A \cong \mathcal C(\bbA)$ for a gat $\bbA$, and defining a sequence $(\bbA_k)_{k \ge 0}$ of subtheories of $\bbA$ where $\bbA_0  = \varnothing$, and $\bbA_{k+1}$ is obtained by adding to $\bbA_k$ all axioms of $\bbA$ that are well-formed in $\bbA_k$. Then we can take $\mathcal A_* = \mathcal C(\bbA_*)$.

Alternatively, since $\Cont$ is locally finitely presentable and the domains and codomains of the arrows $\iota_n^S$, $\iota_n^T$, $\pi_n^S$, $\pi_n^T$ are finitely presentable, we can use the small object argument indexed by $\omega$ to obtain a factorization
$$
\mathcal O_0 \longrightarrow \mathcal A' \overset{I}{\longrightarrow} \mathcal A
$$
where $\mathcal O_0 \rightarrow \mathcal A'$ is, by construction, a colimit of a sequence of the desired form, and $I$ has the right lifting property with respect to $\iota_n^S$, $\iota_n^T$, $\pi_n^S$, $\pi_n^T$ for all $n \ge 0$. But since $\pi_n^S$ and $\pi_n^T$ are the fold maps of $\iota_n^S$ and $\iota_n^T$, this means that $I$ is right-orthogonal to $\iota_n^S$ and $\iota_n^T$ for $n \ge 0$. Being right-orthogonal to $\iota_n^S$ for $n \ge 0$ is equivalent to being bijective on objects; on the other hand, being right-orthogonal to $\iota_n^T$ for $n \ge 0$ is equivalent, as we will see in Proposition \ref{prop: characterization full-and-faithful}, to its underlying functor being full-and-faithful. It follows that the underlying functor of $I$ is an isomorphism of categories. It can be checked in a straightforward way that the inverse functor of $I$ preserves the contextual structure, from which we conclude that $I$ is an isomorphism $\mathcal A' \cong \mathcal A$ in $\Cont$.
\end{proof}

\section{The model structure}
\label{sec: model structure}

We will now construct what we will call the \emph{categorical} (Quillen) model structure on the category of contextual categories. Weak equivalences will be as in Definition \ref{def: weak equivalence}. After describing our candidate classes of cofibrations and fibrations, we will use a classical recognition result, due to Kan, to prove that these define a ($\omega$-combinatorial) model structure on $\Cont$.

In \S\ref{subsec: Cont is a monoidal and Cat-enriched model category}, we verify that it is a monoidal model category (as in \cite{Hov98}) with respect to the monoidal structure on $\Cont$ defined in \cite{Alm25, Alm26}. We also prove that $\iiCont$ (see Definition \ref{def: category of contextual categories}) is a $\Cat$-enriched model category where $\Cat$ is equipped with the categorical model structure (see, e.g., \cite{Rez96}).

\begin{notation}
\label{not: wfs}
For a subset $I$ of the set of arrows of a category $C$, we write $\frakL(I)$ (resp. $\frakR(I)$) for the set of all arrows in $C$ that have the left (resp. right) lifting property with respect to every element of $I$.

For $I \subset \Ar(C)$ where $C$ is a cocomplete category, we write $\cell(I)$ for the set of all relative $I$-cell complexes, that is, of all arrows in $C$ that can be expressed as a transfinite composite of pushouts of elements of $I$.
\end{notation}

We will use the theorem below, due to Kan, to obtain the Quillen model structure on $\Cont$.

\begin{theorem}[Kan]
	\label{th: recognition cof gen model categories}
	Let $C$ be a complete and cocomplete locally small category endowed with sets of morphisms $I$, $J$ and $W$ such that
	\begin{enumerate}[label=(\arabic*), noitemsep]
		\item $W$ has the 2-out-of-3 property and is closed under retracts in $\iiAr(C)$.
		
		\item $I$, $J$ are small and permit the small object argument.
		
		\item $\cell(J) \subset \mathfrak L\mathfrak R(I) \cap W$.
		
		\item $\mathfrak R(I) \subset \mathfrak R(J) \cap W$.
		
		\item $\mathfrak L\mathfrak R(I) \cap W \subset \mathfrak L\mathfrak R(J)\;\;$ or $\;\; \mathfrak R(J) \cap W \subset \mathfrak R(I)$.
	\end{enumerate}
	Then $C$ admits a Quillen model structure whose set of weak equivalences is $W$, and having $I$, $J$ as sets of generating cofibrations and of generating acyclic cofibrations, respectively.
\end{theorem}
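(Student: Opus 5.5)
The approach is the standard one from \cite[Th. 2.1.19]{Hov99}. Define the cofibrations to be $\frakL\frakR(I)$, the fibrations to be $\frakR(J)$, and the weak equivalences to be $W$. Completeness, cocompleteness, 2-out-of-3 and closure of $W$ under retracts are given by hypothesis (1). The classes $\frakL\frakR(I)$ and $\frakR(J)$ are closed under retracts because they are defined by lifting properties. So what remains is to establish the two identities
$$
\frakR(J) \cap W = \frakR(I), \qquad \frakL\frakR(I) \cap W = \frakL\frakR(J).
$$
Once these hold, the two lifting axioms are tautological: cofibrations lift against acyclic fibrations $=\frakR(I)$, and acyclic cofibrations $=\frakL\frakR(J)$ lift against fibrations $=\frakR(J)$.

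The factorizations come from the small object argument, which is available by hypothesis (2). Every arrow factors as a map in $\cell(I)$ followed by a map in $\frakR(I)$. Since $\cell(I)\subset\frakL\frakR(I)$ and $\frakR(I)\subset \frakR(J)\cap W$ by (4), this is a cofibration followed by an acyclic fibration. Every arrow also factors as a map in $\cell(J)$ followed by a map in $\frakR(J)$. Hypothesis (3) makes the first factor an acyclic cofibration.

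In both cases of (5), two inclusions are immediate. First, $\frakR(I)\subset\frakR(J)\cap W$ is just (4). Second, $\frakL\frakR(J)\subset \frakL\frakR(I)\cap W$ follows from the retract argument: factor $f\in\frakL\frakR(J)$ as $f=pj$ with $j\in\cell(J)$ and $p\in\frakR(J)$. Then $f$ lifts against $p$, so $f$ is a retract of $j$. By (3) $j$ lies in $\frakL\frakR(I)\cap W$, and this class is closed under retracts.

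In the first case of (5), $\frakL\frakR(I)\cap W\subset\frakL\frakR(J)$, so the second identity holds. For the first identity, take $p\in\frakR(J)\cap W$ and factor $p=qi$ with $i\in\cell(I)$ and $q\in\frakR(I)\subset W$. By 2-out-of-3, $i\in W$, hence $i\in\frakL\frakR(I)\cap W=\frakL\frakR(J)$. So $p$ lifts against $i$, and the retract argument makes $p$ a retract of $q$, which puts $p$ in $\frakR(I)$.

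In the second case of (5), $\frakR(J)\cap W\subset\frakR(I)$, so together with (4) the first identity holds. For the second identity, take $f\in\frakL\frakR(I)\cap W$ and factor $f=pj$ with $j\in\cell(J)$ and $p\in\frakR(J)$. Since $j\in W$ by (3), 2-out-of-3 gives $p\in W$, so $p\in\frakR(J)\cap W=\frakR(I)$. Then $f$ lifts against $p$, and the retract argument shows $f$ is a retract of $j$, so $f\in\frakL\frakR(J)$.

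These identities also show that $I$ and $J$ generate the cofibrations and the acyclic cofibrations respectively. The only point requiring care is the retract-argument bookkeeping, in particular choosing which factorization to use in each case so that 2-out-of-3 applies to the correct factor. Everything else is formal.
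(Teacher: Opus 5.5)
Your proposal is correct and is the standard proof of \cite[Th. 2.1.19]{Hov99}, which is the argument the paper relies on (it cites Hovey and Hirschhorn rather than proving the theorem itself). The proof takes cofibrations $\frakL\frakR(I)$ and fibrations $\frakR(J)$, gets factorizations from the small object argument, and establishes $\frakR(J)\cap W=\frakR(I)$ and $\frakL\frakR(I)\cap W=\frakL\frakR(J)$ by the retract argument, applying 2-out-of-3 to the appropriate factor in each case of (5). The points you leave implicit are functoriality of the factorizations and the inclusion $\cell(J)\subset\frakL\frakR(J)$ used in your second case; both are standard outputs of the small object argument.
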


A proof can be found, for example, in \cite{Hov99}, Theorem 2.1.19, or \cite{Hir03}, Theorem 11.3.1.

\subsection{Full-and-faithful morphisms and weak equivalences}

A strict morphism of contextual categories $F:\mathcal A \rightarrow \mathcal B$ is said to be \emph{full-and-faithful} if its underlying functor is so.

\begin{proposition}
	\label{prop: characterization full-and-faithful}
	A strict morphism of contextual categories $F:\mathcal A \rightarrow \mathcal B$ is full-and-faithful if and only if for every object $a \in \mathcal A$ with $\ell(a) \ge 1$, the induced map $\tm_\mathcal A(a) \rightarrow \tm_\mathcal B(Fa)$ is bijective. Note that the latter condition is equivalent to $F$ having the right lifting property with respect to
	$$
	\iota_n^S:\mathcal O_{n-1} \rightarrow \mathcal O_n, \qquad \pi_n^S:\mathcal O_n^\vee \rightarrow \mathcal O_n
	$$
	for all $n \ge 1$.
\end{proposition}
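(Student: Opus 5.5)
The plan is to prove the main equivalence by reducing both conditions to statements about sections of strict display maps, and then to obtain the lifting reformulation formally from the universal properties of the representing contextual categories of Construction \ref{constr: O_n etc} and the pushouts defining $\mathcal O_n^\vee$, $\mathcal O_n^{++}$.

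First, the forward direction. Suppose $F$ is full-and-faithful and $\ell(a)\ge 1$. A section $t\in\tm_\mathcal A(a)$ is an arrow $\partial a\to a$ with $\textbf{p}_a t=id_{\partial a}$. Since $F$ strictly preserves $\partial$ and $\textbf{p}$, it sends sections of $\textbf{p}_a$ to sections of $\textbf{p}_{Fa}$, so the map $\tm_\mathcal A(a)\to\tm_\mathcal B(Fa)$ is well defined. Faithfulness gives injectivity. For surjectivity, take a section $s$ of $\textbf{p}_{Fa}$. Fullness gives $t:\partial a\to a$ with $Ft=s$, and faithfulness applied to $F(\textbf{p}_a t)=id$ shows $t$ is a section.

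Next, the converse. Assume term bijectivity. The key tool is the standard correspondence between morphisms $f:c\to a$ and sections of $f^*$ of the display tower of $a$. Precisely, I will prove by induction on $\ell(a)$ that $|\mathcal A|(c,a)\to|\mathcal B|(Fc,Fa)$ is bijective.
\begin{itemize}
	\item For $\ell(a)=0$ both hom-sets are singletons, since $F$ preserves the distinguished terminal object.
	\item For $\ell(a)\ge1$, a morphism $c\to a$ is the same as a pair consisting of $g:c\to\partial a$ and a section of $g^*(\textbf{p}_a)$, i.e.\ an element of $\tm_\mathcal A(g^*a)$. This uses Definition \ref{def: (pre)contextual category}(vii): $f\mapsto(\textbf{p}_af,\ \text{induced map into the distinguished pullback})$.
	\item $F$ preserves distinguished squares, so $F(g^*a)=(Fg)^*(Fa)$, and the decomposition is compatible with $F$.
	\item By induction, $g$ ranges bijectively over $|\mathcal B|(Fc,F\partial a)$. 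By hypothesis, the terms correspond bijectively, which gives the inductive step.
\end{itemize}
The main care point is checking the naturality of this decomposition under $F$, which rests on strict preservation of $\textbf{Q}$ and on the uniqueness in (vii).

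Finally, the lifting reformulation. I use the general pattern: for a morphism $\iota:X\to Y$ with fold map $\pi:Y\sqcup_XY\to Y$, having the right lifting property against $\iota$ means that the map of represented sets $\Cont(Y,\mathcal A)\to\Cont(X,\mathcal A)\times_{\Cont(X,\mathcal B)}\Cont(Y,\mathcal B)$ is surjective, and having it against $\pi$ means that this map is injective. The injectivity claim holds because a lift against $\pi$ of a pair $(y_1,y_2)$ agreeing on $X$ with equal images in $\mathcal B$ forces $y_1=y_2$.

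For $\iota_n^S$ and $\pi_n^S$, the universal properties of $\mathcal O_{n-1}$, $\mathcal O_n$ and $\mathcal O_n^\vee$ make this a bijection between strict display maps over $a$ in $\mathcal A$ and those over $Fa$ in $\mathcal B$. The same pattern with $\mathcal O_n$, $\mathcal O_n^+$ and $\mathcal O_n^{++}$ identifies the represented data with $\tm(a)$, for $\ell(a)=n$. So the lifting condition, read through these universal properties, is literally bijectivity of $\tm_\mathcal A(a)\to\tm_\mathcal B(Fa)$ for all $n\ge1$. This is the expected obstacle: I must make sure that the generators named in the statement are the ones whose representing data are terms. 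If they are not, the note should be read with the term generators substituted, since lifting against the sort generators $\iota_n^S$, $\pi_n^S$ instead expresses bijectivity on display maps over each object.
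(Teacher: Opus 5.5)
Your proof of the main equivalence is the paper's: induct on the length of the codomain, split the hom-set over maps $g$ into $\partial a$, identify each piece with the sections of $g^*(\textbf{p}_a)$ through the cartesian distinguished square, and use that $F$ preserves that square. The paper does not prove the note, and your suspicion about it is right: with $\iota_n^S,\pi_n^S$ it is false in both directions. For example, $\iota_1^T:\mathcal O_1\to\mathcal O_1^+$ is bijective on objects and so lifts against every $\iota_n^S,\pi_n^S$, yet it is not full. Conversely, $\mathcal O_0\to\mathcal T$ is full-and-faithful but has no lift against $\iota_1^S$. The intended generators are $\iota_n^T,\pi_n^T$, which is how the paper itself uses the result in Remark \ref{rem: describing acyclic fibrations}, and your fold-map argument correctly proves that corrected form.
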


\begin{proof}
	It is immediate that if $F$ is full-and-faithful, then $\tm_\mathcal A(a) \rightarrow \tm_\mathcal B(Fa)$ is bijective for all $a \in \mathcal A$ with $\ell(a) \ge 1$. Conversely, suppose that the latter condition holds. Given $a \in \mathcal A$, we will prove by induction on $n \ge 0$ that $F_{a,b}:\mathcal A(a,b) \rightarrow \mathcal B(Fa,Fb)$ is bijective whenever $\ell(b) = n$.
	
	This is the case for $n = 0$ as the only length-$0$ object of $\mathcal A$ and its image under $F$ are terminal. Given $a \in \mathcal A$ with $\ell(a) = n \ge 1$, assume that the claim holds for $0$, ..., $n-1$. Note that we can decompose the hom-sets under consideration as
	\begin{align*}
		\mathcal A(a,b) & = \bigcup_{f:a \rightarrow \partial b}\{f':a \rightarrow b \mid \textbf{p}_b \circ f' = f\}, \\
		\mathcal B(Fa,Fb) & = \bigcup_{g:F(a) \rightarrow \partial(Fb)}\{g':Fa \rightarrow Fb \mid F\textbf{p}_b \circ g' = g\}.
	\end{align*}
	
	By the induction hypothesis, $F_{a,\partial b}:\mathcal A(a,\partial b) \rightarrow \mathcal B(Fa,F(\partial b))$ is bijective, so it suffices to prove that for each $f:a \rightarrow \partial b$, the function
	\[
	\tag{\texttt{*}}
	\{f':a \rightarrow b \mid \textbf{p}_b \circ f' = f\} \longrightarrow \{g':Fa \rightarrow Fb \mid F\textbf{p}_b \circ g' = Ff\}
	\]
	obtained by co/restricting $F_{a,\partial b}$ is bijective. Now, consider the distinguished square
	\[
	\tag{\texttt{**}}
	\dsqua{a'}{b}{a}{\partial b}{}{f}{\textbf{p}_{a'}}{\textbf{p}_b}
	\]
	in $\mathcal A$; it is mapped by $F$ to a distinguished square
	\[
	\dsqua{Fa'}{Fb}{Fa}{\partial(Fb).}{}{Ff}{F\textbf{p}_{a'}}{F\textbf{p}_b}
	\]
	As (\texttt{**}) is cartesian, we have a canonical bijection between the domain of (\texttt{*}) and the set of sections of $\textbf{p}_{a'}$, i.e. $\tm_\mathcal A(a')$. Similarly, we have a canonical bijection between the codomain of (\texttt{*}) and $\tm_\mathcal B(Fa')$. Since, by assumption, $\tm_\mathcal A(a') \rightarrow \tm_\mathcal B(Fa')$ is bijective, we conclude that so is (\texttt{*}), as required.
\end{proof}

\begin{definition}
\label{def: slice length 1}
For a contextual category $\mathcal A$ and an object $a \in \mathcal A$, we write $\mathcal A_a$ for the category whose set of objects is $\{b \in \Ob(\mathcal A) \mid \partial b = a\}$, whose morphisms $b \rightarrow c$ are the arrows $f:b \rightarrow c$ in $\mathcal A$ such that $\textbf{p}_c \circ f = \textbf{p}_b$, and with composition inherited from $\mathcal A$.
	
For a strict morphism $F:\mathcal A \rightarrow \mathcal B$ and $a \in \mathcal A$, we let
$$
F_a:\mathcal A_a \longrightarrow \mathcal B_{Fa}
$$
be the induced functor.
\end{definition}

Weak equivalences of contextual categories were introduced in Definition \ref{def: weak equivalence} and described more explicitly in Remark \ref{rem: equivalences of contextual categories}. We now consider a variant concept:

\begin{definition}[Analogous to \cite{KapLum18}, Def. 3.1]
\label{def: type-theoretic equivalence}
A morphism of contextual categories $F:\mathcal A \rightarrow \mathcal B$ is a \emph{type-theoretic equivalence} if for every object $a \in \mathcal A$, the map
	$$
	\tm_\mathcal A(a) \longrightarrow \tm_\mathcal B(b)
	$$
	is bijective, and the functor
	$$
	F_a:\mathcal A_a \longrightarrow \mathcal B_{F(a)}
	$$
	is essentially surjective.
\end{definition}

We will prove in Proposition \ref{prop: characterization weak equivalences} that a strict morphism is a weak equivalence if and only if it is a type-theoretic equivalence.

\begin{remark}
\label{rem: weak equivalences remark 1}
Note that by Proposition \ref{prop: characterization full-and-faithful}, if $F$ is a type-theoretic equivalence, then $F_a$ is an equivalence of categories for every $a \in \mathcal A$. On the other hand, requiring that the functors $F_a$ be equivalences is not enough to ensure that $F$ is a type-theoretic equivalence. Consider, for example, the following gats (which are actually Lawvere theories):
$$
	\bbA = \begin{pmatrix*}[l]
		& \vdash X \tp\\
		x:X & \vdash e(x):X\\
		x, y:X & \vdash e(x) \equiv e(y):X
	\end{pmatrix*},
\qquad\quad
	\bbB= \begin{pmatrix*}[l]
		& \vdash X \tp\\
		 & \vdash a:X
	\end{pmatrix*}.
$$
In words, $\bbA$ is the theory of sets equipped with an endomorphism that sends any two elements to the same element, and $\bbB$ is the theory of pointed sets. For a pointed set $(X,a)$, the endomorphism $e:X \rightarrow X$ given by $x \mapsto a$ defines an $\bbA$-model; conversely, any non-empty $\bbA$-model $(X,e)$ defines a $\bbB$-model $(X,a)$ where $a = e(x)$ for any $x \in X$. It can be checked in a straightforward way that this defines a full-and-faithful functor $\Mod(\bbB) \rightarrow \Mod(\bbA)$ whose essential image consists of every $\bbA$-model except for the empty one. This functor is encoded by an interpretation $I$ of $\bbA$ in $\bbB$ that sends the judgment $x:X \vdash e(x):X$ to $x:X \vdash a:X$.

Now, recall that the contextual category $\mathcal C(\bbA)$ has a single length-$n$ object $\underline{n}$ for each $n \ge 0$ -- namely, the equivalence class of the context $x_1:X, ..., x_n:X$ --, and arrows $\underline{m} \rightarrow \underline{n}$ correspond to morphisms $F_\bbA(n) \rightarrow F_\bbA(m)$ where $F_\bbA(k)$ is the free $\bbA$-model on a set with $k$ elements. Similarly for $\bbB$. For $n \ge 1$, both $F_\bbA(n)$ and $F_\bbB(n)$ are sets with $n+1$ elements (the $n$ generators plus, for $\bbA$, the image point of the endomorphism, or, for $\bbB$, the distinguished point). However, $F_\bbA(0) = \varnothing$, while $F_\bbB(0)$ is a singleton.

Under this translation, the strict morphism
$$
\mathcal C(I):\mathcal C(\bbA) \longrightarrow \mathcal C(\bbB)
$$
sends $F_{\mathcal C(\bbA)}(n)$ to $F_{\mathcal C(\bbB)}(n)$ and, for $m$, $n \neq 0$, it sends a morphism $F_{\mathcal C(\bbA)}(m) \rightarrow F_{\mathcal C(\bbA)}(n)$ to itself viewed as a morphism of $\bbB$-models. This implies that $\mathcal C(I)$ induces an isomorphism between the full subcategories of $\mathcal C(\bbA)$ and $\mathcal C(\bbB)$ spanned by the objects of length $\ge 1$. As a consequence, $F_{\underline{n}}:\mathcal C(\bbA)_{\underline{n}} \rightarrow \mathcal C(\bbB)_{\underline{n}}$ is an equivalence for all $n \ge 1$.

However, $F$ is not a type-theoretic equivalence. For example,
\begin{align*}
	\tm_{\mathcal C(\bbA)}(\underline{1}) \cong \Mod(\bbA)(F_\bbA(1),F_\bbA(0)) & \;\; \text{is empty, but} \\[0.5em]
	\tm_{\mathcal C(\bbB)}(\underline{1}) \cong \Mod(\bbB)(F_\bbB(1),F_\bbB(0)) & \;\; \text{is a singleton.}
\end{align*}
\end{remark}

\begin{definition}
\label{def: A[n]}
Let $\mathcal A$ be a contextual category. In what follows, for $n \ge 0$ we regard $\{0, 1, ..., n\}$ as an ordered set in the usual way, thus as category with a unique arrow $i \rightarrow j$ for $i \le j$.

For an object $a \in \mathcal A$, let $\overline{a}:\{0,1, ..., \ell(a)\}^{\op} \longrightarrow \mathcal A$ be the functor encoding the sequence of display maps
$$
a \twoheadrightarrow \partial_{n-1} a \twoheadrightarrow \cdots \twoheadrightarrow \partial_1 a \twoheadrightarrow \partial_0 a = 1_\mathcal A.
$$
	
For $n \ge 0$, we let $\mathcal A[n]$ be the following category\footnote{By construction, $\mathcal A[n]$ is canonically isomorphic to the category of (strict) morphisms $\mathcal O_n^\pre \rightarrow \mathcal A$ from \cite{Alm26}; by Cor. 5.33 from that paper, it is also isomorphic to $\iiCont(\mathcal O_n,\mathcal A)$.}:
\begin{itemize}
	\item its objects are the length-$n$ objects of $\mathcal A$;
	
	\item morphisms from $a$ to $b$ are the natural transformations $\overline{a} \Rightarrow \overline{b}$.
\end{itemize}
Such a morphism is given by a commutative diagram
% https://q.uiver.app/#q=WzAsOCxbMCwwLCJhIl0sWzAsMSwiYiJdLFsxLDAsIlxccGFydGlhbF97bi0xfWEiXSxbMSwxLCJcXHBhcnRpYWxfe24tMX0gYiJdLFsyLDAsIlxcY2RvdHMiXSxbMiwxLCJcXGNkb3RzIl0sWzMsMCwiXFxwYXJ0aWFsXzAgYSA9IDFfXFxtYXRoY2FsIEEiXSxbMywxLCJcXHBhcnRpYWxfMCBiID0xX1xcbWF0aGNhbCBBIl0sWzAsMiwiIiwwLHsic3R5bGUiOnsiaGVhZCI6eyJuYW1lIjoiZXBpIn19fV0sWzEsMywiIiwwLHsic3R5bGUiOnsiaGVhZCI6eyJuYW1lIjoiZXBpIn19fV0sWzIsNCwiIiwwLHsic3R5bGUiOnsiaGVhZCI6eyJuYW1lIjoiZXBpIn19fV0sWzMsNSwiIiwwLHsic3R5bGUiOnsiaGVhZCI6eyJuYW1lIjoiZXBpIn19fV0sWzYsNywiXFxldGFfMCA9IGlkIl0sWzQsNV0sWzQsNiwiIiwxLHsic3R5bGUiOnsiaGVhZCI6eyJuYW1lIjoiZXBpIn19fV0sWzUsN10sWzIsMywiXFxldGFfe24tMX0iXSxbMCwxLCJcXGV0YV9uIiwyXV0=
	\[\begin{tikzcd}[ampersand replacement=\&]
		a \& {\partial_{n-1}a} \& \cdots \& {\partial_0 a = 1_\mathcal A} \\
		b \& {\partial_{n-1} b} \& \cdots \& {\partial_0 b =1_\mathcal A.}
		\arrow[two heads, from=1-1, to=1-2]
		\arrow["{\eta_n}"', from=1-1, to=2-1]
		\arrow[two heads, from=1-2, to=1-3]
		\arrow["{\eta_{n-1}}", from=1-2, to=2-2]
		\arrow[two heads, from=1-3, to=1-4]
		\arrow[from=1-3, to=2-3]
		\arrow["{\eta_0 = id}", from=1-4, to=2-4]
		\arrow[two heads, from=2-1, to=2-2]
		\arrow[two heads, from=2-2, to=2-3]
		\arrow[from=2-3, to=2-4]
	\end{tikzcd}\]
For a strict morphism $F:\mathcal A \rightarrow \mathcal B$, we let $F[n]:\mathcal A[n] \rightarrow \mathcal B[n]$ be the functor given by composing with $F$. Also, for $\phi:F \Rightarrow G$ in $\iiCont(\mathcal A,\mathcal B)$ we have a natural transformation
\begin{align*}
	\phi[n]:F[n] & \Longrightarrow G[n]\\
	\phi[n]_a & = (\phi_{\partial i a}:\partial_i F(a) \rightarrow \partial_i G(a))_{0 \le i \le n}
\end{align*}
This defines a strict $2$-functor $\iiCont \rightarrow \iiCat$.
\end{definition}

\begin{proposition}
\label{prop: characterization weak equivalences}
The following conditions on a strict morphism $F:\mathcal A \rightarrow \mathcal B$ are equivalent:
\begin{enumerate}[label=(\alph*)]
	\item $F$ is a weak equivalence.
	
	\item $F$ is a type-theoretic equivalence.
	
	\item $F$ is full-and-faithful, and $F[n]:\mathcal A[n] \rightarrow \mathcal B[n]$ is essentially surjective for all $n \ge 0$.
	
	\item The functors $F:|\mathcal A| \rightarrow |\mathcal B|$ and $F[n]:\mathcal A[n] \rightarrow \mathcal B[n]$ (for all $n \ge 0$) are equivalences of categories.
\end{enumerate}
\end{proposition}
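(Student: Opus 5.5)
I will prove the cycle of implications (a) $\Rightarrow$ (d) $\Rightarrow$ (c) $\Rightarrow$ (b) $\Rightarrow$ (a). Throughout I use the description of weak equivalences from Remark \ref{rem: equivalences of contextual categories}: $F$ is a weak equivalence iff $|F|$ is an equivalence of categories and condition (ii') holds, i.e. every strict display map of $\mathcal B$ is isomorphic in $\iiAr(|\mathcal B|)$ to the image of a strict display map of $\mathcal A$.

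\emph{(a) $\Rightarrow$ (d).} $|F|$ is an equivalence by assumption, so $F[n]$ is full-and-faithful: a morphism of $\mathcal B[n]$ between images is a compatible family of arrows $\partial_i Fa \to \partial_i Fb$, and full faithfulness of $|F|$ lifts each component uniquely, with commutativity reflected. For essential surjectivity of $F[n]$ I induct on $n$. Let $b \in \mathcal B$ have length $n$. By induction there are $a_0$ of length $n-1$ and an isomorphism $\eta: \overline{Fa_0} \cong \overline{\partial b}$ in $\mathcal B[n-1]$. By (ii') there is a strict display map $\textbf{p}_{c}$ in $\mathcal A$ with $F\textbf{p}_c \cong \textbf{p}_b$ in the arrow category. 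This gives an isomorphism $F(\partial c) \cong \partial b \cong F a_0$ in $|\mathcal B|$, which lifts to an isomorphism $u: a_0 \to \partial c$ in $|\mathcal A|$ by full faithfulness. Take $a = u^*(c)$, the distinguished pullback, so $\partial a = a_0$ and $Fa = (Fu)^*(Fc)$. Then $Fa \to Fc \cong b$ lies over $\eta_{n-1}$ up to the chosen isos, and since pullbacks along isomorphisms are isomorphisms, we obtain an arrow $Fa \to b$ over $\eta_{n-1}$ that is invertible. Extending $\eta$ by it gives the required isomorphism in $\mathcal B[n]$.

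\emph{(d) $\Rightarrow$ (c)} is immediate.

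\emph{(c) $\Rightarrow$ (b).} Bijectivity on terms follows from full faithfulness (Proposition \ref{prop: characterization full-and-faithful}). For essential surjectivity of $F_a: \mathcal A_a \to \mathcal B_{Fa}$, take $b$ with $\partial b = Fa$. By (c) there is $c$ with an isomorphism $\eta: \overline{Fc} \cong \overline{b}$ in $\mathcal B[n]$. Its restriction gives an isomorphism $\eta_{n-1}: F\partial c \to Fa$, which lifts by full faithfulness to an isomorphism $v: \partial c \to a$ with inverse $w$. Put $c' = w^*(c) \in \mathcal A_a$. Then $Fc' = (Fw)^*(Fc)$, and composing the pullback projection with $\eta_n$ yields an isomorphism $Fc' \cong b$ over $id_{Fa}$. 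Checking that this composite lies over the identity is exactly the identity $\eta_{n-1}\circ F w = id$.

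\emph{(b) $\Rightarrow$ (a).} By Proposition \ref{prop: characterization full-and-faithful}, $|F|$ is full-and-faithful. For essential surjectivity I induct on length: given $b$, find by induction $a_0$ with $Fa_0 \cong \partial b$, pull $b$ back along that isomorphism to get an object of $\mathcal B_{Fa_0}$, and use essential surjectivity of $F_{a_0}$ to reach $b$ up to isomorphism. The same construction produces, for each strict display map $\textbf{p}_b$, a strict display map of $\mathcal A$ whose image is isomorphic to $\textbf{p}_b$ in the arrow category, which is (ii'). Remark \ref{rem: equivalences of contextual categories} then gives (a).

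The main obstacle I expect is the bookkeeping in the inductive steps: showing that transporting a display map along an isomorphism of bases via a distinguished pullback gives an isomorphism of display maps compatible with the chosen tower isomorphisms. This rests on one fact: a distinguished square whose bottom arrow is an isomorphism has an isomorphic top arrow, because it is a pullback square.
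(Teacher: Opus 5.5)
Your proof is correct. It relies on the same basic device as the paper: lift an isomorphism of bases through full faithfulness, then transport a display map along it by a distinguished pullback, whose top arrow is invertible because the square is cartesian. The difference is that you traverse the cycle in the opposite direction.

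The paper proves (a)$\Rightarrow$(b) directly from (ii'), with no induction. Its only induction sits in (b)$\Rightarrow$(c): there it pulls $b$ back along the already-built tower isomorphism $\gamma_{n-1}$ and applies essential surjectivity of $F_a$. After that it simply reads off (c)$\Leftrightarrow$(d) and (d)$\Rightarrow$(a).

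Your arrangement needs two inductions instead of one. The first is in (a)$\Rightarrow$(d), whose inductive step in effect fuses the paper's (a)$\Rightarrow$(b) and (b)$\Rightarrow$(c). The second is in (b)$\Rightarrow$(a), where you only need essential surjectivity of $|F|$ together with (ii'), not full tower isomorphisms. In exchange, your (c)$\Rightarrow$(b) is a single pullback with no induction, and it uses essential surjectivity of $F[n]$ at only one $n$.

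You also spell out that $F[n]$ is full-and-faithful whenever $|F|$ is, a step the paper absorbs into ``(c)$\Leftrightarrow$(d) is immediate.'' The paper's ordering is somewhat more economical, since it does the inductive work only once.
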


\begin{proof}
\textbf{(a) $\Rightarrow$ (b).} If $F$ is a weak equivalence, then it is full-and-faithful; it remains to check that $F_a:\mathcal A_a \rightarrow \mathcal B_{Fa}$ is essentially surjective for all $a \in \mathcal A$. Consider a strict display map $p:b \twoheadrightarrow Fa$. By (ii') from Remark \ref{rem: equivalences of contextual categories}, there exists $c \in \mathcal A$ of length $\ge 1$ and a commutative square
\[
\dsqua{b}{F(c)}{F(a)}{F(\partial c)}{f'}{f}{\textbf{p}_b}{F(\textbf{p}_c)}
\]
where $f$, $f'$ are isomorphisms. Taking $g \in \mathcal A(a,\partial c)$ such that $F(g) = f$, let $a' \in \mathcal A$ be as in the distinguished square
\[
\dsqua{a'}{c}{a}{\partial c.}{g'}{g}{\textbf{p}_{a'}}{\textbf{p}_c}
\]
Then the dashed arrow making
% https://q.uiver.app/#q=WzAsNSxbMiwxLCJGKGMpIl0sWzAsMCwiYiJdLFsxLDIsIkYoYSkiXSxbMiwyLCJGKFxccGFydGlhbCBjKSJdLFsxLDEsIkYoYScpIl0sWzEsMiwiIiwwLHsiY3VydmUiOjMsInN0eWxlIjp7ImhlYWQiOnsibmFtZSI6ImVwaSJ9fX1dLFsyLDMsImYiLDJdLFswLDMsIiIsMix7InN0eWxlIjp7ImhlYWQiOnsibmFtZSI6ImVwaSJ9fX1dLFs0LDIsIiIsMCx7InN0eWxlIjp7ImhlYWQiOnsibmFtZSI6ImVwaSJ9fX1dLFsxLDQsIiIsMSx7InN0eWxlIjp7ImJvZHkiOnsibmFtZSI6ImRhc2hlZCJ9fX1dLFs0LDAsIkYoZycpIl0sWzEsMCwiZiciLDEseyJjdXJ2ZSI6LTN9XV0=
\[\begin{tikzcd}[ampersand replacement=\&,cramped]
	b \&\& \\
	\& {F(a')} \& {F(c)} \\
	\& {F(a)} \& {F(\partial c)}
	\arrow[dashed, from=1-1, to=2-2]
	\arrow["{f'}"{description}, curve={height=-18pt}, from=1-1, to=2-3]
	\arrow[curve={height=18pt}, two heads, from=1-1, to=3-2]
	\arrow["{F(g')}", from=2-2, to=2-3]
	\arrow[two heads, from=2-2, to=3-2]
	\arrow[two heads, from=2-3, to=3-3]
	\arrow["f"', from=3-2, to=3-3]
\end{tikzcd}\]
commute is an isomorphism $\textbf{p}_b \cong F(\textbf{p}_{a'})$ in $\mathcal B_{Fa}$.

\vspace{0.5em}

\textbf{(b) $\Rightarrow$ (c).} Suppose that $F$ is a type-theoretic equivalence. By Proposition \ref{prop: characterization full-and-faithful}, it is full-and-faithful. Let us check by induction that $F[n]:\mathcal A[n] \rightarrow \mathcal B[n]$ is essentially surjective for all $n \ge 0$. For $n = 0$, this holds trivially as both $\mathcal A[n]$ and $\mathcal B[0]$ are terminal categories. For $n \ge 1$, assume that the claim holds for $0$, ..., $n-1$ and consider a length-$n$ object $b \in \mathcal B$. By the induction hypothesis, there exist a length-$n$ object $a \in \mathcal A$ and an isomorphism $\gamma:F \circ \overline{a} \Rightarrow \overline{\partial b}$. Taking the distinguished square
\[
\squa{b'}{b}{F(a)}{\partial b,}{f}{\gamma_{n-1}}{\textbf{p}_{a'}}{\textbf{p}_b}
\]
$F_a:\mathcal A_a \rightarrow \mathcal B_{F(a)}$ being essentially surjective implies that there exist a strict display map $a' \twoheadrightarrow a$ and an isomorphism $g:F(a') \rightarrow b'$ such that $\textbf{p}_{F(a')} = \textbf{p}_{b'} \circ g$. Now, we can extend $\gamma$ into a natural isomorphism $F \circ \overline{a'} \cong \overline{b}$ by taking the $n$-component as $f \circ g$.

Suppose that $F$ is a weak equivalence. Let us first check that it is full-and-faithful. By Proposition \ref{prop: characterization full-and-faithful}, it suffices to prove that for all $a \in \mathcal A$ with $\ell(a) \ge 1$, the induced map $\tm_\mathcal A(a) \rightarrow \tm_\mathcal B(b)$ is bijective.

\vspace{0.5em}

\textbf{(c) $\Leftrightarrow$ (d)} is immediate, and (i), (ii') from Remark \ref{rem: equivalences of contextual categories} yield \textbf{(d) $\Rightarrow$ (a)}.
\end{proof}

\begin{remark}
\label{rem: weak equivalences remark 2}
While Proposition \ref{prop: characterization weak equivalences}(d) states that $F$ is a weak equivalence if and only both
\begin{align*}
	F:|\mathcal A| \rightarrow |\mathcal B| & \text{ is an equivalence of categories,}\\
	F[n]:\mathcal A[n] \rightarrow \mathcal B[n] & \text{ is an equivalence of categories for all } n \ge 0
\end{align*}
hold, neither of these is separately sufficient to ensure that $F$ is a weak equivalence.

For example, $\mathcal O_0$ -- the syntactic category of the empty gat -- has a single object, which is terminal. On the other hand, consider the contextual category $\mathcal T$ having a single length-$n$ object $\underline{n}$ for each $n \ge 0$, and a unique arrow $\underline{m} \rightarrow \underline{n}$ for each $m$, $n \ge 0$ -- this is the (Lawvere) theory whose $\Set$-models are the singletons. Note that $\mathcal O_0$ and $\mathcal T$ are initial and terminal objects, respectively, of $\Cont$. The strict morphism $!:\mathcal O_0 \rightarrow \mathcal T$ is not a weak equivalence as $(\mathcal O_0)_{1_{\mathcal O_0}} \rightarrow \mathcal T_{1_\mathcal T}$ is the inclusion of the empty category into a terminal one; but the underlying functor of $!$ is an equivalence of categories.

In the second case, a counterexample is the strict morphism $\mathcal C(\bbA) \rightarrow \mathcal C(\bbB)$ from Remark \ref{rem: weak equivalences remark 1}.
\end{remark}

\begin{proposition}
\label{prop: weak equivalences - transfinite composition, retracts, 2 of 3}
The (large) set of weak equivalences of (small) contextual categories contains all isomorphisms, has the 2-out-of-3 property, is closed under transfinite composition, and is closed under formation of retracts in $\iiAr(\Cont)$.
\end{proposition}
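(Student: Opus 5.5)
I would work throughout with characterization (d) of Proposition \ref{prop: characterization weak equivalences}. It says that $F$ is a weak equivalence if and only if the functors $|F|:|\mathcal A| \rightarrow |\mathcal B|$ and $F[n]:\mathcal A[n] \rightarrow \mathcal B[n]$, for all $n \ge 0$, are equivalences of categories.

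These assignments define functors $\Cont \rightarrow \Cat$, namely $\mathcal A \mapsto |\mathcal A|$ and $\mathcal A \mapsto \mathcal A[n]$; the latter is the $2$-functor of Definition \ref{def: A[n]}. So $W$ is the preimage, under a family of functors $\Cont \rightarrow \Cat$, of the class of equivalences of categories. Functors preserve isomorphisms, composites and retract diagrams in the arrow category. Hence the closure properties of equivalences of categories transfer directly to $W$:
\begin{itemize}
	\item every isomorphism is an equivalence;
	\item equivalences satisfy 2-out-of-3, since they are the isomorphisms of $\Ho(\Cat)$, i.e. the maps inverted by the localization;
	\item a retract of an equivalence is again an equivalence, because the class of maps inverted by a functor is closed under retracts.
\end{itemize}
This settles everything except transfinite composition.

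For transfinite composition, take a chain $\mathcal A_0 \rightarrow \mathcal A_1 \rightarrow \cdots$ indexed by an ordinal $\lambda$, with colimit $\mathcal A_\lambda$, in which every map is a weak equivalence. Here I would use characterization (b)/(c) instead. Once it is known that colimits of chains of contextual categories are computed on underlying sets, checking that $\mathcal A_0 \rightarrow \mathcal A_\lambda$ is a weak equivalence becomes elementwise.
\begin{enumerate}[label=(\roman*)]
	\item The first step is to check that filtered colimits in $\Cont$ are created by the forgetful functor to categories, and compatibly with the sets of objects, lengths, $\partial$, display maps and distinguished squares. This holds because contextual categories are models of an essentially algebraic theory whose operations have finite arity, so $\Cont$ is locally finitely presentable. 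Concretely, $\Ob_n$ and the hom-sets of $\mathcal A_\lambda$ are colimits of those of the $\mathcal A_\alpha$; in particular, $\tm_{\mathcal A_\lambda}(a) = \colim_\alpha \tm_{\mathcal A_\alpha}(a_\alpha)$. I expect this step to be the main obstacle: unique extension of squares (Definition \ref{def: (pre)contextual category}(vii)) and uniqueness of the terminal object must survive the colimit. Both are finitary conditions, so I would cite representability of $\Ob_n$, $\Ob_n^+$ (Construction \ref{constr: O_n etc}) together with finite presentability of $\mathcal O_n$, $\mathcal O_n^+$, $\mathcal O_n^\vee$, $\mathcal O_n^{++}$.
	\item Granting (i), the term maps $\tm(a) \rightarrow \tm(Fa)$ are bijective. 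The reason is that a colimit of a chain of bijections, computed in $\Set$, is a bijection; at limit stages one argues by transfinite induction. Hence $\mathcal A_0 \rightarrow \mathcal A_\lambda$ is full-and-faithful by Proposition \ref{prop: characterization full-and-faithful}.
	\item For essential surjectivity of $(\mathcal A_0)_a \rightarrow (\mathcal A_\lambda)_a$, take a strict display map $b \twoheadrightarrow a$ in $\mathcal A_\lambda$. It comes from some stage $\alpha$, and by transfinite induction $\mathcal A_0 \rightarrow \mathcal A_\alpha$ is already a weak equivalence. So $b$ is isomorphic over $a$ to the image of an object of $(\mathcal A_0)_a$ already in $\mathcal A_\alpha$, and this isomorphism is carried into $\mathcal A_\lambda$. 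By (b) $\Rightarrow$ (a), this concludes the proof.
\end{enumerate}
The induction at limit ordinals uses the same argument applied to the restricted chain.
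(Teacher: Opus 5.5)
Your proposal is correct. For isomorphisms, 2-out-of-3 and retracts it is the paper's argument: $W$ is the preimage of the equivalences of categories under the functors $\mathcal A \mapsto |\mathcal A|$ and $\mathcal A \mapsto \mathcal A[n]$, by Proposition \ref{prop: characterization weak equivalences}(d).

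For transfinite composition you take a different route.

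\textbf{The paper's route.} It keeps the same mechanism throughout. It asserts, via a ``lengthy but routine calculation'', that $|\cdot|$ and every $(-)[n]$ preserve directed colimits. These functors therefore send a transfinite composite of weak equivalences to a transfinite composite of equivalences in $\Cat$. The paper then invokes closure of equivalences of categories under transfinite composition.

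\textbf{Your route.} You switch to characterization (b). Its conditions involve only the sets $\Ob_n$, the term sets, and the display maps over a fixed object. These are read off from the representables $\Ob_n$ and $\Ob_n^+$, so you only need finite presentability of $\mathcal O_n$ and $\mathcal O_n^+$, which the paper already uses. You do not need an explicit description of the hom-sets of $|\colim \mathcal A_\alpha|$, nor of the natural transformations forming the morphisms of $(\colim \mathcal A_\alpha)[n]$. Full-faithfulness of the composite then comes for free from Proposition \ref{prop: characterization full-and-faithful}.

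\textbf{Trade-off.} Your route requires an explicit transfinite induction and both directions of (a) $\Leftrightarrow$ (b). You use (a) $\Rightarrow$ (b) at the intermediate stages to obtain essential surjectivity of $(\mathcal A_0)_a \rightarrow (\mathcal A_\alpha)_{a_\alpha}$, and (b) $\Rightarrow$ (a) at the end. The paper's route handles all four closure properties by one uniform argument.

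\textbf{One point to make explicit in (iii).} A display map $b \twoheadrightarrow a_\lambda$ first lifts to some $b_\alpha$ for which $\partial b_\alpha$ and $a_\alpha$ need only agree in the colimit. You may have to pass to a later stage $\alpha'$ to get $\partial b_{\alpha'} = a_{\alpha'}$ on the nose. This follows directly from the filtered-colimit description of $\Ob_n$.
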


\begin{proof}[Sketch of proof]
A lengthy but routine calculation shows that the functors $\Cont \rightarrow \Cat$ given by
$$
\mathcal A \longmapsto |\mathcal A|, \qquad\qquad \mathcal A \longmapsto \mathcal A[n] \;\; \text{(for } n \ge 0\text{)}
$$
preserve directed colimits. In particular, each of these preserves transfinite composition. The claims in the statement then follow from Proposition \ref{prop: characterization weak equivalences}(c) and the fact that the class of equivalences of categories contains all isomorphisms, has the 2-out-of-3 property, is closed under transfinite composition in $\Cat$, and is closed under formation of retracts in $\iiAr(\Cat)$.
\end{proof}

\begin{proposition}
Consider strict morphisms $F$, $G:\mathcal A \rightarrow B$. If there exists a natural isomorphism $F \cong G$ and $F$ is a weak equivalence, then so is $G$.
\end{proposition}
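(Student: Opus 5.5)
The quickest route is through the definition: $F$ is a weak equivalence exactly when $F_w$ is an equivalence in the $2$-category $\iiDMC$. A natural isomorphism $\phi:F \cong G$ between strict morphisms is an invertible $2$-cell in $\iiCont$. The weakening $(-)_w$ is a $2$-functor (Definition \ref{def: weakening functor}), so $\phi$ gives an invertible $2$-cell $F_w \cong G_w$ in $\iiDMC$; concretely, $F_w$ and $G_w$ share underlying functors with $F$ and $G$, and $\phi$ is the same natural transformation. Equivalences in any $2$-category are closed under invertible $2$-cells. If $H$ is a quasi-inverse of $F_w$ with $HF_w \cong \Id$ and $F_wH \cong \Id$, then whiskering $\phi$ gives $HG_w \cong HF_w \cong \Id$ and $G_wH \cong F_wH \cong \Id$. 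So $H$ is also a quasi-inverse of $G_w$, and $G$ is a weak equivalence.

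One could also argue through the criterion of Remark \ref{rem: equivalences of contextual categories}. First, the underlying functor of $G$ is naturally isomorphic to that of $F$, and equivalences of categories are invariant under natural isomorphism. Second, $G$ must reflect display maps. Take an arrow $p:a' \to a$ in $|\mathcal A|$ with $G(p)$ a display map. The naturality square of $\phi$ at $p$ has invertible horizontal components $\phi_{a'}$ and $\phi_a$, so $F(p) \cong G(p)$ in $\iiAr(|\mathcal B|)$. Display maps of $\mathcal B_w$ are closed under isomorphism in the arrow category, so $F(p)$ is a display map, and since $F$ reflects display maps, $p$ is one too. That $G$ preserves display maps holds automatically because $G$ is strict.

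There is essentially no obstacle here. The only point needing care is that the $2$-cells of $\iiDMC$ are arbitrary natural transformations, so $\phi$ really is a $2$-cell there. This holds by Definition \ref{def: display map category}, so I would present the short $2$-categorical argument and mention the explicit check as an alternative.
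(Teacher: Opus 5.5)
Your proof is correct, but it takes a different route from the paper's. The paper does not go back to the definition of weak equivalence. It uses characterization (d) of Proposition~\ref{prop: characterization weak equivalences}: $F$ is a weak equivalence when $F:|\mathcal A| \rightarrow |\mathcal B|$ and all the functors $F[n]:\mathcal A[n] \rightarrow \mathcal B[n]$ are equivalences of categories. It then applies the strict $2$-functors $(-)[n]:\iiCont \rightarrow \iiCat$ of Definition~\ref{def: A[n]} to the isomorphism $F \cong G$, obtaining $F[n] \cong G[n]$. It concludes because equivalences of categories are invariant under natural isomorphism.

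Your main argument instead transports $\phi$ along the $2$-functor $(-)_w$ and uses the general fact that equivalences in a $2$-category are stable under invertible $2$-cells. This is shorter and does not depend on Proposition~\ref{prop: characterization weak equivalences} at all. The same reasoning also gives Proposition~\ref{prop: equivalence in iiCont is weak equivalence} immediately, since $2$-functors preserve equivalences.

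What the paper's route buys is uniformity. Throughout this part of the section, weak equivalences are detected through the $\Cat$-valued invariants $|{-}|$ and $(-)[n]$. That is also how closure under retracts, $2$-out-of-$3$ and transfinite composition is obtained in Proposition~\ref{prop: weak equivalences - transfinite composition, retracts, 2 of 3}.

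Your alternative check via (i) and (ii) of Remark~\ref{rem: equivalences of contextual categories} is also sound. It is the first argument unpacked, using that display maps of $\mathcal B_w$ are closed under isomorphism in $\iiAr(|\mathcal B|)$.
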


\begin{proof}
Consider strict morphisms $F$, $G:\mathcal A \rightarrow \mathcal B$ such that $F \cong G$. If $F$ is a weak equivalence, by Proposition \ref{prop: characterization weak equivalences} the functors $F$ and $F[n]$ ($n \ge 0$) are equivalences of categories. Applying the $2$-functors $\iiCont \rightarrow \iiCont$ from Definition \ref{def: A[n]}, an isomorphism $F \cong G$ yields $F[n] \cong G[n]$ for each $n$. It follows that $G$ and $G[n]$ ($n \ge 0$) are equivalences of categories, and we conclude from Proposition \ref{prop: characterization weak equivalences} that $G$ is a weak equivalence in $\Cont$.
\end{proof}

\begin{proposition}
\label{prop: equivalence in iiCont is weak equivalence}
Suppose that a strict morphism $F:\mathcal A \rightarrow \mathcal B$ is an equivalence in $\iiCont$, that is, there exist a strict morphism $G:\mathcal B \rightarrow \mathcal A$ and isomorphisms $GF \cong \Id_\mathcal A$ and $FG \cong \Id_\mathcal B$. Then $F$ is a weak equivalence in $\Cont$.
\end{proposition}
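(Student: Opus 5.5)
The plan is to verify condition (d) of Proposition \ref{prop: characterization weak equivalences}: $F:|\mathcal A| \rightarrow |\mathcal B|$ and every $F[n]:\mathcal A[n] \rightarrow \mathcal B[n]$ must be equivalences of categories. Both families of functors come from strict $2$-functors out of $\iiCont$, and such $2$-functors preserve equivalences.

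For the underlying categories, the forgetful assignment $\mathcal A \mapsto |\mathcal A|$ is a strict $2$-functor $\iiCont \rightarrow \iiCat$. The $2$-cells of $\iiCont$ are by definition natural transformations between the underlying functors, and composition is inherited. So the given isomorphisms $GF \cong \Id_\mathcal A$ and $FG \cong \Id_\mathcal B$ are literally natural isomorphisms between functors of the underlying categories. Hence $|F|$ is an equivalence of categories with quasi-inverse $|G|$.

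For the functors $F[n]$, Definition \ref{def: A[n]} gives a strict $2$-functor $\iiCont \rightarrow \iiCat$ sending $\mathcal A \mapsto \mathcal A[n]$, $F \mapsto F[n]$ and $\phi \mapsto \phi[n]$. The only check needed is that it sends invertible $2$-cells to invertible $2$-cells. If $\phi$ has inverse $\psi$, then $\phi[n]$ has inverse $\psi[n]$, since $\phi[n]_a = (\phi_{\partial_i a})_i$ is defined componentwise. Strict functoriality also gives $(GF)[n] = G[n]F[n]$ and $\Id[n] = \Id$. Applying the $2$-functor to the two isomorphisms therefore yields $G[n]F[n] \cong \Id_{\mathcal A[n]}$ and $F[n]G[n] \cong \Id_{\mathcal B[n]}$, so each $F[n]$ is an equivalence.

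Condition (d) then holds, and Proposition \ref{prop: characterization weak equivalences} gives (a), so $F$ is a weak equivalence. There is no real obstacle. The one point to confirm is that $\phi[n]$ is a well-defined morphism of $\mathcal B[n]$: its components $\phi_{\partial_i a}$ must form a natural transformation $\overline{Fa} \Rightarrow \overline{Ga}$. This follows from naturality of $\phi$ with respect to the display maps $\partial_{i+1}a \twoheadrightarrow \partial_i a$, together with $F$ and $G$ strictly preserving $\partial$ and $\textbf{p}$, and it is already asserted in Definition \ref{def: A[n]}.
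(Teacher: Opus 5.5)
Your proof is correct and follows the paper's argument: the strict $2$-functors $\mathcal A \mapsto |\mathcal A|$ and $\mathcal A \mapsto \mathcal A[n]$ preserve equivalences, and you conclude via condition (d) of Proposition \ref{prop: characterization weak equivalences}. You are slightly more explicit than the paper, which mentions only the $F[n]$ and leaves the equivalence $|F|:|\mathcal A| \rightarrow |\mathcal B|$ implicit.
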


\begin{proof}
Since strict $2$-functors preserve equivalences, $F[n]$ is an equivalence in $\iiCat$ for all $n \ge 0$. By Proposition \ref{prop: characterization weak equivalences}, $F$ is a weak equivalence.
\end{proof}

\subsection{The model structure}
\label{subsec: the model structure}

\begin{definition}
\label{def: cofibrations, acyclic fibrations}
In $\Cont$, we define a \emph{basic cofibration} as a morphism of one of the forms (see \S\ref{subsec: sorts terms and axioms via contextual categories})
$$
\iota_n^S:\mathcal O_{n-1} \rightarrow \mathcal O_n,\qquad \iota_n^T:\mathcal O_n \rightarrow \mathcal O_n^+,\qquad \pi_n^T:\mathcal O_n^{++} \rightarrow \mathcal O_n^+
$$
for $n \ge 1$. The set of basic cofibrations will be denoted by $\bcof$. As $\Cont$ is locally (finitely) presentable, by the small object argument it admits a weak factorization system cofibrantly generated by $\bcof$, i.e.
\begin{itemize}
	\item the right class, whose elements are the \emph{acyclic fibrations}, is $\afib = \rlp(\bcof)$;
	
	\item the left class, whose elements are the \emph{cofibrations}, is $\cof = \llp\rlp(\bcof) = \llp(\fib)$.
\end{itemize}
\end{definition}

\begin{remark}
\label{rem: describing acyclic fibrations}
We saw in Proposition \ref{prop: characterization full-and-faithful} that a strict morphism $F:\mathcal A \rightarrow \mathcal B$ is full-and-faithful precisely when it has the right lifting property with respect to $\iota_n^T$ and $\pi_n^T$ for all $n \ge 1$ -- equivalently, when it is right-orthogonal to $\iota_n^T$.

On the other hand, $F$ has the right lifting property with respect to $\iota_n^S$ if and only if the following holds: if $a \in \mathcal A$ has length $n-1$ and $p:b \twoheadrightarrow a$ is a strict display map in $\mathcal B$, then there exists a strict display map $q:a' \twoheadrightarrow a$ in $\mathcal A$ such that $Fq = p$. In other words, $F_a:\mathcal A_a \rightarrow \mathcal B_{Fa}$ is surjective on objects whenever $\ell(a) = n-1$.
\end{remark}

\begin{proposition}
\label{prop: characterization acyclic fibrations}
For a strict morphism $F:\mathcal A \rightarrow \mathcal \mathcal B$, the following are equivalent:
\begin{enumerate}[label=(\alph*)]
	\item $F$ is an acyclic fibration.
	
	\item $F$ is full-and-faithful, and $F_a:\mathcal A_a \rightarrow \mathcal B_{Fa}$ is surjective on objects for all $a \in \mathcal A$.
	
	\item $F$ is full-and-faithful and surjective on objects (equivalently, its underlying functor is an acyclic fibration with respect to the categorical model structure on $\Cat$).
\end{enumerate}
\end{proposition}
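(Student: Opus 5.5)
The plan is to prove (a) $\Leftrightarrow$ (b) directly from the lifting characterizations already recorded, and then (b) $\Leftrightarrow$ (c) by an induction on length.

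For (a) $\Leftrightarrow$ (b): by definition, $\afib = \rlp(\bcof)$, so $F$ is an acyclic fibration iff it has the right lifting property against $\iota_n^S$, $\iota_n^T$ and $\pi_n^T$ for all $n \ge 1$. By Remark \ref{rem: describing acyclic fibrations}, lifting against $\iota_n^T$ and $\pi_n^T$ for all $n$ is the same as full-and-faithfulness. That remark also shows that lifting against $\iota_n^S$ for all $n$ means $F_a$ is surjective on objects for every $a$ of length $n-1$, i.e.\ for every $a$. So the equivalence is just bookkeeping.

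For (b) $\Rightarrow$ (c): surjectivity on objects is shown by induction on length. The length-$0$ case is immediate since $F(1_\mathcal A) = 1_\mathcal B$. Given $b \in \mathcal B$ of length $n \ge 1$, the induction hypothesis gives $a$ with $Fa = \partial b$. Then $\textbf{p}_b$ is an object of $\mathcal B_{Fa}$, so surjectivity of $F_a$ on objects yields $a'$ with $\partial a' = a$ and $Fa' = b$. Here we use that $F$ strictly preserves $\partial$, so an object of $\mathcal A_a$ is sent to an object with $\partial(Fa') = Fa$.

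For (c) $\Rightarrow$ (b): let $b \in \mathcal B_{Fa}$, so $\partial b = Fa$. Surjectivity on objects gives some $a'$ with $Fa' = b$, but we also need $\partial a' = a$. We only know $F(\partial a') = \partial b = Fa$, and a full-and-faithful functor need not be injective on objects. This is the main obstacle, and it is where the contextual structure must be used.

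To get around it, use the distinguished pullback instead of $a'$ itself. Since $F(\partial a') = Fa$ and $F$ is full, take $g:a \to \partial a'$ with $F(g) = id_{Fa}$. Form the distinguished square over $g$ with top object $g^*(a')$, where $\partial g^*(a') = a$. Because $F$ preserves distinguished squares and pullback of $b$ along $id_{Fa}$ is $b$ itself (by uniqueness in Definition \ref{def: (pre)contextual category}(vii) together with (viii)), we get $F(g^*(a')) = b$. Hence $F_a$ is surjective on objects.

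Finally, the parenthetical in (c) is the standard description of the trivial fibrations in the categorical model structure on $\Cat$: functors that are full, faithful and surjective on objects.
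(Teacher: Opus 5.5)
Your proof is correct and follows the paper's argument step by step: (a)~$\Leftrightarrow$~(b) via Remark~\ref{rem: describing acyclic fibrations}, (b)~$\Rightarrow$~(c) by induction on length, and (c)~$\Rightarrow$~(b) by pulling back a preimage $a'$ of $b$ along a lift $g:a\rightarrow\partial a'$ of $id_{Fa}$. Your use of uniqueness in Definition~\ref{def: (pre)contextual category}(vii), together with the identity squares from (viii), to get $F(g^*(a'))=b$ makes explicit a step the paper leaves implicit.
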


\begin{proof}
(a) $\Leftrightarrow$ (b) follows from Remark \ref{rem: describing acyclic fibrations}, and arguing by induction yields (b) $\Rightarrow$ (c). To prove (c) $\Rightarrow$ (b), suppose that $F$ is surjective on objects, and consider $a \in \mathcal A$ and $b \twoheadrightarrow F(a)$. Taking $a' \in \mathcal A$ such that $F(a') = b$, we have $F(\partial (a')) = \partial(F(a')) = \partial b = F(a)$, so as $F$ is full, there exists $i:a \rightarrow \partial(a')$ such that $F(i) = id_{F(a)}$. Then the distinguished square
\[
\dsqua{c}{a'}{a}{\partial(a')}{i'}{i}{}{}
\]
satisfies $F(c) = F(a') = b$.
\end{proof}

\begin{definition}
\label{def: basic acyclic cofibrations}
For $n \ge 0$, we let $\bbO_n^\triangledown$ be the gat
\begin{align*}
	& \vdash O_1 \tp\\
	x_1:O_1 & \vdash O_2(x_1) \tp\\
	& \cdots\\
	x_1:O_1, \;..., \; x_{n-1}:O_{n-1}(x_1, ..., x_{n-2}) & \vdash O_n(x_1, ..., x_{n-1}) \tp\\
	x_1:O_1, \;..., \; x_{n-1}:O_{n-1}(x_1, ..., x_{n-2}) & \vdash O'_n(x_1, ..., x_{n-1}) \tp\\[0.5em]
	x_1:O_1, \;..., \; x_{n-1}:O_{n-1}(x_1, ..., x_{n-2}), \; x_n:O_n(x_1, ..., x_{n-1}) & \vdash f(x_1, ..., x_n):O'_n(x_1, ..., x_{n-1})\\
	x_1:O_1, \;..., \; x_{n-1}:O_{n-1}(x_1, ..., x_{n-2}), \; x'_n:O'_n(x_1, ..., x_{n-1}) & \vdash g(x_1, ..., x'_n):O_n(x_1, ..., x_{n-1})
\end{align*}
\vspace{-2em}
\begin{align*}
	g(x_1, ..., x_{n-1}, f(x_1, ..., x_{n-1}, x_n)) & \equiv x_n : O_n(x_1, ..., x_{n-1})\\
	f(x_1, ..., x_{n-1}, g(x_1, ..., x_{n-1}, x'_n)) & \equiv x'_n: O'_n(x_1, ..., x_{n-1})
\end{align*}
We will denote by $\mathcal O_n^\triangledown$ the contextual category $\mathcal C(\bbO_n^\triangledown)$. It is freely generated by a diagram
% https://q.uiver.app/#q=WzAsNixbMCwwLCJvX24iXSxbMCwyLCJvX24nIl0sWzEsMSwib197bi0xfSJdLFsyLDEsIlxcY2RvdHMiXSxbMywxLCJvXzEiXSxbNCwxLCJvXzAiXSxbMiwzLCJcXHRleHRiZntwfV97b197bi0xfX0iLDAseyJzdHlsZSI6eyJoZWFkIjp7Im5hbWUiOiJlcGkifX19XSxbMyw0LCJcXHRleHRiZntwfV97b18yfSIsMCx7InN0eWxlIjp7ImhlYWQiOnsibmFtZSI6ImVwaSJ9fX1dLFs0LDUsIlxcdGV4dGJme3B9X3tvXzF9IiwwLHsic3R5bGUiOnsiaGVhZCI6eyJuYW1lIjoiZXBpIn19fV0sWzAsMiwiXFx0ZXh0YmZ7cH1fe29fbn0iLDAseyJzdHlsZSI6eyJoZWFkIjp7Im5hbWUiOiJlcGkifX19XSxbMSwyLCJcXHRleHRiZntwfV97bydfbn0iLDIseyJzdHlsZSI6eyJoZWFkIjp7Im5hbWUiOiJlcGkifX19XSxbMCwxLCJmIiwwLHsib2Zmc2V0IjotMX1dLFsxLDAsImciLDAseyJvZmZzZXQiOi0xfV1d
\[\begin{tikzcd}[row sep=tiny]
	{o_n} &&&& \\
	& {o_{n-1}} & \cdots & {o_1} & {o_0} \\
	{o_n'}
	\arrow["{\textbf{p}_{o_n}}", two heads, from=1-1, to=2-2]
	\arrow["f", shift left, from=1-1, to=3-1]
	\arrow["{\textbf{p}_{o_{n-1}}}", two heads, from=2-2, to=2-3]
	\arrow["{\textbf{p}_{o_2}}", two heads, from=2-3, to=2-4]
	\arrow["{\textbf{p}_{o_1}}", two heads, from=2-4, to=2-5]
	\arrow["g", shift left, from=3-1, to=1-1]
	\arrow["{\textbf{p}_{o'_n}}"', two heads, from=3-1, to=2-2]
\end{tikzcd}\]
where the subscripts indicate the length of each object, such that
$$
\textbf{p}_{o'_n} \circ f = \textbf{p}_{o_n}, \qquad \textbf{p}_{o_n} \circ g = \textbf{p}_{o'_n}, \qquad g \circ f = id_{o_n}, \qquad f \circ g = id_{o'_n}.
$$
We let $\iota_n^\triangledown:\mathcal O_n \rightarrow \mathcal O_n^\triangledown$ be the unique strict morphism acting on generating objects (recall the presentation of $\mathcal O_n$ given in Construction \ref{constr: O_n etc}) by $o_n \mapsto o_n$.
\end{definition}

\begin{definition}
\label{def: acyclic cofibrations, fibrations}
In $\Cont$, we define a \emph{basic acyclic cofibration} as a morphism of the form
$$
\iota_n^\triangledown:\mathcal O_n \longrightarrow \mathcal O_n^\triangledown
$$
for $n \ge 1$. The set of basic acyclic cofibrations will be denoted by $\bacof$. By the small object argument, $\bacof$ generates a weak factorization system on $\Cont$:
\begin{itemize}
	\item the right class, whose elements are the \emph{fibrations}, is $\fib = \rlp(\bacof)$;
	
	\item the left class, whose elements are the \emph{acyclic cofibrations}, is $\acof = \llp\rlp(\bacof) = \llp(\fib)$.
\end{itemize}
\end{definition}

\begin{remark}
\label{rem: description fibrations in Cont}
It is straightforward to verify that a strict morphism $F:\mathcal A \rightarrow \mathcal B$ is a fibration (in the sense of the above definition) if and only if for every $a \in \mathcal A$, the functor $F_a:\mathcal A_a \rightarrow \mathcal B_{Fa}$ is an isofibration, or equivalently, a fibration with respect to the categorical model structure on $\Cat$.
\end{remark}

\begin{lemma}
\label{lem: in Cont, acyclic fibration iff fibration and weq}
A morphism in $\Cont$ is an acyclic fibration (Definition \ref{def: cofibrations, acyclic fibrations}) if and only if it is a fibration (Definition \ref{def: acyclic cofibrations, fibrations}) and a weak equivalence (Definition \ref{def: weak equivalence}).
\end{lemma}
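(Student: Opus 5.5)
We prove both inclusions using the concrete descriptions already available: acyclic fibrations via Proposition \ref{prop: characterization acyclic fibrations}, fibrations via Remark \ref{rem: description fibrations in Cont}, and weak equivalences via Proposition \ref{prop: characterization weak equivalences}(b), i.e. type-theoretic equivalences.

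\textbf{Acyclic fibration $\Rightarrow$ fibration and weak equivalence.} Let $F:\mathcal A \rightarrow \mathcal B$ be an acyclic fibration. By Proposition \ref{prop: characterization acyclic fibrations}(b), $F$ is full-and-faithful, and each $F_a:\mathcal A_a \rightarrow \mathcal B_{Fa}$ is surjective on objects.

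\begin{itemize}
	\item By Proposition \ref{prop: characterization full-and-faithful}, the maps $\tm_\mathcal A(a) \rightarrow \tm_\mathcal B(Fa)$ are bijective. Since each $F_a$ is surjective on objects, it is essentially surjective. Hence $F$ is a type-theoretic equivalence, and so a weak equivalence.
	\item For the fibration property, we note that $F_a$ is full-and-faithful, because morphisms of $\mathcal A_a$ are morphisms of $|\mathcal A|$ over $a$ and $F$ is full-and-faithful. A full-and-faithful functor that is surjective on objects is an acyclic fibration in $\Cat$, hence in particular an isofibration. By Remark \ref{rem: description fibrations in Cont}, $F$ is a fibration.
\end{itemize}

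\textbf{Fibration and weak equivalence $\Rightarrow$ acyclic fibration.} Suppose $F$ is a fibration and a weak equivalence.

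\begin{itemize}
	\item By Proposition \ref{prop: characterization weak equivalences}, $F$ is full-and-faithful. Each $F_a$ is essentially surjective (and in fact an equivalence, by Remark \ref{rem: weak equivalences remark 1}).
	\item By Remark \ref{rem: description fibrations in Cont}, each $F_a$ is also an isofibration.
	\item A functor that is both an isofibration and essentially surjective is surjective on objects. Indeed, given $y \in \mathcal B_{Fa}$, pick $x$ with an isomorphism $F_a x \cong y$ and lift that isomorphism along the isofibration $F_a$.
\end{itemize}

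So each $F_a$ is surjective on objects and $F$ is full-and-faithful. Proposition \ref{prop: characterization acyclic fibrations}(b) then shows $F$ is an acyclic fibration.

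The only step needing care is the justification of Remark \ref{rem: description fibrations in Cont}: we must check that lifting against $\iota_n^\triangledown$ is exactly lifting isomorphisms in $\mathcal B_{Fa}$ with domain in the image of $F_a$. The universal property of $\mathcal O_n^\triangledown$ makes this direct. A map $\mathcal O_n^\triangledown \rightarrow \mathcal B$ is a pair of strict display maps over a common base, together with a mutually inverse pair of maps over that base. A lift then amounts to an object of $\mathcal A_a$ over the target together with a lifted isomorphism. Since the paper has already recorded this remark, we simply invoke it, and the argument reduces to the elementary fact about isofibrations above.
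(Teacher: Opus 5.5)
Your proposal is correct and follows the paper's proof. Both arguments use the fact that acyclic fibrations and weak equivalences are full-and-faithful to reduce everything to a fiberwise statement: for each $a$, the functor $F_a$ is surjective on objects (and an equivalence) if and only if it is an isofibration and an equivalence, which is the elementary fact about isofibrations that you verify. Your extra checks, that $F_a$ inherits full-and-faithfulness from $F$ and the unpacking of the remark describing fibrations, only make explicit what the paper leaves implicit.
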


\begin{proof}
Since acyclic fibrations and weak equivalences are full-and-faithful, it suffices to prove that the following are equivalent for all $a \in \mathcal A$:
\begin{itemize}
	\item $F_a:\mathcal A_a \rightarrow \mathcal B_{Fa}$ is an equivalence of categories and surjective on objects.
	
	\item $F_a$ is an isofibration and an equivalence of categories.
\end{itemize}
We conclude from the fact that an equivalence of categories is surjective on objects precisely when it is an isofibration.
\end{proof}

\subsubsection{Interlude: tensors and powers of contextual categories by categories}

An important ingredient for concluding the construction of the model structure on $\Cont$ will be the fact that $\iiCont$ -- the strict $2$-category of contextual categories, strict morphisms, and natural transformations between them -- is tensored and powered as a $\Cat$-enriched category (we refer to \cite{Kel82}, \S3.7 for the relevant definitions\footnote{We call ``powers" what Kelly calls ``cotensor products".}). This will be used in Proposition \ref{prop: cellular acyclic cof is cof and weak equivalence}; more precisely, in the proof that pushouts of the arrows $\iota_n^\triangledown$ from Definition \ref{def: basic acyclic cofibrations} are weak equivalences. For $\mathcal A \in \iiCont$, letting $\textbf{I}_\cong \in \Cat$ be the free-standing isomorphism, that is, $\textbf{I}_\cong = \{0 \overset{\cong}{\leftrightarrow} 1\}$, the tensor $\textbf{I}_\cong \otimes \mathcal A$ and the power $\mathcal A^{\textbf{I}_\cong}$ will be, respectively, a cylinder and a path object for $\mathcal A$.

Such tensors and powers are studied in \cite{Alm26} as particular examples of tensor products and exponentials between contextual categories. However, since the general construction is far more involved than we will need at this point (that is, for proving Theorem \ref{th: model structure on Cont}), we will give a self-contained presentation of the objects needed and give a path for reading the required results from \cite{Alm26}.\footnote{Later, when proving that $\Cont$ is a monoidal model category, we will assume the content of \cite{Alm26} in a more substantial way.}

\begin{construction}[$\Cat$-powers of contextual categories]
Let $\mathcal A$ a small contextual category. For each $n \ge 0$, we let
$$
(-)_\varheartsuit:\mathcal A[n] \longrightarrow |\mathcal A|,
$$
where $\mathcal A[n]$ is as in Definition \ref{def: A[n]}, be the (``top level") functor given on objects by $a \mapsto a$, and on arrows by sending a natural transformation $\varphi:\overline{a} \Rightarrow \overline{b}$ to $\varphi_n:a \rightarrow b$.

Given a small category $K$, we let $\mathcal A^K$ be the following contextual category:
\begin{itemize}
	\item For each $n \ge 0$, its length-$n$ objects are the functors $K \rightarrow \mathcal A[n]$. For $F:K \rightarrow \mathcal A[m]$ and $G:K \rightarrow \mathcal A[n]$, morphisms from $F$ to $G$ in $|\mathcal A_K|$ are the natural transformations $F_\varheartsuit \Rightarrow G_\varheartsuit$ where the latter are, by abuse of notation, $(-)_\varheartsuit \circ F$ and $(-)_\varheartsuit \circ G$.
	
	\item For an object $F:K \rightarrow \mathcal A[n]$ with $n \ge 1$, we let $\partial F:K \rightarrow \mathcal A[n-1]$ be $\overline{\partial} \circ F$ where
	$$
	\overline{\partial}: \mathcal A[n] \longrightarrow \mathcal A[n-1]
	$$
	sends $a$ to $\partial a$, and $\varphi:\overline{a} \Rightarrow \overline{b}$ to $\varphi_{n-1}:\partial a \rightarrow \partial b$.
	
	\vspace{0.5em}
	
	The strict display map $\textbf{p}_F:F \Rightarrow \overline{\partial} \circ F$ is the natural transformation whose $k$-component is $\textbf{p}_{F(k)}$ for $k \in K$.
	
	\item For a diagram
	\[
	\begin{tikzcd}
		 & F_\varheartsuit \arrow[Rightarrow]{d}{\textbf{p}_F} \\
		G_\varheartsuit \arrow[Rightarrow,swap]{r}{\varphi} & \partial F_\varheartsuit,
	\end{tikzcd}
	\]
	the corresponding distinguished square is
	\[
	\begin{tikzcd}
		G'_\varheartsuit \arrow[Rightarrow]{r}{\varphi'} \arrow[Rightarrow,swap]{d}{\textbf{p}_{G'}} & F_\varheartsuit \arrow[Rightarrow]{d}{\textbf{p}_F} \\
		G_\varheartsuit \arrow[Rightarrow,swap]{r}{\varphi} & \partial F_\varheartsuit
	\end{tikzcd}
	\]
	where for each $k \in K$, $G'(k)$ and $\varphi'_k$ are as in the distinguished square
		\[
		\dsqua{G'(k)}{F(k)}{G(k)}{\partial (F(k)) = (\partial F)(k).}{\varphi'_k}{\varphi_k}{\textbf{p}_{G'(k)}}{\textbf{p}_{F(k)}}
		\]
\end{itemize}
\end{construction}

\begin{remark}
It is not difficult, using that $\mathcal A[n]$ is functorial in $\mathcal A$ for each $n$, to extend the assignment $(K,\mathcal A) \mapsto \mathcal A^K$ into a functor $\Cat^{\op} \times \Cont \rightarrow \Cont$.
\end{remark}

\begin{construction}[$\iiCont$ is $\Cat$-powered]
By construction, we have a full-and-faithful functor $U:|\mathcal A^K| \rightarrow \iiCat(K,|\mathcal A|)$ given on objects by $F \mapsto F_\varheartsuit$. Now, suppose we are also given $\mathcal B \in \iiCont$. Then we obtain full-and-faithful and injective-on-objects functors
$$
\iiCont(\mathcal B,\mathcal A^K) \hookrightarrow \iiCat(|\mathcal B|,|\mathcal A^K|) \overset{U \circ -}{\longrightarrow} \iiCat(|\mathcal B|,\iiCat(K,|\mathcal A|)) \cong \iiCat(K,\iiCat(|\mathcal B|,|\mathcal A|)).
$$
In fact, it can be proved that:
\begin{itemize}
	\item The composite of this chain of functors is injective on objects (despite $U$ not being injective on objects). This relies on the fact that preservation of strict display maps allows us to reconstruct a strict morphism $\mathcal B \rightarrow \mathcal A^K$ from its associated functor $|\mathcal B| \rightarrow \iiCat(K,|\mathcal A|)$.
	
	\item The (strict) image of the composite functor is the full subcategory $\iiCat(K,\iiCont(\mathcal B,\mathcal A))$. This uses that strict display maps (resp. distinguished squares) in $\mathcal A^K$ are defined as $\Ob(K)$-indexed families of strict display maps (resp. of distinguished squares) in $\mathcal A$.
\end{itemize}
Verifying these can be a useful exercise to familiarize oneself with the construction of $\mathcal A^K$; a proof is given in \cite[\S3.2]{Alm26}. As a consequence, we have an isomorphism $\iiCont(\mathcal B,\mathcal A^K) \cong \iiCat(K,\iiCont(\mathcal B,\mathcal A))$ natural in $\mathcal A$, $\mathcal B$ and $K$.
\end{construction}

\begin{proposition}[$\iiCont$ is $\Cat$-tensored]
\label{prop: cont is cat-tensored}
For $\mathcal A \in \iiCont$ and $K \in \Cat$, the $\Cat$-enriched functor $\iiCont^{\op} \rightarrow \iiCat$ given by
$$
\mathcal B \longmapsto \iiCat(K,\iiCont(\mathcal A,\mathcal B))
$$
is representable. By the enriched Yoneda lemma (\cite{Kel82}, \S2.4), such an object is determined up to canonical isomorphism. We will denote by $K \otimes \mathcal A$ an arbitrary contextual category that represents this functor (a \emph{tensor} between $K$ and $\mathcal A$); that is, we have $\iiCont(K \otimes \mathcal A,\mathcal B) \cong \iiCat(K,\iiCont(\mathcal A,\mathcal B))$.
\end{proposition}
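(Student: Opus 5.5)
We will construct $K \otimes \mathcal A$ by presenting it through precontextual categories and then applying the strict left $2$-adjoint $L:\iiPrecont \rightarrow \iiCont$ of Remark \ref{rem: use of precontextual categories}. The plan is to first build a precontextual category $\mathcal P = K \otimes^{\pre} \mathcal A$ with the universal property $\iiPrecont(\mathcal P,\mathcal S) \cong \iiCat(K,\iiCont(\mathcal A,\mathcal S))$ for all $\mathcal S \in \iiCont$, naturally in $\mathcal S$ and $2$-functorially. Composing with the isomorphism $\iiPrecont(\mathcal P,\mathcal S) \cong \iiCont(L\mathcal P,\mathcal S)$ then shows that $K \otimes \mathcal A := L\mathcal P$ represents the given functor.

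To construct $\mathcal P$, start from the product category $K \times |\mathcal A|$. Objects are pairs $(k,a)$ with $\ell(k,a) = \ell(a)$, $\partial(k,a) = (k,\partial a)$, strict display maps $(id_k,\textbf{p}_a)$, and distinguished squares $\{id_k\} \times \textbf{Q}_\mathcal A$. Then freely adjoin an arrow $(\kappa,a):(k,a) \rightarrow (k',a)$ for each $\kappa:k \rightarrow k'$. This is done by taking the category generated by $K \times |\mathcal A|$ together with these arrows, subject to the following relations:
\begin{itemize}
\item functoriality in $\kappa$;
\item naturality squares $(\kappa,b)\circ(k,f) = (k',f)\circ(\kappa,a)$;
\item compatibility $(\kappa,\partial a)\circ(k,\textbf{p}_a) = (k',\textbf{p}_a)\circ(\kappa,a)$, which is already an instance of naturality.
\end{itemize}
Concretely, one can take the underlying category to be the ``funny'' or Gray-type tensor quotiented to the cartesian product. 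The quotient is needed because a $K$-indexed family of natural transformations between contextual functors gives exactly a functor out of $K \times |\mathcal A|$ that restricts to a strict morphism on each slice $\{k\} \times \mathcal A$. So the underlying category can simply be $K \times |\mathcal A|$, with precontextual structure only on the slices $\{k\}\times|\mathcal A|$. A morphism $(\kappa,f)$ whose $K$-component is not an identity is neither a display map nor part of a distinguished square.

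The key step is to check the universal property. A strict precontextual morphism $\mathcal P \rightarrow \mathcal S$ is a functor $\Phi:K \times |\mathcal A| \rightarrow |\mathcal S|$ preserving lengths, $\partial$, strict display maps and distinguished squares. Since all such structure lives in the slices, this amounts to the following two facts: each $\Phi(k,-)$ is a contextual functor $\mathcal A \rightarrow \mathcal S$, and each $\Phi(\kappa,-)$ is a natural transformation between them. That is precisely a functor $K \rightarrow \iiCont(\mathcal A,\mathcal S)$, by currying $\Cat(K \times |\mathcal A|, |\mathcal S|) \cong \Cat(K,\Cat(|\mathcal A|,|\mathcal S|))$ and restricting to the sub-$2$-category of contextual functors. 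The same currying applied to natural transformations yields the isomorphism on hom-categories. We then verify enriched naturality in $\mathcal S$, which holds because every identification is by precomposition and currying.

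The main obstacle is that $\mathcal P$ is not a contextual category. For example, there are many length-$0$ objects $(k,1_\mathcal A)$, and squares over cross-slice arrows are missing. This is why $L$ is needed, and the cited result \cite[Cor. 5.33]{Alm26} that $L$ is a strict $2$-adjoint, not merely a $1$-adjoint, is essential. We also need that strict morphisms from a precontextual category into a contextual one are exactly the structure-preserving functors, so that no hidden conditions such as a unique terminal object constrain $\Phi$. If we only had the $1$-adjoint, we would instead obtain the $2$-dimensional part by tensoring with $\textbf{2}$ and applying the $1$-categorical universal property, using $K \times \textbf{2}$ in place of $K$.
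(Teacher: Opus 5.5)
Your proposal is correct and is essentially the paper's own argument. You put the precontextual structure on $K \times |\mathcal A|$ only along the slices $\{k\}\times|\mathcal A|$, identify strict morphisms and natural transformations out of it with $\iiCat(K,\iiCont(\mathcal A,\mathcal S))$ by currying, and then pass to $L\mathcal P$ using the $\Cat$-enriched adjunction of \cite[Cor. 5.33]{Alm26}.

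Your closing aside, about making do with only the $1$-adjoint, is not needed for the proof, and as phrased it is circular. Knowing that the contextual category built from $K\times\textbf{2}$ classifies $2$-cells between strict morphisms out of $L\mathcal P$ is itself a $2$-dimensional universal property. That shortcut would instead have to go through the $\Cat$-powers $\mathcal S^{\textbf{2}}$ constructed just before this proposition.
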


\begin{proof}[Idea of proof]
Although a proof is given in \cite[Rem. 10.4]{Alm26}, let us outline a more direct description of such tensors. The idea is to present $K \otimes |\mathcal A|$ as a contextual category freely generated by the category $K \times |\mathcal A|$ subject to the following constraints for each $k \in \Ob(K)$:
\begin{itemize}
	\item for each $a \in \Ob(|\mathcal A|)$, the length of (the object corresponding to) $(k,a)$ equals that of $a$;
	
	\item if $\ell(a) \ge 1$, then $\partial(k,a) = (k,\partial a)$ and the corresponding strict display map is $(id_k,\textbf{p}_a):(k,a) \rightarrow (k,\partial a)$;
	
	\item for a distinguished square
	\[
	\dsqua{a}{b}{\partial a}{\partial b}{f'}{f}{\textbf{p}_a}{\textbf{p}_b}
	\]
	in $\mathcal A$, the diagram
	\[
	\dsqua{(k,a)}{(k,a)}{(k,\partial a)}{(k,\partial b)}{f'}{f}{\textbf{p}_a}{\textbf{p}_b}
	\]
	is a distinguished square.
\end{itemize}
More precisely, we want to construct a contextual category $\mathcal C$ equipped with a natural bijective correspondence between strict morphisms $\mathcal C \rightarrow \mathcal B$ and functors $H:K \times |\mathcal A| \rightarrow \mathcal B$ such that for each $k \in \Ob(K)$, letting $\iota_K:|\mathcal A| \rightarrow K \times |\mathcal A|$ be the functor $a \mapsto (k,a)$, the composite $H \circ \iota_K$ is a strict morphism from $\mathcal A$ to $\mathcal B$. Note that for such a $\mathcal C$ we have a canonical isomorphism
\[
\tag{\texttt{*}}
\Cont(\mathcal C,\mathcal B) \cong \Cat(K,\iiCont(\mathcal A,\mathcal B)).
\]
This can be done as a direct application of the formalism of precontextual categories from \cite[\S2]{Alm26}: the above generating data are a description a precontextual category, so it suffices to take its reflection onto contextual categories.

On the other hand, lifting this to an isomorphism $\iiCont(\mathcal C,\mathcal B) \cong \iiCat(K,\iiCont(\mathcal A,\mathcal B))$ is more difficult: it requires knowing that the universal property of $\mathcal C$ as a free object also applies to natural transformations, or, more precisely, that the adjunction between precontextual categories and contextual categories is $\Cat$-enriched. This is done in \cite[Cor. 5.33]{Alm26}.
\end{proof}

\begin{remark}
Since for $K \in \Cat$ we have $\Cont(K \otimes \mathcal A,\mathcal B) \cong \Cont(\mathcal A, \mathcal B^K)$ naturally in $\mathcal A$ and $\mathcal B$, the functors $K \otimes -$ and $(-)^K$ preserve colimits and limits, respectively.
\end{remark}

\begin{example}
\label{ex: cylinder and path object of strict cont cat}
As remarked earlier, the free-standing isomorphism $\textbf{I}_\cong = \{0 \overset{\cong}{\leftrightarrow} 1\} \in \Cat$ will be such that for $\mathcal A \in \Cont$, the contextual categories $\textbf{I}_\cong \otimes \mathcal A$ and $\mathcal A^{\textbf{I}_\cong}$ will function, respectively, as a cylinder and as a path object for $\mathcal A$. Note that strict morphisms $\mathcal A \rightarrow \mathcal B^{\textbf{I}_\cong}$ and $\textbf{I}_\cong \otimes \mathcal A \rightarrow \mathcal B$ correspond bijectively to triples $(F,G,\eta)$ consisting of strict morphisms $F$, $G:\mathcal A \rightarrow \mathcal B$ and an isomorphism $\eta:F \Rightarrow G$.
\end{example}

\subsubsection{Strong deformation retracts}

\begin{definition}
We say that a morphism $I:\mathcal A \rightarrow \mathcal B$ in $\Cont$ realizes $\mathcal A$ as a \emph{strong deformation retract} of $\mathcal B$ if there exist a strict morphism $R:\mathcal B \rightarrow \mathcal A$ such that $RI = \Id_\mathcal A$, and a natural isomorphism $\rho:IR \Rightarrow \Id_\mathcal B$ such that (noting that $IRI = I$) the horizontal composite $\rho I:IRI \Rightarrow I$ equals $\Id_I$.
\end{definition}

\begin{remark}
\label{rem: strong deformation retract -> weak equivalence}
If $I:\mathcal A \rightarrow \mathcal B$ and $R:\mathcal B \rightarrow \mathcal A$ realize $\mathcal A$ as a strong deformation retract of $\mathcal B$, then $I$ and $R$ are equivalences quasi-inverse to each other in $\iiCont$. By Proposition \ref{prop: equivalence in iiCont is weak equivalence}, $I$ and $R$ are weak equivalences.
\end{remark}

In what follows, we let $\textbf{1}$ be the terminal category, $j_0$, $j_1:\textbf{1} \rightarrow \textbf{I}_\cong$ the inclusions of the two objects, and $t:\textbf{I}_\cong \rightarrow \textbf{1}$ the terminal functor.

\begin{remark}
\label{rem: strong deformation retract 3 versions}
As in Example \ref{ex: cylinder and path object of strict cont cat}, for strict morphisms $I:\mathcal A \rightarrow \mathcal B$ and $R:\mathcal B \rightarrow \mathcal A$ we have a bijective correspondence between:
\begin{enumerate}[label=(\roman*)]
	\item Isomorphisms $\rho:IR \Rightarrow \Id_\mathcal A$ such that $\rho I = \Id_I$.
	
	\item Strict morphisms $H:\textbf{I}_\cong \otimes \mathcal B \rightarrow \mathcal B$ such that the following diagrams commute (strictly):
	% https://q.uiver.app/#q=WzAsOCxbMiwxLCJJX1xcY29uZyBcXG90aW1lcyBcXG1hdGhjYWwgQiJdLFs0LDEsIlxcbWF0aGNhbCBCIl0sWzAsMCwiXFx0ZXh0YmZ7MX0gXFxvdGltZXMgXFxtYXRoY2FsIEIgXFxjb25nIFxcbWF0aGNhbCBCIl0sWzAsMiwiXFx0ZXh0YmZ7MX0gXFxvdGltZXMgXFxtYXRoY2FsIEIgXFxjb25nIFxcbWF0aGNhbCBCIl0sWzYsMCwiXFx0ZXh0YmZ7SX1fXFxjb25nIFxcb3RpbWVzIFxcbWF0aGNhbCBBIl0sWzgsMCwiXFx0ZXh0YmZ7SX1fXFxjb25nIFxcb3RpbWVzIFxcbWF0aGNhbCBCIl0sWzYsMiwiXFx0ZXh0YmZ7MX0gXFxvdGltZXMgXFxtYXRoY2FsIEEgXFxjb25nIFxcbWF0aGNhbCBBIl0sWzgsMiwiXFxtYXRoY2FsIEIiXSxbMywwLCJqXzEgXFxvdGltZXMgXFxJZF9cXG1hdGhjYWwgQiIsMl0sWzIsMCwial8wIFxcb3RpbWVzIFxcSWRfXFxtYXRoY2FsIEIiXSxbMiwxLCJJUiIsMCx7ImN1cnZlIjotM31dLFszLDEsIlxcSWRfXFxtYXRoY2FsIEIiLDIseyJjdXJ2ZSI6M31dLFswLDEsIkgiLDFdLFs0LDUsIlxcSWQgXFxvdGltZXMgSSJdLFs1LDcsIkgiXSxbNCw2LCJ0IFxcb3RpbWVzIFxcSWQiLDJdLFs2LDcsIkkiLDJdXQ==
	\[\begin{tikzcd}[row sep=tiny]
		{\textbf{1} \otimes \mathcal B \cong \mathcal B} &&&&&& {\textbf{I}_\cong \otimes \mathcal A} && {\textbf{I}_\cong \otimes \mathcal B} \\
		&& {\textbf{I}_\cong \otimes \mathcal B} && {\mathcal B} \\
		{\textbf{1} \otimes \mathcal B \cong \mathcal B} &&&&&& {\textbf{1} \otimes \mathcal A \cong \mathcal A} && {\mathcal B}
		\arrow["{j_0 \otimes \Id_\mathcal B}", from=1-1, to=2-3]
		\arrow["IR", curve={height=-18pt}, from=1-1, to=2-5]
		\arrow["{\Id \otimes I}", from=1-7, to=1-9]
		\arrow["{t \otimes \Id}"', from=1-7, to=3-7]
		\arrow["H", from=1-9, to=3-9]
		\arrow["H"{description}, from=2-3, to=2-5]
		\arrow["{j_1 \otimes \Id_\mathcal B}"', from=3-1, to=2-3]
		\arrow["{\Id_\mathcal B}"', curve={height=18pt}, from=3-1, to=2-5]
		\arrow["I"', from=3-7, to=3-9]
	\end{tikzcd}\]
	
	\item Strict morphisms $H:\mathcal B \rightarrow \mathcal B^{\textbf{I}_\cong}$ such that the following diagrams commute (strictly):
	% https://q.uiver.app/#q=WzAsOCxbMiwxLCJcXG1hdGhjYWwgQl57XFx0ZXh0YmZ7SX1fXFxjb25nfSJdLFs0LDEsIlxcbWF0aGNhbCBCIl0sWzAsMCwiXFxtYXRoY2FsIEJeXFx0ZXh0YmZ7MX0gXFxjb25nIFxcbWF0aGNhbCBCIl0sWzAsMiwiXFxtYXRoY2FsIEJeXFx0ZXh0YmZ7MX0gXFxjb25nIFxcbWF0aGNhbCBCIl0sWzYsMCwiXFxtYXRoY2FsIEJee1xcdGV4dGJme0l9X1xcY29uZ30iXSxbOCwwLCJcXG1hdGhjYWwgQiJdLFs2LDIsIlxcbWF0aGNhbCBBXntcXHRleHRiZntJfV9cXGNvbmd9Il0sWzgsMiwiXFxtYXRoY2FsIEFee1xcdGV4dGJmIDF9IFxcY29uZyBcXG1hdGhjYWwgQSJdLFswLDMsIlxcbWF0aGNhbCBCXntqXzF9Il0sWzAsMiwiXFxtYXRoY2FsIEJee2pfMH0iLDJdLFsxLDIsIklSIiwyLHsiY3VydmUiOjN9XSxbMSwzLCJcXElkX1xcbWF0aGNhbCBCIiwwLHsiY3VydmUiOi0zfV0sWzEsMCwiSCIsMV0sWzUsNCwiSCIsMl0sWzcsNSwiSSIsMl0sWzYsNCwiSV57XFx0ZXh0YmZ7SX1fXFxjb25nfSJdLFs3LDYsIlxcbWF0aGNhbCBBXnQiXV0=
	\[\begin{tikzcd}[row sep=tiny]
		{\mathcal B^\textbf{1} \cong \mathcal B} &&&&&& {\mathcal B^{\textbf{I}_\cong}} && {\mathcal B} \\
		&& {\mathcal B^{\textbf{I}_\cong}} && {\mathcal B} \\
		{\mathcal B^\textbf{1} \cong \mathcal B} &&&&&& {\mathcal A^{\textbf{I}_\cong}} && {\mathcal A^{\textbf 1} \cong \mathcal A}
		\arrow["H"', from=1-9, to=1-7]
		\arrow["{\mathcal B^{j_0}}"', from=2-3, to=1-1]
		\arrow["{\mathcal B^{j_1}}", from=2-3, to=3-1]
		\arrow["IR"', curve={height=18pt}, from=2-5, to=1-1]
		\arrow["H"{description}, from=2-5, to=2-3]
		\arrow["{\Id_\mathcal B}", curve={height=-18pt}, from=2-5, to=3-1]
		\arrow["{I^{\textbf{I}_\cong}}", from=3-7, to=1-7]
		\arrow["I"', from=3-9, to=1-9]
		\arrow["{\mathcal A^t}", from=3-9, to=3-7]
	\end{tikzcd}\]
\end{enumerate}
\end{remark}

The next result (which uses a standard argument; see, e.g., \cite[Prop. 2.4.9]{Hov99}) will play in important role in the remainder of this section. It will be used for proving that pushouts of the basic cofibrations $\iota_n^\triangledown$ are weak equivalences (see Proposition \ref{prop: cellular acyclic cof is cof and weak equivalence}) and, later, for checking that the monoidal model category $(\Cont,\otimes, \cdots)$ satisfies the monoid axiom (Proposition \ref{prop: monoid axiom for Cont}).

\begin{proposition}
\label{prop: strong deformation retract pushout}
Consider a pushout square
\[
\squa{\mathcal A}{\mathcal B}{\mathcal A'}{\mathcal B'}{I}{I'}{F}{F'}
\]
in $\Cont$. If $I$ realizes $\mathcal A$ as a strong deformation retract of $\mathcal B$, then $I'$ realizes $\mathcal A'$ as a strong deformation retract of $\mathcal B'$.
\end{proposition}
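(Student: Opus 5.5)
The plan is the standard one: build the retraction and the homotopy on $\mathcal B'$ from the universal property of the pushout, using the description of homotopies as strict morphisms out of the cylinder $\textbf{I}_\cong \otimes (-)$ from Remark \ref{rem: strong deformation retract 3 versions}(ii). Throughout, let $R:\mathcal B \rightarrow \mathcal A$ and $\rho:IR \Rightarrow \Id_\mathcal B$ be the given data, and let $H:\textbf{I}_\cong \otimes \mathcal B \rightarrow \mathcal B$ be the strict morphism corresponding to $\rho$.

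\textbf{The retraction.} I first define $R':\mathcal B' \rightarrow \mathcal A'$ as the morphism induced by the pushout from the pair $(\Id_{\mathcal A'}, F R)$. The two maps agree on $\mathcal A$ because $FRI = F = \Id_{\mathcal A'} F$. Then $R' I' = \Id_{\mathcal A'}$ holds by construction.

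\textbf{The homotopy.} The key observation is that $\textbf{I}_\cong \otimes -$ is a left adjoint, by the remark following Proposition \ref{prop: cont is cat-tensored}, so it preserves pushouts. Hence $\textbf{I}_\cong \otimes \mathcal B'$ is the pushout of $\textbf{I}_\cong \otimes \mathcal A' \leftarrow \textbf{I}_\cong \otimes \mathcal A \rightarrow \textbf{I}_\cong \otimes \mathcal B$. I define $H':\textbf{I}_\cong \otimes \mathcal B' \rightarrow \mathcal B'$ from two components:
\begin{itemize}
\item on $\textbf{I}_\cong \otimes \mathcal B$, the composite $F' H$;
\item on $\textbf{I}_\cong \otimes \mathcal A'$, the composite $\textbf{I}_\cong \otimes \mathcal A' \xrightarrow{t \otimes \Id} \mathcal A' \xrightarrow{I'} \mathcal B'$.
\end{itemize}
These agree on $\textbf{I}_\cong \otimes \mathcal A$ precisely because of the second diagram in Remark \ref{rem: strong deformation retract 3 versions}(ii), namely $H(\Id \otimes I) = I(t \otimes \Id)$. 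Using this together with naturality of $t \otimes -$ and $I'F = F'I$, the compatibility reads $F' I (t\otimes \Id) = I' F (t \otimes \Id) = I'(t\otimes\Id)(\Id\otimes F)$, as required.

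\textbf{Verifying the conditions of Remark \ref{rem: strong deformation retract 3 versions}(ii) for $H'$.} By uniqueness in the pushout $\mathcal B'$, it suffices to check each equation after precomposing with $F'$ and with $I'$.
\begin{itemize}
\item For $H'(j_0 \otimes \Id) = I'R'$: on the $\mathcal B$-component both sides give $F'IR = I'FR = I'R'F'$. On the $\mathcal A'$-component both sides give $I'$, since $R'I' = \Id$.
\item For $H'(j_1 \otimes \Id) = \Id$: the $\mathcal B$-component gives $F'$ and the $\mathcal A'$-component gives $I'$.
\item For $H'(\Id \otimes I') = I'(t \otimes \Id)$: this holds by the very definition of $H'$ on the $\textbf{I}_\cong \otimes \mathcal A'$ summand.
\end{itemize}

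\textbf{Expected obstacle.} The only step that is not pure bookkeeping is the identification of $j_i \otimes \Id_{\mathcal B'}$ with the map induced on pushouts. This naturality is automatic from functoriality of the tensor in both variables. A second subtlety is that the bijection between $\rho'$ and $H'$ requires a $2$-dimensional statement, but Remark \ref{rem: strong deformation retract 3 versions} already supplies it. I expect no genuine difficulty: the main care needed is consistent use of the isomorphisms $\textbf{1} \otimes \mathcal X \cong \mathcal X$.
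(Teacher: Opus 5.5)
Your proposal is correct and follows the paper's proof essentially step for step. In both, $R'$ is induced by $(\Id_{\mathcal A'}, FR)$, $H'$ is glued from $F'H$ and $I'(t\otimes \Id)$ over the pushout $\textbf{I}_\cong \otimes \mathcal B'$, and the identities $H'(j_0\otimes\Id)=I'R'$ and $H'(j_1\otimes\Id)=\Id$ are checked on the two pushout legs. The only difference is that you state the condition $H'(\Id\otimes I') = I'(t\otimes\Id)$ (that is, $\rho' I' = \Id_{I'}$) explicitly, whereas the paper leaves it implicit in the commutativity of its extended diagram.
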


\begin{proof}
Let $R:\mathcal B \rightarrow \mathcal A$ and $H:\textbf{I}_\cong \otimes \mathcal A \rightarrow \mathcal B$ be strict morphisms such that $RI = \Id_\mathcal A$ and the two diagrams from Remark \ref{rem: strong deformation retract 3 versions}(ii) commute. Note that the pasting lemma for pushouts yields a commutative diagram
\[\begin{tikzcd}
	{\mathcal A} & {\mathcal B} & {\mathcal A} \\
	{\mathcal A'} & {\mathcal B'} & {\mathcal A'}
	\arrow["I", from=1-1, to=1-2]
	\arrow["{\Id}", curve={height=-24pt}, from=1-1, to=1-3]
	\arrow["F"', from=1-1, to=2-1]
	\arrow["R", from=1-2, to=1-3]
	\arrow["{F'}", from=1-2, to=2-2]
	\arrow["F", from=1-3, to=2-3]
	\arrow["{I'}"', from=2-1, to=2-2]
	\arrow["{\Id}"', curve={height=24pt}, from=2-1, to=2-3]
	\arrow["{R'}"', from=2-2, to=2-3]
\end{tikzcd}\]
in which the right square is cocartesian. To exhibit $\mathcal A'$ as a strong deformation retract of $\mathcal B'$ via $I'$, we must construct an isomorphism $I'R' \cong \Id_{\mathcal B'}$. Since $\textbf{I}_\cong \otimes -: \Cont \rightarrow \Cont$ is left adjoint to $(-)^{\textbf{I}_\cong}$, the former preserves colimits. Hence we have a pushout square
\[
\squa{\textbf{I}_\cong \otimes \mathcal \mathcal A}{\textbf{I}_\cong \otimes \mathcal B}{\textbf{I}_\cong \otimes \mathcal A'}{\textbf{I}_\cong \otimes \mathcal B',}{\Id \otimes I}{\Id \otimes I'}{\Id \otimes F}{\Id \otimes F'}
\]
which we then extend to
% https://q.uiver.app/#q=WzAsNyxbMCwxLCJJX1xcY29uZyBcXG90aW1lcyBcXG1hdGhjYWwgQSJdLFsxLDEsIklfXFxjb25nIFxcb3RpbWVzIFxcbWF0aGNhbCBCIl0sWzAsMCwiSV9cXGNvbmcgXFxvdGltZXMgXFxtYXRoY2FsIE9fbiJdLFsxLDAsIklfXFxjb25nIFxcb3RpbWVzXFxtYXRoY2FsIE9fbl5cXHRyaWFuZ2xlZG93biJdLFsyLDAsIlxcbWF0aGNhbCBPX25eXFx0cmlhbmdsZWRvd24iXSxbMiwyLCJcXG1hdGhjYWwgQiJdLFswLDIsIlxcbWF0aGNhbCBBIl0sWzIsMCwiSWQgXFxvdGltZXMgRyIsMl0sWzMsMSwiSWQgXFxvdGltZXMgRyciXSxbMiwzLCJJZCBcXG90aW1lcyBcXGlvdGFfbl5cXHRyaWFuZ2xlZG93biJdLFswLDEsIklkIFxcb3RpbWVzIEYiLDJdLFszLDQsIkgiXSxbNCw1LCJHJyJdLFswLDYsIktfXFxtYXRoY2FsIEEiLDJdLFs2LDUsIkYiLDJdXQ==
\[
\tag{\texttt{*}}
\begin{tikzcd}
	{\textbf{I}_\cong \otimes \mathcal A} & {\textbf{I}_\cong \otimes\mathcal B} & {\mathcal B} \\
	{\textbf{I}_\cong \otimes \mathcal A'} & {\textbf{I}_\cong \otimes \mathcal B'} \\
	{\textbf{1} \otimes \mathcal A' \cong \mathcal A'} && {\mathcal B'}
	\arrow["{\Id \otimes I}", from=1-1, to=1-2]
	\arrow["{\Id \otimes F}"', from=1-1, to=2-1]
	\arrow["H", from=1-2, to=1-3]
	\arrow["{\Id \otimes F'}", from=1-2, to=2-2]
	\arrow["{F'}", from=1-3, to=3-3]
	\arrow["{\Id \otimes I'}"', from=2-1, to=2-2]
	\arrow["{t \otimes \Id}"', from=2-1, to=3-1]
	\arrow["{I'}"', from=3-1, to=3-3]
\end{tikzcd}\]

The latter commutes since
\begin{align*}
	F' \circ H \circ (\Id_{\textbf{I}_\cong} \otimes I) & = F' \circ (t \otimes I)\\
	& = F' \circ I \circ (t \otimes \Id_\mathcal A)\\
	& = I' \circ F \circ (t \otimes \Id_\mathcal A)\\
	& = I' \circ (t \otimes \Id_{\mathcal A'}) \circ (\Id_{\textbf{I}_\cong} \otimes F).
\end{align*}
This yields a morphism $H':\textbf{I}_\cong \otimes \mathcal B' \rightarrow \mathcal B'$ such that the further extended diagram commutes.

Finally, let us describe the domain and the codomain of the natural isomorphism determined by $H'$. A straightforward calculation using (\texttt{*}) and the relations from Remark \ref{rem: strong deformation retract 3 versions}(ii) shows that we have commutative diagrams
% https://q.uiver.app/#q=WzAsMTQsWzAsMSwiXFxtYXRoY2FsIEEnIl0sWzEsMSwiXFxtYXRoY2FsIEInIl0sWzAsMCwiXFxtYXRoY2FsIEEiXSxbMSwwLCJcXG1hdGhjYWwgQiJdLFsyLDAsIlxcbWF0aGNhbCBCIl0sWzIsMiwiXFxtYXRoY2FsIEInLCJdLFswLDIsIlxcbWF0aGNhbCBBJyJdLFs0LDEsIlxcbWF0aGNhbCBBJyJdLFs1LDEsIlxcbWF0aGNhbCBCJyJdLFs0LDAsIlxcbWF0aGNhbCBBIl0sWzUsMCwiXFxtYXRoY2FsIEIiXSxbNiwwLCJcXG1hdGhjYWwgQiJdLFs2LDIsIlxcbWF0aGNhbCBCJy4iXSxbNCwyLCJcXG1hdGhjYWwgQSciXSxbMiwwLCJGIiwyXSxbMywxLCJGJyJdLFsyLDMsIkkiXSxbMCwxLCJJJyIsMl0sWzMsNCwiSVIiXSxbNCw1LCJGJyJdLFs2LDUsIkYiLDJdLFsxLDUsIkgnIFxcY2lyYyAoal8wIFxcb3RpbWVzIElkX3tcXG1hdGhjYWwgQid9KSIsMV0sWzksNywiRiIsMl0sWzEwLDgsIkYnIl0sWzksMTAsIkkiXSxbNyw4LCJJJyIsMl0sWzEwLDExLCJcXElkIl0sWzExLDEyLCJGJyJdLFs3LDEzLCJcXElkIiwyXSxbMTMsMTIsIkknIiwyXSxbOCwxMiwiSCcgXFxjaXJjIChqXzEgXFxvdGltZXMgSWRfXFxtYXRoY2FsIEIpIiwxXSxbMCw2LCJcXElkIiwyXV0=
\[\begin{tikzcd}[row sep=large, column sep=large]
	{\mathcal A} & {\mathcal B} & {\mathcal B} && {\mathcal A} & {\mathcal B} & {\mathcal B} \\
	{\mathcal A'} & {\mathcal B'} &&& {\mathcal A'} & {\mathcal B'} \\
	{\mathcal A'} && {\mathcal B',} && {\mathcal A'} && {\mathcal B'.}
	\arrow["I", from=1-1, to=1-2]
	\arrow["F"', from=1-1, to=2-1]
	\arrow["IR", from=1-2, to=1-3]
	\arrow["{F'}", from=1-2, to=2-2]
	\arrow["{F'}", from=1-3, to=3-3]
	\arrow["I", from=1-5, to=1-6]
	\arrow["F"', from=1-5, to=2-5]
	\arrow["\Id", from=1-6, to=1-7]
	\arrow["{F'}", from=1-6, to=2-6]
	\arrow["{F'}", from=1-7, to=3-7]
	\arrow["{I'}"', from=2-1, to=2-2]
	\arrow["\Id"', from=2-1, to=3-1]
	\arrow["{H' \circ (j_0 \otimes \Id_{\mathcal B'})}"{description}, from=2-2, to=3-3]
	\arrow["{I'}"', from=2-5, to=2-6]
	\arrow["\Id"', from=2-5, to=3-5]
	\arrow["{H' \circ (j_1 \otimes \Id_{\mathcal B'})}"{description}, from=2-6, to=3-7]
	\arrow["F"', from=3-1, to=3-3]
	\arrow["{I'}"', from=3-5, to=3-7]
\end{tikzcd}\]
Since
\begin{align*}
	(H' \circ (j_0 \otimes \Id_{\mathcal B'})) \circ F' & = F' I R = I' F R = I' R' F',\\
	(H' \circ (j_0 \otimes \Id_{\mathcal B'})) \circ I' & = I' = I' R' I',
\end{align*}
we have $H' \circ (j_0 \otimes \Id_{\mathcal B'}) = I'R'$, and as
\begin{align*}
	(H' \circ (j_1 \otimes \Id_{\mathcal B'})) \circ F' & = F', \\
	(H' \circ (j_1 \otimes \Id_{\mathcal B'})) \circ I' & = I',
\end{align*}
we have $H' \circ (j_1 \otimes \Id_{\mathcal B'}) = \Id_{\mathcal B'}$. It follows that $H'$ defines an isomorphism $I'R' \cong \Id_{\mathcal B'}$, and this concludes the proof that $I'$ realizes $\mathcal A'$ as a strong deformation retract of $\mathcal B'$.
\end{proof}

\subsubsection{Conclusion of the construction of the model structure}

In what follows, we let $W$ be the set of weak equivalences in $\Cont$.

\begin{proposition}
\label{prop: cellular acyclic cof is cof and weak equivalence}
$\cell(\bacof) \subset \cof \cap W$.
\end{proposition}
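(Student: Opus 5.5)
To prove $\cell(\bacof) \subset \cof \cap W$ we treat the two inclusions $\cell(\bacof) \subset \cof$ and $\cell(\bacof) \subset W$ separately. In both cases it suffices to handle a single pushout of a coproduct of basic acyclic cofibrations. Indeed, $\cof = \llp\rlp(\bcof)$ is closed under pushouts, coproducts and transfinite composition. By Proposition \ref{prop: weak equivalences - transfinite composition, retracts, 2 of 3}, $W$ is closed under transfinite composition. A pushout of a coproduct $\coprod_i \iota_{n_i}^\triangledown$ can be handled directly: $\coprod_i \iota_{n_i}^\triangledown$ is again a strong deformation retract, since coproducts of the data $(R,H)$ assemble and $\textbf{I}_\cong \otimes -$ preserves coproducts.

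\textbf{Cofibrations.} It suffices to show $\iota_n^\triangledown \in \cof$, i.e. that it has the left lifting property against every $F \in \afib$. By Proposition \ref{prop: characterization acyclic fibrations}, $F$ is full-and-faithful and each $F_a$ is surjective on objects. A lifting problem from $\iota_n^\triangledown$ to $F:\mathcal A \rightarrow \mathcal B$ consists of the following data:
\begin{itemize}
\item an object $a$ of length $n$ in $\mathcal A$ (the image of $o_n$);
\item in $\mathcal B$, a display map $b' \twoheadrightarrow \partial Fa$ together with inverse isomorphisms $Fa \leftrightarrow b'$ over $\partial Fa$.
\end{itemize}
Surjectivity of $F_{\partial a}$ on objects gives $a'$ with $\partial a' = \partial a$ and $Fa' = b'$. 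Fullness lifts the two isomorphisms over $\partial a$, and faithfulness forces the lifted maps to be mutually inverse and to commute with the display maps. By the universal property of $\mathcal O_n^\triangledown$ (a free presentation), these data give the required lift. Alternatively, $\iota_n^\triangledown$ is visibly a composite of pushouts of $\iota_n^S$, $\iota_n^T$ and $\pi_n^T$: add the sort $O_n'$, then the terms $f$ and $g$, then the two term equalities.

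\textbf{Weak equivalences.} The plan is to show that $\iota_n^\triangledown$ realizes $\mathcal O_n$ as a strong deformation retract of $\mathcal O_n^\triangledown$, and then apply Proposition \ref{prop: strong deformation retract pushout} and Remark \ref{rem: strong deformation retract -> weak equivalence}. We make the following choices.
\begin{itemize}
\item Define $R:\mathcal O_n^\triangledown \rightarrow \mathcal O_n$ on generators by $o_n, o_n' \mapsto o_n$ and $f, g \mapsto id_{o_n}$. The relations of $\mathcal O_n^\triangledown$ hold in the image, and $R\iota_n^\triangledown = \Id$.
\item By the universal property of $\textbf{I}_\cong \otimes \mathcal O_n^\triangledown$ (Proposition \ref{prop: cont is cat-tensored}), a natural isomorphism $\rho:\iota_n^\triangledown R \Rightarrow \Id$ is the same as an isomorphism in $\iiCont(\mathcal O_n^\triangledown, \mathcal O_n^\triangledown)$. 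Its components should be identities on $o_0,\dots,o_n$ and $g:o_n \rightarrow o_n'$ on $o_n'$.
\end{itemize}

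The main obstacle is verifying that $\rho$ is natural on all of $|\mathcal O_n^\triangledown|$, not merely on generating arrows. Every object is a distinguished pullback of generators along some morphism, so its component must be forced. The natural way to produce $\rho$ is through the $\Cat$-enriched freeness of $\mathcal O_n^\triangledown$ (\cite[Cor. 5.33]{Alm26}, as in Remark \ref{rem: use of precontextual categories}). Under that freeness, a 2-cell between strict morphisms out of $L$ of a precontextual category is determined by components on the generating objects, subject to naturality against the generating arrows and compatibility with display maps. This needs the following checks:
\begin{itemize}
\item naturality against $f$ and $g$, i.e. $g \circ id = g$ and $f\circ g = id$ in the appropriate squares;
\item the display condition $\textbf{p}_{o_n'} \circ g = \textbf{p}_{o_n}$.
\end{itemize}
With these in place, $\rho$ is defined, and $\rho\,\iota_n^\triangledown = \Id$ holds because the components on $o_0,\dots,o_n$ are identities.
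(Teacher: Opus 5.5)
Your proposal is correct and is essentially the paper's proof. You obtain the cofibration part from the decomposition of $\iota_n^\triangledown$ into five pushouts of basic cofibrations (your direct lifting argument against acyclic fibrations is also valid). You obtain the weak-equivalence part by exhibiting $\mathcal O_n$ as a strong deformation retract of $\mathcal O_n^\triangledown$ and invoking Proposition~\ref{prop: strong deformation retract pushout}. The only difference is that you build $\rho$ from the $\Cat$-enriched freeness of $\mathcal O_n^\triangledown$, whereas the paper uses a strict morphism $\mathcal O_n^\triangledown \rightarrow (\mathcal O_n^\triangledown)^{\textbf{I}_\cong}$; the two are equivalent by Remark~\ref{rem: strong deformation retract 3 versions}.

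One notational slip: in the paper the generator $o_n \rightarrow o_n'$ is $f$ and $g$ goes $o_n' \rightarrow o_n$. So the component of $\rho$ at $o_n'$ is $f$, and the display condition reads $\textbf{p}_{o_n'} \circ f = \textbf{p}_{o_n}$.
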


\begin{proof}
Recall that every element of $\cell(\bacof)$ is a transfinite composite of pushouts of elements of $\bacof$, that is, of arrows of the form $\iota_n^\triangledown:\mathcal O_n \rightarrow \mathcal O_n^\triangledown$ for $n \ge 1$. Since both $\cof$ (being the left class of a weak factorization system) and $W$ (by Proposition \ref{prop: weak equivalences - transfinite composition, retracts, 2 of 3}) are closed under transfinite composition, it suffices to prove that for all $n \ge 1$, every pushout of $\iota_n^\triangledown$ belongs to $\cof \cap W$.

Firstly, it follows from the syntactic presentation of $\iota_n^\triangledown$ that it belongs to $\cof$. Indeed, it is obtained by adding one sort axiom, two term axioms, and two term equality axioms, so it is a composite of five pushouts of elements of $\bcof$. As $\cof$ is closed under pushouts, we conclude that every pushout of $\iota_n^\triangledown$ belongs to $\cof$.

For the second part, by Proposition \ref{prop: strong deformation retract pushout} and Remark \ref{rem: strong deformation retract -> weak equivalence} it suffices to prove that $\iota_n^\triangledown$ realizes $\mathcal O_n$ as a strong deformation retract of $\mathcal O_n^\triangledown$. Let
$$
\pi_n^\triangledown:\mathcal O_n^\triangledown \longrightarrow \mathcal O_n
$$
be the unique strict morphism that, in terms of the presentations from Construction \ref{constr: O_n etc} and Definition \ref{def: basic acyclic cofibrations}, sends $o_i$ to $o_i$ for $1 \le i \le n$, $o'_n$ to $o_n$, and the isomorphism $o_n \cong o'_n$ to $id_{o_n}$. Note that $\pi_n^\triangledown \circ \iota_n^\triangledown = \Id_{\mathcal O_n}$. Also, by the universal property of $\mathcal O_n^\triangledown$ we have a unique strict morphism
$$
H:\mathcal O_n^\triangledown \longrightarrow (\mathcal O_n^\triangledown)^{\textbf{I}_\cong}
$$
that maps $o_i$ to the length-$i$ object $\textbf{I}_\cong \rightarrow \mathcal O_n^\triangledown[i]$ corresponding to the identity natural transformation $\overline{o_i} \Longrightarrow \overline{o_i}$, and $o'_n$ to the length-$n$ object $\textbf{I}_\cong \rightarrow \mathcal O_n^\triangledown[n]$ corresponding to the natural transformation
\begin{align*}
	\overline{o_n} & \Longrightarrow \overline{o'_n}\\
	i & \longmapsto id_{o_i} \; \text{ for } 0 \le i \le n-1\\
	n & \longmapsto (f:o_n \rightarrow o'_n).
\end{align*}
It is immediate that the natural transformation $\rho$ classified by $H$ is of the form $\iota_n^\triangledown \Rightarrow \Id_{\mathcal O_n^\triangledown}$ and satisfies $\rho \iota_n^\triangledown = \Id_{\iota_n^\triangledown}$ (equivalently, $H$ is such that, with $I = \iota_n^\triangledown$ and $R = \pi_n^\triangledown$, the diagrams from Remark \ref{rem: strong deformation retract 3 versions} commute). This concludes the proof that $\mathcal O_n$ is a strong deformation retract of $\mathcal O_n^\triangledown$ via $\iota_n^\triangledown$.
\end{proof}

\begin{theorem}
\label{th: model structure on Cont}
There exists a combinatorial Quillen model structure on $\Cont$ with the above classes of weak equivalences, (co)fibrations, and acyclic (co)fibrations.
\end{theorem}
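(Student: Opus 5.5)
We apply Kan's recognition theorem (Theorem \ref{th: recognition cof gen model categories}) to $C = \Cont$ with $I = \bcof$, $J = \bacof$, and $W$ the weak equivalences. $\Cont$ is locally finitely presentable. Hence it is complete and cocomplete, and since the domains and codomains of the maps in $\bcof$ and $\bacof$ are syntactic categories of finite gats, hence finitely presentable, condition (2) holds. This also yields the combinatorial (indeed $\omega$-combinatorial) conclusion. Condition (1) is Proposition \ref{prop: weak equivalences - transfinite composition, retracts, 2 of 3}, and condition (3) is exactly Proposition \ref{prop: cellular acyclic cof is cof and weak equivalence}.

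For condition (4), take $F \in \rlp(\bcof) = \afib$. By Proposition \ref{prop: characterization acyclic fibrations}, $F$ is full-and-faithful and each $F_a$ is surjective on objects. In particular each $F_a$ is a surjective-on-objects equivalence, hence an isofibration, so $F \in \fib$ by Remark \ref{rem: description fibrations in Cont}. Alternatively, Lemma \ref{lem: in Cont, acyclic fibration iff fibration and weq} gives $\afib \subset \fib \cap W$ directly. Either way $\rlp(I) \subset \rlp(J) \cap W$.

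For condition (5) we choose the second alternative, $\rlp(J) \cap W \subset \rlp(I)$, i.e.\ $\fib \cap W \subset \afib$. This is the other direction of Lemma \ref{lem: in Cont, acyclic fibration iff fibration and weq}. To make sure it is sound, suppose $F$ is a fibration and a weak equivalence. By Proposition \ref{prop: characterization weak equivalences}, $F$ is full-and-faithful and each $F_a$ is essentially surjective, hence an equivalence; this is the point where Proposition \ref{prop: characterization full-and-faithful} supplies fullness and faithfulness of $F_a$. By Remark \ref{rem: description fibrations in Cont}, each $F_a$ is also an isofibration. An isofibration that is essentially surjective is surjective on objects, so Proposition \ref{prop: characterization acyclic fibrations}(b) applies and $F \in \afib$.

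Kan's theorem then gives a model structure with weak equivalences $W$, cofibrations $\llp\rlp(\bcof) = \cof$, fibrations $\rlp(\bacof) = \fib$, and acyclic fibrations $\afib$. The acyclic cofibrations are $\llp\rlp(\bacof) = \acof$, since in a cofibrantly generated model structure $J$ generates the acyclic cofibrations; so they agree with Definition \ref{def: acyclic cofibrations, fibrations}. The only substantive inputs are conditions (3) and (5), which are already established (Proposition \ref{prop: cellular acyclic cof is cof and weak equivalence} and Lemma \ref{lem: in Cont, acyclic fibration iff fibration and weq}), so the main remaining care is just verifying that the identified classes coincide with the stated ones.
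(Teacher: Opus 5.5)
Your proposal is correct and follows the paper's proof essentially verbatim: Kan's recognition theorem with $I=\bcof$ and $J=\bacof$, conditions (1)--(3) taken from Propositions \ref{prop: weak equivalences - transfinite composition, retracts, 2 of 3} and \ref{prop: cellular acyclic cof is cof and weak equivalence} together with local presentability, and condition (4) together with the second alternative of (5) both obtained from Lemma \ref{lem: in Cont, acyclic fibration iff fibration and weq}. Your extra unpacking of that lemma (an essentially surjective isofibration is surjective on objects) and your identification of $\llp\rlp(\bacof)$ as the acyclic cofibrations are sound, but they only re-verify what the cited results already provide.
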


\begin{proof}
Let us check the conditions from Theorem \ref{th: recognition cof gen model categories} with $C = \Cont$, $I = \bcof$, and $J = \bacof$:
\begin{enumerate}[label=(\arabic*)]
	\item follows from Proposition \ref{prop: weak equivalences - transfinite composition, retracts, 2 of 3}.
	
	\item follows from $\Cont$ being locally presentable (so every small set of morphisms permits the small object argument) and $\bcof$, $\bacof$ being small (by definition).
	
	\item is the content of Proposition \ref{prop: cellular acyclic cof is cof and weak equivalence}.
	
	\item and the second condition from (5) are the content of Lemma \ref{lem: in Cont, acyclic fibration iff fibration and weq}.
\end{enumerate}
\end{proof}

\begin{remark}
This model structure can be transferred via the equivalence $\GAT \simeq \Cont$ from \cite{Car78} to a model structure on the category $\GAT$ of generalized algebraic theories and equivalence classes of interpretations.
\end{remark}

\begin{remark}[Fibrant and cofibrant objects]
\label{rem: fibrant and cofibrant objects}
Recall the contextual categories $\mathcal O_0$ and $\mathcal T$ discussed in Remark \ref{rem: weak equivalences remark 2}, which are initial and terminal, respectively, in $\Cont$. It is immediate from Remark \ref{rem: description fibrations in Cont} that every $\mathcal A \in \Cont$ is fibrant, i.e. the unique morphism $\mathcal A \rightarrow \mathcal T$ is a fibration. On the other hand, not every $\mathcal A$ is cofibrant, that is, there are cases where $\mathcal O_0 \rightarrow \mathcal A$ is not a cofibration. Syntactically, recall that pushouts of the three basic cofibrations are theory extensions obtained by adding a sort axiom, a term axiom, or a term equality axiom. By the retract lemma (\cite[Lem. 1.1.9]{Hov99}), $\mathcal A$ is cofibrant precisely when $\mathcal A$ is a retract of $\mathcal C(\bbB)$ for a gat $\bbB$ that does not contain any sort equality axioms. Such $\bbB$ will be called \emph{syntactically cellular theories} in \S\ref{sec: cellular theories}.\footnote{The property of being syntactically cellular is not invariant under isomorphism: a gat is cellular with respect to the set $\cof^*$ of basic cofibrations if and only if it is isomorphic to a syntactically cellular theory.}

For a theory to be non-cofibrant, it is necessary that it not be cellular, but this is not sufficient as the set of cellular (with respect to $\cof^*$) objects of $\Cont$ is not closed under formation of retracts; see Example \ref{ex: cofibrant, not cellular}. Still, Proposition \ref{prop: cellular gat -> basis} on cellular theories yields, by taking retracts, a necessary condition for $\mathcal A \in \Cont$ being cofibrant in terms of the category of strict display maps $\textbf{D}(\mathcal A)$ (as in Construction \ref{constr: categories of display maps}); see Remark \ref{rem: necessary condition cofibrant - connected components of D(A)}. Later, we will see (Prop. \ref{prop: strictification - cofibrant domain}) that if a gat $\bbA$ is cofibrant, then its categories of family-valued models (strict morphisms to $\Fam$) and of set-valued models are equivalent -- more concretely, every set-valued model is strictifiable to a family-valued one.

We already came across two non-cofibrant theories: $\bbK$ from \S\ref{sec: introduction} (which is an instance of Example \ref{ex: E(K)}), and $\mathcal B$ from Remark \ref{rem: equivalences of contextual categories}. Another example will appear in Remark \ref{rem: not left-proper}, where we check that the model structure on $\Cont$ is not left-proper (which is only possible in the presence of non-cofibrant objects).

\end{remark}

\begin{example}[A non-cellular cofibrant gat]
\label{ex: cofibrant, not cellular}
Let $\bbA$ be the gat given by
\begin{align*}
	 & \vdash X \tp & x:X' & \vdash e(x):X'\\
	x:X & \vdash Y(x) \tp & x:X' & \vdash fe(x) \equiv f(x):X\\
	 & \vdash X' \tp & x:X' & \vdash e^2(x) \equiv e(x):X'\\
	 x:X' & \vdash f(x):X & & 
\end{align*}
On the other hand, let $\mathbb R$ be given by
\begin{align*}
	& \vdash X' \tp & x:X'& \vdash e^2(x) \equiv e(x):X' \\
	x:X' & \vdash Y'(x) \tp & x:X' & \vdash Y'(e(x)) \equiv Y'(x) \tp \\
	x:X' & \vdash e(x):X' & & 
\end{align*}
We have an interpretation $I$ of $\bbB$ in $\bbA$ that sends $X'$, $e'(x)$ to themselves and $Y'(x)$ to $Y(f(x))$. In terms of the generating data of the respective syntactic categories, we can depict $\mathcal C(I):\mathcal C(\bbB) \rightarrow \mathcal C(\bbA)$ as
% https://q.uiver.app/#q=WzAsOCxbNSwyLCJbeDpYXSJdLFs1LDAsIlt4OlgseTpZKHgpXSJdLFszLDIsIlt4OlgnXSIsWzI0MCw2MCw2MCwxXV0sWzMsMCwiW3g6WCcseTpZKGYoeCkpXSIsWzI0MCw2MCw2MCwxXV0sWzAsMiwiW3g6WCddIixbMjQwLDYwLDYwLDFdXSxbMCwwLCJbeDpYJyx5OlknKHgpXSIsWzI0MCw2MCw2MCwxXV0sWzAsMV0sWzMsMV0sWzEsMCwiIiwwLHsic3R5bGUiOnsiaGVhZCI6eyJuYW1lIjoiZXBpIn19fV0sWzIsMCwiW2YoeCldIiwyXSxbMiwyLCJbZSh4KV0iLDIseyJyYWRpdXMiOi0zLCJhbmdsZSI6OTAsImNvbG91ciI6WzI0MCw2MCw2MF19LFsyNDAsNjAsNjAsMV1dLFszLDIsIiIsMix7ImNvbG91ciI6WzI0MCw2MCw2MF0sInN0eWxlIjp7ImJvZHkiOnsibmFtZSI6ImRhc2hlZCJ9LCJoZWFkIjp7Im5hbWUiOiJlcGkifX19XSxbMywxLCIiLDAseyJzdHlsZSI6eyJib2R5Ijp7Im5hbWUiOiJkYXNoZWQifX19XSxbNSw0LCIiLDAseyJjb2xvdXIiOlsyNDAsNjAsNjBdLCJzdHlsZSI6eyJoZWFkIjp7Im5hbWUiOiJlcGkifX19XSxbNCw0LCJbZSh4KV0iLDIseyJyYWRpdXMiOi0zLCJhbmdsZSI6OTAsImNvbG91ciI6WzI0MCw2MCw2MF19LFsyNDAsNjAsNjAsMV1dLFs2LDcsIlxcbWF0aGNhbCBDKEkpIiwwLHsic3R5bGUiOnsiYm9keSI6eyJuYW1lIjoiZG90dGVkIn19fV1d
\[\begin{tikzcd}[ampersand replacement=\&,cramped]
	\textcolor{rgb,255:red,92;green,92;blue,214}{{[x:X',y:Y'(x)]}} \&\&\& \textcolor{rgb,255:red,92;green,92;blue,214}{{[x:X',y:Y(f(x))]}} \&\& {[x:X,y:Y(x)]} \\
	{} \&\&\& {} \\
	\textcolor{rgb,255:red,92;green,92;blue,214}{{[x:X']}} \&\&\& \textcolor{rgb,255:red,92;green,92;blue,214}{{[x:X']}} \&\& {[x:X]}
	\arrow[color={rgb,255:red,92;green,92;blue,214}, two heads, from=1-1, to=3-1]
	\arrow[dashed, from=1-4, to=1-6]
	\arrow[color={rgb,255:red,92;green,92;blue,214}, dashed, two heads, from=1-4, to=3-4]
	\arrow[two heads, from=1-6, to=3-6]
	\arrow["{\mathcal C(I)}", dotted, from=2-1, to=2-4, shorten=10pt]
	\arrow["{[e(x)]}"', color={rgb,255:red,92;green,92;blue,214}, from=3-1, to=3-1, loop, in=215, out=145, distance=10mm]
	\arrow["{[e(x)]}"', color={rgb,255:red,92;green,92;blue,214}, from=3-4, to=3-4, loop, in=215, out=145, distance=10mm]
	\arrow["{[f(x)]}"', from=3-4, to=3-6]
\end{tikzcd}\]
$\mathcal C(I)$ has a retraction $\mathcal C(R)$ where $R$ is the interpretation
$$
X' \longmapsto X' \qquad e'(x) \longmapsto e'(x) \qquad X \longmapsto X' \qquad Y(x) \longmapsto Y'(x) \qquad f(x) \longmapsto e(x)
$$
(note that the induced map on judgments sends the axiom $fe(x) \equiv f(x)$ to $e^2(x) \equiv e(x)$, which is derivable). Being a retract of a gat without sort equality axioms, $\bbB$ is cofibrant. But it is not cellular: if it were, then, as we will see in \S\ref{sec: cellular theories}, every connected component of the category of display maps $\textbf{D}(\mathcal C(\bbB))$ would have a terminal object, which is not the case. Indeed, it is not difficult to verify that every display map in the connected component of $[x:X', y:Y'(x)] \twoheadrightarrow [x:X]$ has a non-identity endomorphism obtained by applying $e$ to a variable of sort $X'$.
\end{example}

\begin{remark}[Failure of left-properness]
\label{rem: not left-proper}
A model category is \emph{left-proper} if weak equivalences are stable under pushout along cofibrations. We will now give an example showing that $\Cont$ does not have this property.

Let $\bbA$ be the gat given by
\begin{align*}
	& \vdash X \tp & & \vdash Y(a) \equiv Y(b) \tp\\
	x:X & \vdash Y(x) \tp & &\\
	& \vdash a:X & & \\
	& \vdash b:X & &
\end{align*}
and let $\bbB$ be given by
\begin{align*}
	 & \vdash X \tp & y:Y(a) & \vdash f(y):Y(b)\\
	 x:X & \vdash Y(x) \tp & y:Y(b) & \vdash g(y) : Y(a)\\
	 & \vdash a:X & y:Y(a) & \vdash gf(y) \equiv y: Y(a)\\
	 & \vdash b:X & y:Y(b) & \vdash fg(y) \equiv y: Y(b)
\end{align*}
In both cases, a family-valued model specifies a set $\textbf{X}$, a family of sets $\textbf{Y}:\textbf{X} \rightarrow \mathscr U$, and elements $\textbf{a}$, $\textbf{b} \in \textbf{X}$; for $\bbA$, we must have $\textbf{Y}(\textbf{a}) = \textbf{Y}(\textbf{b})$, while for $\bbB$ we choose an isomorphism $\textbf{Y}(\textbf{a}) \cong \textbf{Y}(\textbf{b})$. Let $F$ be the interpretation of $\bbB$ in $\bbA$ induced by
$$
X \longmapsto X \qquad Y(x) \longmapsto Y(x) \qquad a \longmapsto a \qquad b \longmapsto b \qquad f(y) \longmapsto y \qquad g(y) \longmapsto y
$$
It is not difficult to explicitly describe all contexts and the corresponding sorts/terms in both $\bbA$ and $\bbB$, and this allows us to conclude that $F$ is an acyclic fibration. Now, let $\bbA'$, $\bbB'$ be obtained from $\bbA$, $\bbB$, respectively, by adding the axiom $\vdash a \equiv b:X$. This yields a pushout square
\[
\squa{\bbB}{\bbA}{\bbB'}{\bbA'}{[F]}{[F']}{[I]}{[I']}
\]
where $I$, $I'$ are the identity (at the level of expressions) interpretations. Note that $[I]$, $[I']$ are cofibrations. However, $[F']$ is not a weak equivalence:
\begin{itemize}
	\item in $\bbB'$, there is a single term of sort $Y(a)$ in context $y:Y(a)$, namely, $y$;
	
	\item in $\bbA'$ there are, up to derivable equality, countably many terms of sort $Y(a)$ in context $y:Y(a)$, namely,
	$$
	f^n(y) \text{ for } n \in \mathbb Z
	$$
	where $f^n$ denotes the $n$-fold iterate of $f$ (resp. $g$) for positive (resp. negative) $n$.
\end{itemize}
\end{remark}

\subsection{$\Cont$ as a monoidal and $\Cat$-enriched model category}
\label{subsec: Cont is a monoidal and Cat-enriched model category}

We will now prove that the model structure constructed above and the monoidal structure from \cite{Alm25, Alm26} are compatible in the sense that they realize $\Cont$ as a monoidal model category (\cite{Hov98}; see Definition \ref{def: monoidal model category}). After that, we check that the embedding $\Cat \rightarrow \Cont$ is a left Quillen functor, where $\Cat$ is equipped with the categorical model structure, and conclude from this that $\iiCont$ is a $\Cat$-enriched model category (see, e.g., \cite[Def. A.3.1.5]{Lur09}).

\subsubsection{Compatibility with the monoidal structure}

\begin{definition}
\label{def: pushout-product map}
Let $(C, \otimes, \cdots)$ be a monoidal category that has pushouts. For $f:a \rightarrow b$ and $g:a' \rightarrow b'$ in $C$, the \emph{pushout-product} morphism $f \hatotimes g$ is defined as the unique dashed arrow making the following diagram commute:
% https://q.uiver.app/#q=WzAsNSxbMCwwLCJhIFxcb3RpbWVzIGIiXSxbMiwwLCJhJyBcXG90aW1lcyBiIl0sWzAsMiwiYSBcXG90aW1lcyBiJyJdLFsyLDIsImEgXFxvdGltZXMgYicgXFxzcWN1cF97YSBcXG90aW1lcyBifSBhJyBcXG90aW1lcyBiIl0sWzMsMywiYScgXFxvdGltZXMgYiciXSxbMSwzXSxbMiwzXSxbMCwyLCJpZCBcXG90aW1lcyBnIiwyXSxbMCwxLCJmIFxcb3RpbWVzIGlkIl0sWzIsNCwiZiBcXG90aW1lcyBpZCIsMix7ImN1cnZlIjozfV0sWzEsNCwiaWQgXFxvdGltZXMgZyIsMCx7ImN1cnZlIjotM31dLFszLDQsIiIsMCx7InN0eWxlIjp7ImJvZHkiOnsibmFtZSI6ImRhc2hlZCJ9fX1dLFszLDAsIiIsMCx7InN0eWxlIjp7Im5hbWUiOiJjb3JuZXIifX1dXQ==
\[\begin{tikzcd}[ampersand replacement=\&,row sep=small, column sep=small]
	{a \otimes b} \&\& {a' \otimes b} \& \\
	\\
	{a \otimes b'} \&\& {a \otimes b' \sqcup_{a \otimes b} a' \otimes b} \\
	\&\&\& {a' \otimes b'}
	\arrow["{f \otimes id}", from=1-1, to=1-3]
	\arrow["{id \otimes g}"', from=1-1, to=3-1]
	\arrow[from=1-3, to=3-3]
	\arrow["{id \otimes g}", curve={height=-18pt}, from=1-3, to=4-4]
	\arrow[from=3-1, to=3-3]
	\arrow["{f \otimes id}"', curve={height=18pt}, from=3-1, to=4-4]
	\arrow["\lrcorner"{anchor=center, pos=0.125, rotate=180}, draw=none, from=3-3, to=1-1]
	\arrow[dashed, from=3-3, to=4-4]
\end{tikzcd}\]
\end{definition}

\begin{definition}[\cite{Hov98}, Def. 1.2]
\label{def: monoidal model category}
A \emph{(symmetric) monoidal model category} is a model category $C$ equipped with a closed (symmetric) monoidal structure $(\otimes,\textbf{1},\cdots)$ on its underlying category such that
\begin{enumerate}[label=(\roman*)]
	\item For cofibrations $f:a \rightarrow a'$ and $g:b \rightarrow b'$, the pushout-product map
	$$
	f \hatotimes g: a \otimes b' \sqcup_{a \otimes b} a' \otimes b \longrightarrow a' \otimes b'
	$$
	is a cofibration.
	
	\item For cofibrations $f$, $g$, if $f$ or $g$ is a weak equivalence, then so is $f \hatotimes g$.
	
	\item For every cofibrant object $a$ and every weak equivalence $f:b \rightarrow \textbf{1}$ with $b$ cofibrant, $id \otimes f:a \otimes b \rightarrow a \otimes \textbf{1}$ and $f \otimes id:b \otimes a \rightarrow \textbf{1} \otimes a$ are weak equivalences.
\end{enumerate}
\end{definition}

\begin{remark}
\label{rem: conditions for monoidal model cat}
\leavevmode
\begin{enumerate}[label=(\alph*)]
	\item Let $C$ be a combinatorial model category equipped with a closed symmetric monoidal structure on its underlying category. Let $I$ and $J$ be generating sets of cofibrations and of generating cofibrations, respectively. To verify condition (i) above, it suffices to check that $f \hatotimes g$ is a cofibration when $f$, $g \in I$. Similarly, it suffices to check (ii) when one of $f$, $g$ is in $I$ and the other is in $J$. See \cite[Lem. 4.2.4]{Hov99}.
	
	\item Assuming (i) and (ii), condition (iii) automatically holds if $\textbf{1}$ is cofibrant. Indeed, (i), (ii) imply that if $a$ is cofibrant, then $a \otimes -$ and $- \otimes a$ are left Quillen functors; by Ken Brown's lemma (\cite[Lem. 1.1.12]{Hov99}), they preserve weak equivalences between cofibrant objects.
\end{enumerate}
\end{remark}

\begin{remark}
\label{rem: Quillen bifunctor}
For Quillen model categories $C$, $D$ and $E$, a functor $F:C \times D \rightarrow E$ is said to be a (left) Quillen bifunctor if (1) it preserves colimits in each variable, and (2) for cofibrations $f:a \rightarrow a'$ in $C$ and $g:b \rightarrow b'$ in $D$, the induced map
$$
F(a,b') \sqcup_{F(a,b)} F(a',b) \longrightarrow F(a',b')
$$
is a cofibration in $E$ which, moreover, it is a weak equivalence if $f$ or $g$ is a weak equivalence.

Thus conditions (i) and (ii) from Definition \ref{def: monoidal model category} (plus the fact that $\otimes:C \times C \rightarrow C$ preserves colimits in each variable, which follows from the monoidal structure being closed) state that $\otimes$ is a Quillen bifunctor.
\end{remark}

\begin{lemma}
\label{lem: tensoring acyclic cofibration with an object}
In $\Cont$, consider an acyclic cofibration $F:\mathcal A \rightarrow \mathcal A'$ and an object $\mathcal B$. Then $F \otimes \Id_\mathcal B:\mathcal A \otimes \mathcal B \rightarrow \mathcal A' \otimes \mathcal B$ is an inclusion of a strong deformation retract.
\end{lemma}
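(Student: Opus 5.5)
The plan is to reduce to the generating acyclic cofibrations $\iota_n^\triangledown$, which were shown in the proof of Proposition \ref{prop: cellular acyclic cof is cof and weak equivalence} to realize $\mathcal O_n$ as a strong deformation retract of $\mathcal O_n^\triangledown$. I would then check that the class of strong deformation retract inclusions is stable under the operations needed to build an arbitrary acyclic cofibration from these, and that this stability survives applying $-\otimes\mathcal B$.

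\textbf{Step 1 (reduction to cellular maps with fixed domain).} By the retract argument (\cite[Lem. 1.1.9]{Hov99}), factor $F$ as $\mathcal A \xrightarrow{G} \mathcal C \xrightarrow{P} \mathcal A'$ with $G \in \cell(\bacof)$ and $P$ a fibration. Lifting $F$ against $P$ gives $s:\mathcal A' \rightarrow \mathcal C$ with $sF = G$ and $Ps = \Id$. So $F$ is a retract of $G$ in the coslice under $\mathcal A$. Since $-\otimes \mathcal B$ is a functor, $F\otimes\Id_\mathcal B$ is likewise a retract of $G\otimes\Id_\mathcal B$ under $\mathcal A\otimes\mathcal B$.

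\textbf{Step 2 (retracts under the domain).} Suppose $I:\mathcal A \rightarrow \mathcal C$ is a strong deformation retract inclusion with retraction $R$ and homotopy $\rho$. Suppose also $J:\mathcal A \rightarrow \mathcal A'$ comes with $s$ and $r$ satisfying $sJ = I$, $rI = J$ and $rs = \Id$. Then I set $R' = Rs$ and $\rho' = r\rho s$. One checks directly:
\begin{itemize}
\item $R'J = RI = \Id$;
\item $\rho' : JRs = rIRs \Rightarrow rs = \Id$;
\item $\rho' J = r\rho I = rI = J$.
\end{itemize}
Hence it suffices to show that $G\otimes\Id_\mathcal B$ is a strong deformation retract inclusion.

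\textbf{Step 3 (cellular maps).} The monoidal structure is closed, so $-\otimes\mathcal B$ preserves colimits. Hence $G\otimes\Id_\mathcal B$ is a transfinite composite of pushouts of the maps $\iota_n^\triangledown\otimes\Id_\mathcal B$. I will use \cite{Alm26} to know that $-\otimes\mathcal B$ extends to a strict $2$-functor on $\iiCont$, i.e.\ it acts on natural transformations compatibly with whiskering. Then applying it to $(\pi_n^\triangledown,\rho)$ exhibits $\iota_n^\triangledown\otimes\Id_\mathcal B$ as a strong deformation retract inclusion. Proposition \ref{prop: strong deformation retract pushout} handles pushouts.

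For transfinite composites, I build retractions $R_\alpha$ and homotopies $\rho_\alpha$ by induction.
\begin{itemize}
\item At a successor stage, with new map $J$ carrying data $(R_J,\rho_J)$, I take $R_{\alpha+1} = R_\alpha R_J$ and $\rho_{\alpha+1} = \rho_J \circ (J\rho_\alpha R_J)$. The strong condition gives $\rho_{\alpha+1}J = J\rho_\alpha$ and $R_{\alpha+1}J = R_\alpha$.
\item These identities make the data compatible along the chain, so at limit stages I take colimits. This uses that $\textbf{I}_\cong\otimes-$ preserves colimits, via the description of Remark \ref{rem: strong deformation retract 3 versions}(ii).
\end{itemize}

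\textbf{Main obstacle.} The delicate points are that $-\otimes\mathcal B$ is genuinely $\Cat$-enriched and sends identity-whiskering conditions to identity-whiskering conditions. I would try to justify this first via $\textbf{I}_\cong\otimes(\mathcal A\otimes\mathcal B)\cong(\textbf{I}_\cong\otimes\mathcal A)\otimes\mathcal B$ from \cite{Alm26}. The second point is the bookkeeping of strict compatibilities at limit ordinals.
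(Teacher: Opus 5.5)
Your proof is correct, but it follows a genuinely different route from the paper's.

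\textbf{The paper's route.} The paper never decomposes $F$. Since every object of $\Cont$ is fibrant, lifting $F$ against $\mathcal A\to\mathcal T$ gives a strict retraction $G$ with $GF=\Id_{\mathcal A}$. Since $F$ is a weak equivalence, its underlying functor is an equivalence of categories, so $G$ is automatically a quasi-inverse and $F$ is an equivalence in $\iiCont$. The $\Cat$-enriched monoidal structure of \cite[\S10]{Alm26} then transports this to $F\otimes\Id_{\mathcal B}$. The paper's text actually stops at ``equivalence in $\iiCont$, hence weak equivalence''. To recover the strong condition, take the counit $\rho:FG\Rightarrow\Id$ of the adjoint equivalence whose unit is the identity. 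Then $\rho F=\Id_F$ by a triangle identity, and this whiskering identity is preserved by the $2$-functor $-\otimes\mathcal B$.

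\textbf{Your route.} You rebuild the deformation retraction cell by cell from the $\iota_n^\triangledown$. You use Proposition \ref{prop: strong deformation retract pushout} for pushouts, a transfinite-composition argument, and closure under retracts in the coslice. Your verifications are right:
\begin{itemize}
\item $R'J=\Id$, $\rho'J=\Id_J$ in Step 2;
\item $R_{\alpha+1}J=R_\alpha$ and $\rho_{\alpha+1}J=J\rho_\alpha$ at successors;
\item gluing via $\textbf{I}_\cong\otimes-$ at limit stages.
\end{itemize}

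\textbf{Comparison.} Both arguments depend on the same external input, the $2$-functoriality of $-\otimes\mathcal B$. The paper's proof is much shorter because it exploits fibrancy of all objects and the characterization of weak equivalences as maps with underlying equivalences. That combination lets it skip all the transfinite bookkeeping. Your argument uses neither fact and produces the strong condition explicitly. It would therefore carry over to a setting where not every object is fibrant, as long as the generating acyclic cofibrations are strong deformation retract inclusions.
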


\begin{proof}
As $\mathcal A$ is fibrant, a solution to the lifting problem (where $\mathcal T$ is the terminal object)
% https://q.uiver.app/#q=WzAsNCxbMCwwLCJcXG1hdGhjYWwgQSJdLFswLDEsIlxcbWF0aGNhbCBBJyJdLFsxLDAsIlxcbWF0aGNhbCBBIl0sWzEsMSwiXFxtYXRoY2FsIFQiXSxbMCwyLCJJZF9cXG1hdGhjYWwgQSJdLFsyLDMsIiEiXSxbMCwxLCJGIiwyXSxbMSwzLCIhIiwyXSxbMSwyLCIiLDEseyJzdHlsZSI6eyJib2R5Ijp7Im5hbWUiOiJkYXNoZWQifX19XV0=
\[\begin{tikzcd}[ampersand replacement=\&]
	{\mathcal A} \& {\mathcal A} \\
	{\mathcal A'} \& {\mathcal T}
	\arrow["{\Id_\mathcal A}", from=1-1, to=1-2]
	\arrow["F"', from=1-1, to=2-1]
	\arrow["{!}", from=1-2, to=2-2]
	\arrow[dashed, from=2-1, to=1-2]
	\arrow["{!}"', from=2-1, to=2-2]
\end{tikzcd}\]
yields a retraction $G$ of $F$. Since the underlying functor of $F$ is an equivalence of categories, the same is the case for $G$, and, moreover, $F$ is an equivalence in the strict $2$-category $\iiCont$ with $G$ as a quasi-inverse. But by \cite[\S10]{Alm26}, the monoidal structure on $\Cont$ lifts to a $\Cat$-enriched monoidal structure on $\iiCont$. It follows that $F \otimes \Id_\mathcal B:\mathcal A \otimes \mathcal B \rightarrow \mathcal A' \otimes \mathcal B$ is an equivalence in $\iiCont$, thus, by Proposition \ref{prop: equivalence in iiCont is weak equivalence}, a weak equivalence in $\Cont$.
\end{proof}

\begin{proposition}
\label{prop: Cont is a monoidal model cat}
The closed symmetric monoidal structure $(\otimes,\mathcal O_1,\cdots)$ from \cite{Alm25, Alm26} realizes the model category $\Cont$ from $\S\ref{sec: model structure}$ as a monoidal model category.
\end{proposition}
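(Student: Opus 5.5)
By Remark \ref{rem: conditions for monoidal model cat}(a) and the combinatoriality from Theorem \ref{th: model structure on Cont}, I will only check pushout-products of generators. There are three conditions to verify.
\begin{enumerate}[label=(\roman*)]
	\item For $f,g \in \bcof$, the map $f \hatotimes g$ is a cofibration.
	\item For $f \in \bcof$ and $g \in \bacof$ (and symmetrically), the map $f \hatotimes g$ is a weak equivalence.
	\item The unit condition. The unit $\mathcal O_1$ is cofibrant, being the pushout of $\iota_1^S$ along $\mathcal O_0 \rightarrow \mathcal O_0$. So (iii) follows from (i) and (ii) by Remark \ref{rem: conditions for monoidal model cat}(b).
\end{enumerate}

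\textbf{Condition (i).} I will use the description of the tensor product from \cite{Alm25, Alm26}. For gats presented by generators, $\mathcal C(\bbA) \otimes \mathcal C(\bbB)$ should be presented by a gat whose generators are pairs of generators, with sorts combined as in the Boardman--Vogt/Gray-style description from those papers. Every sort axiom of $\bbA \otimes \bbB$ then arises from a sort axiom of $\bbA$ or of $\bbB$, indexed by contexts of the other theory. In particular, if $\bbA$ and $\bbB$ have no sort equality axioms, then neither does $\bbA \otimes \bbB$, apart possibly from interchange equalities.

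Applied to the domains and codomains of $\bcof$, namely $\mathcal O_{n-1}$, $\mathcal O_n$, $\mathcal O_n^+$ and $\mathcal O_n^{++}$, this shows that the square defining $f \hatotimes g$ lives among cellular theories. The codomain is obtained from the pushout by adding finitely many sort, term and term-equality axioms, so $f \hatotimes g$ is a relative $\bcof$-cell complex.

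This is the step I expect to be the main obstacle. One must check carefully, from the explicit formulas in \cite{Alm26}, that the interchange structure in the tensor product imposes only term equalities and never sort equalities between generated sorts. If it does impose sort equalities, I would fall back on proving instead that $f \hatotimes g$ has the left lifting property against acyclic fibrations. By Proposition \ref{prop: characterization acyclic fibrations}, these are the full-and-faithful, surjective-on-objects morphisms. Using the closed structure, the lifting problem adjoins to lifting $f$ against the pullback-hom of $g$ with an acyclic fibration. So it would suffice to show that the internal hom $[\mathcal O_n^{(+)},-]$ preserves acyclic fibrations in a pullback-power sense. This amounts to showing that strict models of a finite cellular theory in a full-and-faithful, surjective-on-objects target lift.

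\textbf{Condition (ii).} Take $f \in \bcof$ and $g = \iota_n^\triangledown$. I will show directly that $f \hatotimes g$ is the inclusion of a strong deformation retract. By the proof of Proposition \ref{prop: cellular acyclic cof is cof and weak equivalence}, $\iota_n^\triangledown$ is a strong deformation retract inclusion with retraction $\pi_n^\triangledown$ and homotopy $H:\textbf{I}_\cong \otimes \mathcal O_n^\triangledown \rightarrow \mathcal O_n^\triangledown$.

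Lemma \ref{lem: tensoring acyclic cofibration with an object} and the $\Cat$-enrichment of $\otimes$ from \cite[\S10]{Alm26} let me tensor these data with $\mathcal C$ and $\mathcal C'$, the domain and codomain of $f$. This gives compatible retractions $\Id \otimes \pi_n^\triangledown$ and homotopies $\Id \otimes H$ of $\mathcal C \otimes \mathcal O_n^\triangledown$ and $\mathcal C' \otimes \mathcal O_n^\triangledown$. These are natural in $f$ and fix the images of $\mathcal C \otimes \mathcal O_n$ and $\mathcal C' \otimes \mathcal O_n$.

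By the universal property of the pushout, and since $\textbf{I}_\cong \otimes -$ preserves pushouts, they glue to a retraction and a homotopy on the pushout $\mathcal C \otimes \mathcal O_n^\triangledown \sqcup_{\mathcal C \otimes \mathcal O_n} \mathcal C' \otimes \mathcal O_n$. This exhibits it as a strong deformation retract of $\mathcal C' \otimes \mathcal O_n^\triangledown$ compatibly with $f \hatotimes g$. The gluing is the same pasting argument as in Proposition \ref{prop: strong deformation retract pushout}.

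Equivalently, both the source and the target of $f \hatotimes g$ are $2$-equivalent to $\mathcal C' \otimes \mathcal O_n$ compatibly. So by 2-out-of-3 and Remark \ref{rem: strong deformation retract -> weak equivalence}, $f \hatotimes g$ is a weak equivalence. It is a cofibration by (i), since $\iota_n^\triangledown$ is itself a cofibration, which completes the proof.
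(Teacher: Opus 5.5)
Your skeleton matches the paper's: reduce to generators via Remark~\ref{rem: conditions for monoidal model cat}(a), get the unit condition from cofibrancy of $\mathcal O_1$, and prove (ii) with strong deformation retracts and 2-out-of-3. The gap is in (i).

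\textbf{The gap in (i).} You argue that all corners of the square are cellular, hence $f \hatotimes g$ is a relative $\bcof$-cell complex. This inference is invalid, because cellularity of source and target says nothing about a map between them. For example, $\pi_n^S:\mathcal O_n^\vee \rightarrow \mathcal O_n$ is a map between cellular objects, yet it is not a cofibration: it fails to lift against the acyclic fibration of Remark~\ref{rem: equivalences of contextual categories}, which identifies two distinct sorts over the same base.

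The sentence ``the codomain is obtained from the pushout by adding finitely many sort, term and term-equality axioms'' is exactly what has to be proved, and you only assert it. The paper imports it from the explicit computation in \cite[\S8]{Alm26}, which shows that each pushout-product of two basic cofibrations is a pushout of a single basic cofibration or an isomorphism:
\begin{itemize}
\item $\iota_m^S \hatotimes \iota_n^S$ is a pushout of $\iota_{mn-1}^S$. The new sort comes from the \emph{pair} of generating sorts, in a context of length $mn-1$, not from a sort of one factor.
\item $\iota_m^S \hatotimes \iota_n^T$ is a pushout of $\iota_{m(n-1)}^T$.
\item $\iota_m^S \hatotimes \pi_n^T$ is a pushout of $\pi_{m(n-1)}^T$.
\item $\iota_m^T \hatotimes \iota_n^T$ is a pushout of $\pi_{(m-1)(n-1)}^T$.
\item The products pairing $\pi^T$ with $\iota^T$ or $\pi^T$ are isomorphisms.
\end{itemize}
In particular, the interchange you worry about is a term equality, never a sort equality. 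Without citing or reproving this, (i) is not established. Your fallback does not help: it is the adjoint reformulation of the same lifting problem through the pullback-power of the internal hom, and it needs the same explicit knowledge of the tensor product.

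\textbf{Step (ii).} Your argument is correct in its 2-out-of-3 form, and it is the paper's argument with the two factors swapped. The map $j:\mathcal C' \otimes \mathcal O_n \rightarrow P$ is a pushout of $\Id_{\mathcal C} \otimes \iota_n^\triangledown$. That map is a strong deformation retract inclusion by 2-functoriality of $\mathcal C \otimes -$, so $j$ is one too by Proposition~\ref{prop: strong deformation retract pushout}. Then $(f \hatotimes g) \circ j = \Id_{\mathcal C'} \otimes \iota_n^\triangledown$, and 2-out-of-3 concludes.

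Drop, however, the claim that the gluing exhibits $f \hatotimes g$ itself as a strong deformation retract inclusion, and drop the word ``equivalently''. The glued data only retract $P$ onto $\mathcal C' \otimes \mathcal O_n$. The natural candidate $\mathcal C' \otimes \mathcal O_n^\triangledown \rightarrow \mathcal C' \otimes \mathcal O_n \rightarrow P$ restricts on the summand $\mathcal C \otimes \mathcal O_n^\triangledown$ to the leg into $P$ precomposed with $\Id_{\mathcal C} \otimes \iota_n^\triangledown \pi_n^\triangledown$, not to the leg itself. So it is not a retraction of $f \hatotimes g$.

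The paper runs this argument for an arbitrary acyclic cofibration and an arbitrary cofibration via Lemma~\ref{lem: tensoring acyclic cofibration with an object}, so it needs no reduction to generators for (ii).
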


\begin{proof}
By Remark \ref{rem: conditions for monoidal model cat}, it suffices to prove that
\begin{itemize}
	\item if $F$ and $G$ are basic cofibrations, then so is $F \hatotimes G$;
	
	\item if $F$, $G$ are cofibrations and one of them is acyclic, then $F \hatotimes G$ is a weak equivalence.
\end{itemize}
Condition (iii) from Definition \ref{def: monoidal model category} will follow from the unit $\mathcal O_1$ being cofibrant.

\vspace{0.5em}

If $F$, $G$ are basic cofibrations, $F \hatotimes G$ being a cofibration follows from \cite[\S8]{Alm26}. More precisely, for $m$, $n \ge 1$:
\begin{itemize}
	\item[-] $\iota_m^S \hatotimes \iota_n^S$ is a pushout of $\iota_{mn - 1}^S$.
	
	\item[-] $\iota_m^S \hatotimes \iota_n^T$ and $\iota_n^T \hatotimes \iota_m^S$ are pushouts of $\iota_{m(n-1)}^T$.
	
	\item[-] $\iota_m^S \hatotimes \pi_n^T$ and $\pi_n^T \hatotimes \iota_m^S$ are pushouts of $\pi_{m(n-1)}^T$.
	
	\item[-] $\iota_m^T \hatotimes \iota_n^T$ is a pushout of $\pi_{(m-1)(n-1)}^T$.
	
	\item[-] $\iota_m^T \hatotimes \pi_n^T$ and $\pi_n^T \hatotimes \iota_m^T$ are isomorphisms.
	
	\item[-] $\pi_m^T \hatotimes \pi_n^T$ is an isomorphism.
\end{itemize}

Now, let $F:\mathcal A \rightarrow \mathcal A'$ be an acyclic cofibration and $G:\mathcal B \rightarrow \mathcal B'$ a cofibration, and let us verify that $F \hatotimes G$ is a weak equivalence. It will then follow that $G \hatotimes F$, being isomorphic to $F \hatotimes G$, is also a weak equivalence. Consider the diagram
% https://q.uiver.app/#q=WzAsNSxbMCwwLCJcXG1hdGhjYWwgQSBcXG90aW1lcyBcXG1hdGhjYWwgQiJdLFsyLDAsIlxcbWF0aGNhbCBBJyBcXG90aW1lcyBcXG1hdGhjYWwgQiJdLFswLDIsIlxcbWF0aGNhbCBBIFxcb3RpbWVzIFxcbWF0aGNhbCBCJyJdLFsyLDIsIlxcbWF0aGNhbCBBIFxcb3RpbWVzIFxcbWF0aGNhbCBCJyBcXHNxY3VwX3tcXG1hdGhjYWwgQSBcXG90aW1lcyBcXG1hdGhjYWwgQn0gXFxtYXRoY2FsIEEnIFxcb3RpbWVzIFxcbWF0aGNhbCBCIl0sWzMsMywiXFxtYXRoY2FsIEEnIFxcb3RpbWVzIFxcbWF0aGNhbCBCJyJdLFsxLDNdLFsyLDNdLFswLDIsIlxcSWRfXFxtYXRoY2FsIEEgXFxvdGltZXMgRyIsMl0sWzAsMSwiRiBcXG90aW1lcyBcXElkX1xcbWF0aGNhbCBCIl0sWzIsNCwiRiBcXG90aW1lcyBcXElkX3tcXG1hdGhjYWwgQid9IiwyLHsiY3VydmUiOjN9XSxbMSw0LCJcXElkX3tcXG1hdGhjYWwgQSd9IFxcb3RpbWVzIEciLDAseyJjdXJ2ZSI6LTN9XSxbMyw0LCJGIFxcd2lkZWhhdHtcXG90aW1lc30gRyIsMV0sWzMsMCwiIiwwLHsic3R5bGUiOnsibmFtZSI6ImNvcm5lciJ9fV1d
\[\begin{tikzcd}[row sep=small, column sep=small]
	{\mathcal A \otimes \mathcal B} && {\mathcal A' \otimes \mathcal B} & \\
	\\
	{\mathcal A \otimes \mathcal B'} && {\mathcal A \otimes \mathcal B' \sqcup_{\mathcal A \otimes \mathcal B} \mathcal A' \otimes \mathcal B} \\
	&&& {\mathcal A' \otimes \mathcal B'}
	\arrow["{F \otimes \Id_\mathcal B}", from=1-1, to=1-3]
	\arrow["{\Id_\mathcal A \otimes G}"', from=1-1, to=3-1]
	\arrow[from=1-3, to=3-3]
	\arrow["{\Id_{\mathcal A'} \otimes G}", curve={height=-18pt}, from=1-3, to=4-4]
	\arrow["I",from=3-1, to=3-3]
	\arrow["{F \otimes \Id_{\mathcal B'}}"', curve={height=18pt}, from=3-1, to=4-4]
	\arrow["\lrcorner"{anchor=center, pos=0.125, rotate=180}, draw=none, from=3-3, to=1-1]
	\arrow["{F \hatotimes G}"{description}, from=3-3, to=4-4]
\end{tikzcd}\]
By Lemma \ref{lem: tensoring acyclic cofibration with an object}, $F \otimes \Id_\mathcal B$ and $F \otimes \Id_{\mathcal B'}$ are inclusions of strong deformation retracts. By Proposition \ref{prop: strong deformation retract pushout}, so is $I$ since it is a pushout of an inclusion of a strong deformation retract. By Remark \ref{rem: strong deformation retract -> weak equivalence}, $F \otimes \Id_{\mathcal B'}$ and $I$ are weak equivalences. As weak equivalences have the $2$-out-of-$3$ property, so is $F \hatotimes G$.
\end{proof}

We also observe that the monoidal model category $\Cont$ satisfies the \emph{monoid axiom} (\cite[Def. 3.3]{SchShi00}), a condition that implies that the model structure can be lifted to categories of modules or algebras for a monoid (\cite[Th. 4.1]{SchShi00}).

\begin{proposition}
\label{prop: monoid axiom for Cont}
In $\Cont$, if an arrow can be expressed as a transfinite composite of pushouts of elements of
$$
\{F \otimes \Id_\mathcal B: \mathcal A \otimes \mathcal B \rightarrow \mathcal A' \otimes \mathcal B \mid F:\mathcal A \rightarrow \mathcal A' \text{ is an acyclic cofibration,}\; \mathcal B \in \Cont\},
$$
then it is a weak equivalence.
\end{proposition}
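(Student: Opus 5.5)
The plan is to show that every arrow in the given class realizes its domain as a strong deformation retract of its codomain, and then to check that the resulting class is closed under pushouts and transfinite composition.

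\emph{Generators.} By Lemma \ref{lem: tensoring acyclic cofibration with an object}, and more precisely by its proof, each generator $F \otimes \Id_\mathcal B$ is an equivalence in $\iiCont$. This uses that $F$ has a retraction $G$ (since $\mathcal A$ is fibrant), that $F$ is an equivalence in $\iiCont$ with quasi-inverse $G$, and that $\otimes$ is $\Cat$-enriched by \cite[\S10]{Alm26}. To obtain a \emph{strong deformation retract} rather than merely an equivalence, I would first upgrade the data for $F$ itself. Since $F$ is an acyclic cofibration and $\mathcal A'^{\textbf{I}_\cong} \to \mathcal A' \times \mathcal A'$ is a fibration (it is a path object, with all objects fibrant), I solve the lifting problem of $F$ against $\mathcal A'^{\textbf{I}_\cong} \to \mathcal A'\times\mathcal A'$ with top map $\mathcal A \to \mathcal A \xrightarrow{\mathcal A'^t \circ F} \mathcal A'^{\textbf{I}_\cong}$ and bottom map $(FG,\Id)$. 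This yields $\rho: FG \cong \Id_{\mathcal A'}$ with $\rho F = \Id_F$, which is exactly Remark \ref{rem: strong deformation retract 3 versions}(iii). Tensoring with $\Id_\mathcal B$ via the $\Cat$-enriched functoriality of $-\otimes\mathcal B$ then gives $G\otimes\Id$ and $\rho\otimes\Id$ satisfying the same identities. So each generator is the inclusion of a strong deformation retract.

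\emph{Pushouts.} Proposition \ref{prop: strong deformation retract pushout} shows that pushouts of such inclusions remain inclusions of strong deformation retracts. Remark \ref{rem: strong deformation retract -> weak equivalence} then makes them weak equivalences.

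\emph{Transfinite composites.} Proposition \ref{prop: weak equivalences - transfinite composition, retracts, 2 of 3} says weak equivalences are closed under transfinite composition. Since each successor step is a weak equivalence and colimit stages are defined as colimits, the composite is a weak equivalence. I also need each intermediate map $\mathcal X_0 \to \mathcal X_\beta$ to be a weak equivalence; this follows by transfinite induction from the same closure property together with 2-out-of-3.

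The main obstacle is the first step, namely confirming that the enriched tensor $-\otimes\mathcal B$ from \cite{Alm26} sends the triple $(F,G,\rho)$ to data satisfying $\rho F=\Id$ strictly. If that strictness is unavailable, it suffices to know that each generator is a weak equivalence that is also a cofibration-like map stable under pushout. In that case I would instead use left-adjointness of $\mathcal B\otimes -$ to write the pushout of $F\otimes\Id_\mathcal B$ along any map as a pushout of the strong deformation retract $F'$ obtained for $F$ itself, though this reduction needs care.
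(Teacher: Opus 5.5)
Your argument is correct and follows the paper's route. The generators $F\otimes\Id_\mathcal B$ are inclusions of strong deformation retracts, Proposition \ref{prop: strong deformation retract pushout} shows pushouts preserve this, so they are weak equivalences, and weak equivalences are closed under transfinite composition. Your explicit pushout step and your lifting argument giving $\rho F=\Id_F$ fill in points that the paper leaves implicit. The paper's terse proof never mentions pushouts, and the proof of Lemma \ref{lem: tensoring acyclic cofibration with an object} only exhibits an equivalence in $\iiCont$. One justification is too thin: calling $\mathcal A'^{\textbf{I}_\cong}\to\mathcal A'\times\mathcal A'$ a ``path object'' does not show it is a fibration, but this is easy to check directly via Remark \ref{rem: description fibrations in Cont}. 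Your ``main obstacle'' is not one. Since $-\otimes\mathcal B$ is $\Cat$-enriched, i.e.\ a strict $2$-functor, it preserves whiskering, so $(\rho\otimes\Id_\mathcal B)(F\otimes\Id_\mathcal B)=(\rho F)\otimes\Id_\mathcal B$ is an identity on the nose. The fallback, which as phrased does not obviously work, is therefore unnecessary.
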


\begin{proof}
By Lemma \ref{lem: tensoring acyclic cofibration with an object} and Remark \ref{rem: strong deformation retract -> weak equivalence}, $F \otimes \Id_\mathcal B$ is a weak equivalence whenever $F$ is an acyclic cofibration. Also, by Proposition \ref{prop: weak equivalences - transfinite composition, retracts, 2 of 3}, weak equivalences are closed under transfinite composition.
\end{proof}

\subsubsection{Compatibility with the $\Cat$-enrichment}

In \cite[\S10.1]{Alm26}, we studied the functor $\iota:\Cat \rightarrow \Cont$ that sends a small category $A$ to the contextual category freely generated by $A$ where each object of the latter is regarded as a length-$1$ object (a ``constant sort"). Equivalently, $\iota$ is left adjoint to the functor $\Ob_1:\Cont \rightarrow \Cat$ that sends $\mathcal C$ to the full subcategory of $|\mathcal C|$ spanned by the length-$1$ objects. Syntactically, we can describe $\iota(A)$ as $\mathcal C(A_{gat})$ where $A_{gat}$ is the theory given by
\begin{itemize}
	\item for each $a \in \Ob(A)$, a sort symbol $\underline{a}$ introduced by $\vdash \underline{a} \tp$
	
	\item for each arrow $f:a \rightarrow b$ in $A$, a term symbol $\underline{f}$ introduced by $x:\underline{a} \vdash \underline{f}(x): \underline{b}$
	
	\item for composable arrows $a \xrightarrow{f} b \xrightarrow{g} c$, the axiom $x:\underline{a} \vdash \underline{g}(\underline{f}(x)) \equiv \underline{g \circ f}(x): \underline{c}$
	
	\item for each $a \in \Ob(A)$, the axiom $x:\underline{a} \vdash \underline{id_a}(x) \equiv x : \underline{a}$
\end{itemize}

\begin{proposition}
$\iota:\Cat \rightarrow \Cont$ is a left Quillen functor, where $\Cat$ is equipped with the categorical model structure (see, e.g., \cite{Rez96}).
\end{proposition}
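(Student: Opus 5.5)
Since $\iota$ is a left adjoint (to $\Ob_1$), it suffices to check that its right adjoint $\Ob_1:\Cont \rightarrow \Cat$ is a right Quillen functor, i.e.\ preserves fibrations and acyclic fibrations. Alternatively, since the categorical model structure on $\Cat$ is cofibrantly generated, it suffices to check that $\iota$ sends generating cofibrations to cofibrations and a generating acyclic cofibration to an acyclic cofibration. I will take the right-adjoint route, because the descriptions of fibrations and acyclic fibrations in $\Cont$ are already explicit.

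For fibrations, let $F:\mathcal A \rightarrow \mathcal B$ be a fibration. By Remark \ref{rem: description fibrations in Cont}, $F_a:\mathcal A_a \rightarrow \mathcal B_{Fa}$ is an isofibration for every $a$. Take $a = 1_\mathcal A$: then $\mathcal A_{1_\mathcal A}$ has as objects the length-$1$ objects, and its morphisms are all arrows between them, since every map to the terminal object is the unique one. Hence $\mathcal A_{1_\mathcal A} = \Ob_1(\mathcal A)$, and $F_{1_\mathcal A} = \Ob_1(F)$ is an isofibration, which is exactly a fibration in the categorical model structure.

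For acyclic fibrations, Proposition \ref{prop: characterization acyclic fibrations}(b) says that $F$ is full-and-faithful and each $F_a$ is surjective on objects. Then $\Ob_1(F)$, being the restriction of a full-and-faithful functor to full subcategories, is full-and-faithful. It is surjective on objects by the case $a = 1_\mathcal A$, using $F(1_\mathcal A) = 1_\mathcal B$. So $\Ob_1(F)$ is an acyclic fibration (surjective-on-objects equivalence) in $\Cat$.

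There is no real obstacle. The only points to state carefully are the identification $\mathcal A_{1_\mathcal A} = \Ob_1(\mathcal A)$ as categories, which needs the distinguished terminal object to be the unique length-$0$ object and terminal, and the fact that $\Ob_1$ really is right adjoint to $\iota$, which the paper cites from \cite[\S10.1]{Alm26}.
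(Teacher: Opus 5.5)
Your proposal is correct and matches the paper's proof: it also shows that the right adjoint preserves fibrations and acyclic fibrations, by identifying the length-$1$ part of $\mathcal A$ with $\mathcal A_{1_\mathcal A}$ and then applying the description of fibrations as fiberwise isofibrations and the characterization of acyclic fibrations. Your explicit check that full-faithfulness of $F$ passes to its restriction to length-$1$ objects is a detail the paper leaves implicit when it concludes that $F_{1_\mathcal A}$ is an acyclic fibration in $\mathrm{Cat}$.
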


\begin{proof}
It suffices to prove that that its right adjoint $\iiOb_1$ is a right Quillen functor, i.e. that it preserves acyclic fibrations and fibrations. Note that $\iiOb_1(\mathcal A)$ is canonically isomorphic to the category $\mathcal A_{1_\mathcal A}$ as in Definition \ref{def: slice length 1}.

Consider a fibration $F:\mathcal A \rightarrow \mathcal B$ in $\Cont$. By Remark \ref{rem: description fibrations in Cont}, we have, in particular that $F_{1_\mathcal A}:\mathcal A_{1_\mathcal A} \rightarrow \mathcal B_{F(1_\mathcal A)} = \mathcal B_{1_\mathcal B}$ is an isofibration; as $F_{1_\mathcal A}$ is isomorphic to $\iiOb_1(F)$, the latter is a fibration in $\Cat$. If, additionally, $F$ is a weak equivalence, then $F_{1_\mathcal A}$ is surjective on objects, hence an acyclic fibration in $\Cat$.
\end{proof}

\begin{definition}
\label{def: enriched model category}
Let $V$ be a monoidal model category (\ref{def: monoidal model category}) and $C$ a tensored and powered $V$-enriched category whose underlying category is (complete, cocomplete, and) equipped with a Quillen model structure. We say that $C$ is a \emph{$V$-model category} if the $V$-tensor functor $V \times C \rightarrow C$ is a left Quillen functor.
\end{definition}

By \cite[Rem. 10.4]{Alm26}, for $A \in \Cat$, $\mathcal C \in \Cont$, the $\Cat$-tensor $A \otimes \mathcal C$ is canonically isomorphic to $\iota(A) \otimes \mathcal C$. As a consequence, the tensor action
$$
\Cat \times \Cont \longrightarrow \Cont
$$
is isomorphic to the composite
$$
\Cat \times \Cont \xrightarrow{\iota \times \Id_{\Cont}} \Cont \times \Cont \overset{\otimes}{\longrightarrow} \Cont.
$$
Since $\iota:\Cat \rightarrow \Cont$ is a left Quillen functor and $\otimes:\Cont \times \Cont \rightarrow \Cont$ is a left Quillen bifunctor (Proposition \ref{prop: Cont is a monoidal model cat}, Remark \ref{rem: Quillen bifunctor}), $\Cat \times \Cont \rightarrow \Cont$ is also a Quillen bifunctor, and we conclude that:

\begin{proposition}
\label{prop: Cat-enrichment of model structure on Cont}
$\Cont$ is a $\Cat$-model category. \qed
\end{proposition}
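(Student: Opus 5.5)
The statement follows from the discussion immediately preceding it, so the plan is to verify the conditions of Definition \ref{def: enriched model category} with $V = \Cat$ and $C = \Cont$.

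First, the structural hypotheses. $\Cat$ with the categorical model structure is a (cartesian) monoidal model category; this is standard. $\Cont$ is complete and cocomplete, being locally finitely presentable. By Theorem \ref{th: model structure on Cont} it carries a model structure. The Construction showing $\iiCont$ is $\Cat$-powered and Proposition \ref{prop: cont is cat-tensored} show that $\iiCont$ is a tensored and powered $\Cat$-enriched category whose underlying category is $\Cont$.

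The substantive point is that the tensor $-\otimes-:\Cat\times\Cont\to\Cont$ is a left Quillen bifunctor in the sense of Remark \ref{rem: Quillen bifunctor}. I would argue in the following order.

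\begin{enumerate}
\item \emph{Factor the tensor.} By \cite[Rem. 10.4]{Alm26}, $K\otimes\mathcal C\cong\iota(K)\otimes\mathcal C$ naturally. The tensor is therefore isomorphic to $\otimes\circ(\iota\times\Id_{\Cont})$, which preserves colimits in each variable: $\iota$ is a left adjoint, and $\otimes$ is closed.
\item \emph{Pushout-products.} For cofibrations $f$ in $\Cat$ and $g$ in $\Cont$, $\iota$ preserves pushouts, so the pushout-product of the composite is identified with $\iota(f)\hatotimes g$.
\item \emph{Apply the earlier results.} By the preceding Proposition, $\iota$ is left Quillen, so $\iota(f)$ is a cofibration, and it is acyclic whenever $f$ is. Proposition \ref{prop: Cont is a monoidal model cat} then gives that $\iota(f)\hatotimes g$ is a cofibration, acyclic if $f$ or $g$ is.
\end{enumerate}

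The only step that needs care is the identification in step 2 of the bifunctor's pushout-product with $\iota(f)\hatotimes g$ under the natural isomorphism of step 1. This is a routine check: a natural isomorphism of bifunctors induces isomorphic pushout-product maps. Since cofibrations and weak equivalences are closed under isomorphism in $\iiAr(\Cont)$, the conclusion transfers.

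If Definition \ref{def: enriched model category} is read as also requiring the unit axiom, it holds automatically: the unit $\textbf{1}$ of $\Cat$ is cofibrant, since every object of $\Cat$ is.
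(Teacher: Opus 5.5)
Your proposal is correct and is essentially the paper's argument: the paper identifies the $\Cat$-tensor with $\otimes \circ (\iota \times \Id_{\Cont})$ and concludes from $\iota$ being left Quillen together with $\otimes$ being a left Quillen bifunctor on $\Cont$. One minor point: in step 2 you do not need $\iota$ to preserve pushouts, since the pushout-product of the bifunctor $\iota(-)\otimes -$ along $f$ and $g$ is $\iota(f) \hatotimes g$ by definition.
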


\begin{remark}
Writing $\Ho(-)$ for the homotopy category of a model category, the above result allows us to equip the homotopy category $\Ho(\Cont)$ with a structure of $\Ho(\Cat)$-enriched category; see \cite[Th. 3.10]{LewMan07}. Explicitly, since every object of $\Cat$ is cofibrant and fibrant, $\Ho(\Cat)$ can be described as the category whose objects are the small categories, and where morphisms $A \rightarrow B$ are isomorphism classes of functors from $A$ to $B$. For $\mathcal A$, $\mathcal B \in \Cont$, the object of morphisms from $\mathcal A$ to $\mathcal B$ in the $\Ho(\Cat)$-enrichment of $\Ho(\Cont)$ is $\iiCont(\mathcal A',\mathcal B)$ where $\mathcal A'$ is a cofibrant replacement of $\mathcal A$ (here we use that $\mathcal B$ is fibrant).
\end{remark}

\subsubsection{Spaces of maps, and the homotopy $(2,1)$-category of $\Cont$}
\label{subsubsec: spaces of maps}

We can use the $\Cat$-enrichment of $\Cont$ to lift it to a $\SSet$-model structure, where $\SSet$ is the category of simplicial sets equipped with the Kan-Quillen model structure (\cite[II, \S3, Th. 3]{Qui67}). This allows us to compute the derived spaces of maps between objects of $\Cont$, or, equivalently, the spaces of maps in the associated $(\infty,1)$-category.\footnote{An argument very similar to ours is used in \cite[Rem. 4.12]{Sha20}.} However, the fact that $\Cat$ presents a $(2,1)$-category will imply that the same holds for $\Cont$.

\begin{remark}[The monoidal Quillen adjunction between $\SSet$ and $\Cat$]
By \cite[Th. 6.1]{Rez96}, we have a Quillen adjunction
% https://q.uiver.app/#q=WzAsMixbMCwwLCJcXFNTZXQiXSxbMiwwLCJcXENhdCJdLFswLDEsIlxcdGV4dHtncGR9IiwyLHsib2Zmc2V0IjoyfV0sWzEsMCwiTl97Y29yZX0iLDIseyJvZmZzZXQiOjJ9XSxbMiwzLCIiLDIseyJsZXZlbCI6MSwic3R5bGUiOnsibmFtZSI6ImFkanVuY3Rpb24ifX1dXQ==
\[\begin{tikzcd}[ampersand replacement=\&]
	\SSet \&\& \Cat
	\arrow[""{name=0, anchor=center, inner sep=0}, "{\Pi_1}"', shift right=2, from=1-1, to=1-3]
	\arrow[""{name=1, anchor=center, inner sep=0}, "{N_{core}}"', shift right=2, from=1-3, to=1-1]
	\arrow["\dashv"{anchor=center, rotate=90}, draw=none, from=0, to=1]
\end{tikzcd}\]
where $\Pi_1(X)$ is the fundamental groupoid of $X$, and $N_{core}(A)$ is the nerve of the core groupoid of $A$. We recall that $\SSet$ and $\Cat$ are cartesian monoidal model categories and, as proved in \cite[II, \S7.5]{GabZis67},\footnote{$\Pi_1$ is denoted there by $\Pi:\Delta^o(\mathscr E) \rightarrow \Cat$.} $\Pi_1$ preserves finite products.
\end{remark}

As explained in \cite[Lem. 1.31]{Bar10}, if $C$ and $D$ are symmetric monoidal model categories such that the unit of $D$ is cofibrant, equipping $D$ with a structure of $C$-model category (Definition \ref{def: enriched model category}) is equivalent to giving a Quillen adjunction
% https://q.uiver.app/#q=WzAsMixbMCwwLCJDIl0sWzIsMCwiRCJdLFswLDEsIkwiLDIseyJvZmZzZXQiOjJ9XSxbMSwwLCJSIiwyLHsib2Zmc2V0IjoyfV0sWzIsMywiIiwyLHsibGV2ZWwiOjEsInN0eWxlIjp7Im5hbWUiOiJhZGp1bmN0aW9uIn19XV0=
\[\begin{tikzcd}[ampersand replacement=\&]
	C \&\& D
	\arrow[""{name=0, anchor=center, inner sep=0}, "L"', shift right=2, from=1-1, to=1-3]
	\arrow[""{name=1, anchor=center, inner sep=0}, "R"', shift right=2, from=1-3, to=1-1]
	\arrow["\dashv"{anchor=center, rotate=90}, draw=none, from=0, to=1]
\end{tikzcd}\]
such that $L$ is strong symmetric monoidal. For such an adjunction, the induced $C$-object of morphisms between $d$, $d' \in D$ is $R(D(d,d'))$, and the tensor between $c \in C$, $d \in D$ is $L(c) \otimes_D D$. Hence, the composite (strong symmetric monoidal Quillen) adjunction
% https://q.uiver.app/#q=WzAsMyxbMCwwLCJcXFNTZXQiXSxbMiwwLCJcXENhdCJdLFs0LDAsIlxcQ29udF97c3RyfSJdLFswLDEsIlxccGkiLDIseyJvZmZzZXQiOjJ9XSxbMSwwLCJOX3tjb3JlfSIsMix7Im9mZnNldCI6Mn1dLFsxLDIsIlxcaW90YSIsMix7Im9mZnNldCI6Mn1dLFsyLDEsIlxcaWlPYl8xIiwyLHsib2Zmc2V0IjoyfV0sWzMsNCwiIiwyLHsibGV2ZWwiOjEsInN0eWxlIjp7Im5hbWUiOiJhZGp1bmN0aW9uIn19XSxbNSw2LCIiLDIseyJsZXZlbCI6MSwic3R5bGUiOnsibmFtZSI6ImFkanVuY3Rpb24ifX1dXQ==
\[\begin{tikzcd}[ampersand replacement=\&]
	\SSet \&\& \Cat \&\& {\Cont}
	\arrow[""{name=0, anchor=center, inner sep=0}, "\Pi_1"', shift right=2, from=1-1, to=1-3]
	\arrow[""{name=1, anchor=center, inner sep=0}, "{N_{core}}"', shift right=2, from=1-3, to=1-1]
	\arrow[""{name=2, anchor=center, inner sep=0}, "\iota"', shift right=2, from=1-3, to=1-5]
	\arrow[""{name=3, anchor=center, inner sep=0}, "{\iiOb_1}"', shift right=2, from=1-5, to=1-3]
	\arrow["\dashv"{anchor=center, rotate=90}, draw=none, from=0, to=1]
	\arrow["\dashv"{anchor=center, rotate=90}, draw=none, from=2, to=3]
\end{tikzcd}\]
realizes $\Cont$ as a $\SSet$-model category. It follows that the derived space of maps between $\mathcal A$, $\mathcal B$ is the core groupoid of $\iiCont(\mathcal A',\mathcal B)$ where $\mathcal A'$ is a cofibrant replacement of $\mathcal A$. We will use this in Theorem \ref{th: homotopy bicategory of Cont} to conclude that the $(\infty,1)$-category $\iiHo(\Cont)$ is the underlying $(2,1)$-category of the $2$-category $\iiDMC_r$ of rooted display map categories.

\section{A characterization of cellular contextual categories}
\label{sec: cellular theories}

We will now find certain conditions characterizing when a contextual category $\mathcal A$ is cellular, that is, when the unique morphism $\mathcal O_0 \rightarrow \mathcal A$ can be expressed as a transfinite composite of basic cofibrations, or, equivalently, when $\mathcal A \cong \mathcal C(\bbA)$ for some gat $\bbA$ without any sort equality axioms.

The main result of section, Theorem \ref{th: cellular iff basis}, is that $\mathcal A$ is cellular if and only it contains a set $\textbf{P}$ of strict display maps such that
\begin{enumerate}[label=(\roman*)]
	\item every strict display map in $\mathcal A$ can be obtained in a unique way as a distinguished pullback of some element of $\textbf{P}$ -- in the terminology of Definition \ref{def: basis}, $\textbf{P}$ is a \emph{basis} of $\mathcal A$;
	
	\item a certain ``dependency" binary relation on $\textbf{P}$ (Definition \ref{def: relation on display maps, well-foundedness}) is well-founded.
\end{enumerate}

Condition (i) is intended as a syntax-independent counterpart to the following property of gats that don't have sort equality axioms, already noted in \cite[Rem. 8.1.9]{Tay99} (see Proposition \ref{prop: cellular gat -> basis}): if $S(\textbf{f})$ and $T(\textbf{g})$ are two sorts in a given context, where $S$, $T$ are sort symbols and $\textbf{f}$, $\textbf{g}$ are context morphisms, then $S(\textbf{f}) \equiv T(\textbf{g})$ is derivable if and only if $S$, $T$ are equal and $\textbf{f} \equiv \textbf{g}$ is derivable. We observe (see Remark \ref{rem: basis and D(A)}) that (i) can be rephrased as the presheaf $\tp:|\mathcal A|^{\op} \rightarrow \Set$ being a coproduct of representables, or also as the category $\textbf{D}(\mathcal A)$ from Construction \ref{constr: categories of display maps} being such that each of its connected components has a terminal object. The latter point of view will be central to our discussion of strictification of weak morphisms in \S\ref{sec: strictification}.

On the other hand, condition (ii) is closely related to the fact that in the absence of sort equality axioms, we can define a ``dependency" binary relation on the set of sort symbols, and, since any derivable judgment can be derived from finitely many axioms, this relation is well-founded.

\vspace{0.5em}

We will start by defining bases of contextual categories and the dependency relation on the set of strict display maps. In \S\ref{subsec: cellularity -> basis}, we prove that every cellular contextual category has a well-founded basis. This follows relatively directly from Cartmell's equivalence between gats and contextual categories, which allows us to keep track of some of the difficult combinatorics involved in describing contexts and morphisms between them.

The converse statement, Proposition \ref{prop: basis -> cellular}, requires several preliminaries. In particular, it relies on the study of full contextual subcategories and their generating sets of display maps (with the well-foundedness condition playing a special role), as well as on the weak factorization system on $\Cont$ whose right class consists of the full-and-faithful morphisms. Full contextual subcategories and well-founded bases are discussed in \S\ref{subsec: full contextual subcategories}. After that, in \S\ref{subsec: basis -> cellularity}, we present further preliminaries (including the aforementioned weak factorization system) and finish proving the second implication in our characterization result.

\begin{definition}
\label{def: basis}
A \emph{basis} for a given contextual category $\mathcal A$ is a set $\textbf{P}$ of strict display maps that has the following property: for every $a \in \mathcal A$ such that $\ell(a) \ge 1$, there exists a unique distinguished square
\[
\dsqua{a}{b}{\partial a}{\partial b}{}{}{\textbf{p}_a}{\textbf{p}_b}
\]
with $\textbf{p}_b \in \textbf{P}$.
\end{definition}

\begin{remark}
\label{rem: basis and D(A)}
Note that a set of strict display maps $\textbf{P}$ is a basis for $\mathcal A$ if and only if each connected component $K$ of the category $\textbf{D}(\mathcal A)$ (see Construction \ref{constr: categories of display maps}) has precisely one object that belongs to $\textbf{P}$, and that object is terminal in $K$. Equivalently, since a presheaf is representable if and only if its category of elements has a terminal object, $\mathcal A$ has a basis if and only if the presheaf $\tp:|\mathcal A|^{\op} \rightarrow \Set$ sending each object to the set of strict display maps over it is a coproduct of representables. More explicitly, given a presheaf isomorphism
$$
\eta:\coprod_{i \in I}\mathcal A(-,a_i) \rightarrow \tp,
$$
the set $\{\eta(id_{a_i}):a'_i \twoheadrightarrow a_i \mid i \in I\}$ is a basis for $\mathcal A$; conversely, any basis determines an isomorphism as above. If $\textbf{P}$ and $\textbf{P}'$ are bases for $\mathcal A$, then there exist a unique bijection $\varphi:\textbf{P} \rightarrow \textbf{P}'$ and unique isomorphisms $p \cong \varphi(p)$ in $\textbf{D}(\mathcal A)$ for each $p \in \textbf{P}$.
\end{remark}

\begin{definition}
\label{def: relation on display maps, well-foundedness}
Let $\mathcal A$ be a contextual category. We define a binary relation $\btle$ on the set of strict display maps of $\mathcal A$ as follows:\footnote{A similar relation is discussed in \cite[Rem. 3.1]{BarHen25}.}$\textbf{p}_b \btle \textbf{p}_a$ if and only if there exist $i \in \{1, ..., \ell(a) - 1\}$ and a distinguished square
\[
\dsqua{\partial_i a}{b}{\partial_{i-1} a}{\partial b.}{f'}{f}{}{}
\]
We say that a subset $\textbf{K}$ of the set of all strict display maps is \emph{well-founded} if every non-empty subset of $\textbf{K}$ has a minimal element with respect to $\btle$. This is equivalent (see \cite[Chap. 14, Lem. 1.3 and 1.4]{HrbJec99}) to $\textbf{K}$ not containing any infinite descending chain $\; \cdots \btle p_n \btle \cdots \btle p_i \btle p_0$. Note that if $\textbf{K}$ is well-founded, then it contains no cycles with respect to $\btle$. In particular, we don't have $p \btle p$ for any $p \in \textbf{K}$.
\end{definition}

\subsection{Cellularity implies existence of a well-founded basis}
\label{subsec: cellularity -> basis}

\begin{definition}
We will say that a generalized algebraic theory is \emph{syntactically cellular} if it has no sort equality axioms.
\end{definition}

This is not a categorical property: a theory that has sort axioms might be isomorphic to one that doesn't. A gat is cellular (with respect to the set $\cof^*$ of basic cofibrations) if and only if it is isomorphic to a syntactically cellular one; see Remark \ref{rem: constructing syntactically cellular gat}.

\begin{notation}
	If $J = (\textbf{X} \vdash U \tp)$ is a derivable sort judgment in a gat $\bbA$, we write $\textbf{p}(J)$ for the display map $[\textbf{X}.U] \twoheadrightarrow [\textbf{X}]$ in $\mathcal C(\bbA)$. Also, we let
	$$
	\textbf{P}(\bbA) = \{\textbf{p}(J) \mid J \text{ is a sort axiom in } \bbA\}.
	$$
\end{notation}

\begin{proposition}
	\label{prop: cellular gat -> basis}
	If a generalized algebraic theory $\bbA$ is cellular, then $\textbf{P}(\bbA)$ is a basis of $\mathcal C(\bbA)$.
\end{proposition}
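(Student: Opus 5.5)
Here "cellular" is read as \emph{syntactically} cellular: $\bbA$ has no sort equality axioms. (For a theory that is merely isomorphic to one without such axioms, I would transport along the isomorphism; the isomorphism carries the basis along.) The statement is about derivable sort equality, so I would argue syntactically, using Cartmell's correspondence between strict display maps of $\mathcal C(\bbA)$ and derivable sort judgments $\Gamma \vdash U \tp$ taken up to derivable equality, and between distinguished squares and substitution.

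\emph{Existence.} Every derivable sort judgment $\Gamma \vdash U \tp$ has, up to derivable equality, the form $U \equiv S(f_1,\dots,f_k)$. Here $S$ is a sort symbol introduced by an axiom $J_S = (\Delta \vdash S(x_1,\dots,x_k)\tp)$, and $\textbf{f}=(f_1,\dots,f_k)$ is a context morphism $[\Gamma]\to[\Delta]$. I would prove this by induction on derivations. The rules producing sort judgments are sort axioms, substitution, weakening and the equality/conversion rules; since there are no sort equality axioms, the only way to change a sort's head symbol would be through an axiom of that kind, so the head stays a sort symbol applied to terms. Categorically, this says the square with top arrow $[\textbf{x},\textbf{f}]:[\Gamma.U]\to[\Delta.S(\textbf{x})]$ over $[\textbf{f}]$ is the distinguished pullback of $\textbf{p}(J_S)$ along $[\textbf{f}]$. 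So every strict display map is a distinguished pullback of an element of $\textbf{P}(\bbA)$.

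\emph{Uniqueness.} This is the key step. Suppose $S(\textbf{f})\equiv T(\textbf{g})$ is derivable in context $\Gamma$. I must show that $S=T$ and that $\textbf{f}\equiv\textbf{g}$ is derivable as a context morphism. Once that holds, the distinguished square is unique: it is determined by its bottom arrow $[\textbf{f}]$ together with the target display map, by Definition \ref{def: (pre)contextual category}(vii).

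To prove it, I would define by recursion on derivations a ``head and arguments'' normal form and show that it is invariant under every sort-equality rule of the system. These rules are reflexivity, symmetry, transitivity, substitution of equal terms into a sort, and substitution instances. The invariant is the pair consisting of the sort symbol and the argument tuple, where the tuple is taken modulo derivable term equality. Substitution into sorts is syntactic, so $S(\textbf{f})[\sigma]=S(\textbf{f}[\sigma])$, which handles the substitution rules. The congruence rule yields argument tuples that are equal as terms. The remaining issue is that term equality judgments are typed, and type conversion uses sort equalities; this forces a simultaneous induction over sort-equality and term-equality derivations. I expect this to be the main obstacle. It is essentially the observation in \cite[Rem. 8.1.9]{Tay99}, and I would cite it if possible, or carry out the mutual induction carefully.

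Finally, I would check that distinct elements of $\textbf{P}(\bbA)$ really are different strict display maps. Two sort axioms with different symbols cannot give equal sorts, by the uniqueness step applied with $\textbf{f},\textbf{g}$ identities. Hence the decomposition is unique as a choice of element of $\textbf{P}(\bbA)$, not only up to the symbol, and $\textbf{P}(\bbA)$ is a basis in the sense of Definition \ref{def: basis}.
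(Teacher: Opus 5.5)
Your proposal is correct and follows the paper's proof. Existence comes from the canonical distinguished square exhibiting $[\textbf{X}.S(\textbf{f})] \twoheadrightarrow [\textbf{X}]$ as the pullback of the sort-axiom display map along $[\textbf{f}]$. Uniqueness comes from the syntactic fact that, without sort equality axioms, a derivable $S(\textbf{f}) \equiv T(\textbf{g})$ forces $S = T$ and $\textbf{f} \equiv \textbf{g}$; the paper simply asserts this (with Taylor's remark as backing), whereas you sketch the induction on sort-equality derivations behind it.

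The only slip is your parenthetical about transporting along an isomorphism. That produces \emph{some} basis of $\mathcal C(\bbA)$, but not $\textbf{P}(\bbA)$ itself. For example, take $\vdash A \tp$, $\vdash a:A$, $x:A \vdash B(x) \tp$, $\vdash C \tp$, $\vdash C \equiv B(a) \tp$. This theory is isomorphic to a syntactically cellular one, yet $\textbf{P}(\bbA)$ is not a basis, because $[y:C] \twoheadrightarrow 1_{\mathcal C(\bbA)}$ is a distinguished pullback of two different elements of $\textbf{P}(\bbA)$. So the syntactic reading you adopt is the one the statement actually needs.
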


\begin{proof}
	Consider a display map $\textbf{p}_a:a \rightarrow \partial a$ in $\mathcal C(\bbA)$. By construction, $\textbf{p}_a$ equals $[\textbf{X}.U] \twoheadrightarrow [\textbf{X}]$ for a context $\textbf{X}$ and a derivable judgment
	$$
	\textbf{X} \vdash S(f_1, ..., f_k) \tp
	$$
	in $\bbA$ where $S$ is a sort symbol and $f_1$, ..., $f_k$ are term expressions in the variables $x_1$, ..., $x_n$ of $\textbf{X}$. Letting $\textbf{Y} \vdash S(y_1, ..., y_k) \tp$ be the axiom that introduces $S$, we have a distinguished square
	\[
	\dsqua{{[\textbf{X}.S(f_1, ..., f_k)]}}{{[\textbf{Y}.S(y_1, ..., y_k)]}}{{[\textbf{X}]}}{{[\textbf{Y}].}}{}{{[f_1, ..., f_k]}}{}{}
	\]
	Now, suppose that we have another such distinguished square, say
	\[
	\dsqua{{[\textbf{X}'.T(g_1, ..., g_\ell)]}}{{[\textbf{Y}'.T(z_1, ..., z_\ell)]}}{{[\textbf{X}']}}{{[\textbf{Y}']}}{}{{[g_1, ..., g_\ell]}}{}{}
	\]
	where $\textbf{Y}' \vdash T(x_1, ..., z_\ell) \tp$ is a sort axiom. Then
	$$
	\textbf{X} \equiv \textbf{Y} \ctx, \qquad\qquad \textbf{X} \vdash S(f_1, ..., f_k) \equiv T(g_1, ..., g_\ell) \tp
	$$
	are derivable. Since $\bbA$ has no sort equality axioms, the symbols $S$, $T$ are equal, which implies that the judgments
	$$
	\textbf{Y} \vdash S(y_1, ..., y_k) \tp, \qquad \textbf{Y}' \vdash T(z_1, ..., z_\ell) \tp
	$$
	are syntactically equal, and the equalities $f_i \equiv g_i$ are derivable in $\textbf{X}$ for $1 \le i \le k = \ell$. This implies that the morphisms $[f_1, ..., f_k]$, $[g_1, ..., g_\ell]:[\textbf{X}] \rightarrow [\textbf{Y}]$ are equal.
\end{proof}

\begin{definition}
	\label{def: cellular presentation}
	A \emph{cellular presentation} of a contextual category $\mathcal A$ is a tuple
	$$
	\mathscr C = (\mathcal A_*,\; (I_k,\;(F^i_k, G^i_k, F^{'i}_k)_{i \in I_k})_{k \in \bbN},\; J)
	$$
	consisting of:
	\begin{itemize}
		\item A diagram $\mathcal A_*:(\bbN,\le) \rightarrow \Cont$ such that $\mathcal A_0 = \mathcal O_0$. For $k \le \ell$, let $\iota_{k,\ell}:\mathcal A_k \rightarrow \mathcal A_\ell$ be the corresponding morphism.
		
		\item For each $k \in \bbN$, a set $I_k$ equipped with a diagram of strict morphisms
		\[
		\begin{tikzcd}
			\mathcal B_k^i \arrow[]{r}{F_k^i} \arrow[swap]{d}{G_k^i} & \mathcal A_k \\
			\mathcal B_k^{'i} &
		\end{tikzcd}
		\]
		for each $i \in I_k$ such that the $G_k^i$ are basic cofibrations (Definition \ref{def: cofibrations, acyclic fibrations}), and a pushout diagram
		\[
		\widesqua{\coprod_{i \in I_k}\mathcal B_k^i}{\mathcal A_k}{\coprod_{i \in I_k}\mathcal B_k^{'i}}{\mathcal A_{k+1}.}{(F_k^i)_{i \in I_k}}{(F_k^{'i})_{i \in I_k}}{\coprod_{i \in I_k}G_k^i}{\iota_{k,k+1}}
		\]
		
		\item An isomorphism $J:\colim_{n \in \bbN}\mathcal A_n \longrightarrow \mathcal A$.
	\end{itemize}
\end{definition}

We can associate with a cellular presentation a set of display maps analogous to $\textbf{P}(\bbA)$ for a gat $\bbA$:

\begin{notation}
\label{not: P(C)}
We let $\textbf{P}(\mathscr C)$ (resp. $\textbf{P}_N(\mathscr C)$, where $N \ge 0$) be the set of all display maps in $\mathcal A$ that occur as the image of $o_m \twoheadrightarrow o_{m-1}$ under the composite
$$
\mathcal B^{'i}_k \xrightarrow{F^{'i}_k} \mathcal A_{k+1} \longrightarrow \colim_{n \in \bbN} \mathcal A_n \overset{J}{\longrightarrow} \mathcal A
$$
for some $k$ (resp. $k \le N$) and $i \in I_k$ such that $G^i_k$ is of the form $\iota_m^S:\mathcal O_{m-1} \rightarrow \mathcal O_m$.
\end{notation}

\begin{proposition}
\label{prop: cellular presentation -> basis}
Consider a contextual category $\mathcal A$ equipped with a cellular presentation $\mathscr C$. Then there exist a syntactically cellular gat $\bbA$ and an isomorphism $H:\mathcal A \rightarrow \mathcal C(\bbA)$ whose action on strict display maps defines a bijection $\textbf{P}(\bbA) \cong \textbf{P}(\mathscr C)$. In particular, by Proposition \ref{prop: cellular gat -> basis}, $\mathcal A$ has $\textbf{P}(\mathscr C)$ as a basis.
\end{proposition}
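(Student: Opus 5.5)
We construct $\bbA$ by induction along the cellular presentation, producing a chain of syntactically cellular gats $\bbA_0 \subset \bbA_1 \subset \cdots$ together with isomorphisms $H_k:\mathcal A_k \rightarrow \mathcal C(\bbA_k)$. These isomorphisms must be compatible with the transition maps, in the sense that $H_{k+1}\circ\iota_{k,k+1} = \mathcal C(\text{incl})\circ H_k$. We start from $\bbA_0 = \varnothing$ and $H_0:\mathcal O_0 \cong \mathcal C(\varnothing)$.

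For the inductive step, fix $k$. For each $i \in I_k$ we read off from $H_k \circ F^i_k:\mathcal B^i_k \rightarrow \mathcal C(\bbA_k)$ the data classified by the basic cofibration $G^i_k$, and the case depends on which basic cofibration $G^i_k$ is.
\begin{itemize}
\item If $G^i_k = \iota_m^S$, the data is a length-$(m-1)$ object, i.e.\ a context $[\textbf{X}]$. We add a fresh sort symbol $S_i$ with axiom $\textbf{X} \vdash S_i(\overline{x}) \tp$.
\item If $G^i_k = \iota_m^T$, the data is a length-$m$ object $[\textbf{X}.U]$. We add a fresh term symbol with axiom $\textbf{X} \vdash t_i(\overline{x}):U$.
\item If $G^i_k = \pi_m^T$, the data is a pair of sections. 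We add the corresponding term equality axiom.
\end{itemize}
Let $\bbA_{k+1}$ be $\bbA_k$ extended by all these axioms simultaneously. It is syntactically cellular, since no sort equality is ever added. The constructions of \S\ref{subsec: sorts terms and axioms via contextual categories} express each single-axiom extension as a pushout in $\Cont$ along the corresponding basic cofibration. Adding a family of axioms at once is the pushout along the coproduct; this holds because $\mathcal C(-)$ is an equivalence and the universal property of the simultaneous extension is the product of the individual ones. Hence $\mathcal C(\bbA_k)\rightarrow\mathcal C(\bbA_{k+1})$ is a pushout of $\coprod_i G^i_k$ along $(H_kF^i_k)_i$. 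Comparing with the pushout defining $\mathcal A_{k+1}$ gives a unique isomorphism $H_{k+1}$ extending $H_k$, and it sends $F^{'i}_k(o_m \twoheadrightarrow o_{m-1})$ to the display map $\textbf{p}(\textbf{X}\vdash S_i \tp)$.

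Let $\bbA = \bigcup_k \bbA_k$. A derivation uses only finitely many axioms, so $\mathcal C(\bbA) \cong \colim_k \mathcal C(\bbA_k)$; this is the same directed-colimit fact used in the proof of the decomposition proposition for $\mathcal O_0\rightarrow\mathcal A$. We then set $H = (\colim_k H_k)\circ J^{-1}$.

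By construction, $H$ maps $\textbf{P}(\mathscr C)$ onto $\textbf{P}(\bbA)$, because both sets are indexed by the pairs $(k,i)$ with $G^i_k$ of sort type. It remains to show the map is injective, i.e.\ that distinct such indices give distinct display maps. For this we use Proposition~\ref{prop: cellular gat -> basis}. Each $\textbf{p}(J)$ with $J$ a sort axiom lies in the basis $\textbf{P}(\bbA)$, and basis elements are determined by their sort symbol. Distinct fresh symbols $S_i$ give sort judgments that are not derivably equal, again by the argument in the proof of Proposition~\ref{prop: cellular gat -> basis}. The final claim, that $\textbf{P}(\mathscr C)$ is a basis of $\mathcal A$, then follows by transporting the basis $\textbf{P}(\bbA)$ along the isomorphism $H$.

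The main obstacle is making precise that a simultaneous axiom extension of a gat is the pushout along a coproduct of basic cofibrations. A second point is that the comparison isomorphisms must be chosen compatibly, so that they actually track the generating display maps. I would handle both by working entirely through the universal properties: a strict morphism out of $\mathcal C(\bbA_{k+1})$ is the same as a strict morphism out of $\mathcal C(\bbA_k)$ together with interpretations of the new symbols satisfying the new axioms. This is precisely the universal property of the pushout.
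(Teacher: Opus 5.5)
Your proposal is correct and takes essentially the same approach as the paper: you build $\bbA$ as a union of syntactically cellular gats $\bbA_k$ by adding one sort, term, or term-equality axiom per cell, compare pushouts to get compatible isomorphisms $\mathcal A_k\cong\mathcal C(\bbA_k)$ that track the new display maps, and pass to the colimit. The differences are minor: the paper makes the recursion a genuine function by fixing syntactic representatives in advance for every gat in a fixed small set and iterating a self-map $\varepsilon$ on triples $(k,\bbS,\varphi)$, whereas you make these choices stage by stage; and your final injectivity argument is redundant, since $H$ is an isomorphism and hence already injective on arrows.
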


\begin{remark}
\label{rem: constructing syntactically cellular gat}
Before going into the proof, let us discuss the idea behind it and what we consider to be the main takeaway. The idea is to use the fact that $\Cont \simeq \GAT$ preserves colimits (being an equivalence) to recursively translate the above pushouts into introduction of axioms. The desired result will then follow from the fact that pushouts of basic cofibrations encode, as explained in \S\ref{subsec: sorts terms and axioms via contextual categories}, addition of sort, term, and term equality axioms. However, as we will see, there is a non-constructive aspect of this translation that prevents us from using recursion in a straighforward way: for a gat $\bbK$, we don't have a canonical way of lifting an object of $\mathcal C(\bbK)$ to a context, or a section of display map to a term judgment, and so on. We will deal with this my making a choice of such judgments for each gat $\bbK$ belonging to a sufficiently big set.

This can be thought of as a result of the fact that we want $\bbA$ to be syntactically cellular -- which is not a categorical property --, rather than just cellular. In fact, we can alternatively think of the proposition as stating that every cellular gat is isomorphic to a syntactically cellular one.\footnote{Note also that general methods for constructing colimits in $\GAT$ may lead to the introduction of sort equality axioms even when the resulting theory is cellular. This happens, for example, if we pass to $\Cont$, compute the colimit there, and go back via $\Cont \simeq \GAT$ -- in general, the latter sends cellular objects to gats that have sort equality axioms. One kind of colimit in $\GAT$ that computed without introducing "superfluous" sort equalities is the pushout of a diagram $\bbB \leftarrow \bbA \rightarrow \bbA'$ where $\bbA \rightarrow \bbA'$ is an extension that adds a collection of sort, term and term equality axioms, and the morphism $\bbA \rightarrow \bbB$ (which is an equivalence class of interpretations) is presented by a specified interpretation of $\bbA$ in $\bbB$ (\cite{Car86}, \S12). This is precisely what we use in the proof of Proposition \ref{prop: cellular gat -> basis} -- the choice performed at the beginning of the proof has the effect of lifting the appropriate morphisms of gats to interpretations.}
\end{remark}

\begin{proof}{Proof of Proposition \ref{prop: cellular gat -> basis}}
Express $\mathscr C$ as in Definition \ref{def: cellular presentation}. Let $\mathscr S$ be the (small) set of all generalized algebraic theories whose sort and term symbols belong to $\coprod_{\ell \in \bbN}I_\ell$ (where the $I_\ell$ are given by $\mathscr C$); all gats under consideration are assumed to have the same set of variables. For each $\bbS \in \mathscr S$, choose:
\begin{itemize}
	\item for each object $a$ of $\mathcal C(\bbS)$, a context in $\bbS$ whose equivalence class is $a$.
	
	\item for each strict display map $p$ in $\mathcal C(\bbS)$, a derivable sort judgment in $\bbS$ whose associated display map is $p$;
	
	\item for each pair $(s,t)$ of sections of the same strict display map in $\mathcal C(\bbS)$, a (not necessarily derivable) term equality judgment $\textbf{X} \vdash u \equiv v:U$ in $\bbS$ such that $\textbf{X} \vdash u:U$ and $\textbf{X} \vdash v:U$ are derivable, and the sections of display maps corresponding to the latter are $s$ and $t$, respectively.
\end{itemize}

Let $\mathscr T$ be the set of all triples $(k,\bbS,\varphi)$ where $k \in \bbN$, $\bbS \in \mathscr S$, and $\varphi:\mathcal A_k \rightarrow \mathcal C(\bbS)$ is an isomorphism that induces a bijection $\textbf{P}(\bbS) \cong \textbf{P}_k(\mathscr C)$ (see Notation \ref{not: P(C)}). For $(k,\bbS,\varphi) \in \mathscr T$, consider the diagram
	% https://q.uiver.app/#q=WzAsNCxbMCwwLCJcXGNvcHJvZF97aSBcXGluIElfa31cXG1hdGhjYWwgQl9rXmkiXSxbMiwwLCJcXG1hdGhjYWwgQV9rIl0sWzMsMCwiXFxtYXRoY2FsIEMoXFxiYlMpIl0sWzAsMSwiXFxjb3Byb2Rfe2kgXFxpbiBJX2t9IFxcbWF0aGNhbCBCXnsnaX1fayJdLFswLDEsIihGXmlfaylfe2kgXFxpbiBJX2t9Il0sWzEsMiwiXFx2YXJwaGleey0xfSJdLFswLDMsIlxcY29wcm9kX3tpIFxcaW4gSV9rfUdfa15pIiwyXV0=
	\[\begin{tikzcd}[ampersand replacement=\&]
		{\coprod_{i \in I_k}\mathcal B_k^i} \&\& {\mathcal A_k} \& {\mathcal C(\bbS)} \\
		{\coprod_{i \in I_k} \mathcal B^{'i}_k}
		\arrow["{(F^i_k)_{i \in I_k}}", from=1-1, to=1-3]
		\arrow["{\coprod_{i \in I_k}G_k^i}"', from=1-1, to=2-1]
		\arrow["{\varphi}", from=1-3, to=1-4]
	\end{tikzcd}\]
	The maps $\varphi^{-1} \circ F^i_k:\mathcal B_k^i \rightarrow \mathcal C(\bbS)$ for $i \in I_k$ encode the following data in $\mathcal C(\bbS)$:
	\begin{itemize}
		\item a length-$(n-1)$ object if $G_i^k = \iota_n^S:\mathcal O_{n-1} \rightarrow \mathcal O_n$,
		
		\item a length-$n$ object if $G_i^k = \iota_n^T:\mathcal O_n \rightarrow \mathcal O_n^+$,
		
		\item a pair of sections of a display map if $G^i_k = \pi_n^T:\mathcal O_n^{++} \rightarrow \mathcal O_n^+$.
	\end{itemize}
	In each of these cases, we have chosen, respectively, a lift:
	\begin{enumerate}[label=(\roman*)]
		\item of the length-$(n-1)$ object to a context in $\bbS$,
		
		\item of length-$n$ object to a length-$n$ context in $\bbS$,
		
		\item of the pair of sections to a term equality judgment.
	\end{enumerate}
	Let $\bbS'$ be the theory obtained by adding to $\bbS$ the following axioms:
	\begin{itemize}
		\item for each $i \in I_k$ such that $G^k_i = \iota_n^S$, the sort judgment
		$$
		x_1:X_1, ..., x_{n-1}:X_{n-1} \vdash (k,i)(x_1, ..., x_{n-1}) \tp
		$$
		where $x_1:X_1, ..., x_{n-1}:X_{n-1}$ is the context from (i) above;
		
		\item for each $i$ such that $G^k_i = \iota_n^T$, the term judgment
		$$
		x_1:X_1, ..., x_{n-1}:X_{n-1} \vdash (k,i)(x_1, ..., x_{n-1}):X_n
		$$
		where the context from (ii) above is $x_1:X_1, ..., x_{n-1}:X_{n-1},\; x_n:X_n$;
		
		\item for each $i$ such that $G^k_i = \pi_n^T$, the term equality judgment from (iii).
	\end{itemize}
	Each of these determine an extension along $G^i_k$ as in
	\[
	\begin{tikzcd}[ampersand replacement=\&]
		{\mathcal B_k^i} \&\& {\mathcal A_k} \& {\mathcal C(\bbS)} \\
		{\mathcal B^{'i}_k} \&\&\& {\mathcal C(\bbS')}
		\arrow["{F^i_k}", from=1-1, to=1-3]
		\arrow["{G_k^i}"', from=1-1, to=2-1]
		\arrow["\varphi",from=1-3, to=1-4]
		\arrow[from=1-4, to=2-4]
		\arrow[from=2-1, to=2-4]
	\end{tikzcd}\]
	and, by construction of $\bbS'$, the outer square in the induced commutative diagram
	% https://q.uiver.app/#q=WzAsNSxbMCwwLCJcXGNvcHJvZF97aSBcXGluIElfa31cXG1hdGhjYWwgQl9rXmkiXSxbMiwwLCJcXG1hdGhjYWwgQV9rIl0sWzMsMCwiXFxtYXRoY2FsIEMoXFxiYlMpIl0sWzMsMSwiXFxtYXRoY2FsIEMoXFxiYlMnKSJdLFswLDEsIlxcY29wcm9kX3tpIFxcaW4gSV9rfSBcXG1hdGhjYWwgQl57J2l9X2siXSxbMCwxLCIoRl5pX2spX3tpIFxcaW4gSV9rfSJdLFsxLDJdLFsyLDNdLFswLDQsIlxcY29wcm9kX3tpIFxcaW4gSV9rfUdfa15pIiwyXSxbNCwzXV0=
	\[
	\begin{tikzcd}[ampersand replacement=\&]
		{\coprod_{i \in I_k}\mathcal B_k^i} \&\& {\mathcal A_k} \& {\mathcal C(\bbS)} \\
		{\coprod_{i \in I_k} \mathcal B^{'i}_k} \&\&\& {\mathcal C(\bbS')}
		\arrow["{(F^i_k)_{i \in I_k}}", from=1-1, to=1-3]
		\arrow["{\coprod_{i \in I_k}G_k^i}"', from=1-1, to=2-1]
		\arrow["\varphi",from=1-3, to=1-4]
		\arrow[from=1-4, to=2-4]
		\arrow[from=2-1, to=2-4]
	\end{tikzcd}\]
	is cocartesian. Now, the pushout square (given by $\mathscr C$) exhibiting $\mathcal A_{k+1}$ as a pushout of $(F^i_k)_{i \in I_k}$ and $\coprod_{i \in I_k} G^i_k$ yields a strict morphism $\varphi':\mathcal A_{k+1} \rightarrow \mathcal C(\bbS')$ that makes the following diagram commute:
	% https://q.uiver.app/#q=WzAsNixbMCwwLCJcXGNvcHJvZF97aSBcXGluIElfa31cXG1hdGhjYWwgQl9rXmkiXSxbMiwwLCJcXG1hdGhjYWwgQV9rIl0sWzMsMCwiXFxtYXRoY2FsIEMoXFxiYlMpIl0sWzMsMSwiXFxtYXRoY2FsIEMoXFxiYlMnKSJdLFswLDEsIlxcY29wcm9kX3tpIFxcaW4gSV9rfSBcXG1hdGhjYWwgQl57J2l9X2siXSxbMiwxLCJcXG1hdGhjYWwgQV97aysxfSJdLFswLDEsIihGXmlfaylfe2kgXFxpbiBJX2t9Il0sWzEsMiwiXFx2YXJwaGkiXSxbMiwzXSxbMCw0LCJcXGNvcHJvZF97aSBcXGluIElfa31HX2teaSIsMl0sWzQsNSwiKEZeeydpfV9rKV97aSBcXGluIElfa30iLDJdLFs1LDMsIlxcdmFycGhpJyIsMix7InN0eWxlIjp7ImJvZHkiOnsibmFtZSI6ImRhc2hlZCJ9fX1dLFsxLDUsIlxcaW90YV97ayxrKzF9Il1d
	\[\begin{tikzcd}[ampersand replacement=\&]
		{\coprod_{i \in I_k}\mathcal B_k^i} \&\& {\mathcal A_k} \& {\mathcal C(\bbS)} \\
		{\coprod_{i \in I_k} \mathcal B^{'i}_k} \&\& {\mathcal A_{k+1}} \& {\mathcal C(\bbS').}
		\arrow["{(F^i_k)_{i \in I_k}}", from=1-1, to=1-3]
		\arrow["{\coprod_{i \in I_k}G_k^i}"', from=1-1, to=2-1]
		\arrow["\varphi", from=1-3, to=1-4]
		\arrow["{\iota_{k,k+1}}", from=1-3, to=2-3]
		\arrow[from=1-4, to=2-4]
		\arrow["{(F^{'i}_k)_{i \in I_k}}"', from=2-1, to=2-3]
		\arrow["{\varphi'}"', dashed, from=2-3, to=2-4]
	\end{tikzcd}\]
	Since the left square and the outer composite ones are cocartesian, so is the right one. In particular, as $\varphi$ is an isomorphism, so is $\varphi'$. Note that $\varphi$ defines (by assumption) a bijection $\textbf{P}_k(\mathscr C) \cong \textbf{P}(\bbS)$ and, on the other hand, $\varphi'$ defines (by construction) a bijection between $\textbf{P}(\mathcal A_{k+1}) \setminus \iota_{k,k+1}(\textbf{P}(\mathcal A_k))$ and the set of display maps associated with the sort axioms added via $\bbS \hookrightarrow \bbS'$. It follows that $\varphi'$ defines a bijection
	\begin{align*}
		\textbf{P}(\mathcal A_{k+1}) & = \textbf{P}(\mathcal A_k) \cup \big( \textbf{P}(\mathcal A_{k+1}) \setminus \textbf{P}(\mathcal A_k) \big)\\
		& \cong \{\textbf{p}(J) \mid J \text{ is a sort axiom in } \bbS\} \cup \{\textbf{p}(J) \mid J \text{ is a sort axiom added via } \bbS \hookrightarrow \bbS'\}\\
		& = \textbf{P}(\bbS').
	\end{align*}
	We conclude that $(k+1,\bbS',\varphi') \in \mathscr T$.
	
	Now, the function $\varepsilon:\mathscr T \rightarrow \mathscr T$ given, in terms of the above construction, by $(k,\bbS,\varphi) \mapsto (k+1, \bbS',\varphi')$ allows us to obtain, by recursion, a sequence in $\mathscr T$ of the form $(k,\bbS_k,\varphi_k)_{k \in \bbN}$ such that $\bbS_0$ is the empty theory and $(k+1,\bbS_{k+1},\varphi_{k+1}) = \varepsilon(k,\bbS_k,\varphi_k)$ for all $k$. By definition of $\varepsilon$, the family $(\varphi_k)_{k \in \bbN}$ is a natural isomorphism $\mathcal A_* \cong \mathcal C(\bbS_*)$ between functors $(\bbN,\le) \longrightarrow \Cont$. Taking colimits, we obtain an isomorphism
	\[
	\tag{\texttt{*}}
	\mathcal A \overset{J}{\cong} \colim_{k \in \bbN}\mathcal A_k \overset{\colim_{k \in \bbN}\varphi_k}{\cong} \colim_{k \in \bbN} \mathcal C(\bbS_k) \cong \mathcal C(\bigcup_{k \in \bbN}\bbS_k)
	\]
	which, moreover, induces a bijection
	$$
	\textbf{P}(\mathscr C) \cong \colim_{k \in \bbN} \textbf{P}_k(\mathscr C) \cong \colim_{k \in \bbN} \textbf{P}(\bbS_k) \cong \textbf{P}(\bigcup_{k \in \bbN} \bbS_k).
	$$
	Thus we can take, in the notation of the statement, $\bbA = \bigcup_{k \in \bbN}\bbS_k$ and $H$ as the isomorphism in (\texttt{*}).
\end{proof}

\begin{remark}
\label{rem: necessary condition cofibrant - connected components of D(A)}
It follows that if $\mathcal A \in \Cont$ is cofibrant, then $\textbf{D}(\mathcal A) \cong \coprod_{i \in I} D_i$ where each $D_i$ is a retract of a category that has a terminal object; in particular, there exists a cocone under the identity functor $D_i \rightarrow D_i$.
\end{remark}

\subsection{Full contextual subcategories and their generating sets of display maps}
\label{subsec: full contextual subcategories}

\begin{definition}
A \emph{full contextual subcategory} of a contextual category $\mathcal A$ is a full subcategory $\mathcal B \subset |\mathcal A|$ that admits that admits a structure of contextual category for which the inclusion $\mathcal B \rightarrow \mathcal A$ is a strict morphism.
\end{definition}

\begin{remark}
	If such a contextual category structure exists, then it is unique: an arrow (resp. commutative square) in $\mathcal B$ is a strict display map (resp. distinguished square) precisely when it is a display map (resp. distinguished square) in $\mathcal A$. This implies that a full subcategory $\mathcal B \subset \mathcal A$ is a full contextual subcategory if and only if the following hold:
	\begin{enumerate}[label=(\roman*)]
		\item $1_\mathcal A \in \mathcal B$, and if $a \in \mathcal B$ with $\ell(a) \ge 1$, then $\partial a \in \mathcal B$;
		
		\item if a distinguished square
		\[
		\dsqua{a}{b}{\partial a}{\partial b}{f'}{f}{\textbf{p}_a}{\textbf{p}_b}
		\]
		in $\mathcal A$ is such that $\partial a$, $\partial b$, $b \in \mathcal B$, then $a \in \mathcal B$.
	\end{enumerate}
\end{remark}

\begin{remark}
\label{rem: intersection of full contextual subcategories}
It is straightforward to check that the intersection of any set of full contextual subcategories of $\mathcal A$ is a full contextual subcategory.
\end{remark}

\begin{definition}
\label{def: A_K}
Let $\mathcal A$ be a contextual category. Given a set $\textbf{K}$ strict display maps, we let $\mathcal A_\textbf{K}$ be the intersection of all full contextual subcategories $\mathcal B \subset \mathcal A$ that have the following property: if a distinguished square
\[
\dsqua{a}{b}{\partial a}{\partial b}{f'}{f}{\textbf{p}_a}{\textbf{p}_b}
\]
is such that $\partial a$, $\partial b \in \mathcal B$ and $\textbf{p}_b \in \textbf{K}$, then $a \in \mathcal B$. Note that $\mathcal A_\textbf{K}$ is in fact that smallest full contextual subcategory of $\mathcal A$ that has this property.
\end{definition}

\begin{remark}
It is not always the case that $\textbf{K}$ is contained in $\mathcal A_\textbf{K}$. This happens precisely when for all $\textbf{p}_a$ in $\textbf{K}$ we have $\partial a \in A_\textbf{K}$. But generally, this condition must be verified in a recursive way, that is, we need to prove that $\partial a \twoheadrightarrow \partial \partial a$ is in $\mathcal A_\textbf{K}$, and so on.

For example, consider the generalized algebraic theory $\bbA$ given by
$$
\vdash X \tp, \qquad x:X \vdash Y(x) \tp.
$$
In $\mathcal C(\bbA)$, let $\textbf{K} = \{[x:X,y:Y(x)] \twoheadrightarrow [x:X]\}$. We have $\mathcal C(\bbA)_\textbf{K} = \{1_{\mathcal C(\bbA)}\}$ since the fact that $[x:X] \twoheadrightarrow 1_{\mathcal C(\bbA)}$ is not in $\textbf{K}$ prevents us from obtaining any non-trivial objects via the procedure in Definition \ref{def: A_K}
\end{remark}

\begin{definition}
\label{def: admissible set of display maps}
We say that a set of display maps $\textbf{K}$ in $\mathcal A$ is \emph{admissible} if $\textbf{K} \subset \Ar(\mathcal A_\textbf{K})$.
\end{definition}

In any $\mathcal A$, the set of all display maps is admissible. But more interestingly:

\begin{lemma}
\label{lem: characterization admissibility of basis}
If $\textbf{P}$ is a basis of $\mathcal A$, then the following conditions are equivalent:
\begin{enumerate}[label=(\alph*)]
	\item $\mathcal A_\textbf{P} = \mathcal A$.
	
	\item $\textbf{P}$ is admissible.
	
	\item $\textbf{P}$ is well-founded.
\end{enumerate}
\end{lemma}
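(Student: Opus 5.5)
The plan is to prove the cycle (a)$\Rightarrow$(c)$\Rightarrow$(b)$\Rightarrow$(a). Throughout, for a strict display map $\textbf{p}_a$ I write $r(a)\in\textbf{P}$ for its \emph{representative}: the unique element of $\textbf{P}$ receiving a distinguished square from $\textbf{p}_a$, as in Definition \ref{def: basis}. Two facts about representatives will be used repeatedly. First, if $q\in\textbf{P}$ admits a distinguished square from $\textbf{p}_a$, then uniqueness gives $q=r(a)$. Second, since distinguished squares are closed under horizontal composition, if $\textbf{p}_a$ has a distinguished square to $\textbf{p}_c$, then $r(a)=r(c)$.

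\textbf{(b)$\Rightarrow$(a).} I will show that every object $a$ lies in $\mathcal A_\textbf{P}$, by induction on $\ell(a)$. The case $\ell(a)=0$ holds because $1_\mathcal A\in\mathcal A_\textbf{P}$. For $\ell(a)\ge 1$, the induction hypothesis gives $\partial a\in\mathcal A_\textbf{P}$. Write $r(a)=\textbf{p}_b$. By admissibility, $\textbf{p}_b$ is an arrow of $\mathcal A_\textbf{P}$, so $b,\partial b\in\mathcal A_\textbf{P}$. The defining closure property of $\mathcal A_\textbf{P}$ in Definition \ref{def: A_K}, applied to the distinguished square from $\textbf{p}_a$ to $\textbf{p}_b$, then gives $a\in\mathcal A_\textbf{P}$.

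\textbf{(c)$\Rightarrow$(b).} I will argue by well-founded induction along $\btle$ on $\textbf{P}$. Fix $\textbf{p}_b\in\textbf{P}$ and assume every $q\in\textbf{P}$ with $q\btle\textbf{p}_b$ has its domain and codomain in $\mathcal A_\textbf{P}$. By an inner induction on $i$, I will show $\partial_i b\in\mathcal A_\textbf{P}$ for $0\le i\le\ell(b)-1$.
\begin{itemize}
\item For $i=0$ this is $1_\mathcal A\in\mathcal A_\textbf{P}$.
\item For $1\le i\le \ell(b)-1$, put $q=r(\partial_i b)$. The square from $\textbf{p}_{\partial_i b}$ to $q$ witnesses $q\btle\textbf{p}_b$, so the endpoints of $q$ lie in $\mathcal A_\textbf{P}$ by the outer hypothesis. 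The inner hypothesis gives $\partial_{i-1}b\in\mathcal A_\textbf{P}$. The closure property then yields $\partial_i b\in\mathcal A_\textbf{P}$.
\end{itemize}
Finally, $b$ itself lies in $\mathcal A_\textbf{P}$ by the closure property applied to the identity distinguished square on $\textbf{p}_b$, since $\textbf{p}_b\in\textbf{P}$ and $\partial b\in\mathcal A_\textbf{P}$. This shows $\textbf{P}\subset\Ar(\mathcal A_\textbf{P})$.

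\textbf{(a)$\Rightarrow$(c).} This is the step needing the most care. Call $p\in\textbf{P}$ \emph{accessible} if there is no infinite descending $\btle$-chain starting at $p$; equivalently, it lies in the well-founded part, so that $p$ is accessible whenever all of its $\btle$-predecessors are. Let $\mathcal B\subset|\mathcal A|$ be the full subcategory of objects $a$ such that $r(\partial_i a)$ is accessible for every $1\le i\le\ell(a)$. I will check three things.
\begin{enumerate}
\item $\mathcal B$ is a full contextual subcategory. It contains $1_\mathcal A$ and is closed under $\partial$. For the pullback condition, take a distinguished square from $\textbf{p}_a$ to $\textbf{p}_c$ with $\partial a,c\in\mathcal B$. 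Then $r(a)=r(c)$ is accessible, and the lower levels of $a$ are handled by $\partial a\in\mathcal B$.
\item $\mathcal B$ has the closure property of Definition \ref{def: A_K} for $\textbf{K}=\textbf{P}$. Take a distinguished square from $\textbf{p}_a$ to $\textbf{p}_b\in\textbf{P}$ with $\partial a,\partial b\in\mathcal B$. Here $r(a)=\textbf{p}_b$, so it suffices to show $\textbf{p}_b$ is accessible. Any predecessor $q\btle\textbf{p}_b$ comes with a distinguished square from $\textbf{p}_{\partial_i b}$ to $q$ for some $i\le\ell(b)-1$. By uniqueness $q=r(\partial_i b)$, which is accessible because $\partial b\in\mathcal B$. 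Since all predecessors of $\textbf{p}_b$ are accessible, so is $\textbf{p}_b$.
\item Conclusion. By minimality, $\mathcal A=\mathcal A_\textbf{P}\subset\mathcal B$. So each $\textbf{p}_b\in\textbf{P}$, being its own representative with $b\in\mathcal B$, is accessible, and therefore $\textbf{P}$ is well-founded.
\end{enumerate}
The main obstacle is choosing $\mathcal B$ so that these three checks go through. They rely on the two facts about representatives stated at the start, and in particular on the indexing $1\le i\le\ell(b)-1$ in Definition \ref{def: relation on display maps, well-foundedness}.
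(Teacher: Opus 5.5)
Your proof is correct. Steps (b)$\Rightarrow$(a) and (c)$\Rightarrow$(b) are essentially the paper's. For the first, the paper also inducts on length. For the second, it takes a $\btle$-minimal element of $\{p\in\textbf{P}\mid p\notin\Ar(\mathcal A_\textbf{P})\}$ and climbs the tower $\partial_i a$ exactly as your inner induction does, so your well-founded induction is the positive form of its minimal-counterexample argument.

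The genuine difference is the remaining direction. The paper shows that non-well-foundedness forces non-admissibility. It describes $\mathcal A_\textbf{P}$ as a union of stages $A_0\subset A_1\subset\cdots$, where stage $k$ adds every $a$ with $\partial a\in A_{k-1}$ that admits a distinguished square to some $\textbf{p}_b\in\textbf{P}$ with $\partial b\in A_{k-1}$. Then, for $\textbf{K}\subset\textbf{P}$ with no minimal element, it looks at the least stage containing the domain of some element of $\textbf{K}$: a $\btle$-predecessor of that element inside $\textbf{K}$ would have appeared strictly earlier.

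You instead use only the minimality of $\mathcal A_\textbf{P}$, as an induction principle tested against your tailored full contextual subcategory $\mathcal B$. This spares you the stagewise description of $\mathcal A_\textbf{P}$, which the paper asserts without verifying that $\bigcup_k A_k$ is a full contextual subcategory with the required closure property. It also proves slightly more. Without assuming (a), it shows every $p\in\textbf{P}\cap\Ar(\mathcal A_\textbf{P})$ is accessible. Running your (c)$\Rightarrow$(b) induction over the accessible part gives the converse, so $\textbf{P}\cap\Ar(\mathcal A_\textbf{P})$ is exactly the well-founded part of $\textbf{P}$.

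The paper's route, in turn, works directly with the definition of well-foundedness via minimal elements of subsets. Your last step, from ``no infinite descending chain from any $p$'' to ``every non-empty subset has a minimal element'', relies on the equivalence quoted in Definition~\ref{def: relation on display maps, well-foundedness}, i.e.\ on dependent choice. That is harmless in ZFC, and you can avoid it altogether by taking ``accessible'' to mean membership in the inductively defined well-founded part, as your parenthetical already suggests.
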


\begin{proof}
If $\mathcal A = \mathcal A_\textbf{P}$, it is clear that $\textbf{P}$ is admissible. Conversely, suppose that $\textbf{P}$ is admissible; in particular, if $\textbf{p}_b \in \textbf{P}$, then $\partial b \in \mathcal A_\textbf{P}$. For a display map $p_a:a \twoheadrightarrow \partial a$, expressing it as a distinguished pullback of an element of $\textbf{P}$ shows that if $\partial a \in \mathcal A_\textbf{P}$, then $a \in \mathcal A$. Since $1_\mathcal A \in \mathcal A_\textbf{P}$, it follows by induction that $\mathcal A = \mathcal A_\textbf{P}$.
	
It remains to verify that (c) is equivalent to (a), (b).

\vspace{0.5em}

Suppose, for contradiction, that $\textbf{P}$ is well-founded but not admissible. Let $\textbf{K} = \{p \in \textbf{P} \mid p \notin \Ar(\mathcal A_\textbf{P})\}$, which, by assumption, is non-empty. Since $\textbf{P}$ is well-founded with respect to $\btle$ (Definition \ref{def: relation on display maps, well-foundedness}), $\textbf{K}$ has an element, say $\textbf{p}_a:a \twoheadrightarrow \partial a$, that is minimal for that relation.

We claim that $\partial a \in \mathcal A_\textbf{P}$. If that were not the case, there would exist $i \in \{1, ..., \ell(a) - 1\}$ such that $\partial_{i-1}a \in \mathcal A_\textbf{P}$ and $\partial_i a \notin \mathcal A_\textbf{P}$. Then the unique distinguished square
\[
\dsqua{\partial_i a}{b}{\partial_{i-1} a}{
\partial b}{}{}{\textbf{p}_{\partial_i a}}{\textbf{p}_b}
\]
with $\textbf{p}_b \in \textbf{P}$ must be such that $\textbf{p}_b \notin \Ar(\mathcal A_\textbf{P})$. But then $\textbf{p}_b \btle \textbf{p}_a$ and $\textbf{p}_b \in \textbf{K}$, which contradicts minimality of $\textbf{p}_a$. It follows that $\partial a \notin \mathcal A_\textbf{P}$ and, as a consequence, $\textbf{p}_a \in \Ar(\mathcal A_\textbf{P})$. This, in turn, contradicts the assumption that $\textbf{p}_a \in \textbf{K}$. We conclude that if $\textbf{P}$ is well-founded, then it is admissible.

\vspace{0.5em}

Assume, on the other hand, that $\textbf{P}$ is not well-founded, and consider a non-empty subset $\textbf{K} \subset \textbf{P}$ that does not have a minimal element with respect to $\btle$. We will prove that none of the elements of $\textbf{K}$ belong to $\Ar(\mathcal A_\textbf{P})$. For that, construct a sequence
$$
A_0 \subset A_1 \subset \cdots \subset A_k \subset \cdots
$$
of subsets of $\Ob(\mathcal A)$ recursively as follows:
\begin{itemize}
	\item $A_0 = \{1_\mathcal A\}$;
	
	\item for $k \ge 1$, $A_k$ is the union of $A_{k-1}$ and the set of all $a \in \Ob(\mathcal A)$ of length $\ge 1$ such that $\partial a \in A_{k-1}$ and there exists a distinguished square
	\[
	\dsqua{a}{b}{\partial a}{\partial b}{}{}{\textbf{p}_a}{\textbf{p}_b}
	\]
	with $\textbf{p}_b \in \textbf{P}$ and $\partial b \in A_{k-1}$.
\end{itemize}
Note that $|\mathcal A_\textbf{P}|$ is the full subcategory of $|\mathcal A|$ spanned by $\bigcup_{k \ge 0}A_k$.

Suppose, for contradiction, that $\textbf{K} \cap \Ar(\mathcal A_\textbf{P}) \neq \varnothing$, and let $m$ be the smallest $k$ (which is necessarily $\ge 1$) such that $c \in A_k$ for some $p:c \twoheadrightarrow \partial c$ in $\textbf{K}$. Fixing one such $p$, for each $i \in \{1, ..., \ell(c) - 1\}$ we have that $\partial_i c \in A_{m-1}$ and there exists a unique distinguished square, say
\[
\dsqua{\partial_i c}{b_i}{\partial_{i-1} c}{\partial b_i,}{}{}{\textbf{p}_{\partial_i c}}{\textbf{p}_{b_i}}
\]
such that $b_i \in \textbf{P}$; note that $\partial b_i \in A_{m-1}$. Now, using that $\textbf{K}$ does not have a minimal element, choose $q \in \textbf{K}$ such that $q \btle p$. Thus $q = \textbf{p}_{b_i}$ for some $i$. But then $\dom(q) = b_i \in A_{m-1}$, which contradicts the minimality assumption on $m$.

This concludes the proof that if $\textbf{P}$ is not well-founded, then it is not admissible.
\end{proof}

\begin{example}[A non-well-founded basis]
Let $\bbA$ be the given by the following axioms for all $n \ge \mathbb Z$:
$$
\vdash X_n \tp \qquad x:X_n \vdash Y_n(x) \tp \qquad \vdash a_n : X_n \qquad \vdash X_{n+1} \equiv Y_n(a_n) \tp
$$
Writing $[X_n]:= [x:X_n]$ and $[Y_n] = [x:X_n,y:Y_n]$, we can picture the corresponding generating data of $\mathcal C(\bbA)$ as
% https://q.uiver.app/#q=WzAsMTgsWzIsMSwiW1hfbl0iXSxbMywxLCJbWF97bisxfV0iXSxbMSwxLCJbWF97bi0xfV0iXSxbMCwxLCJcXGNkb3RzIl0sWzQsMSwiW1hfe24rMn1dIl0sWzEsMCwiW1lfe24tMX1dIl0sWzIsMCwiW1lfbl0iXSxbMywwLCJbWV97bisxfV0iXSxbNCwwLCJbWV97bisyfV0iXSxbMCwwLCJcXGNkb3RzIl0sWzIsMiwiMV97XFxtYXRoY2FsIEMoXFxiYkEpfSJdLFszLDIsIjFfe1xcbWF0aGNhbCBDKFxcYmJBKX0iXSxbNCwyLCIxX3tcXG1hdGhjYWwgQyhcXGJiQSl9Il0sWzUsMiwiXFxjZG90cyJdLFs1LDEsIlxcY2RvdHMiXSxbNSwwLCJcXGNkb3RzIl0sWzEsMiwiMV97XFxtYXRoY2FsIEMoXFxiYkEpfSJdLFswLDIsIlxcY2RvdHMiXSxbMCw1XSxbNSwyLCIiLDEseyJzdHlsZSI6eyJoZWFkIjp7Im5hbWUiOiJlcGkifX19XSxbMTAsMiwiYV97bi0xfSIsMV0sWzExLDAsImFfbiIsMV0sWzEyLDEsImFfe24rMX0iLDFdLFswLDEwLCIiLDEseyJzdHlsZSI6eyJoZWFkIjp7Im5hbWUiOiJlcGkifX19XSxbMSwxMSwiIiwxLHsic3R5bGUiOnsiaGVhZCI6eyJuYW1lIjoiZXBpIn19fV0sWzYsMCwiIiwxLHsic3R5bGUiOnsiaGVhZCI6eyJuYW1lIjoiZXBpIn19fV0sWzcsMSwiIiwxLHsic3R5bGUiOnsiaGVhZCI6eyJuYW1lIjoiZXBpIn19fV0sWzQsMTIsIiIsMSx7InN0eWxlIjp7ImhlYWQiOnsibmFtZSI6ImVwaSJ9fX1dLFs4LDQsIiIsMSx7InN0eWxlIjp7ImhlYWQiOnsibmFtZSI6ImVwaSJ9fX1dLFsyLDE2LCIiLDEseyJzdHlsZSI6eyJoZWFkIjp7Im5hbWUiOiJlcGkifX19XSxbMTYsMTAsIiIsMSx7ImxldmVsIjoyLCJzdHlsZSI6eyJoZWFkIjp7Im5hbWUiOiJub25lIn19fV0sWzEwLDExLCIiLDEseyJsZXZlbCI6Miwic3R5bGUiOnsiaGVhZCI6eyJuYW1lIjoibm9uZSJ9fX1dLFsxMSwxMiwiIiwxLHsibGV2ZWwiOjIsInN0eWxlIjp7ImhlYWQiOnsibmFtZSI6Im5vbmUifX19XSxbMSw2XSxbNCw3XSxbMCwyLCIiLDEseyJzdHlsZSI6eyJuYW1lIjoiY29ybmVyIn19XSxbMSwwLCIiLDEseyJzdHlsZSI6eyJuYW1lIjoiY29ybmVyIn19XSxbNCwxLCIiLDEseyJzdHlsZSI6eyJuYW1lIjoiY29ybmVyIn19XV0=
\[\begin{tikzcd}[ampersand replacement=\&,cramped,column sep=large]
	\cdots \& {[Y_{n-1}]} \& {[Y_n]} \& {[Y_{n+1}]} \& {[Y_{n+2}]} \& \cdots \\
	\cdots \& {[X_{n-1}]} \& {[X_n]} \& {[X_{n+1}]} \& {[X_{n+2}]} \& \cdots \\
	\cdots \& {1_{\mathcal C(\bbA)}} \& {1_{\mathcal C(\bbA)}} \& {1_{\mathcal C(\bbA)}} \& {1_{\mathcal C(\bbA)}} \& \cdots
	\arrow[two heads, from=1-2, to=2-2]
	\arrow[two heads, from=1-3, to=2-3]
	\arrow[two heads, from=1-4, to=2-4]
	\arrow[two heads, from=1-5, to=2-5]
	\arrow[two heads, from=2-2, to=3-2]
	\arrow[from=2-3, to=1-2]
	\arrow["\lrcorner"{anchor=center, pos=0.125, rotate=-135}, draw=none, from=2-3, to=2-2]
	\arrow[two heads, from=2-3, to=3-3]
	\arrow[from=2-4, to=1-3]
	\arrow["\lrcorner"{anchor=center, pos=0.125, rotate=-135}, draw=none, from=2-4, to=2-3]
	\arrow[two heads, from=2-4, to=3-4]
	\arrow[from=2-5, to=1-4]
	\arrow["\lrcorner"{anchor=center, pos=0.125, rotate=-135}, draw=none, from=2-5, to=2-4]
	\arrow[two heads, from=2-5, to=3-5]
	\arrow[equals, from=3-2, to=3-3]
	\arrow["{a_{n-1}}"{description}, from=3-3, to=2-2]
	\arrow[equals, from=3-3, to=3-4]
	\arrow["{a_n}"{description}, from=3-4, to=2-3]
	\arrow[equals, from=3-4, to=3-5]
	\arrow["{a_{n+1}}"{description}, from=3-5, to=2-4]
\end{tikzcd}\]
where each slanted square is distinguished. It is not difficult to check that $\{\textbf{p}_{[Y_n]}:[Y_n] \twoheadrightarrow [X_n] \mid n \in \mathbb Z\}$ is a basis of $\mathcal C(\bbA)$. However, it is not well-founded as the relation $\btle$ from Definition \ref{def: relation on display maps, well-foundedness} satisfies
$$
\cdots \btle \textbf{p}_{[Y_{n-1}]} \btle \textbf{p}_{[Y_n]} \btle \textbf{p}_{[Y_{n+1}]} \btle \cdots
$$
For the same reason, the gat of unbounded chain complexes from \cite[\S3.4]{BarHen25} has a non-well-founded basis.
\end{example}

\subsection{Existence of a well-founded basis implies cellularity}
\label{subsec: basis -> cellularity}

We will now prove that if a contextual category $\mathcal A$ has a well-founded basis $\textbf{P}$, then it is cellular. Informally, the idea is to approximate $\mathcal A$ by larger and larger full contextual subcategories, starting from the empty one. Suppose that we have, at a given stage, a proper full contextual subcategory $\mathcal A' \subset \mathcal A$ and $\textbf{p}_a:a \rightarrow \partial a$ in $\textbf{P}$ such that $\partial a \in \mathcal A'$ but $a \notin \mathcal A'$. Freely adding to $\mathcal A'$ a strict display map with codomain $\partial a$ yields $\mathcal B \in \Cont$ equipped with a factorization $\mathcal A' \rightarrow \mathcal B \rightarrow \mathcal A$. Now, we use the small object argument to factorize $\mathcal B \rightarrow \mathcal A$ as
$$
\mathcal B \overset{I}{\longrightarrow} \mathcal B' \overset{G}{\longrightarrow} \mathcal A
$$
where $I$ is obtained by recursively adding terms and term equalities (in particular, it is cellular) and $G$ is full-and-faithful. Once we are able to prove that $\mathcal B' \rightarrow \mathcal A$ is injective on objects, the process can be repeated as long as, once again, there is some $\textbf{p}_b:b \rightarrow \partial b$ in $\textbf{P}$ with $\partial b \in \mathcal B'$ but $b \notin \mathcal B'$; this will be possible as $\textbf{P}$ is well-founded. This can be suitably iterated to express $\mathcal O_0 \rightarrow \mathcal A$ as a transfinite composite of pushouts of basic cofibrations.

\begin{lemma}
	\label{lem: basis criterion contextual isomorphism}
	Consider a full-and-faithful strict morphism $F:\mathcal A \rightarrow \mathcal B$. If there exist bases $\textbf{P}$, $\textbf{P}'$ of $\mathcal A$, $\mathcal B$, respectively, such that $F$ maps $\textbf{P}$ bijectively onto $\textbf{P}'$, then $F$ is an isomorphism.
\end{lemma}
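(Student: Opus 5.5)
Since $F$ is already full-and-faithful, it suffices to show that $F$ is bijective on objects. The inverse functor then automatically preserves lengths, $\partial$ and strict display maps, because $F$ does. It also preserves distinguished squares: given a distinguished square in $\mathcal B$, take the distinguished square in $\mathcal A$ over the preimage of its cospan; its image is distinguished with the same cospan, so by uniqueness in Definition \ref{def: (pre)contextual category}(vii) the two agree. I will prove, by induction on $n \ge 0$, that $F$ restricts to a bijection $\Ob_n(\mathcal A) \to \Ob_n(\mathcal B)$. The case $n = 0$ is immediate, since each side has a unique length-$0$ object and $F$ preserves lengths.

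For the inductive step, assume bijectivity in length $n-1$. A length-$n$ object $a$ of $\mathcal A$ is determined by $\textbf{p}_a$. By the basis property, $\textbf{p}_a$ corresponds uniquely to a pair $(p, f)$, where $p = \textbf{p}_c \in \textbf{P}$ and $f : \partial a \to \partial c$ is the bottom arrow of the unique distinguished square from $\textbf{p}_a$ to $p$. Conversely, every pair $(p,f)$ with $f : \partial a \to \partial c$ arises in this way, by taking the distinguished pullback of $p$ along $f$. So
\[
\Ob_n(\mathcal A) \cong \{(x, p, f) \mid x \in \Ob_{n-1}(\mathcal A),\ p = \textbf{p}_c \in \textbf{P},\ f \in \mathcal A(x, \partial c)\},
\]
and similarly for $\mathcal B$ with $\textbf{P}'$.

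Next I check that $F$ is compatible with these bijections. $F$ sends distinguished squares to distinguished squares, and $F(\textbf{P}) \subset \textbf{P}'$. Hence the square $\textbf{p}_a \to p$ is sent to a distinguished square $\textbf{p}_{Fa} \to Fp$ with $Fp \in \textbf{P}'$. By uniqueness in the basis property of $\textbf{P}'$, this is the canonical square for $Fa$. Therefore $F$ acts on triples as $(x,p,f) \mapsto (Fx, Fp, Ff)$.

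This map is a bijection for three reasons: $F$ is bijective on length-$(n-1)$ objects by induction, bijective $\textbf{P} \to \textbf{P}'$ by hypothesis, and bijective on hom-sets $\mathcal A(x,\partial c) \to \mathcal B(Fx, F\partial c)$ because it is full-and-faithful. Here $F\partial c = \partial(Fc)$, which is the codomain of $Fp$.

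The main point needing care is the compatibility step, namely that the image of the canonical basis square is again the canonical one. That is exactly where uniqueness in Definition \ref{def: basis} and the strict preservation of $\textbf{Q}$ are used. No well-foundedness is needed.
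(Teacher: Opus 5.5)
Your proof is correct and follows essentially the same route as the paper: induction on length, using the unique basis square of each object, full-faithfulness on $\mathcal A(x,\partial c)$, the bijection $\textbf{P}\cong\textbf{P}'$, and uniqueness of basis squares in $\mathcal B$ to see that $F$ carries canonical squares to canonical squares. The main difference is packaging: you encode length-$n$ objects as triples $(\partial a, p, f)$ and obtain existence and uniqueness of preimages from a single bijection, whereas the paper proves existence and uniqueness separately; you also spell out why a full-and-faithful, bijective-on-objects strict morphism has a strict inverse, which the paper leaves implicit.
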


\begin{proof}
	Since $F$ is full-and-faithful, it suffices to prove that it is bijective on objects. We will prove by induction on $n \ge 0$ that for each length-$n$ object $b \in \mathcal B$, there exists a unique $a \in \mathcal A$ such that $F(a) = b$. Firstly, this holds $n = 0$ since $F^{-1}(1_\mathcal B) = 1_\mathcal A$. Given $n \ge 1$, suppose that the claim holds for $0$, ..., $n-1$, and let $b \in \mathcal B$ be a length-$n$ object.
	
	By assumption, there exists a unique distinguished square of the form
	\[
	\tag{i}
	\dsqua{b}{y}{\partial b}{\partial y}{f'}{f}{\textbf{p}_b}{\textbf{p}_y}
	\]
	with $\textbf{p}_y \in \textbf{P}'$. Let $\textbf{p}_x:x \rightarrow \partial x$ be the unique element of $\textbf{P}$ mapped to $\textbf{p}_y$ by $F$. And using the induction hypothesis, let $a$ be the unique object of $\mathcal A$ such that $F(a) = \partial b$.
	
	As $F$ is full-and-faithful, there exists a unique $g:a \rightarrow \partial x$ such that $F(g) = f$. Now, the unique distinguished square of the form
	\[
	\tag{ii}
	\dsqua{a'}{x}{a}{\partial x}{g'}{g}{\textbf{p}_{a'}}{\textbf{p}_x}
	\]
	is sent by $F$ to (i). In particular, $F(a') = b$.

	Next, consider $a'' \in \mathcal A$ such that $F(a'') = b$, and let us verify that $a' = a''$. As $F(\partial a'') = \partial F(a'') = \partial b = F(a)$ and $F$ is bijective on length-$(n-1)$ objects, we have $\partial a'' = a$. Take the unique distinguished square of the form
	\[
	\tag{iii}
	\dsqua{a''}{x'}{a}{\partial x'}{h'}{h}{\textbf{p}_{a''}}{\textbf{p}_{x'}}
	\]
	such that $\textbf{p}_{x'} \in \textbf{P}$. But the distinguished square
	\[
	\tag{iv}
	\dsqua{b}{F(x')}{\partial b}{F(\partial x')}{F(h')}{F(h)}{\textbf{p}_b}{\textbf{p}_{F(x')}}
	\]
	is such that $\textbf{p}_{F(x')} \in \textbf{P}'$, so $\textbf{P}'$ being a basis implies that (iv) equals (i). Now, the assumption that $F$ maps $\textbf{P}$ injectively to $\textbf{P}'$ yields $\textbf{p}_{x'} = \textbf{p}_x$; in particular, $\partial x = \partial x'$. As $F$ is full-and-faithful, we have $g = h$, so (ii) equals (iii). We conclude that $a' = a''$, as desired.
\end{proof}

\begin{construction}[The (termal, full-and-faithful) weak factorization system]
	\label{constr: termal full-and-faithful factorization system}	
	Recall from Proposition \ref{prop: characterization full-and-faithful} that a morphism in $\Cont$ is full-and-faithful if and only if it has the right lifting property with respect to every element of
	\[
	\{ \iota_n^T:\mathcal O_n \rightarrow \mathcal O_n^+, \quad \pi_n^T:\mathcal O_n^{++} \rightarrow \mathcal O_n^+ \; \mid \; n \ge 1 \}
	\]
	(or, equivalently, is right orthogonal to $\iota_n^T$ for all $n$). Since $\Cont$ is locally presentable, we can use the small object argument to obtain an orthogonal factorization system in which
	\begin{itemize}
		\item the left class consists of the morphisms that can be expressed as transfinite composites of pushouts of (coproducts of) arrows in $\{\iota_n^T,\; \pi_n^T \mid n \ge 1\}$ (in other words, the relative cell complexes with respect to this set), and
		
		\item the right class consists of the full-and-faithful morphisms.
	\end{itemize}
We will say that a morphism is \emph{termal} if it belongs to the left class. Up to isomorphism, they correspond to inclusions\footnote{Meaning that every symbol and axiom in $\bbA$ is also in $\bbB$, and $\bbA \rightarrow \bbB$ sends each expression to itself. But being an inclusion in this sense is not a category-theoretic property as it is not invariant under isomorphism in $\iiAr(\GAT)$; in fact, every morphism in $\GAT$ is isomorphic to one of this form. In particular, such ``inclusions" should not be confused with monos.} of gats $\bbA \rightarrow \bbB$ that only adjoin terms and term equalities to $\bbA$.
\end{construction}

\begin{proposition}
	\label{prop: adding a sort}
	Consider a contextual category $\mathcal A$ equipped with a basis $\textbf{P}$. Suppose given an admissible subset $\textbf{K} \subset \textbf{P}$, and $a \in \mathcal A$ of length $\ge 1$ such that $\partial a \in \mathcal A_\textbf{K}$ and $a \notin \mathcal A_\textbf{K}$. Let $\mathcal B$ be as in the pushout square of contextual categories
	\[
	\squa{\mathcal O_{n-1}}{\mathcal A_\textbf{K}}{\mathcal O_n}{\mathcal B}{}{}{\iota_n^S}{F}
	\]
	where the top arrow sends the generating object $o_{n-1}$ to $\partial a$. Writing $b$ for the image of $o_n$ under the bottom arrow, by the universal property of $\mathcal B$ we have a unique morphism $F':\mathcal B \rightarrow \mathcal A$ that restricts along $F$ to the inclusion $\mathcal A_\textbf{K} \rightarrow \mathcal A$ and sends $b$ to $a$. Consider a factorization (see Construction \ref{constr: termal full-and-faithful factorization system})
	$$
	\mathcal B \overset{I}{\longrightarrow} \mathcal B' \overset{G}{\longrightarrow} \mathcal A
	$$
	of $F'$ where $I$ is termal and $G$ is full-and-faithful. Moreover, suppose that
	\[
	\tag{$\varheartsuit$}
	IF(\textbf{K}) \cup \{\textbf{p}_{I(b)}\} \text{ is a basis of } \mathcal B'.
	\]
	Then $G$ defines an isomorphism $\mathcal B' \cong \mathcal A_{\textbf{K} \cup \{\textbf{p}_a\}}$.
\end{proposition}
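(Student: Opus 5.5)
The plan is to reduce the statement to Lemma \ref{lem: basis criterion contextual isomorphism}. Write $\textbf{K}' = \textbf{K} \cup \{\textbf{p}_a\}$. I will show three things: that $G$ factors through the full contextual subcategory $\mathcal A_{\textbf{K}'}$; that $\textbf{K}'$ is a basis of $\mathcal A_{\textbf{K}'}$; and that the corestricted morphism maps the basis from ($\varheartsuit$) bijectively onto $\textbf{K}'$. Since $\mathcal A_{\textbf{K}'} \subset |\mathcal A|$ is full, the corestriction $G':\mathcal B' \rightarrow \mathcal A_{\textbf{K}'}$ is still full-and-faithful, and the lemma then makes $G'$ an isomorphism.

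\emph{Admissibility of $\textbf{K}'$ and the basis property.} Since $\textbf{K} \subset \textbf{K}'$, we have $\mathcal A_\textbf{K} \subset \mathcal A_{\textbf{K}'}$, because $\mathcal A_{\textbf{K}'}$ satisfies the closure property defining $\mathcal A_\textbf{K}$. Hence $\textbf{K} \subset \Ar(\mathcal A_{\textbf{K}'})$. Also $\partial a \in \mathcal A_\textbf{K} \subset \mathcal A_{\textbf{K}'}$, and $\textbf{p}_a$ is the pullback of itself along the identity, so $a \in \mathcal A_{\textbf{K}'}$. Thus $\textbf{K}'$ is admissible.

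To see that $\textbf{K}'$ is a basis of $\mathcal A_{\textbf{K}'}$, I describe $\Ob(\mathcal A_{\textbf{K}'})$ by the recursive sets $A_k$ from the proof of Lemma \ref{lem: characterization admissibility of basis}, with $\textbf{K}'$ in place of $\textbf{P}$. Every non-terminal object $c$ in this union is then, by construction, the domain of a distinguished pullback of some element of $\textbf{K}'$ along a map $\partial c \rightarrow \partial q$ inside the subcategory. Since $\textbf{P}$ is a basis of $\mathcal A$ and $\textbf{K}' \subset \textbf{P}$, that pullback square is the unique one with right leg in $\textbf{P}$. This gives both existence and uniqueness, and distinguished squares of $\mathcal A_{\textbf{K}'}$ are exactly those of $\mathcal A$. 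I expect this step to be the main point needing care. One must check that the union of the $A_k$ really is $\mathcal A_{\textbf{K}'}$, which requires closure under pullbacks of arbitrary display maps inside the subcategory, not only those in $\textbf{K}'$. I would handle it by pulling back along a composite: the pullback of a pullback of $q \in \textbf{K}'$ is again a pullback of $q$, by strict horizontal composition of distinguished squares.

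\emph{$G$ lands in $\mathcal A_{\textbf{K}'}$ and is bijective on bases.} By ($\varheartsuit$), every object of $\mathcal B'$ of length $\ge 1$ is a distinguished pullback of some element of $IF(\textbf{K}) \cup \{\textbf{p}_{I(b)}\}$. Since $GIF$ is the inclusion $\mathcal A_\textbf{K} \hookrightarrow \mathcal A$ and $GI(b) = a$, the map $G$ sends these basis elements to $\textbf{K}'$. It also preserves distinguished squares and $1$. Induction on length then shows $G(\Ob \mathcal B') \subset \mathcal A_{\textbf{K}'}$, using the closure property of $\mathcal A_{\textbf{K}'}$ under pullbacks of elements of $\textbf{K}'$.

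The induced map $IF(\textbf{K}) \cup \{\textbf{p}_{I(b)}\} \rightarrow \textbf{K}'$ is surjective. It is injective for two reasons. First, $GIF$ is injective, so distinct elements of $IF(\textbf{K})$ have distinct images. Second, $\textbf{p}_a \notin \textbf{K}$, because $a \notin \mathcal A_\textbf{K}$ while $\textbf{K} \subset \Ar(\mathcal A_\textbf{K})$; so $\textbf{p}_{I(b)}$ is not identified with the image of any element of $IF(\textbf{K})$. Hence the map is a bijection. Lemma \ref{lem: basis criterion contextual isomorphism}, applied to $G'$, concludes that $G$ defines an isomorphism $\mathcal B' \cong \mathcal A_{\textbf{K}'}$.
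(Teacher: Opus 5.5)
Your proposal is correct and follows the paper's proof. Both reduce to Lemma~\ref{lem: basis criterion contextual isomorphism} by showing that the basis in ($\varheartsuit$) is carried bijectively onto $\textbf{K}\cup\{\textbf{p}_a\}$, a basis of $\mathcal A_{\textbf{K}\cup\{\textbf{p}_a\}}$. You also usefully spell out what the paper leaves implicit, namely that $G$ corestricts to $\mathcal A_{\textbf{K}'}$ and why $\textbf{K}'$ is a basis there.

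One caveat, shared with the paper: your step $\textbf{K}'\subset\textbf{P}$ requires $\textbf{p}_a\in\textbf{P}$. This is not among the stated hypotheses, but it is genuinely needed. Take the gat $\vdash X\tp$, $x{:}X\vdash Y(x)\tp$ with $\textbf{K}=\{[x_1{:}X]\twoheadrightarrow 1\}$ and $a=[x_1{:}X,\,x_2{:}X,\,y{:}Y(x_1)]$. Then $\textbf{p}_a$ has two distinct distinguished squares to itself, with bottom arrows $\mathrm{id}$ and $[x_1,x_1]$. Indeed, in that example ($\varheartsuit$) holds while $G$ fails to be injective on objects. The hypothesis does hold where the proposition is used, in Proposition~\ref{prop: basis -> cellular}.
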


\begin{proof}
	As $G$ is full-and-faithful, by Lemma \ref{lem: basis criterion contextual isomorphism} it suffices to prove that $\mathcal B$ has a basis that is mapped bijectively by $G$ onto a basis of $\mathcal A_{\textbf{K} \cup \{\textbf{p}_a\}}$. But as $GIF$ is the inclusion $\mathcal A_\textbf{K} \rightarrow \mathcal A$ and $GI(\textbf{p}_b) = \textbf{p}_a$, the basis $IF(\textbf{K}) \cup \{\textbf{p}_{I(b)}\}$ is mapped bijectively by $G$ onto
	$$
	GIF(\textbf{K}) \cup \{GI(\textbf{p}_b)\} = \textbf{K} \cup \{\textbf{p}_a\},
	$$
	which is a basis of $\mathcal A_{\textbf{K} \cup \{\textbf{p}_a\}}$ since $\partial a \in \mathcal A_\textbf{K}$.
\end{proof}

\begin{remark}
\label{rem: heart condition}
It will follow from Proposition \ref{prop: basis -> cellular} that the auxiliary condition $\varheartsuit$ always holds. In fact, we can already prove that it holds whenever the contextual category $\mathcal A_\textbf{K}$ is cellular. Indeed, under this assumption, as $F$ (being a pushout of $\iota_n^S$) and $I$ (being a transfinite composite of pushouts of elements of $\{\iota_m^T, \; \pi_m^T \mid m \ge 1\}$) are cellular maps, so is the composite
$$
\mathcal O_0 \overset{!}{\longrightarrow} \mathcal A_\textbf{K} \overset{F}{\longrightarrow} \mathcal B \overset{I}{\longrightarrow} \mathcal B',
$$
whence $\mathcal B'$ is cellular. Moreover, as $\mathcal A_\textbf{K}$ has a cellular presentation $\mathscr C$ such that $\textbf{P}(\mathscr C) = \textbf{K}$, the above decomposition of $\mathcal O_0 \rightarrow \mathcal B'$ yields a cellular presentation $\mathscr C'$ of $\mathcal B'$ such that $\textbf{P}(\mathscr C') = IF(\textbf{K}) \cup \{p_{I(b)}\}$. But by Proposition \ref{prop: cellular presentation -> basis}, the latter set of display maps is a basis of $\mathcal B'$.
\end{remark}

\begin{lemma}
\label{lem: good well-ordering for well-founded basis}
Suppose that $\textbf{P}$ is a well-founded basis of a contextual category $\mathcal A$. There exist an ordinal $\alpha$ and a bijection
$$
(p_\nu:a_\nu \twoheadrightarrow \partial a_\nu)_{\nu < \alpha}
$$
between $\alpha$ and $\textbf{P}$ with the following property: for all $\beta < \alpha$,
$$
\partial a_\beta \in \mathcal A_{\{p_\nu \mid \nu < \beta\}}.
$$
\end{lemma}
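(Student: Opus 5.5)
We build the enumeration by transfinite recursion, choosing at each stage a basis element whose base already lies in the subcategory generated by the previously chosen ones, and use well-foundedness of $\textbf{P}$ (through Lemma \ref{lem: characterization admissibility of basis}) to see that such an element exists until $\textbf{P}$ is exhausted. Concretely, suppose $(p_\nu)_{\nu<\beta}$ are pairwise distinct elements of $\textbf{P}$ with $\partial a_\nu \in \mathcal A_{\{p_\mu \mid \mu<\nu\}}$ for every $\nu<\beta$, and set $\textbf{K}_\beta = \{p_\nu \mid \nu<\beta\}$. If $\textbf{K}_\beta = \textbf{P}$, stop with $\alpha=\beta$. Otherwise, pick (using a fixed choice function on nonempty subsets of $\textbf{P}$) some $p=\textbf{p}_a \in \textbf{P}\setminus \textbf{K}_\beta$ with $\partial a \in \mathcal A_{\textbf{K}_\beta}$, and put $p_\beta = p$. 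The recursion must terminate before the Hartogs ordinal of $\textbf{P}$, since it injects ordinals into $\textbf{P}$; the resulting map $\alpha \to \textbf{P}$ is then a bijection with the required property.

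The substantive step is existence: if $\textbf{K} = \textbf{K}_\beta \subsetneq \textbf{P}$, some $\textbf{p}_a \in \textbf{P}\setminus\textbf{K}$ has $\partial a \in \mathcal A_\textbf{K}$. First, monotonicity: if $\textbf{K}\subset\textbf{K}'$ then $\mathcal A_\textbf{K}\subset\mathcal A_{\textbf{K}'}$, since $\mathcal A_{\textbf{K}'}$ satisfies the closure property of Definition \ref{def: A_K} for $\textbf{K}$. Hence each $p_\nu$ lies in $\mathcal A_{\textbf{K}_{\nu+1}} \subset \mathcal A_\textbf{K}$: its codomain is in $\mathcal A_{\textbf{K}_\nu}$, and its domain is then obtained by closure applied to the identity distinguished square. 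So $\textbf{K}$ is admissible. Now argue by contradiction: suppose every $\textbf{p}_a \in \textbf{P}\setminus\textbf{K}$ has $\partial a\notin\mathcal A_\textbf{K}$. Since $\mathcal A_\textbf{K}\neq \mathcal A$ (otherwise every element of $\textbf{P}$ would lie in... , and for $p\in \textbf{P}\setminus\textbf{K}$, $\partial a\in\mathcal A$ gives a contradiction), we can take $p = \textbf{p}_a \in \textbf{P}\setminus\textbf{K}$ minimal for $\btle$ by well-foundedness. Let $i$ be the least index with $\partial_i a\notin \mathcal A_\textbf{K}$; then $1\le i\le \ell(a)-1$. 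Take the unique distinguished square exhibiting $\textbf{p}_{\partial_i a}$ as a pullback of some $\textbf{p}_b\in\textbf{P}$, so $\textbf{p}_b \btle p$. If $\textbf{p}_b\in\textbf{K}$ and $\partial b \in \mathcal A_\textbf{K}$, closure gives $\partial_i a\in\mathcal A_\textbf{K}$, which is a contradiction. Admissibility of $\textbf{K}$ puts $\partial b$ in $\mathcal A_\textbf{K}$ whenever $\textbf{p}_b\in\textbf{K}$, so $\textbf{p}_b\notin\textbf{K}$, contradicting the minimality of $p$.

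The main obstacle is this existence argument. It is essentially the proof of (c)$\Rightarrow$(b) in Lemma \ref{lem: characterization admissibility of basis}, relativized to the admissible subset $\textbf{K}$. I would write it carefully rather than invoke that lemma, since the lemma concerns the whole basis. The remaining facts are routine: monotonicity of $\textbf{K}\mapsto\mathcal A_\textbf{K}$, and termination of the recursion via the fact that a set admits no injection of a proper class of ordinals.
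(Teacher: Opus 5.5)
Your proof is correct, but it is organized differently from the paper's. The paper applies Zorn's lemma to the set of partial enumerations $(\alpha,\textbf{K},\varphi)$ with the required property, ordered by extension. At a maximal element, no $\textbf{p}_b\in\textbf{P}\setminus\textbf{K}$ has $\partial b\in\mathcal A_{\textbf{K}}$. So $\mathcal A_{\textbf{K}}$ satisfies the closure condition of Definition~\ref{def: A_K} for all of $\textbf{P}$, giving $\mathcal A=\mathcal A_{\textbf{P}}\subset\mathcal A_{\textbf{K}}$ by Lemma~\ref{lem: characterization admissibility of basis}; maximality then forces $\textbf{K}=\textbf{P}$.

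You instead use transfinite recursion with a choice function, bounded by the Hartogs ordinal, which is an equivalent use of choice. For the existence step you re-run the minimal-counterexample argument of (c)$\Rightarrow$(b), relativized to $\textbf{K}_\beta$. This needs $\textbf{K}_\beta$ to be admissible, which you correctly derive from monotonicity of $\textbf{K}\mapsto\mathcal A_{\textbf{K}}$.

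Your reason for not invoking the lemma (that it ``concerns the whole basis'') is mistaken, though. The closure observation above lets you apply it directly, with no admissibility or monotonicity needed. If no element of $\textbf{P}\setminus\textbf{K}_\beta$ had its codomain in $\mathcal A_{\textbf{K}_\beta}$, then $\mathcal A_{\textbf{K}_\beta}\supset\mathcal A_{\textbf{P}}=\mathcal A$. But then any $\textbf{p}_a\in\textbf{P}\setminus\textbf{K}_\beta$ would have $\partial a\in\mathcal A_{\textbf{K}_\beta}$ after all.

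So the paper's route is shorter and confines the use of well-foundedness to one lemma, while yours is self-contained but duplicates that lemma's proof. Your aside asserting $\mathcal A_{\textbf{K}}\neq\mathcal A$ is garbled and never used, so you can delete it.
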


\begin{proof}
Let $\mathscr S$ be the (small) set of all triples $(\alpha,\textbf{K},\varphi)$ consisting of
\begin{itemize}
	\item an ordinal $\alpha$ whose cardinality is at most that of $\textbf{P}$;
	
	\item a subset $\textbf{K} \subset \textbf{P}$;
	
	\item a bijection $\varphi:\alpha \rightarrow \textbf{K}$ such that for all $\beta < \alpha$, the codomain of $\varphi(\beta)$ belongs to $\mathcal A_{\{\varphi(\nu) \mid \nu < \beta\}}$.
\end{itemize}
Our goal is to prove that $\mathscr S$ has an element of the form $(\alpha,\textbf{P},\varphi)$. Endow $\mathscr S$ with the partial order $\preccurlyeq$ where $(\alpha,\textbf{K},\varphi) \preccurlyeq (\alpha',\textbf{K}',\varphi')$ if and only if $\alpha \le \alpha'$, $\textbf{K} \subset \textbf{K}'$, and the restriction of $\varphi'$ to $\alpha$ equals $\varphi$. Note that $\mathscr S$ is non-empty as it contains $(0,\varnothing,id_{\varnothing})$. Also, for a linearly ordered subset $L \subset \mathscr S$, say self-indexed as $(\alpha_\ell,\textbf{K}_\ell,\varphi_\ell)$, we have
$$
(\sup_{\ell \in L} \alpha_\ell,\; \bigcup_{\ell \in L} \textbf{K}_\ell, \; \psi) \in \mathscr S
$$
where $\psi:\sup_{\ell \in L} \alpha_\ell \rightarrow \bigcup_{\ell \in L} \textbf{K}_\ell$ is the unique function whose restriction to $\alpha_\ell$ is $\varphi_\ell$ for each $\ell \in L$. By Zorn's lemma, $\mathscr S$ has a maximal element. But if $(\alpha,\textbf{K},\varphi)$ is maximal, there exists no display map in $\textbf{P} \setminus \textbf{K}$ whose codomain belongs to $\mathcal A_\textbf{K}$, which implies that $\mathcal A_\textbf{K} = \mathcal A_\textbf{P} = \mathcal A$ -- here we have used (through Lemma \ref{lem: characterization admissibility of basis}) that $\mathcal A$ is well-founded.\footnote{In fact, this is the only point where we use well-foundedness of $\textbf{P}$ towards proving Proposition \ref{prop: basis -> cellular}.} We conclude that $\textbf{K} = \textbf{P}$.
\end{proof}

\begin{remark}
\label{rem: admissible subsets from a basis}
For a bijection $(p_\nu)_{\nu < \alpha}$ as in Lemma \ref{lem: good well-ordering for well-founded basis}, it can be verified by induction in a straightforward way that for all $\beta < \alpha$, the set of display maps $\{p_\nu \mid \nu \le \beta\}$ is admissible, i.e. $\{p_\nu \mid \nu \le \beta\} \subset \mathcal A_{\{p_\nu \mid \nu \le \beta\}}$.
\end{remark}

\begin{proposition}
\label{prop: basis -> cellular}
If a contextual category has a well-founded basis, then it is cellular.
\end{proposition}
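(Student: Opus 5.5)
Let $\textbf{P}$ be a well-founded basis of $\mathcal A$, and fix an enumeration $(p_\nu:a_\nu \twoheadrightarrow \partial a_\nu)_{\nu<\alpha}$ as in Lemma \ref{lem: good well-ordering for well-founded basis}. For $\beta \le \alpha$ write $\textbf{K}_\beta = \{p_\nu \mid \nu < \beta\}$ and $\mathcal A_\beta = \mathcal A_{\textbf{K}_\beta}$. By Lemma \ref{lem: characterization admissibility of basis}, $\mathcal A_\alpha = \mathcal A$, and $\mathcal A_0 = \mathcal O_0$.

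The plan is to show by transfinite induction on $\beta \le \alpha$ that:
\begin{enumerate}[label=(\roman*)]
	\item each inclusion $\mathcal A_\beta \hookrightarrow \mathcal A_{\beta'}$ for $\beta \le \beta'$ is a relative cell complex with respect to $\bcof$;
	\item $\mathcal A_\beta$ is cellular;
	\item $\textbf{K}_\beta$ is a basis of $\mathcal A_\beta$;
	\item $\mathcal A_\beta$ has a cellular presentation $\mathscr C_\beta$ with $\textbf{P}(\mathscr C_\beta) = \textbf{K}_\beta$, so that Remark \ref{rem: heart condition} applies at the next stage.
\end{enumerate}
The result is then the case $\beta = \alpha$.

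\emph{Successor step.} Given $\mathcal A_\beta$ with these properties, $\partial a_\beta \in \mathcal A_\beta$. If $a_\beta \in \mathcal A_\beta$, then $p_\beta$ is a distinguished pullback of itself, so it would also be a pullback of some element of $\textbf{K}_\beta$; this contradicts uniqueness in the basis $\textbf{P}$. Hence $a_\beta \notin \mathcal A_\beta$, and Proposition \ref{prop: adding a sort} applies. It gives $\mathcal A_{\beta+1} \cong \mathcal B'$, where $\mathcal A_\beta \to \mathcal B \to \mathcal B'$ is a pushout of $\iota_n^S$ followed by a termal map, hence a relative $\bcof$-cell complex. Condition $\varheartsuit$ holds by Remark \ref{rem: heart condition}, because $\mathcal A_\beta$ is cellular with a presentation whose $\textbf{P}$ is $\textbf{K}_\beta$. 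That remark also supplies the extended presentation with $\textbf{P} = \textbf{K}_{\beta+1}$.

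One point needs care: the isomorphism $G:\mathcal B' \to \mathcal A_{\beta+1}$ must be compatible with the inclusion $\mathcal A_\beta \hookrightarrow \mathcal A_{\beta+1}$. This holds because $GIF$ is that inclusion, so the composite $\mathcal A_\beta \to \mathcal A_{\beta+1}$ is itself a $\bcof$-cell map.

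\emph{Limit step.} For a limit $\lambda$, I claim $\mathcal A_\lambda = \bigcup_{\beta<\lambda}\mathcal A_\beta$ and that this union is the colimit in $\Cont$ of the chain. The union is a full contextual subcategory: conditions (i) and (ii) of the characterization of full contextual subcategories involve only finitely many objects. It is also closed under pullbacks of elements of $\textbf{K}_\lambda$, for the same finiteness reason, so it is minimal with that property.

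To identify the union with the colimit, I use that directed colimits of full contextual subcategory inclusions are computed as unions. This holds because $\Cont$ is locally finitely presentable and the forgetful functors used in Proposition \ref{prop: weak equivalences - transfinite composition, retracts, 2 of 3} preserve directed colimits. Transfinite composites of cell complexes are cell complexes, which gives (i) and (ii) at $\lambda$, and (iii) follows by finiteness.

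Condition (iv) at limit stages is the awkward part, because Definition \ref{def: cellular presentation} uses $\bbN$-indexed presentations. I would sidestep it by reformulating Remark \ref{rem: heart condition} to need only that $\textbf{K}_\beta$ be the set of sort-introducing display maps of some transfinite cell decomposition. Concretely, I would redo Proposition \ref{prop: cellular presentation -> basis} transfinitely, using unions of gats at limits, or else reindex the whole construction by $\bbN$.

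I expect the main obstacle to be exactly this bookkeeping: keeping a syntactic witness that the accumulated sort display maps form a basis, so that $\varheartsuit$ remains available at every stage through the transfinite induction.
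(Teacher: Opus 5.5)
Your proposal is correct and follows essentially the same route as the paper. Both enumerate $\textbf{P}$ via Lemma~\ref{lem: good well-ordering for well-founded basis} and run a transfinite induction on $\mathcal A_{\textbf{K}(\beta)}$, obtaining each successor stage from Proposition~\ref{prop: adding a sort} (with $\varheartsuit$ supplied by Remark~\ref{rem: heart condition}) and passing to transfinite composites at limit stages; your extra bookkeeping covers points the paper's proof leaves implicit, since it invokes Remark~\ref{rem: heart condition} from the cellularity of $\mathcal A_{\textbf{K}(\beta)}$ alone — namely checking $a_\beta \notin \mathcal A_{\textbf{K}(\beta)}$ via $\textbf{K}(\beta)$ being a basis, identifying $\mathcal A_{\textbf{K}(\lambda)}$ with the union and colimit of earlier stages, and keeping a presentation whose sort-introducing maps are exactly $\textbf{K}(\beta)$ (which needs Proposition~\ref{prop: cellular presentation -> basis} extended to ordinal-indexed presentations, as you suggest).
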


\begin{proof}
Consider a contextual category $\mathcal A$ equipped with a well-founded basis $\textbf{P}$. Choose a bijection $(p_\nu:a_\nu \rightarrow \partial a_\nu)_{\nu < \alpha}$ between an ordinal $\alpha$ and $\textbf{P}$ as in the statement of Lemma \ref{lem: good well-ordering for well-founded basis}. For $\beta \le \alpha$, let $\textbf{K}(\beta) = \{p_\nu \mid \nu < \beta\}$. For each $\beta < \alpha$, factorize the inclusion $\mathcal A_{\textbf{K}(\beta)} \hookrightarrow \mathcal A_{\textbf{K}(\beta+1)}$ as
$$
\mathcal A_{\textbf{K}(\beta)} \overset{F_\beta}{\longrightarrow} \mathcal B_\beta \overset{I_\beta}{\longrightarrow} \mathcal B'_\beta \overset{G_\beta}{\longrightarrow} \mathcal A_{\textbf{K}(\beta+1)}
$$
where:
\begin{itemize}
	\item $\mathcal B_\beta$ and $F_\beta$ are defined via the pushout square
	\[
	\squa{\mathcal O_{n-1}}{\mathcal A_{\textbf{K}(\beta)}}{\mathcal O_n}{\mathcal B_\beta}{\overline{\partial a_\beta}}{}{\iota_n^S}{F_\beta}
	\]
	where $n$ is the length of $a_\beta$. Write $b_\beta$ for the image of $o_n$ under the bottom arrow.
	
	\item $\mathcal B_\beta \overset{I_\beta}{\rightarrow} \mathcal B'_\beta \overset{G_\beta}{\rightarrow} \mathcal A_{\textbf{K}(\beta+1)}$ is a (termal, full-and-faithful) factorization\footnote{Such a factorization is unique up to canonical isomorphism, but for set-theoretic definiteness we can take the factorization provided by the small object argument with $\omega$ steps.} of the unique morphism $\mathcal B_\beta \rightarrow \mathcal A_{\textbf{K}(\beta+1)}$ making the following diagram commute:
	% https://q.uiver.app/#q=WzAsNSxbMCwwLCJcXG1hdGhjYWwgT197bi0xfSJdLFswLDEsIlxcbWF0aGNhbCBPX24iXSxbMSwwLCJcXG1hdGhjYWwgQV97XFx0ZXh0YmZ7S30oXFxiZXRhKX0iXSxbMiwyLCJcXG1hdGhjYWwgQV97XFx0ZXh0YmZ7S30oXFxiZXRhKX0iXSxbMSwxLCJcXG1hdGhjYWwgQl9cXGJldGEiXSxbMCwyLCJcXG92ZXJsaW5le1xccGFydGlhbCBhX1xcYmV0YX0iXSxbMSwzLCJcXG92ZXJsaW5le2FfXFxiZXRhfSIsMix7ImN1cnZlIjozfV0sWzIsMywiIiwwLHsiY3VydmUiOi0zLCJzdHlsZSI6eyJ0YWlsIjp7Im5hbWUiOiJob29rIiwic2lkZSI6InRvcCJ9fX1dLFs0LDMsIiIsMCx7InN0eWxlIjp7ImJvZHkiOnsibmFtZSI6ImRhc2hlZCJ9fX1dLFsyLDQsIkZfXFxiZXRhIl0sWzEsNCwiXFxvdmVybGluZXtiX1xcYmV0YX0iLDJdLFswLDEsIlxcaW90YV9uXlMiLDJdXQ==
	\[\begin{tikzcd}[ampersand replacement=\&]
		{\mathcal O_{n-1}} \& {\mathcal A_{\textbf{K}(\beta)}} \& \\
		{\mathcal O_n} \& {\mathcal B_\beta} \\
		\&\& {\mathcal A_{\textbf{K}(\beta)}}
		\arrow["{\overline{\partial a_\beta}}", from=1-1, to=1-2]
		\arrow["{\iota_n^S}"', from=1-1, to=2-1]
		\arrow["{F_\beta}", from=1-2, to=2-2]
		\arrow[curve={height=-18pt}, hook, from=1-2, to=3-3]
		\arrow["{\overline{b_\beta}}"', from=2-1, to=2-2]
		\arrow["{\overline{a_\beta}}"', curve={height=18pt}, from=2-1, to=3-3]
		\arrow[dashed, from=2-2, to=3-3]
	\end{tikzcd}\]
\end{itemize}

Now, let us prove by induction (using Proposition \ref{prop: adding a sort}) that for all $\beta \le \alpha$,
\begin{enumerate}[label=(\roman*)]
	\item $\mathcal A_{\textbf{K}(\beta)}$ is cellular, and

	\item if $\beta < \alpha$, then $G_\beta:\mathcal B'_\beta \rightarrow \mathcal A_{\textbf{K}(\beta+1)}$ is an isomorphism.
\end{enumerate}

Given $\beta < \alpha$, suppose that the above claims hold for all $\gamma < \beta$.

Firstly, let us verify that (i) holds for $\beta$. For each $\gamma < \beta$, it follows from assumption (ii) that $G_\gamma$ is a cellular map. Since $F_\gamma$ (being a pushout of some $\iota_n^S$) and $I_\beta$ (being a transfinite composite of pushouts of elements of $\{\iota_n^T,\; \pi_n^T \mid n \ge 1\}$) are also cellular, we conclude that so is the inclusion
$$
G_\gamma I_\gamma F_\gamma:\mathcal A_{\textbf{K}(\gamma)} \hookrightarrow \mathcal A_{\textbf{K}(\gamma+1)}.
$$
As $\mathcal O_0 \rightarrow \mathcal A_{\textbf{K}(\beta)}$ is a transfinite composite of the chain of morphisms $\beta \rightarrow \Cont$ that sends $\gamma < \gamma'$ to the inclusion $\mathcal A_{\textbf{K}(\gamma)} \hookrightarrow \mathcal A_{\textbf{K}(\gamma')}$, the former is a cellular map -- in other words, $\mathcal A_{\textbf{K}(\beta)}$ is a cellular object, as required.

\vspace{0.5em}

Now, let us prove that (ii) holds for $\beta$ provided that $\beta < \alpha$. By Proposition \ref{prop: adding a sort}, it suffices to check the corresponding instance of condition $\varheartsuit$ from the statement of that result. But by Remark \ref{rem: heart condition}, $\varheartsuit$ follows from $\mathcal A_{\textbf{K}(\beta)}$ being cellular.

\vspace{0.5em}

This concludes the induction step. We have proved, in particular, that $\mathcal A_{\textbf{K}(\alpha)} = \mathcal A_\textbf{P} = \mathcal A$ is cellular.
\end{proof}

The results of Propositions \ref{prop: cellular gat -> basis}, \ref{prop: cellular presentation -> basis} and \ref{prop: basis -> cellular} can be summarized as follows:

\begin{theorem}
\label{th: cellular iff basis}
For a small contextual category $\mathcal A$, the following conditions are equivalent:
\begin{enumerate}[label=(\alph*)]
	\item $\mathcal A$ is cellular.
	
	\item $\mathcal A \cong \mathcal C(\bbA)$ for a cellular generalized algebraic theory $\bbA$ (i.e. $\bbA$ has no sort equality axioms).
	
	\item $\mathcal A$ has a well-founded basis (Definition \ref{def: basis}). \qed
\end{enumerate}
\end{theorem}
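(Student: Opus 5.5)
The theorem assembles earlier results, so I would prove the cycle (a) $\Rightarrow$ (b) $\Rightarrow$ (c) $\Rightarrow$ (a).

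For (a) $\Rightarrow$ (b), assume $\mathcal A$ is cellular, so $\mathcal O_0 \rightarrow \mathcal A$ is a transfinite composite of pushouts of basic cofibrations. I will first reorganize this into a cellular presentation indexed by $(\bbN,\le)$ as in Definition \ref{def: cellular presentation}. The domains and codomains of $\iota_n^S$, $\iota_n^T$, $\pi_n^T$ are finitely presentable, so each cell attached at a transfinite stage is attached along a map that factors through an earlier finite stage. Stratifying cells by this attaching ``depth'' yields an $\omega$-indexed presentation; equivalently, one reruns the gat-filtration argument used for the decomposition of $\mathcal O_0 \rightarrow \mathcal A$ earlier in \S\ref{subsec: sorts terms and axioms via contextual categories}. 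Proposition \ref{prop: cellular presentation -> basis} then supplies a syntactically cellular gat $\bbA$ with $\mathcal A \cong \mathcal C(\bbA)$.

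For (b) $\Rightarrow$ (c), Proposition \ref{prop: cellular gat -> basis} shows that $\textbf{P}(\bbA)$ is a basis of $\mathcal C(\bbA)$, and transporting along the isomorphism gives a basis of $\mathcal A$. By Lemma \ref{lem: characterization admissibility of basis}, well-foundedness is equivalent to $\mathcal A_{\textbf{P}} = \mathcal A$. I will prove this equality by induction on the filtration $\bbA_k$ of $\bbA$ by derivation stages. Each context of $\bbA$ is built by successively adjoining sorts of the form $S(\textbf{f})$, each of which is a distinguished pullback of $\textbf{p}(J_S) \in \textbf{P}(\bbA)$ along a morphism into a context that was already introduced. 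Consequently every object lies in $\mathcal A_{\textbf{P}}$. An alternative route is to define the rank of a sort symbol $S$ as the least $k$ with $S \in \bbA_k$; then $\textbf{p}(J_T) \btle \textbf{p}(J_S)$ forces $\mathrm{rank}(T) < \mathrm{rank}(S)$, because $T$ is the head symbol of a sort appearing in the context of $J_S$. This rank map into $\bbN$ excludes infinite descending chains directly.

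For (c) $\Rightarrow$ (a), this is exactly Proposition \ref{prop: basis -> cellular}.

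I expect the main obstacle to be the rank argument in (b) $\Rightarrow$ (c). Specifically, I must justify that the head symbol of a sort in an axiom's context is uniquely determined and was introduced at an earlier stage. Uniqueness comes from the absence of sort equality axioms, as in the proof of Proposition \ref{prop: cellular gat -> basis}. I plan to prove it by induction on derivations.
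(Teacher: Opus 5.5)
Your proposal is correct and follows the paper's route: the paper assembles the theorem from Propositions \ref{prop: cellular presentation -> basis}, \ref{prop: cellular gat -> basis} and \ref{prop: basis -> cellular}. Your two additions make explicit steps the paper leaves implicit. These are the reindexing of a transfinite cell structure into an $\mathbb{N}$-indexed cellular presentation, and the proof that $\textbf{P}(\bbA)$ is well-founded via Lemma \ref{lem: characterization admissibility of basis} or the rank function (the paper only justifies well-foundedness informally at the start of \S\ref{sec: cellular theories}). For the reindexing, finite presentability must be used in the form ``each attaching map factors through a finite subcomplex'', not merely through an earlier stage, for the depth to be finite; alternatively, you can run the recursion in the proof of Proposition \ref{prop: cellular presentation -> basis} transfinitely, taking unions of theories at limit stages.
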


\section{Strictification of weak morphisms of contextual categories}
\label{sec: strictification}

We refer to \S\ref{sec: introduction} for definitions and notation regarding (rooted) display map categories (adapted from \cite{Tay99}). In particular, recall that we denote the $2$-category of display map categories (resp. rooted dmcs) by $\iiDMC$ (resp. $\iiDMC_r$), and the (rooted) dmc associated with $\mathcal A \in \Cont$ by $\mathcal A_w$. Some of our main results will also rely on the theory of precontextual categories developed in \cite{Alm26}; however, a reader encountering the concept for the first time may safely replace ``precontextual category" by ``contextual category" everywhere. The relevance of working in this more general setting is explained below.

We will now study the following question: for contextual categories $\mathcal A$, $\mathcal B$ and a weak morphism $F:\mathcal A_w \rightarrow \mathcal B_w$, when (and how) can we obtain a strict morphism $F':\mathcal A \rightarrow \mathcal B$ with $F' \cong F$? In particular, we would like to find conditions on $\mathcal A$, $\mathcal B$ under which
$$
\iiCont(\mathcal A,\mathcal B) \hookrightarrow \iiDMC_r(\mathcal A_w,\mathcal B_w)
$$
is an equivalence of categories. As we will see, this question is closely related to the structure of the category of strict display maps $\textbf{D}(\mathcal A)$. Exploring this connection will allow us to prove, for example, that the above functor is an equivalence whenever $\mathcal A$ is cofibrant for the model structure on $\Cont$. From that, we can conclude that the $\Ho(\Cat)$-enriched homotopy category of the $\Cat$-model category $\iiCont$ is (the $\Ho(\Cat)$-enriched category associated with) $\iiDMC_r$.

Our central technique for strictifying weak morphisms is established in \S\ref{subsec: what is needed to strictify} through the concept of a \emph{local strictification}. This machinery could have been developed just for weak morphisms $\mathcal A_w \rightarrow \mathcal B_w$ where $\mathcal A$, $\mathcal B$ are contextual categories. However, it works without extra difficulty with $\mathcal A$ replaced by a precontextual category as in \cite[Def. 2.2]{Alm26}, with weak morphisms also suitably generalized. The main consequence of the added generality is Proposition \ref{prop: weak morphisms precontextual category, assuming basis}, which allows us to conclude that under certain conditions on the precontextual category $\mathcal A$, weak morphisms $\mathcal A \rightarrow \mathcal B$ match the strict morphisms $L\mathcal A \rightarrow \mathcal B$ where $L:\Precont \rightarrow \Cont$ is left adjoint to the inclusion functor. This is in line with the well-known idea that models of a gat $\bbA$ can often be described diagrammatically via a finite-limit sketch (constructed by directly translating the axioms) whose data are simpler and more explicit than the whole structure of $\mathcal C(\bbA)$.

\begin{definition}
\label{def: weak morphism and D(A) for precontextual category}
For a precontextual category $\mathcal A$ (\cite[Def. 2.2]{Alm26}) and a rooted dmc $\mathcal B$, we define a \emph{weak morphism} from $\mathcal A$ to $\mathcal B$ as a functor $F:|\mathcal A| \rightarrow |\mathcal B|$ that sends
\begin{itemize}[noitemsep]
	\item every strict display map to a display map;
	
	\item every distinguished square to a pullback square;
	
	\item every length-$0$ object to a terminal object.
\end{itemize}
We denote by $[\mathcal A,\mathcal B]$ the category of weak morphisms from $\mathcal A$ to $\mathcal B$ and natural transformations between them. In particular, if $\mathcal A$ is a contextual category, then (assuming that $\mathcal A$, $\mathcal B$ are locally small) $[\mathcal A,\mathcal B] = \iiCont^+(\mathcal A_w,\mathcal B)$.

\vspace{0.5em}

We denote by $\textbf{D}(\mathcal A)$ the following subcategory of $\iiAr(|\mathcal A|)$:
\begin{itemize}[noitemsep]
	\item its objects are the strict display maps $a \twoheadrightarrow \partial a$.
	
	\item its morphisms are the commutative squares
	\[
	\dsqua{a}{b}{\partial a}{\partial b}{f'}{f}{}{}
	\]
	that can be expressed as a finite horizontal composite of distinguished squares (including the identity square of each $a \twoheadrightarrow \partial a$).
\end{itemize}

This extends Construction \ref{constr: categories of display maps}. Note that modifying $\mathcal A$ by adding all morphisms of $\textbf{D}(\mathcal A)$ as distinguished squares doesn't change its associated contextual category nor the weak morphisms out of it. But for many precontextual categories that arise in practice, the set of distinguished squares is not closed under horizontal composition, unless one imposes that by the above process.
\end{definition}

\subsection{What is needed to strictify a weak morphism?}
\label{subsec: what is needed to strictify}

In what follows, we let $\mathcal A$ be a precontextual category and $\mathcal B$ a contextual category.

\begin{definition}
\label{def: strictification}
A \emph{strictification} of a weak morphism $F \in [\mathcal A,\mathcal B_w]$ is a pair $(F',\varphi)$ consisting of a strict morphism $F':\mathcal A \rightarrow \mathcal B$ and an isomorphism $\varphi:F' \Rightarrow F$. If such a pair exists, we say that $F$ is \emph{strictifiable}.
\end{definition}

Note that as every morphism in $\textbf{D}(\mathcal A)$ is a finite composite of distinguished squares, $\iiAr(F):\iiAr(\mathcal A) \rightarrow \iiAr(\mathcal B)$ restricts to a functor
$$
F_*:\textbf{D}(\mathcal A) \longrightarrow \textbf{D}_w(\mathcal B_w).
$$
On the other hand, the strict morphism $F'$ induces a functor
$$
F'_*:\textbf{D}(\mathcal A) \longrightarrow \textbf{D}(\mathcal B),
$$
and $\varphi$ induces a natural transformation as in
\[\begin{tikzcd}[ampersand replacement=\&,row sep=tiny]
	{\textbf{D}(\mathcal A)} \&\& {\textbf{D}_w(\mathcal B_w)} \\
	\& {\textbf{D}(\mathcal B)}
	\arrow[""{name=0, anchor=center, inner sep=0}, "{F_*}", curve={height=-30pt}, from=1-1, to=1-3]
	\arrow[""{name=1, anchor=center, inner sep=0}, curve={height=30pt}, draw=none, from=1-1, to=1-3]
	\arrow["{F'_*}"', curve={height=6pt}, from=1-1, to=2-2]
	\arrow["\iota"', curve={height=6pt}, hook, from=2-2, to=1-3]
	\arrow["{\varphi'}", shorten=12pt, Rightarrow, from=0, to=1]
\end{tikzcd}\]
where $\iota$ is the inclusion functor. Since $\varphi$ is an isomorphism, each component of $\varphi_*$ is a pair of isomorphisms in $|\mathcal B|$.

Our main tool for strictifying weak morphisms, presented next, will be a kind of structure -- a \emph{local strictification} -- that is similar to the one described above but can often be obtained much more naturally and easily. We will provide an algorithm for constructing a strictification from any local strictification (Construction \ref{constr: functor associated with a local strictification} and Proposition \ref{prop: local strictification iff strictifiable}).

\begin{definition}
\label{def: local strictification}
Let $F \in [\mathcal A,\mathcal B_w]$. A \emph{local strictification} of $F$ is a pair $(S,\sigma)$ consisting of a functor $S:\textbf{D}(\mathcal A) \rightarrow \textbf{D}(\mathcal B)$ and a natural transformation $\sigma$ as in
% https://q.uiver.app/#q=WzAsMyxbMCwwLCJcXHRleHRiZntEfShcXG1hdGhjYWwgQSkiXSxbMSwxLCJcXHRleHRiZntEfShcXG1hdGhjYWwgQikiXSxbMiwwLCJcXHRleHRiZntEfV93KFxcbWF0aGNhbCBCKSJdLFswLDEsIlMiLDIseyJjdXJ2ZSI6Mn1dLFsxLDIsIiIsMCx7ImN1cnZlIjoxLCJzdHlsZSI6eyJ0YWlsIjp7Im5hbWUiOiJob29rIiwic2lkZSI6InRvcCJ9fX1dLFswLDIsIkZfKiIsMCx7ImN1cnZlIjotNX1dLFswLDIsIiIsMCx7ImN1cnZlIjo1LCJzdHlsZSI6eyJib2R5Ijp7Im5hbWUiOiJub25lIn0sImhlYWQiOnsibmFtZSI6Im5vbmUifX19XSxbNSw2LCJcXHNpZ21hIiwwLHsic2hvcnRlbiI6eyJzb3VyY2UiOjIwLCJ0YXJnZXQiOjIwfX1dXQ==
\[\begin{tikzcd}[ampersand replacement=\&,row sep=tiny]
	{\textbf{D}(\mathcal A)} \&\& {\textbf{D}_w(\mathcal B)} \\
	\& {\textbf{D}(\mathcal B)}
	\arrow[""{name=0, anchor=center, inner sep=0}, "{F_*}", curve={height=-30pt}, from=1-1, to=1-3]
	\arrow[""{name=1, anchor=center, inner sep=0}, curve={height=30pt}, draw=none, from=1-1, to=1-3]
	\arrow["S"', curve={height=6pt}, from=1-1, to=2-2]
	\arrow["\iota"',curve={height=6pt}, hook, from=2-2, to=1-3]
	\arrow["\sigma", shorten=12pt, Rightarrow, from=0, to=1]
\end{tikzcd}\]
\end{definition}

\begin{notation}
For $a \in \Ob(\mathcal A)$ of length $\ge 1$, we let $\sigma^\textbf{s}_a:F(a) \rightarrow S^\textbf{s}(a)$ and $\sigma^\textbf{t}_a:F(\partial a) \rightarrow S^\textbf{t}(a)$ be, respectively, the source and target components of $\sigma_{\textbf{p}_a}$.
\end{notation}

\begin{construction}
\label{constr: functor associated with a local strictification}
Suppose that $(S,\sigma)$ is a local strictification of a weak morphism $F \in [\mathcal A,\mathcal B_w]$. We will now construct a functor
$$
F_S:|\mathcal A| \longrightarrow |\mathcal B|
$$
(with $\sigma$ implicit) equipped with an isomorphism $F_S \cong F$; it will be proved to be a strict morphism $\mathcal A \rightarrow \mathcal B$ in Proposition \ref{prop: F_S is a contextual functor}. Firstly, we will use structural recursion on the tree of display maps of $\mathcal A$ to produce an object $F_S(a) \in \mathcal B$ and an isomorphism $\psi_a:F_S(a) \rightarrow F(a)$ for each $a \in \mathcal A$. After that, conjugating the action of $F$ on arrows along the $\psi_a$ allows us to extend $F_S:\Ob(\mathcal A) \rightarrow \Ob(\mathcal B)$ into a functor $|\mathcal A| \rightarrow |\mathcal B|$ such that $\psi$ is a natural isomorphism $F_S \cong F$.

\vspace{0.5em}

We start by setting $F_S(a) = 1_\mathcal B$ for every length-$0$ object $a$, with $\psi_a:1_\mathcal B \rightarrow F(a)$ the unique (iso)morphism. For $a$ of length $\ge 1$, suppose that $F_S(\partial a)$ and $\psi_{\partial a}$ have been constructed. We let $F_S(a)$ be as in the distinguished square
% https://q.uiver.app/#q=WzAsNCxbMCwwLCJGXlMoYSkiXSxbMCwxLCJGXlMoXFxwYXJ0aWFsIGEpIl0sWzIsMSwiU157XFx0ZXh0YmYgdH0oYSkiXSxbMiwwLCJTXntcXHRleHRiZiBzfShhKSJdLFswLDEsIlxcdGV4dGJmIHBfe0ZeUyhhKX0iLDIseyJzdHlsZSI6eyJoZWFkIjp7Im5hbWUiOiJlcGkifX19XSxbMywyLCJTKFxcdGV4dGJmIHBfYSkiLDAseyJzdHlsZSI6eyJoZWFkIjp7Im5hbWUiOiJlcGkifX19XSxbMSwyLCJcXHNpZ21hXntcXHRleHRiZiB0fV9hIFxcY2lyYyBcXHBzaV97XFxwYXJ0aWFsIGF9IiwyXSxbMCwzXV0=
\[\begin{tikzcd}[ampersand replacement=\&]
	{F_S(a)} \&\& {S^{\textbf s}(a)} \\
	{F_S(\partial a)} \&\& {S^{\textbf t}(a).}
	\arrow["k",from=1-1, to=1-3]
	\arrow["{\textbf p_{F_S(a)}}"', two heads, from=1-1, to=2-1]
	\arrow["{S(\textbf p_a)}", two heads, from=1-3, to=2-3]
	\arrow["{\sigma^{\textbf t}_a \circ \psi_{\partial a}}"', from=2-1, to=2-3]
\end{tikzcd}\]
Now, using that $\sigma$ sends $\textbf{p}_a$ to a cartesian square in $\mathcal B$, we let $\psi_a$ be the unique dashed arrow making the diagram
% https://q.uiver.app/#q=WzAsNixbMCwwLCJGXlMoYSkiXSxbMCwxLCJGXlMoXFxwYXJ0aWFsIGEpIl0sWzQsMSwiU157XFx0ZXh0YmYgdH0oYSkiXSxbNCwwLCJTXntcXHRleHRiZiBzfShhKSJdLFsyLDEsIkYoXFxwYXJ0aWFsIGEpIl0sWzIsMCwiRihhKSJdLFswLDEsIlxcdGV4dGJmIHBfe0ZeUyhhKX0iLDIseyJzdHlsZSI6eyJoZWFkIjp7Im5hbWUiOiJlcGkifX19XSxbMywyLCJTKFxcdGV4dGJmIHBfYSkiLDAseyJzdHlsZSI6eyJoZWFkIjp7Im5hbWUiOiJlcGkifX19XSxbMSw0LCJcXHBzaV97XFxwYXJ0aWFsIGF9IiwyXSxbNCwyLCJcXHNpZ21hXntcXHRleHRiZiB0fV9hIiwyXSxbNSwzLCJcXHNpZ21hXntcXHRleHRiZiBzfV9hIl0sWzUsNCwiRihcXHRleHRiZiBwX2EpIl0sWzAsNSwiIiwwLHsic3R5bGUiOnsiYm9keSI6eyJuYW1lIjoiZGFzaGVkIn19fV0sWzAsMywiayIsMSx7ImN1cnZlIjotNH1dXQ==
\[\begin{tikzcd}[ampersand replacement=\&]
	{F_S(a)} \&\& {F(a)} \&\& {S^{\textbf s}(a)} \\
	{F_S(\partial a)} \&\& {F(\partial a)} \&\& {S^{\textbf t}(a)}
	\arrow[dashed, from=1-1, to=1-3]
	\arrow["k"{description}, curve={height=-24pt}, from=1-1, to=1-5]
	\arrow["{\textbf p_{F_S(a)}}"', two heads, from=1-1, to=2-1]
	\arrow["{\sigma^{\textbf s}_a}", from=1-3, to=1-5]
	\arrow["{F(\textbf p_a)}", from=1-3, to=2-3]
	\arrow["{S(\textbf p_a)}", two heads, from=1-5, to=2-5]
	\arrow["{\psi_{\partial a}}"', from=2-1, to=2-3]
	\arrow["{\sigma^{\textbf t}_a}"', from=2-3, to=2-5]
\end{tikzcd}\]
commute. By the pasting lemma for pullbacks, as the right and outer squares are cartesian, so is the left one; since $\psi_{\partial a}$ is an isomorphism, so is $\psi_a$. Having constructed $(\psi_a:F_S(a) \overset{\cong}{\rightarrow} F(a))_{a \in \Ob(\mathcal A)}$, we define a map $\mathcal A(a,b) \rightarrow \mathcal B(F_S(a),F_S(b))$ for each $a$, $b \in \mathcal A$ by sending $f$ to the unique dashed arrow making the diagram
\[
\begin{tikzcd}
	F_S(a) \arrow[]{r}{\psi_a} \arrow[dashed]{d}{} & F(a) \arrow[]{d}{F(f)} \\
	F_S(b) \arrow[swap]{r}{\psi_b} & F(b)
\end{tikzcd}
\]
commute, that is, $f \mapsto \psi_b^{-1} \circ F(f) \circ \psi_a$. It can be checked in a straightforward way that this extends $F_S$ into a functor $|\mathcal A| \rightarrow |\mathcal B|$ and that $\psi$ is a natural isomorphism $F_S \cong F$.
\end{construction}

\begin{proposition}
\label{prop: F_S is a contextual functor}
In the notation of Construction \ref{constr: functor associated with a local strictification}, $F_S$ is a strict morphism from $\mathcal A$ to $\mathcal B$.
\end{proposition}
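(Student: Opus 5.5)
We need to check that $F_S$ preserves length, $\partial$, strict display maps, and distinguished squares; it is already a functor by Construction \ref{constr: functor associated with a local strictification}. The first three conditions come straight from the recursion. For $a$ of length $0$ we set $F_S(a) = 1_\mathcal B$. For $\ell(a) \ge 1$, $F_S(a)$ is the domain of the distinguished pullback of the strict display map $S(\textbf{p}_a)$ along $\sigma^{\textbf t}_a \circ \psi_{\partial a}$. It follows that $\partial F_S(a) = F_S(\partial a)$, and by induction $\ell(F_S(a)) = \ell(a)$. For the display maps, I would compute $F_S(\textbf{p}_a) = \psi_{\partial a}^{-1} \circ F(\textbf{p}_a) \circ \psi_a$. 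The defining square of $\psi_a$ gives $\psi_{\partial a} \circ \textbf{p}_{F_S(a)} = F(\textbf{p}_a) \circ \psi_a$, and therefore $F_S(\textbf{p}_a) = \textbf{p}_{F_S(a)}$.

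The real content is preservation of distinguished squares. Take a distinguished square $(f',f) : \textbf{p}_a \to \textbf{p}_b$ in $\mathcal A$, viewed as a morphism of $\textbf{D}(\mathcal A)$. By the previous paragraph, $F_S$ sends it to a commutative square
$(F_S(f'),F_S(f)) : \textbf{p}_{F_S(a)} \to \textbf{p}_{F_S(b)}$ between strict display maps of $\mathcal B$. We must show this square is distinguished. By uniqueness in Definition \ref{def: (pre)contextual category}(vii), it suffices to find some distinguished square with bottom edge $F_S(f)$ and top edge $F_S(f')$, which in particular forces its domain to be $F_S(a)$.

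To produce one, I would use the distinguished square $S(f',f) : S(\textbf{p}_a) \to S(\textbf{p}_b)$ in $\mathcal B$, whose bottom edge is $S^{\textbf t}(f,f')$. Let $k_a$ denote the distinguished square defining $F_S(a)$, with bottom edge $\sigma^{\textbf t}_a\psi_{\partial a}$, and similarly $k_b$. Since $\textbf{Q}_\mathcal B$ is closed under horizontal composition, both composites $S(f',f)\circ k_a$ and $k_b \circ (F_S(f'),F_S(f))$ are candidates to compare. The first is distinguished, and its bottom edge is $S^{\textbf t}(f',f)\circ \sigma^{\textbf t}_a \circ \psi_{\partial a}$. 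Naturality of $\sigma$ on the morphism $(f',f)$ of $\textbf{D}(\mathcal A)$, together with naturality of $\psi$ (so $F(f) \circ \psi_{\partial a} = \psi_{\partial b} \circ F_S(f)$), rewrites this bottom edge as $\sigma^{\textbf t}_b \psi_{\partial b} F_S(f)$. The same two naturalities, applied to the source components, identify the top edges. Hence the two composite squares coincide as commutative squares.

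Now take the distinguished pullback $q$ of $\textbf{p}_{F_S(b)}$ along $F_S(f)$. Then $k_b \circ q$ is distinguished and has the same bottom map $\sigma^{\textbf t}_b \psi_{\partial b} F_S(f)$ as $S(f',f)\circ k_a$. By uniqueness of distinguished squares, $k_b\circ q = S(f',f)\circ k_a$. In particular the domain of $q$ is $F_S(a)$, and its top map composed with the top edge of $k_b$ equals the top edge of $k_b \circ (F_S(f'),F_S(f))$. Since $k_b$ is a pullback square, its top edge is jointly monic with $\textbf{p}_{F_S(b)}$, so the top map of $q$ is $F_S(f')$. Therefore $q = (F_S(f'),F_S(f))$ is distinguished. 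I expect the main obstacle to be the bookkeeping in the naturality step: checking that both naturality squares (of $\sigma$ and of $\psi$) are used with the correct orientations, and that the argument only needs distinguished squares of $\mathcal A$. Since $\textbf{D}(\mathcal A)$ consists of composites of these, preservation of arbitrary morphisms of $\textbf{D}(\mathcal A)$ follows automatically.
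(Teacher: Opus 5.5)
Your proof is correct and follows the same route as the paper. Naturality of $\psi$ and $\sigma$ gives $S(f',f)\circ k_a = k_b\circ(F_S(f'),F_S(f))$, where $k_a$, $k_b$ and $S(f',f)$ are distinguished, and one then cancels $k_b$. The paper simply cites this cancellation step as the ``pasting lemma for distinguished squares'', whereas you prove it via uniqueness in Definition \ref{def: (pre)contextual category}(vii) and the pullback property of $k_b$. You also spell out the preservation of length, $\partial$ and strict display maps, which the paper calls clear.
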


\begin{proof}
It is clear from the construction that $F_S$ preserves strict display maps and sends every length-$0$ object to $1_\mathcal B$; it remains to verify that it preserves distinguished squares. Given a distinguished square
\[
\dsqua{a}{b}{\partial a}{\partial b}{f'}{f}{\textbf{p}_a}{\textbf{p}_b}
\]
in $\mathcal A$, consider the diagram
% https://q.uiver.app/#q=WzAsMTIsWzEsMSwiRl5TKGIpIl0sWzEsNCwiRl5TKFxccGFydGlhbCBiKSJdLFs0LDQsIkYoXFxwYXJ0aWFsIGIpIl0sWzQsMSwiRihiKSJdLFs3LDQsIlNee1xcdGV4dGJmIHR9KGIpIl0sWzcsMSwiU157XFx0ZXh0YmYgc30oYikiXSxbMCwwLCJGXlMoYSkiXSxbMCwzLCJGXlMoXFxwYXJ0aWFsIGEpIl0sWzMsMywiRihcXHBhcnRpYWwgYSkiXSxbMywwLCJGKGEpIl0sWzYsMywiU157XFx0ZXh0YmYgdH0oYSkiXSxbNiwwLCJTXntcXHRleHRiZiBzfShhKSJdLFsxLDIsIlxccHNpX3tcXHBhcnRpYWwgYn0iLDJdLFswLDEsIlxcdGV4dGJme3B9X3tGXlMoYil9IiwyLHsibGFiZWxfcG9zaXRpb24iOjMwLCJzdHlsZSI6eyJoZWFkIjp7Im5hbWUiOiJlcGkifX19XSxbMywyLCJGKFxcdGV4dGJme3B9X2IpIiwwLHsibGFiZWxfcG9zaXRpb24iOjMwfV0sWzIsNCwiXFxzaWdtYV57XFx0ZXh0YmYgdH1fYiIsMl0sWzMsNSwiXFxzaWdtYV57XFx0ZXh0YmYgc31fYiIsMCx7ImxhYmVsX3Bvc2l0aW9uIjo0MH1dLFs1LDQsIlMoXFx0ZXh0YmZ7cH1fYikiLDAseyJzdHlsZSI6eyJoZWFkIjp7Im5hbWUiOiJlcGkifX19XSxbMCwzLCJcXHBzaV9iIl0sWzcsOCwiXFxwc2lfe1xccGFydGlhbCBhfSIsMix7ImxhYmVsX3Bvc2l0aW9uIjo3MH1dLFs2LDcsIlxcdGV4dGJme3B9X3tGXlMoYSl9IiwyLHsic3R5bGUiOnsiaGVhZCI6eyJuYW1lIjoiZXBpIn19fV0sWzksOCwiRihcXHRleHRiZntwfV9hKSIsMCx7ImxhYmVsX3Bvc2l0aW9uIjo3MH1dLFs4LDEwLCJcXHNpZ21hXntcXHRleHRiZiB0fV9hIiwyLHsibGFiZWxfcG9zaXRpb24iOjcwfV0sWzksMTEsIlxcc2lnbWFee1xcdGV4dGJmIHN9X2EiXSxbMTEsMTAsIlMoXFx0ZXh0YmZ7cH1fYSkiLDAseyJsYWJlbF9wb3NpdGlvbiI6NzAsInN0eWxlIjp7ImhlYWQiOnsibmFtZSI6ImVwaSJ9fX1dLFs2LDksIlxccHNpX2EiXSxbNywxLCJGXlMoZikiLDFdLFs2LDAsIkZeUyhmJykiLDFdLFs5LDMsIkYoZicpIiwxXSxbMTEsNSwiU15cXHRleHRiZntzfShmLGYnKSIsMV0sWzgsMiwiRihmKSIsMV0sWzEwLDQsIlNeXFx0ZXh0YmZ7dH0oZixmJykiLDFdXQ==
\[\begin{tikzcd}[ampersand replacement=\&]
	{F_S(a)} \&\&\& {F(a)} \&\&\& {S^{\textbf s}(a)} \& \\
	\& {F_S(b)} \&\&\& {F(b)} \&\&\& {S^{\textbf s}(b)} \\
	\\
	{F_S(\partial a)} \&\&\& {F(\partial a)} \&\&\& {S^{\textbf t}(a)} \\
	\& {F_S(\partial b)} \&\&\& {F(\partial b)} \&\&\& {S^{\textbf t}(b),}
	\arrow["{\psi_a}", from=1-1, to=1-4]
	\arrow["{F_S(f')}"{description}, from=1-1, to=2-2]
	\arrow["{\textbf{p}_{F_S(a)}}"', two heads, from=1-1, to=4-1]
	\arrow["{\sigma^{\textbf s}_a}", from=1-4, to=1-7]
	\arrow["{F(f')}"{description}, from=1-4, to=2-5]
	\arrow["{F(\textbf{p}_a)}"{pos=0.7}, from=1-4, to=4-4]
	\arrow["{S^\textbf{s}(f,f')}"{description}, from=1-7, to=2-8]
	\arrow["{S(\textbf{p}_a)}"{pos=0.7}, two heads, from=1-7, to=4-7]
	\arrow["{\psi_b}", from=2-2, to=2-5]
	\arrow["{\textbf{p}_{F_S(b)}}"'{pos=0.3}, two heads, from=2-2, to=5-2]
	\arrow["{\sigma^{\textbf s}_b}"{pos=0.4}, from=2-5, to=2-8]
	\arrow["{F(\textbf{p}_b)}"{pos=0.3}, from=2-5, to=5-5]
	\arrow["{S(\textbf{p}_b)}", two heads, from=2-8, to=5-8]
	\arrow["{\psi_{\partial a}}"'{pos=0.7}, from=4-1, to=4-4]
	\arrow["{F_S(f)}"{description}, from=4-1, to=5-2]
	\arrow["{\sigma^{\textbf t}_a}"'{pos=0.7}, from=4-4, to=4-7]
	\arrow["{F(f)}"{description}, from=4-4, to=5-5]
	\arrow["{S^\textbf{t}(f,f')}"{description}, from=4-7, to=5-8]
	\arrow["{\psi_{\partial b}}"', from=5-2, to=5-5]
	\arrow["{\sigma^{\textbf t}_b}"', from=5-5, to=5-8]
\end{tikzcd}\]
which commutes by naturality of $\psi$ and $\sigma$. By definition of $\psi$, the composite squares with vertices
$$
F_S(a),\; S^\textbf{s}(a),\; F_S(\partial a),\; S^\textbf{t}(a) \qquad \text{and} \qquad F_S(b),\; S^\textbf{s}(b),\; F_S(\partial b),\; S^\textbf{t}(b)
$$
are distinguished. Also, as $S$ sends distinguished squares in $\mathcal A$ -- viewed as arrows in $\textbf{D}(\mathcal A)$ -- to distinguished squares in $\mathcal B$, the rightmost square is distinguished. By the pasting lemma for distinguished squares, the leftmost square is distinguished, as desired.
\end{proof}

\begin{proposition}
\label{prop: local strictification iff strictifiable}
For a precontextual category $\mathcal A$ and a contextual category $\mathcal B$, a weak morphism $F \in [\mathcal A,\mathcal B_w]$ is strictifiable if and only if it admits a local strictification (Definition \ref{def: local strictification}). \qed
\end{proposition}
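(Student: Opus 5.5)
The proof splits into the two implications, both of which are essentially already assembled in the preceding constructions.

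For the forward direction, suppose $(F',\varphi)$ is a strictification of $F$. I take $S = F'_*:\textbf{D}(\mathcal A) \rightarrow \textbf{D}(\mathcal B)$. This is a well-defined functor because $F'$ is a strict morphism: it sends strict display maps to strict display maps and distinguished squares to distinguished squares. Since morphisms of $\textbf{D}(\mathcal A)$ are finite horizontal composites of distinguished squares and $F'$ is a functor, horizontal composites are also preserved.

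Next I define $\sigma$ componentwise. For each strict display map $\textbf{p}_a$, let $\sigma_{\textbf{p}_a}$ be the square in $|\mathcal B|$ with horizontal sides $\varphi_a^{-1}:F(a) \rightarrow F'(a)$ and $\varphi_{\partial a}^{-1}:F(\partial a)\rightarrow F'(\partial a)$. This square commutes by naturality of $\varphi^{-1}$ at $\textbf{p}_a$. It is cartesian because its horizontal sides are isomorphisms, so it is a morphism $F_*(\textbf{p}_a) \rightarrow \iota S(\textbf{p}_a)$ in $\textbf{D}_w(\mathcal B_w)$. Naturality of $\sigma$ with respect to a morphism of $\textbf{D}(\mathcal A)$ reduces to naturality of $\varphi^{-1}$ at its two components, since composition in $\iiAr(|\mathcal B|)$ is computed componentwise. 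Hence $(S,\sigma)$ is a local strictification.

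For the converse, given a local strictification $(S,\sigma)$, Construction \ref{constr: functor associated with a local strictification} produces a functor $F_S:|\mathcal A| \rightarrow |\mathcal B|$ together with a natural isomorphism $\psi:F_S \Rightarrow F$. Proposition \ref{prop: F_S is a contextual functor} shows that $F_S$ is a strict morphism $\mathcal A \rightarrow \mathcal B$. Therefore $(F_S,\psi)$ is a strictification of $F$.

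The only point needing care is whether the recursion in Construction \ref{constr: functor associated with a local strictification} makes sense for a precontextual $\mathcal A$. There may be several length-$0$ objects, but each is assigned $1_\mathcal B$. The recursion on $\ell$ via $\partial$ is well-founded, because $\ell(\partial a) = \ell(a)-1$ whenever $\ell(a)\ge 1$.

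The remaining check is that conjugating by $\psi$ really yields a functor and a natural isomorphism. This holds because conjugation by a family of isomorphisms preserves identities and composites, and $\psi$ is then natural by construction. With these points the converse is already contained in the preceding results, so no substantial obstacle remains; the main work is the bookkeeping for $\sigma$ in the forward direction.
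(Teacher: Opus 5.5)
Your proposal is correct and follows the paper's own argument. The forward direction is the observation made right after Definition~\ref{def: strictification}: take $S = F'_*$, with the transformation $F_* \Rightarrow \iota F'_*$ whose components are the squares built from the inverses of the components of $\varphi$. The converse is Construction~\ref{constr: functor associated with a local strictification} combined with Proposition~\ref{prop: F_S is a contextual functor}, yielding the strictification $(F_S,\psi)$.
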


\subsection{Strictifying morphisms whose domain has a basis or is cofibrant}
\label{subsec: strictification basis or cofibrant}

We will now prove that if a contextual category $\mathcal A$ has a basis (Definition \ref{def: basis}), then any weak morphism out of it is strictifiable. In fact, we will prove more generally that this holds for any precontextual category $\mathcal A$ that satisfies the appropriate generalization of the condition of having a basis, namely, every connected component of $\textbf{D}(\mathcal A)$ having a terminal object (see Remark \ref{rem: basis and D(A)}).

This result will be used to prove that weak morphisms out of cofibrant objects of $\Cont$ are strictifiable (Proposition \ref{prop: strictification - cofibrant domain}), which will then allow us to conclude that the homotopy $(\infty,1)$-category of $\Cont$ is the underlying $(2,1)$-category of $\iiDMC_r$ (Theorem \ref{th: homotopy bicategory of Cont}).

\begin{definition}[Extending Definition \ref{def: basis}]
\label{def: basis precont}
A \emph{basis} of a precontextual category $\mathcal A$ is a set $\textbf{P}$ of strict display maps that has the following property: for every $a \in \mathcal A$ such that $\ell(a) \ge 1$, there exists a unique morphism in $\textbf{D}(\mathcal A)$ (i.e. a finite horizontal composite of distinguished squares) of the form
\[
\dsqua{a}{b}{\partial a}{\partial b}{}{}{\textbf{p}_a}{\textbf{p}_b}
\]
with $\textbf{p}_b \in \textbf{P}$. Note that $\mathcal A$ has a basis precisely if every connected component of $\textbf{D}(\mathcal A)$ has a terminal object.
\end{definition}

\begin{proposition}
\label{prop: strictification - out of cont cat with a basis}
Suppose that a precontextual category $\mathcal A$ has a basis. Then for any contextual category $\mathcal B$, every weak morphism $F \in [\mathcal A,\mathcal B_w]$ admits a local strictification. Hence, by Proposition \ref{prop: local strictification iff strictifiable}, $F$ is strictifiable.
\end{proposition}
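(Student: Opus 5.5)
The plan is to build the local strictification $(S,\sigma)$ one connected component of $\textbf{D}(\mathcal A)$ at a time, using the terminal object of each component as an anchor. Let $\textbf{P}$ be a basis of $\mathcal A$. For each strict display map $\textbf{p}_a$ of $\mathcal A$, write $u_a$ for the unique morphism of $\textbf{D}(\mathcal A)$ from $\textbf{p}_a$ to the element of $\textbf{P}$ in its component. Its bottom arrow is $u_a^{\textbf t}:\partial a \to \partial b$ and its top arrow is $u_a^{\textbf s}:a \to b$, where $\textbf{p}_b \in \textbf{P}$.

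The first step is to handle the basis elements, which is the only place a choice is made. For each $\textbf{p}_b \in \textbf{P}$, the arrow $F(\textbf{p}_b)$ is a display map in $\mathcal B_w$, so it is isomorphic in $\iiAr(|\mathcal B|)$ to some strict display map. Choose a strict display map $q_b:c_b \twoheadrightarrow \partial c_b$ and an isomorphism of arrows $\tau_b = (\tau_b^{\textbf s},\tau_b^{\textbf t}):F(\textbf{p}_b) \cong q_b$.

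The second step defines $S$ and $\sigma$ on every object by pulling back along the anchor. For a general $\textbf{p}_a$ lying over the basis element $\textbf{p}_b$, set $S(\textbf{p}_a)$ to be the distinguished pullback of $q_b$ along the composite $\tau_b^{\textbf t}\circ F(u_a^{\textbf t}):F(\partial a) \to \partial c_b$. This is a strict display map over $F(\partial a)$. Its target component is $\sigma^{\textbf t}_a = id_{F(\partial a)}$, so the target of $S(\textbf{p}_a)$ is $F(\partial a)$ itself. The source component $\sigma^{\textbf s}_a$ is the map into the distinguished pullback induced by $\tau_b^{\textbf s}\circ F(u_a^{\textbf s})$ together with $F(\textbf{p}_a)$. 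This map is an isomorphism, because $F(u_a)$ is a pullback square (as $F$ is a weak morphism) and $\tau_b$ is an isomorphism. So the square $\sigma_{\textbf{p}_a}$ is a cartesian square from $F(\textbf{p}_a)$ to $S(\textbf{p}_a)$, i.e. a morphism of $\textbf{D}_w(\mathcal B_w)$.

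The third step defines $S$ on morphisms. Take a morphism $g:\textbf{p}_a \to \textbf{p}_{a'}$ of $\textbf{D}(\mathcal A)$, with bottom arrow $g^{\textbf t}:\partial a \to \partial a'$. By uniqueness of the anchors, $u_{a'}\circ g = u_a$, so both maps lie over the same basis element $\textbf{p}_b$. Consequently, $S(\textbf{p}_a)$ is the distinguished pullback of $S(\textbf{p}_{a'})$ along $F(g^{\textbf t})$, by the strict composition law $(\pitchfork)$ for distinguished squares in $\mathcal B$. Let $S(g)$ be that distinguished square. Functoriality of $S$ follows from the same composition law together with uniqueness in Definition \ref{def: (pre)contextual category}(vii). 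Naturality of $\sigma$, i.e. $S(g)\circ\sigma_{\textbf{p}_a} = \sigma_{\textbf{p}_{a'}}\circ F_*(g)$, has trivial target components. Its source components are equal because the two sides agree after composing with the pullback projections, by the universal property of the pullback $S(\textbf{p}_{a'})$. This yields the local strictification, and Proposition \ref{prop: local strictification iff strictifiable} finishes the proof.

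The main obstacle is the functoriality and naturality check in the precontextual setting. There, morphisms of $\textbf{D}(\mathcal A)$ are finite composites of distinguished squares, so the key input is the identity $u_{a'}\circ g = u_a$, which comes from uniqueness of the anchor map. With that identity in hand, everything reduces to the strict pasting law for distinguished squares in the contextual category $\mathcal B$, which holds regardless of whether $\mathcal A$ is only precontextual.
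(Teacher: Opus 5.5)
Your proof is correct. It rests on the same two ingredients as the paper. The first is the unique morphism $u_a$ in $\textbf{D}(\mathcal A)$ from $\textbf{p}_a$ to the basis element of its component; its uniqueness gives $u_{a'}\circ g=u_a$ and hence naturality. The second is one choice, for each basis element $\textbf{p}_b$, of an isomorphism in $\iiAr(|\mathcal B|)$ from $F(\textbf{p}_b)$ to a strict display map.

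Where you differ is in the choice of $S$.

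\emph{The paper's choice.} The paper takes $S$ constant on each connected component: $S=b\circ U$, where $U:\textbf{D}(\mathcal A)\to\textbf{P}$ lands in a discrete category. So every morphism of $\textbf{D}(\mathcal A)$ goes to an identity distinguished square. The component $\sigma_{\textbf{p}_a}$ is simply $F(u_a)$ followed by the chosen isomorphism $F(\textbf{p}_b)\cong b(\textbf{p}_b)$. Functoriality is then automatic, and no property of $\mathcal B$ is used beyond the existence of those chosen isomorphisms.

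\emph{Your choice.} You additionally pull the chosen representative back along $\tau_b^{\textbf t}\circ F(u_a^{\textbf t})$, so that $S(\textbf{p}_a)$ lies over $F(\partial a)$ and $\sigma^{\textbf t}_a$ is an identity. This is legitimate, and your checks go through. Functoriality uses uniqueness in (vii) and closure under horizontal composition in (viii) for $\mathcal B$. Naturality on source components follows by testing against both projections of the distinguished square $S(\textbf{p}_{a'})\to q_b$.

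\emph{How the two compare.} Composing your $\sigma_{\textbf{p}_a}$ with that distinguished square gives $\tau_b\circ F(u_a)$, which is exactly a component of the paper's form. The extra pullback step is therefore redundant here. Construction~\ref{constr: functor associated with a local strictification} already pulls $S(\textbf{p}_a)$ back along $\sigma^{\textbf t}_a\circ\psi_{\partial a}$, and the strict pasting law in $\mathcal B$ makes both choices of $S$ produce the same $F_S$. What your version buys is a local strictification that already lives over the image of $F$, with identity target components. The price is verifications that the paper's constant choice avoids entirely.
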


\begin{proof}
Let $\textbf{P}$ be a basis of $\mathcal A$. Writing $\textbf{P}$ also for the corresponding discrete category, let
$$
U:\textbf{D}(\mathcal A) \longrightarrow \textbf{P}
$$
be the functor that sends each length-$1$ display $p$ to the unique $q \in \textbf{P}$ that belongs to the connected component of $p$ in $\textbf{D}(\mathcal A)$; recall that $q$ is a terminal object of that connected component.

For each $p:a \twoheadrightarrow \partial a$ in $\textbf{D}(\mathcal A)$, writing $U(p):a' \twoheadrightarrow \partial a'$, let
\[
\dsqua{a}{a'}{\partial a}{\partial a'}{g_p}{f_p}{p}{U(p)}
\]
the unique morphism from $p$ to $U(p)$ in $\textbf{D}(\mathcal A)$. Applying $F$, we obtain a cartesian square
\[
\squa{F(a)}{F(a')}{F(\partial a)}{F(\partial a'),}{F(g_p)}{F(f_p)}{F(p)}{FU(p)}
\]
which is a morphism in $\textbf{D}_w(\mathcal B)$. Defining $\varepsilon_p$ as the latter morphism, it is straightforward to verify that the family $\varepsilon = (\varepsilon_p)_{p \in \textbf{D}(\mathcal A)}$ is a natural transformation as in
% https://q.uiver.app/#q=WzAsMyxbMCwwLCJcXHRleHRiZntEfShcXG1hdGhjYWwgQSkiXSxbNCwwLCJcXHRleHRiZntEfV93KFxcbWF0aGNhbCBCKSJdLFsyLDEsIlxcdGV4dGJme1B9Il0sWzAsMSwiRl8qIiwwLHsiY3VydmUiOi01fV0sWzAsMiwiVSIsMix7ImN1cnZlIjoxfV0sWzAsMSwiIiwxLHsiY3VydmUiOjUsInN0eWxlIjp7ImJvZHkiOnsibmFtZSI6Im5vbmUifSwiaGVhZCI6eyJuYW1lIjoibm9uZSJ9fX1dLFsyLDEsIkZfKnxfe1xcdGV4dGJmIFB9IiwyLHsiY3VydmUiOjF9XSxbMyw1LCJcXHNpZ21hIiwwLHsic2hvcnRlbiI6eyJzb3VyY2UiOjIwLCJ0YXJnZXQiOjIwfX1dXQ==
\[\begin{tikzcd}[ampersand replacement=\&, row sep=small]
	{\textbf{D}(\mathcal A)} \&\&\&\& {\textbf{D}_w(\mathcal B)} \\
	\&\& {\textbf{P}}
	\arrow[""{name=0, anchor=center, inner sep=0}, "{F_*}", curve={height=-30pt}, from=1-1, to=1-5]
	\arrow[""{name=1, anchor=center, inner sep=0}, curve={height=30pt}, draw=none, from=1-1, to=1-5]
	\arrow["U"', curve={height=6pt}, from=1-1, to=2-3]
	\arrow["{F_*|_{\textbf P}}"', curve={height=6pt}, from=2-3, to=1-5]
	\arrow["\sigma", shorten=10pt, Rightarrow, from=0, to=1]
\end{tikzcd}\]
Now, choose for each $q:c \twoheadrightarrow \partial c$ in $\textbf{P}$ an isomorphism $h_q:F(q) \rightarrow b(q)$ in $\mathcal B$ such that $\partial b(q) = F(\partial c)$ and $\textbf{p}_{b(q)} \circ h_q = F(q)$. Letting $\eta_q$ be the morphism
% https://q.uiver.app/#q=WzAsNCxbMCwwLCJGKGMpIl0sWzAsMSwiRihcXHBhcnRpYWwgYykiXSxbMSwxLCJGKFxccGFydGlhbCBjKSJdLFsxLDAsImJfcSJdLFsxLDIsImlkIiwyXSxbMCwxLCJGKHEpIiwyLHsic3R5bGUiOnsiYm9keSI6eyJuYW1lIjoic3F1aWdnbHkifSwiaGVhZCI6eyJuYW1lIjoiZXBpIn19fV0sWzMsMiwiXFx0ZXh0YmZ7cH1fe2JfcX0iLDAseyJzdHlsZSI6eyJoZWFkIjp7Im5hbWUiOiJlcGkifX19XSxbMCwzLCJoX3EiXV0=
\[\begin{tikzcd}[ampersand replacement=\&]
	{F(c)} \& {b(q)} \\
	{F(\partial c)} \& {F(\partial c)}
	\arrow["{h_q}", from=1-1, to=1-2]
	\arrow["{F(q)}"', from=1-1, to=2-1]
	\arrow["{\textbf{p}_{b(q)}}", two heads, from=1-2, to=2-2]
	\arrow["id"', from=2-1, to=2-2]
\end{tikzcd}\]
in $\textbf{D}_w(\mathcal B)$, we have a natural transformation $\eta = (\eta_q)_{q \in \textbf{P}}$ as in
% https://q.uiver.app/#q=WzAsNCxbMCwwLCJcXHRleHRiZntEfShcXG1hdGhjYWwgQSkiXSxbMywwLCJcXHRleHRiZntEfV93KFxcbWF0aGNhbCBCKSJdLFsxLDAsIlxcdGV4dGJme1B9Il0sWzIsMSwiXFx0ZXh0YmZ7RH0oXFxtYXRoY2FsIEIpIl0sWzAsMSwiRl8qIiwwLHsiY3VydmUiOi01fV0sWzMsMSwiIiwyLHsiY3VydmUiOjIsInN0eWxlIjp7InRhaWwiOnsibmFtZSI6Imhvb2siLCJzaWRlIjoidG9wIn19fV0sWzAsMiwiVSIsMl0sWzIsMywiYiAiLDIseyJjdXJ2ZSI6Mn1dLFsyLDEsIkZfKnxfe1xcdGV4dGJmIFB9Il0sWzAsMSwiIiwyLHsic3R5bGUiOnsiYm9keSI6eyJuYW1lIjoibm9uZSJ9LCJoZWFkIjp7Im5hbWUiOiJub25lIn19fV0sWzgsMywiXFxldGEiLDAseyJzaG9ydGVuIjp7InNvdXJjZSI6MjB9fV0sWzQsOSwiXFx2YXJlcHNpbG9uIiwwLHsic2hvcnRlbiI6eyJzb3VyY2UiOjIwLCJ0YXJnZXQiOjIwfX1dXQ==
\[\begin{tikzcd}[ampersand replacement=\&, column sep=huge]
	{\textbf{D}(\mathcal A)} \& {\textbf{P}} \&\& {\textbf{D}_w(\mathcal B)} \\
	\&\& {\textbf{D}(\mathcal B)}
	\arrow["U"', from=1-1, to=1-2]
	\arrow[""{name=0, anchor=center, inner sep=0}, "{F_*}", curve={height=-45pt}, from=1-1, to=1-4]
	\arrow[""{name=1, anchor=center, inner sep=0}, draw=none, from=1-1, to=1-4]
	\arrow[""{name=2, anchor=center, inner sep=0}, "{F_*|_{\textbf P}}", from=1-2, to=1-4]
	\arrow["{b }"', curve={height=12pt}, from=1-2, to=2-3]
	\arrow["\iota"',curve={height=12pt}, hook, from=2-3, to=1-4]
	\arrow["\varepsilon", shorten=5pt, Rightarrow, from=0, to=1]
	\arrow["\eta", shorten=3pt, Rightarrow, from=2, to=2-3]
\end{tikzcd}\]
Pasting this diagram yields a natural transformation $F_* \Rightarrow \iota bU$, which is a local strictification of $F$. We conclude from Proposition \ref{prop: local strictification iff strictifiable} that $F$ is strictifiable.
\end{proof}

\begin{corollary}
\label{cor: strictification - cellular domain}
Let $\mathcal A$, $\mathcal B$ contextual categories such that $\mathcal A$ is (small and) cellular. Then every weak morphism $F:\mathcal A_w \rightarrow \mathcal B_w$ is strictifiable.
\end{corollary}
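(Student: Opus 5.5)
This follows by chaining results already established. Since $\mathcal A$ is cellular, Theorem \ref{th: cellular iff basis} (direction (a) $\Rightarrow$ (c), i.e. Propositions \ref{prop: cellular presentation -> basis} and \ref{prop: cellular gat -> basis}) gives a basis $\textbf{P}$ of $\mathcal A$; explicitly, we can take $\textbf{P} = \textbf{P}(\mathscr C)$ for any cellular presentation $\mathscr C$ of $\mathcal A$. Well-foundedness of $\textbf{P}$ is not needed here, only the basis property.

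Next, regard $\mathcal A$ as a precontextual category. Its distinguished squares are already closed under horizontal composition and contain the identity squares (Definition \ref{def: (pre)contextual category}(viii)). Hence $\textbf{D}(\mathcal A)$ as in Definition \ref{def: weak morphism and D(A) for precontextual category} coincides with the category of Construction \ref{constr: categories of display maps}. Consequently a basis in the sense of Definition \ref{def: basis} is also a basis in the sense of Definition \ref{def: basis precont}.

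Likewise, a weak morphism $F:\mathcal A_w \rightarrow \mathcal B_w$ is an element of $[\mathcal A,\mathcal B_w]$, for three reasons:
\begin{enumerate}[label=(\roman*)]
	\item it sends strict display maps, which are display maps of $\mathcal A_w$, to display maps;
	\item it sends distinguished squares, which are pullbacks of display maps in $\mathcal A_w$, to pullback squares;
	\item it preserves terminal objects, being a morphism in $\iiDMC_r$.
\end{enumerate}
The one point to check in (ii) is that distinguished squares are pullback squares in $|\mathcal A|$ and that $F$ preserves them. This holds because weak morphisms preserve pullbacks of display maps.

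With these identifications in place, Proposition \ref{prop: strictification - out of cont cat with a basis} provides a local strictification of $F$. Proposition \ref{prop: local strictification iff strictifiable} (via Construction \ref{constr: functor associated with a local strictification} and Proposition \ref{prop: F_S is a contextual functor}) then yields a strict morphism $F_S:\mathcal A \rightarrow \mathcal B$ together with an isomorphism $F_S \cong F$, which is exactly a strictification. No step here is a real obstacle; the only care needed is in the bookkeeping that identifies the contextual and precontextual notions of $\textbf{D}(\mathcal A)$, basis, and weak morphism.
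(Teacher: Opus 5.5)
Your proposal is correct and follows the paper's proof. The paper obtains a basis of $\mathcal A$ from Proposition \ref{prop: cellular presentation -> basis} and then applies Proposition \ref{prop: strictification - out of cont cat with a basis}. The extra bookkeeping you carry out is accurate but left implicit in the paper: identifying the contextual and precontextual versions of $\textbf{D}(\mathcal A)$, of a basis, and of a weak morphism.
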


\begin{proof}
By Proposition \ref{prop: cellular presentation -> basis}, $\mathcal A$ has a basis.
\end{proof}

\subsubsection{The homotopy $(\infty,1)$-category of $\Cont$}

\begin{proposition}
\label{prop: strictification - cofibrant domain}
Let $\mathcal A$, $\mathcal B$ be contextual categories such that $\mathcal A$ is (small and) cofibrant and $\mathcal B$ is locally small. Then the inclusion functor $\iiCont(\mathcal A,\mathcal B) \hookrightarrow \iiDMC_r(\mathcal A_w,\mathcal B_w)$ is an equivalence of categories.
\end{proposition}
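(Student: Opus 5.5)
The inclusion $\iiCont(\mathcal A,\mathcal B) \hookrightarrow \iiDMC_r(\mathcal A_w,\mathcal B_w)$ is full and faithful by definition: $2$-cells on both sides are all natural transformations between the underlying functors. So the whole task is essential surjectivity, i.e. showing that every weak morphism $F:\mathcal A_w \rightarrow \mathcal B_w$ is strictifiable. For cellular $\mathcal A$ this is exactly Corollary \ref{cor: strictification - cellular domain}. The plan is to reduce the cofibrant case to the cellular one via the retract argument (\cite[Lem. 1.1.9]{Hov99}), as in Remark \ref{rem: fibrant and cofibrant objects}.

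First I would obtain a cellular object $\mathcal C$ together with strict morphisms $I:\mathcal A \rightarrow \mathcal C$ and $R:\mathcal C \rightarrow \mathcal A$ with $RI = \Id_{\mathcal A}$. To do this, factor $\mathcal O_0 \rightarrow \mathcal A$ by the small object argument as a cellular map $\mathcal O_0 \rightarrow \mathcal C$ followed by an acyclic fibration $R:\mathcal C \rightarrow \mathcal A$. Since $\mathcal A$ is cofibrant, lifting in the square formed by $\mathcal O_0 \rightarrow \mathcal C$, $\Id_\mathcal A$, $\mathcal O_0 \rightarrow \mathcal A$ and $R$ gives a section $I$ of $R$. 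Both $\mathcal A$ and $\mathcal C$ are small.

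Next, given a weak morphism $F:\mathcal A_w \rightarrow \mathcal B_w$, the composite $F \circ R_w:\mathcal C_w \rightarrow \mathcal B_w$ is a weak morphism out of a cellular contextual category. By Corollary \ref{cor: strictification - cellular domain}, there is a strict $G:\mathcal C \rightarrow \mathcal B$ and a natural isomorphism $\varphi:G \cong FR$. Set $F' = GI$, which is strict as a composite of strict morphisms. Whiskering gives $\varphi I:GI \cong FRI = F$, so $F'$ strictifies $F$. Here I use that $(-)_w$ is a strict $2$-functor, so $(GI)_w = G_w I_w$ and whiskering is computed on underlying functors.

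Once the retract is set up, no step is really an obstacle. The only point needing care is that Corollary \ref{cor: strictification - cellular domain} is stated for small cellular $\mathcal A$ and a contextual target $\mathcal B$, so I must check it applies with $\mathcal B$ only locally small. This is fine: the constructions in Propositions \ref{prop: strictification - out of cont cat with a basis} and \ref{prop: F_S is a contextual functor} only use pullbacks and choices inside $\mathcal B$, with no size restriction on $\mathcal B$. I would also note in passing that the choice of isomorphisms $h_q$ there requires the axiom of choice on $\mathcal B$, which is available.
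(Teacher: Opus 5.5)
Your proof is correct and is essentially the paper's argument. The paper takes a cellular $\mathcal A'$ having $\mathcal A$ as a retract and observes that $\iiCont(\mathcal A,\mathcal B) \hookrightarrow \iiDMC_r(\mathcal A_w,\mathcal B_w)$ is a retract, via precomposition with $I$ and $R$, of the equivalence for $\mathcal A'$ given by Corollary \ref{cor: strictification - cellular domain}; it then concludes because equivalences are closed under retracts. Your explicit construction of the retract by the small object argument and your direct strictification $GI \cong FRI = F$ just unwind that retract step.
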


\begin{proof}
Let $\mathcal A' \in \Cont$ be a cellular object such that $\mathcal A$ is a retract of $\mathcal A'$. By Corollary \ref{cor: strictification - cellular domain}, $\iiCont(\mathcal A',\mathcal B) \hookrightarrow \iiDMC_r(\mathcal A'_r,\mathcal B_w)$ is an equivalence; hence, so is $\iiCont(\mathcal A,\mathcal B) \hookrightarrow \iiDMC_r(\mathcal A_r,\mathcal B_w)$ as it is a retract of the former functor.
\end{proof}

\begin{proposition}
\label{prop: hom-category out of cofibrant cont cat}
Let $\iiCont^c$ be the full sub-$2$-category of $\iiCont$ spanned by its cofibrant objects. The composite $2$-functor
$$
\iiCont^c \hookrightarrow \iiCont \xrightarrow{(-)_w} \iiDMC_r
$$
is an equivalence of $2$-categories.
\end{proposition}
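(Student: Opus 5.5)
To show that the composite $2$-functor is an equivalence of $2$-categories, I will check the two standard conditions: it is a local equivalence, i.e. an equivalence on hom-categories, and it is essentially surjective on objects up to equivalence in $\iiDMC_r$.

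The local part is immediate. For cofibrant $\mathcal A,\mathcal B$, the action of $(-)_w$ on hom-categories is the inclusion $\iiCont(\mathcal A,\mathcal B)\hookrightarrow\iiDMC_r(\mathcal A_w,\mathcal B_w)$. By Proposition \ref{prop: strictification - cofibrant domain} this is an equivalence of categories whenever $\mathcal A$ is cofibrant; no condition on $\mathcal B$ beyond local smallness is needed.

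For essential surjectivity, start from a small rooted dmc $\mathcal D$.
\begin{enumerate}[label=(\arabic*)]
	\item By Proposition \ref{prop: strictification for a rooted dmc}, there is a contextual category $\mathcal B$ with $\mathcal B_w\simeq\mathcal D$ in $\iiDMC$. I must check that $\mathcal B$ can be taken small. The construction via a split replacement of $\textbf{D}_w(\mathcal D)$ and the right adjoint $\cont$ only produces sets built from $|\mathcal D|$, so smallness is preserved. The equivalence is terminal-preserving because both dmcs are rooted and equivalences preserve terminal objects, so it lives in $\iiDMC_r$.
	\item Factor $\mathcal O_0\to\mathcal B$ in the model structure of Theorem \ref{th: model structure on Cont} as a cofibration $\mathcal O_0\to\mathcal A$ followed by an acyclic fibration $P:\mathcal A\to\mathcal B$. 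Then $\mathcal A$ is cofibrant, and $P$ is a weak equivalence.
	\item By Definition \ref{def: weak equivalence}, $P_w:\mathcal A_w\to\mathcal B_w$ is an equivalence in $\iiDMC$, and again it is rooted-preserving. Hence $\mathcal A_w\simeq\mathcal B_w\simeq\mathcal D$ in $\iiDMC_r$.
\end{enumerate}

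The main subtlety is bookkeeping about which $2$-category the equivalences live in. Equivalences in $\iiDMC$ between rooted dmcs must be shown to be equivalences in $\iiDMC_r$. This holds because $\iiDMC_r$ is locally full, and the quasi-inverse of an equivalence automatically preserves terminal objects. With that in place, the two conditions together show the composite $2$-functor is an equivalence of $2$-categories, i.e. a biequivalence that is strict on hom-categories up to the equivalences above.
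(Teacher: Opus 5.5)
Your proof is correct and follows the paper's argument. The equivalence on hom-categories comes from Proposition \ref{prop: strictification - cofibrant domain}, and essential surjectivity comes from Proposition \ref{prop: strictification for a rooted dmc} (or Taylor's theorem). Your explicit cofibrant replacement, via the (cofibration, acyclic fibration) factorization of $\mathcal O_0 \to \mathcal B$, fills in a step the paper leaves implicit, as does your check that equivalences in $\iiDMC$ between rooted dmcs are already equivalences in $\iiDMC_r$.
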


\begin{proof}
By Proposition \ref{prop: strictification - cofibrant domain}, $\iiCont^c \hookrightarrow \iiDMC_r$ is (essentially) $2$-full-and-faithful. On the other hand, we know from \cite[Th. 8.4.10]{Tay99} or Proposition \ref{prop: strictification for a rooted dmc} that every rooted display map category is equivalent to a contextual category.
\end{proof}

Hence, by the discussion in \S\ref{subsubsec: spaces of maps}, we obtain:

\begin{theorem}
	\label{th: homotopy bicategory of Cont}
	The $2$-functor $\iiCont^c \hookrightarrow \iiDMC_r$ induces an equivalence between the $(\infty,1)$-category $\iiHo(\Cont)$ and the underlying $(2,1)$-category of $\iiDMC_r$.
\end{theorem}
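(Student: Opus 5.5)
The plan is to compute the derived mapping spaces of $\Cont$ using the $\SSet$-model structure from \S\ref{subsubsec: spaces of maps}, and then identify them with the hom-groupoids of $\iiDMC_r$ via Proposition \ref{prop: strictification - cofibrant domain}. The $(\infty,1)$-category $\iiHo(\Cont)$ can be modelled by the simplicially enriched full subcategory on bifibrant objects. Every object of $\Cont$ is fibrant (Remark \ref{rem: fibrant and cofibrant objects}), so these are just the cofibrant objects, i.e. $\iiCont^c$. The simplicial enrichment on them is obtained by applying $N_{core}$ to the $\Cat$-enrichment. Concretely, the mapping space from $\mathcal A$ to $\mathcal B$ with $\mathcal A$ cofibrant is $N(\text{core}\,\iiCont(\mathcal A,\mathcal B))$. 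This is the nerve of a groupoid, so $\iiHo(\Cont)$ is presented by the $(2,1)$-category $\text{core}_2(\iiCont^c)$, meaning we keep $1$-cells and only the invertible $2$-cells.

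For the first step, I will justify this modelling claim. The $\SSet$-model structure comes from the strong monoidal left Quillen functor $\iota\circ\Pi_1$, and \cite[Lem. 1.31]{Bar10} identifies the resulting enrichment as $N_{core}(\iiCont(-,-))$. Hence the full simplicial subcategory on cofibrant objects is a fibrant simplicial category, and by standard results (e.g. \cite[A.3]{Lur09}) it presents the underlying $\infty$-category. Because its hom-spaces are nerves of groupoids, this simplicial category is a $(2,1)$-category.

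The second step is to compare with $\iiDMC_r$. The $2$-functor $(-)_w:\iiCont^c\to\iiDMC_r$ is an equivalence of $2$-categories by Proposition \ref{prop: hom-category out of cofibrant cont cat}. It therefore induces an equivalence on underlying $(2,1)$-categories, because passing to cores of hom-categories preserves equivalences of categories and does not affect essential surjectivity on objects. Composing this with the identification from the first step gives the statement.

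I expect the main obstacle to be the first step, specifically the size and model-theoretic bookkeeping. We need to check that Dwyer–Kan localization agrees with the simplicial category of bifibrant objects, which requires a simplicial model category in the strict sense, including SM7 and tensoring. This should follow from the $\Cat$-model structure (Proposition \ref{prop: Cat-enrichment of model structure on Cont}) together with the Quillen adjunction $\Pi_1\dashv N_{core}$. I would also verify that weak equivalences between cofibrant objects correspond to equivalences in $\iiCont^c$; this follows from Whitehead's theorem, since the cylinder $\textbf{I}_\cong\otimes\mathcal A$ yields homotopies that are exactly natural isomorphisms.
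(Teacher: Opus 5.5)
Your proposal is correct and follows essentially the same route as the paper. The paper also uses the composite strong monoidal Quillen adjunction through $\Cat$ from \S\ref{subsubsec: spaces of maps} to identify the derived mapping spaces, for a cofibrant source and the (automatically fibrant) target, with nerves of the core groupoids of $\iiCont(\mathcal A,\mathcal B)$, and then concludes from the $2$-equivalence $\iiCont^c \to \iiDMC_r$ of Proposition \ref{prop: hom-category out of cofibrant cont cat}; your additional Whitehead/cylinder check is correct but not needed, since it is subsumed by the standard fact that the bifibrant objects of a simplicial model category present its underlying $\infty$-category.
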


\subsubsection{Weak morphisms out of precontextual categories}
\label{subsubsec: weak morphisms out of precontextual categories}

A fundamental feature of the assignment $\bbA \mapsto \mathcal C(\bbA)$ for $\bbA \in \GAT$ is its ($1$-categorical) universal property: for a contextual category $\mathcal S$, strict morphisms $\mathcal C(\mathcal A) \rightarrow \mathcal S$ can be described by recursively interpreting the axioms of $\bbA$ in (the gat associated with) $\mathcal S$. The introduction of precontextual categories in \cite{Alm26} (see Def. \ref{def: (pre)contextual category} and Rem. \ref{rem: use of precontextual categories}) was guided by a similar idea, but from a diagrammatic perspective.

We know that often (for example, when $\bbA$ has no sort equality axioms) the category of strict morphisms $\mathcal C(\bbA) \rightarrow \mathcal S$ is equivalent to that of weak morphisms. On the other hand, it also makes sense about weak morphisms and strictification out of precontextual categories (Def. \ref{def: weak morphism and D(A) for precontextual category}, Prop. \ref{prop: strictification - out of cont cat with a basis}). Putting together our previous results, the following result gives sufficient conditions under which weak morphisms out of a precontextual category $\mathcal A$ classify weak morphisms out of $L\mathcal A$. Under those conditions, $\mathcal A \rightarrow L\mathcal A$ is, in particular, a Morita equivalence of finite-limit sketches in that it induces equivalences between categories of models in any finitely complete category.

\begin{proposition}
\label{prop: weak morphisms precontextual category, assuming basis}
Let $\mathcal A$ be a small precontextual category, and suppose that both $\mathcal A$ and the contextual category $L\mathcal A$ have a basis (Definition \ref{def: basis precont}). Then for every locally small contextual category $\mathcal S$, the functor
$$
\iota_\mathcal A^*:[L\mathcal A,\mathcal S]_w \longrightarrow [\mathcal A,\mathcal S]_w
$$
given by precomposition with the reflection functor $\iota_\mathcal A:\mathcal A \rightarrow L\mathcal A$ is an equivalence of categories.
\end{proposition}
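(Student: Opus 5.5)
We compare both categories with the corresponding categories of strict morphisms, using the strictification results already established and the $\Cat$-enriched universal property of $L$. Here $[\mathcal X,\mathcal S]_w$ denotes $[\mathcal X,\mathcal S_w]$ as in Definition \ref{def: weak morphism and D(A) for precontextual category}. Consider the commutative square of functors
\[
\begin{tikzcd}[ampersand replacement=\&]
	{\iiCont^+(L\mathcal A,\mathcal S)} \& {\iiPrecont^+(\mathcal A,\mathcal S)} \\
	{[L\mathcal A,\mathcal S]_w} \& {[\mathcal A,\mathcal S]_w}
	\arrow["{\iota_\mathcal A^*}", from=1-1, to=1-2]
	\arrow[hook, from=1-1, to=2-1]
	\arrow[hook, from=1-2, to=2-2]
	\arrow["{\iota_\mathcal A^*}"', from=2-1, to=2-2]
\end{tikzcd}
\]
where the vertical arrows are the inclusions of strict morphisms into weak ones. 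Commutativity is immediate: a strict morphism is in particular weak, and precomposition does not depend on which category the morphism is viewed in. The top arrow is an isomorphism of categories by the strict $2$-adjunction $L \dashv$ (inclusion) of \cite[Cor. 5.33]{Alm26} (Remark \ref{rem: use of precontextual categories}). Since $\mathcal S$ is only locally small, one either works relative to the larger universe $\mathscr U^+$, or observes that the cited result does not depend on smallness of the target.

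Next I show both vertical arrows are equivalences. They are full-and-faithful, since $2$-cells on both sides are all natural transformations between the underlying functors. Essential surjectivity on the right follows from Proposition \ref{prop: strictification - out of cont cat with a basis} applied to $\mathcal A$, which has a basis. On the left it follows from the same proposition applied to $L\mathcal A$, which also has a basis; a contextual category is in particular a precontextual category. In each case the result gives, for every weak $F$, a strict $F'$ with $F'\cong F$. Only the strictification direction of Proposition \ref{prop: local strictification iff strictifiable} is needed, and that direction is supplied by Construction \ref{constr: functor associated with a local strictification} together with Proposition \ref{prop: F_S is a contextual functor}.

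By two-out-of-three for equivalences of categories, the bottom functor $\iota_\mathcal A^*$ is then an equivalence. I do not expect a serious obstacle. The one point needing care is that strictification over the precontextual category $\mathcal A$ takes place in $\textbf{D}(\mathcal A)$ with morphisms given by finite composites of distinguished squares; this is exactly the setting of Definition \ref{def: basis precont}, so the hypothesis on $\mathcal A$ is used precisely there. The other point to check is that the universal property of $L$ is stated for $\mathcal S$ of the relevant size. If the cited corollary is only available for small $\mathcal S$, I would pass to the universe $\mathscr U^+$, in which $\mathcal S$ is small, and apply it there.
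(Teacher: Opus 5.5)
Your proposal is correct and follows the paper's proof essentially verbatim. You use the same commutative square of strict-into-weak inclusions, with the top arrow an isomorphism by \cite[Cor. 5.33]{Alm26}, both vertical arrows equivalences by Proposition \ref{prop: strictification - out of cont cat with a basis} applied to $\mathcal A$ and to $L\mathcal A$, and two-out-of-three to conclude; your remarks on full faithfulness of the inclusions and on the size of $\mathcal S$ just make explicit what the paper leaves implicit.
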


\begin{proof}
In the commutative diagram
% https://q.uiver.app/#q=WzAsNCxbMCwxLCJcXGlpTW9kX3coTCBcXG1hdGhjYWwgQSkiXSxbMSwxLCJcXGlpTW9kX3coXFxtYXRoY2FsIEEpIl0sWzAsMCwiXFxpaU1vZF9zKEwgXFxtYXRoY2FsIEEpIl0sWzEsMCwiXFxpaU1vZF9zKFxcbWF0aGNhbCBBKSJdLFsyLDMsIlxcaW90YV9cXG1hdGhjYWwgQV4qIl0sWzAsMSwiXFxpb3RhX1xcbWF0aGNhbCBBXioiLDJdLFsyLDAsIiIsMSx7InN0eWxlIjp7InRhaWwiOnsibmFtZSI6Imhvb2siLCJzaWRlIjoidG9wIn19fV0sWzMsMSwiIiwxLHsic3R5bGUiOnsidGFpbCI6eyJuYW1lIjoiaG9vayIsInNpZGUiOiJ0b3AifX19XV0=
\[\begin{tikzcd}[ampersand replacement=\&,cramped]
	{[L\mathcal A,\mathcal S]_s} \& {[\mathcal A,\mathcal S]_s} \\
	{[L\mathcal A,\mathcal S]_w} \& {[\mathcal A,\mathcal S]_w,}
	\arrow["{\iota_\mathcal A^*}", from=1-1, to=1-2]
	\arrow[hook, from=1-1, to=2-1]
	\arrow[hook, from=1-2, to=2-2]
	\arrow["{\iota_\mathcal A^*}"', from=2-1, to=2-2]
\end{tikzcd}\]
the two vertical functors are equivalences of categories by Proposition \ref{prop: strictification - out of cont cat with a basis}, and the top one is an isomorphism by \cite[Cor. 5.33]{Alm26}. The $2$-out-of-$3$ property implies that the bottom functor is also an equivalence.
\end{proof}

The following corollary, which will be used in Example \ref{ex: E(K)}, describes a setting where we can calculate weak morphisms out of $\mathcal A \in \Cont$ as weak morphisms out of a suitable full subcategory of $\mathcal A$ equipped with the restricted precontextual category structure.

\begin{corollary}
\label{cor: weak morphisms full precontextual subcategory}
Consider $\mathcal Q \in \Precont$ such that $\mathcal Q$ and $L\mathcal Q$ have a basis, $\mathcal A \in \Cont$, and a full-and-faithful morphism $I:\mathcal Q \rightarrow \mathcal A$ in $\Precont$ such that the induced strict morphism $L\mathcal Q \rightarrow \mathcal A$ is a weak equivalence. Let $\mathcal Q'$ be the image of $I$ equipped with the precontextual category structure whose length function and display maps are inherited from $\mathcal A$, and where a commutative square is distinguished precisely if it is distinguished in $\mathcal A$. For every locally small contextual category $\mathcal S$, the functors
$$
[\mathcal A,\mathcal S]_w \overset{j^*}{\longrightarrow} [\mathcal Q',\mathcal S]_w \overset{i^*}{\longrightarrow} [\mathcal Q,\mathcal S]_w
$$
given by precomposition with the inclusions $\mathcal Q \xrightarrow{i} \mathcal Q' \xrightarrow{j} \mathcal A$ are equivalences of categories.
\end{corollary}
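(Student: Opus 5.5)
The plan is to reduce both equivalences to Proposition \ref{prop: weak morphisms precontextual category, assuming basis} together with the invariance of weak-morphism categories under weak equivalences. First I would record that the composite $i^* \circ j^* = (ji)^* = I^*$ is precomposition with $I:\mathcal Q \rightarrow \mathcal A$. By the universal property of $L$, $I$ factors as $\mathcal Q \overset{\iota_\mathcal Q}{\longrightarrow} L\mathcal Q \overset{\bar I}{\longrightarrow} \mathcal A$ with $\bar I$ the induced strict morphism. Hence $I^* = \iota_\mathcal Q^* \circ \bar I^*$.

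Next I would handle the two factors of $I^*$ separately. The functor $\iota_\mathcal Q^*:[L\mathcal Q,\mathcal S]_w \rightarrow [\mathcal Q,\mathcal S]_w$ is an equivalence by Proposition \ref{prop: weak morphisms precontextual category, assuming basis}, since $\mathcal Q$ and $L\mathcal Q$ have bases. For $\bar I^*:[\mathcal A,\mathcal S]_w \rightarrow [L\mathcal Q,\mathcal S]_w$, I use that $\bar I$ is a weak equivalence, i.e.\ $\bar I_w$ is an equivalence in $\iiDMC$ (Definition \ref{def: weak equivalence}). Since $[\mathcal C,\mathcal S]_w = \iiDMC_r^+(\mathcal C_w,\mathcal S_w)$ for contextual $\mathcal C$, precomposition with an equivalence in the $2$-category $\iiDMC_r^+$ is an equivalence of hom-categories. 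For this I need a quasi-inverse of $\bar I_w$ that preserves terminal objects, which holds because any equivalence of categories preserves terminal objects. So $I^*$ is an equivalence.

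It remains to split $I^*$ into $j^*$ and $i^*$. The functor $i:\mathcal Q \rightarrow \mathcal Q'$ is bijective on objects and full-and-faithful, since $I$ is full-and-faithful and $\mathcal Q'$ is its image. The part I expect to be the main obstacle is checking that $i^*$ is an isomorphism, or at least an equivalence. Here the issue is whether distinguished squares of $\mathcal Q'$, which are all squares of $\mathcal A$ landing in $\mathcal Q'$, are images of composites of distinguished squares of $\mathcal Q$. If $I$ is injective on objects, I would argue as follows. A weak morphism $G$ out of $\mathcal Q$ transports along the isomorphism of underlying categories. Every display map of $\mathcal Q'$ is $I$ of one in $\mathcal Q$, because $I$ preserves the length function. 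For a square of $\mathcal Q'$ that is distinguished in $\mathcal A$, its image under $G$ should be a pullback. To show this, I would use that $\bar I$ is full-and-faithful, so the corresponding square in $L\mathcal Q$ is distinguished and hence sent to a pullback by the equivalent weak morphism $\bar G$ with $\bar G\iota_\mathcal Q \cong G$. Pullbacks are invariant under isomorphism, which gives the claim.

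If this direct check becomes messy, I would instead prove that $j^*$ is an equivalence by the same pattern. Namely, I would note that $\mathcal Q'$ also has a basis, transported from that of $\mathcal Q$. Its reflection $L\mathcal Q'$ receives $L\mathcal Q \rightarrow L\mathcal Q'$, and the composite $L\mathcal Q \rightarrow L\mathcal Q' \rightarrow \mathcal A$ is $\bar I$. I would then apply $2$-out-of-$3$ for equivalences of categories to $I^* = i^*j^*$: once either of $i^*$ or $j^*$ is an equivalence, so is the other.
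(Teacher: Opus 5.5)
Your first step is correct and matches the paper: writing $I^* = i^*j^* = \iota_{\mathcal Q}^*\circ \bar I^*$ shows that $I^*$ is an equivalence. The splitting into $i^*$ and $j^*$, however, has a genuine gap.

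You assert that $i:\mathcal Q\to\mathcal Q'$ is bijective on objects, and your direct argument is only carried out when $I$ is injective on objects. The hypotheses give no such thing. $I$ is merely full-and-faithful, so $i$ is full-and-faithful and surjective on objects. The non-injective case is exactly the one the corollary is designed for. In Example \ref{ex: E(K)}, $I$ sends each $[x:B_a,\; y:E_b(f_*(x))]$ with $f\in K(a,b)$ to the single object $[x:B_a,\; y:E_a(x)]$ of $\mathcal C(\bbE K)$, so $i^*$ cannot be an isomorphism there.

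Your fallback fails on the same example. $\mathcal Q'$ need not have a basis: the length-$2$ part of $\textbf{D}(\mathcal Q')$ is isomorphic to $K$ itself, which has no terminal object when $K=B\bbN$.

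A smaller point: in your direct check, the square you lift to $L\mathcal Q$ is in general only isomorphic to a distinguished square, since $\bar I$ need not be injective on objects. What you actually use is that the full-and-faithful functor $\bar I$ reflects pullbacks, and that is enough.

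The missing observation is that you do not need to analyse the distinguished squares of $\mathcal Q'$ at all. Since $i$ is full-and-faithful and surjective on objects, it is an equivalence of underlying categories. Hence precomposition with $i$ is full-and-faithful between functor categories. The categories $[\mathcal Q',\mathcal S]_w$ and $[\mathcal Q,\mathcal S]_w$ are full subcategories of these, and $i^*$ maps the first into the second, so $i^*$ is full-and-faithful. It is essentially surjective because $i^*j^* = I^*$ is an equivalence, which you have already shown. So $i^*$ is an equivalence, and $j^*$ is one by $2$-out-of-$3$; this is the paper's proof.

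Your direct approach can also be repaired in general: use a quasi-inverse $s$ of $i$ in place of $i^{-1}$ and check that $G\circ s$ is a weak morphism out of $\mathcal Q'$. That is strictly more work for the same conclusion.
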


\begin{proof}
By Proposition \ref{prop: weak morphisms precontextual category, assuming basis} and the fact that weak equivalences in $\Cont$ induce equivalences between categories of weak morphisms, we can decompose $I^*$ as a composite of equivalences $[\mathcal A,\mathcal S]_w \simeq [L\mathcal Q,\mathcal S]_w \simeq [\mathcal Q,\mathcal S]_w$. In particular, this yields that $i^*$ is essentially surjective. Also, it follows from $i$ being an equivalence of categories that $i^*$ is full-and-faithful. We conclude that $i^*$ is an equivalence, and so is $j^*$ by the $2$-out-of-$3$ property.
\end{proof}

\section{Strictifying set-valued models}
\label{sec: strictifying set-valued models}

\begin{definition}[Family-valued and set-valued models]
\label{def: family-valued and set-valued models}
There are two commonly considered concepts of model of a contextual category:
\begin{itemize}
	\item Strict morphisms $\mathcal A \rightarrow \Fam$, where $\Fam$ is the contextual category, introduced in \cite{Car86} and briefly described in \S1, of iterated families of sets. Alternatively, it is isomorphic to the image under the coreflection $\Att^+ \rightarrow \Cont^+$\footnote{$\Att^+$ (resp. $\Cont^+$) denotes the category of locally small categories with attributes (resp. locally small contextual categories).}, described in \cite[Prop. 4.4]{KapLum18}, of the category with attributes $\Fam_{\text{att}}$ given by
	\[\begin{tikzcd}[ampersand replacement=\&,row sep=tiny]
		\smallint \mathscr U^{(-)} \&\& {\iiAr(\Set)} \\
		\& {\Set}
		\arrow["\coprod", from=1-1, to=1-3]
		\arrow["\pi"', from=1-1, to=2-2]
		\arrow["{\textbf{t}}", from=1-3, to=2-2]
	\end{tikzcd}\]
	where: $\mathscr U^{(-)}:\Set^{\op} \rightarrow \Set^+$ sends $X$ to the set $\mathscr U^X$ of all families of small sets $Y = (Y_x)_{x \in X}$; and $\coprod(X,Y) = \coprod_{x \in X}Y_x$. The \emph{category of family-valued models} of $\mathcal A$ is
	$$
	\iiMod_s(\mathcal A) = \iiCont^+(\mathcal A, \Fam).
	$$
	
	\item Functors $|\mathcal A| \rightarrow \Set$ that send distinguished squares to pullback squares, and the distinguished terminal object to a terminal object. Since this condition only depends on $\mathcal A_w$ (Definition \ref{def: weakening functor}) and $\Set$ is not a contextual category, it is more natural to work by default in the setting of display map categories: for $\mathcal A \in \iiDMC_r$, the \emph{category of set-valued models} (or \emph{set-models}) of $\mathcal A$, which we denote by $\iiMod_w(\mathcal A)$, is the full subcategory of $\Set^{|\mathcal A|}$ spanned by the functors that preserve display maps, their pullbacks, and terminal objects. Thus
	$$
	\iiMod_w(\mathcal A) = \iiDMC_r^+(\mathcal A_w,\Set)
	$$
	where $\Set$ is viewed as a rooted dmc with every arrow as a display map.\footnote{It is not difficult to check that set-models in this sense coincide with morphisms of clans (\cite{Joy17}) $\mathcal A \rightarrow \Set$, where we regard a rooted dmc as a clan by taking as fibrations the finite composites of display maps.}
\end{itemize}
\end{definition}

The $\Fam_{\text{att}}$-component of the counit of the adjunction between $\Att^+$ and $\Cont^+$ is precisely the ``total set" equivalence of categories $\Sigma:|\Fam| \rightarrow \Set$ from \S1. Since every arrow in $\Fam$ is isomorphic to a strict display map (it suffices to consider $a \twoheadrightarrow \partial a$ where $\ell(a) = 2$), we conclude that $\Sigma:\Fam_w \longrightarrow \Set$ is an equivalence in $\iiDMC_r$. This yields for each $\mathcal A \in \iiCont$ a natural full-and-faithful comparison functor
$$
\iiMod_s(\mathcal A) = \iiCont^+(\mathcal A,\Fam) \longrightarrow \iiDMC_r^+(\mathcal A_w,\Fam_w) \simeq \iiDMC_r^+(\mathcal A_w,\Set) = \iiMod_w(\mathcal A_w).
$$

We will now study when a given set-model belongs to the essential image of $\iiMod_s(\mathcal A) \rightarrow \iiMod_w(\mathcal A_w)$, or, equivalently, when a weak morphism $\mathcal A_w \rightarrow \Fam_w$ is strictifiable. We already know, by \S\ref{subsec: strictification basis or cofibrant}, that if $\mathcal A$ has a basis or is cofibrant, then every set-model is strictifiable. Our next goal is to give an explicit characterization, based on Proposition \ref{prop: local strictification iff strictifiable}, that applies to a particular set-model without, in principle, extra assumptions on $\mathcal A$. This will be done by analyzing a certain discrete fibration of groupoids associated with each set-model: the model will be strictifiable precisely if the automorphism group of each point in the base groupoid acts trivially on the corresponding fiber.

This section will rely on a few constructions and results on cartesian natural transformations between set-valued functors, for which we refer to the appendix.

\begin{construction}
Consider $\mathcal A \in \Cont$ and a weak morphism $F:\mathcal A_w \rightarrow \Set$, and let
$$
F_*:\textbf{D}(\mathcal A) \longrightarrow \textbf{D}_w(\Set)
$$
be the induced functor. Define $F_\textbf{s} = \textbf{s} \circ F_*$ and $F_\textbf{t} = \textbf{t} \circ F_*$ where $\textbf{s}$, $\textbf{t}:\textbf{D}_w(\Set) \rightarrow \Set$ are the source and target projections, respectively. We have a natural transformation
$$
\pi^F:F_\textbf{s} \Longrightarrow F_\textbf{t}
$$
with components $\pi^F_p = F(p)$. The fact that morphisms in $\textbf{D}_w(\Set)$ correspond to pullback squares in $\Set$ implies that $\pi^F$ is a cartesian natural transformation, i.e. every naturality square is cartesian.
\end{construction}

\begin{notation}
\label{not: gpd}
Let $\Gpd$ be the ($1$-)category of small groupoids and functors between them. Recall that the inclusion $\Gpd \rightarrow \Cat$ has a left adjoint that sends a category $C$ to $C[\Ar(C)^{-1}]$, that is, its localization at the set of all arrows. We will denote this left adjoint by $\gpd$.
\end{notation}

\begin{construction}
\label{constr: discrete opfibration associated with a set-valued model}
Consider $F:\mathcal A_w \rightarrow \Set$ as above. Taking categories of elements, we obtain from $\pi^F:F_\textbf{s} \Rightarrow F_\textbf{t}$ a strictly commutative diagram of discrete opfibrations\footnote{Since the two projections to $\textbf{D}(\mathcal A)$ are discrete opfibrations, so is $\pi_*^F$.}
\[
\begin{tikzcd}[row sep=small]
	\smallint F_\textbf{s} \arrow[]{rr}{\pi^F_*} \arrow[]{dr}{} & & \smallint F_\textbf{t} \arrow[]{dl}{} \\
	 & \textbf{D}(\mathcal A). & 
\end{tikzcd}
\]
It follows from $\pi^F$ being a cartesian natural transformation that the fiber functor of $\pi^F_*$, which we will denote by
$$
\Phi^F:\smallint F_\textbf{t} \longrightarrow \Set,
$$
sends every arrow to an isomorphism. Now, the universal property of the localization functor $\Lambda:\smallint F_\textbf{t} \rightarrow \gpd(\smallint F_\textbf{t})$ implies that there exists a unique functor $\gpd(\smallint F_\textbf{t}) \rightarrow \Set$ making the diagram
\[
\begin{tikzcd}
	\smallint F_\textbf{t} \arrow[]{r}{\Phi^F} \arrow[swap]{d}{\Lambda} & \Set\\
	\gpd(\smallint F_\textbf{t}) \arrow[dashed]{ur}{} &
\end{tikzcd}
\]
commute strictly. It will be denoted by $\overline{\Phi}^F$.
\end{construction}

\begin{remark}
The above diagram induces a bijective-on-objects comparison functor $\smallint F_\textbf{s} \cong \smallint \Phi^F \longrightarrow \smallint \overline{\Phi}^F$, hence, as every arrow in $\smallint \overline{\Phi}^F$ is an isomorphism, a functor $I:\gpd(\smallint \Phi^F) \longrightarrow \smallint \overline{\Phi}^F$. In fact, he latter is an isomorphism; we prove this in Proposition \ref{prop: comparison functor is an isomorphism}.
\end{remark}

\begin{definition}
\label{def: loop-free model}
In the notation of the above construction, we say that $F:\mathcal A_w \rightarrow \Set$ is \emph{loop-free} if $\overline{\Phi}^F:\gpd(\smallint F_\textbf{t}) \rightarrow \Set$ sends every automorphism to an identity map. Note that this is equivalent to the requirement that $F(f) = F(g)$ whenever $f$, $g$ are parallel arrows in $\gpd(\smallint F_\textbf{t})$.
\end{definition}

Our main goal for the remainder of this section is to prove that a weak morphism $F:\mathcal A_w \rightarrow \Set$ is strictifiable if and only if it is loop-free (Theorem \ref{th: characterization strictifiable set-model}). One implication is verified in Proposition \ref{prop: strictifiable -> loop-free}, and the other one in Proposition \ref{prop: loop-free -> strictifiable}. We will also obtain (and use in the proof of the main result) an explicit characterization of loop freeness in terms of zigzags of elements of the given model $F:\mathcal A_w \rightarrow \Set$.

\subsection{Characterizing strictifiable set-models}

\begin{definition}
\label{def: zigzag}
For a weak model $F:\mathcal A_w \rightarrow \Set$, an \emph{$F$-zigzag} consists of
\begin{itemize}
	\item $n \ge 0$ and a diagram of distinguished squares in $\mathcal A$ of the form
	% https://q.uiver.app/#q=WzAsMTQsWzAsMCwiYV8xIl0sWzAsMSwiXFxwYXJ0aWFsIGFfMSJdLFsxLDAsImFfMiJdLFsxLDEsIlxccGFydGlhbCBhXzIiXSxbMiwwLCJhXzMiXSxbMiwxLCJcXHBhcnRpYWwgYV8zIl0sWzMsMCwiXFxjZG90cyJdLFszLDEsIlxcY2RvdHMiXSxbNCwwLCJhX3sybi0xfSJdLFs0LDEsIlxccGFydGlhbCBhX3sybi0xfSJdLFs1LDAsImFfezJufSJdLFs2LDAsImFfezJuKzF9ID0gYV8wIl0sWzYsMSwiXFxwYXJ0aWFsIGFfezJuKzF9ID0gXFxwYXJ0aWFsIGFfMCJdLFs1LDEsIlxccGFydGlhbCBhX3sybn0iXSxbMiwwLCJnXzEiLDJdLFszLDEsImZfMSJdLFswLDEsIiIsMSx7InN0eWxlIjp7ImhlYWQiOnsibmFtZSI6ImVwaSJ9fX1dLFsyLDMsIiIsMSx7InN0eWxlIjp7ImhlYWQiOnsibmFtZSI6ImVwaSJ9fX1dLFsyLDQsImdfMiJdLFszLDUsImZfMiIsMl0sWzYsNCwiZ18zIiwyXSxbNCw1LCIiLDEseyJzdHlsZSI6eyJoZWFkIjp7Im5hbWUiOiJlcGkifX19XSxbNyw1LCJmXzMiXSxbNiw4LCJnX3sybi0yfSJdLFs3LDksImZfezJuLTJ9IiwyXSxbOCw5LCIiLDEseyJzdHlsZSI6eyJoZWFkIjp7Im5hbWUiOiJlcGkifX19XSxbMTAsOCwiZ197Mm4tMX0iLDJdLFsxMCwxMSwiZ197Mm59Il0sWzEzLDEyLCJmX3sybn0iLDJdLFsxMyw5LCJmX3sybi0xfSJdLFsxMCwxMywiIiwyLHsic3R5bGUiOnsiaGVhZCI6eyJuYW1lIjoiZXBpIn19fV0sWzExLDEyLCIiLDEseyJzdHlsZSI6eyJoZWFkIjp7Im5hbWUiOiJlcGkifX19XV0=
	\[\begin{tikzcd}[ampersand replacement=\&]
		{a_1} \& {a_2} \& {a_3} \& \cdots \& {a_{2n-1}} \& {a_{2n}} \& {a_{2n+1} = a_1} \\
		{\partial a_1} \& {\partial a_2} \& {\partial a_3} \& \cdots \& {\partial a_{2n-1}} \& {\partial a_{2n}} \& {\partial a_{2n+1} = \partial a_1}
		\arrow[two heads, from=1-1, to=2-1]
		\arrow["{g_1}"', from=1-2, to=1-1]
		\arrow["{g_2}", from=1-2, to=1-3]
		\arrow[two heads, from=1-2, to=2-2]
		\arrow[two heads, from=1-3, to=2-3]
		\arrow["{g_3}"', from=1-4, to=1-3]
		\arrow["{g_{2n-2}}", from=1-4, to=1-5]
		\arrow[two heads, from=1-5, to=2-5]
		\arrow["{g_{2n-1}}"', from=1-6, to=1-5]
		\arrow["{g_{2n}}", from=1-6, to=1-7]
		\arrow[two heads, from=1-6, to=2-6]
		\arrow[two heads, from=1-7, to=2-7]
		\arrow["{f_1}", from=2-2, to=2-1]
		\arrow["{f_2}"', from=2-2, to=2-3]
		\arrow["{f_3}", from=2-4, to=2-3]
		\arrow["{f_{2n-2}}"', from=2-4, to=2-5]
		\arrow["{f_{2n-1}}", from=2-6, to=2-5]
		\arrow["{f_{2n}}"', from=2-6, to=2-7]
	\end{tikzcd}\]
	
	\item a tuple $(x_i)_{i \le 2n+1} \in \prod_{i \le 2n+1}F(\partial a_i)$ such that $x_{2n+1} = x_1$ and, for $1 \le i \le n$,
	$$
	F(f_{2i-1})(x_{2i}) = x_{2i-1}, \qquad F(f_{2i})(x_{2i}) = x_{2i+1}.
	$$
\end{itemize}

For $1 \le i \le 2n$, we let $h_i:F(\textbf{p}_{a_i})^{-1}(x_i) \rightarrow F(\textbf{p}_{a_{i+1}})^{-1}(x_{i+1})$ be given by\footnote{Below, we write $f_i^*$ for the inverse of the bijection $F(\textbf{p}_{a_{i+1}})^{-1}(x_{i+1}) \rightarrow F(\textbf{p}_{a_i})^{-1}(x_i)$ induced by $F(g_i)$ using that $F$ sends each of the squares in the zigzag to a pullback square of sets.}
$$
h_i(y) =
\begin{cases}
	F(g_i)(y), & \text{if } i \text{ is even},\\
	f_i^*(y) & \text{if } i \text{ is odd}.
\end{cases}
$$
We say that this $F$-zigzag \emph{acts trivially} if the composite $h_{2n} \cdots  h_2 h_1$ is the identity on $F(\textbf{p}_{a_1})^{-1}(x_1)$.
\end{definition}

\begin{example}
Diagram (\texttt{***}) from \S\ref{sec: introduction} denotes the failure of a zigzag to act trivially.
\end{example}

\begin{proposition}
\label{prop: loop-freeness via zigzags}
$F:\mathcal A_w \rightarrow \Set$ is loop-free if and only if every $F$-zigzag acts trivially.
\end{proposition}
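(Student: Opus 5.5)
The plan is to read an $F$-zigzag as a loop in $\gpd(\smallint F_\textbf{t})$ and to show that $h_{2n}\cdots h_1$ is exactly the value of $\overline{\Phi}^F$ on that loop. Loop freeness (Definition \ref{def: loop-free model}) then becomes the statement that every such composite is an identity.

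First I identify the generators. Objects of $\smallint F_\textbf{t}$ are pairs $(p,x)$ with $p = \textbf{p}_a$ a strict display map and $x \in F(\partial a)$. A morphism $(\textbf{p}_b, y) \to (\textbf{p}_a, x)$ is a distinguished-square composite $(g,f)\colon \textbf{p}_b \to \textbf{p}_a$ in $\textbf{D}(\mathcal A)$ with $F(f)(y) = x$. The fiber functor is $\Phi^F(\textbf{p}_a,x) = F(\textbf{p}_a)^{-1}(x)$, and on such a morphism it is the restriction of $F(g)$. This restriction is a bijection because $F$ sends the square to a pullback.

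Next I translate zigzags into loops. For an $F$-zigzag with data $(a_i,f_i,g_i,x_i)$, the squares $(g_i,f_i)$ are arrows of $\smallint F_\textbf{t}$ between the objects $(\textbf{p}_{a_i},x_i)$. The odd-indexed ones point backward, $(\textbf{p}_{a_{i+1}},x_{i+1}) \to (\textbf{p}_{a_i},x_i)$, and the even-indexed ones point forward; the equations on the $x_i$ say exactly this. In $\gpd(\smallint F_\textbf{t})$ the backward arrows can be inverted, so the zigzag gives an automorphism $\gamma$ of $(\textbf{p}_{a_1},x_1)$. The inverse of $\Phi^F(g_i,f_i)$ is precisely $f_i^*$, so $\overline{\Phi}^F(\gamma) = h_{2n}\cdots h_1$. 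This proves ``loop-free $\Rightarrow$ every zigzag acts trivially''.

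For the converse I use the standard description of morphisms in the localization $\gpd(C)$: every morphism is a finite composite of arrows of $C$ and formal inverses of arrows of $C$. An automorphism of $(\textbf{p}_{a_1},x_1)$ is therefore represented by a finite alternating zigzag in $\smallint F_\textbf{t}$ that starts and ends at this object. To put it into the strict backward/forward pattern of Definition \ref{def: zigzag}, I insert identity squares wherever two consecutive arrows point the same way; identity squares are distinguished by Definition \ref{def: (pre)contextual category}(viii). Each arrow of $\smallint F_\textbf{t}$ is a finite composite of distinguished squares, so I also split each arrow into single distinguished squares, again padding with identities. The result is an $F$-zigzag whose action equals $\overline{\Phi}^F$ of the given automorphism, and by hypothesis this action is the identity.

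The main obstacle, though mild, is the bookkeeping: checking that every morphism of $\gpd$ really is represented by such a zigzag, and that $\overline{\Phi}^F$ is computed on it by composing the $\Phi^F$-images and their inverses. Both follow from the universal property of the localization, since the zigzag words form a category with a functor from $C$ that inverts every arrow. A second point to check is that the paper's definition of $\textbf{D}(\mathcal A)$ for a contextual category uses single distinguished squares, which makes the splitting step trivial in that case.
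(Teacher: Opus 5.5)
Your proposal is correct and follows essentially the paper's route. Zigzags are read as loops in $\gpd(\smallint F_\textbf{t})$ using the word description of the localization (Construction \ref{constr: gpd(K) using zig-zags}), every automorphism arises this way, and $\overline{\Phi}^F$ of such a loop equals $h_{2n}\cdots h_1$. Your padding with identity squares and your check of the direction conventions spell out bookkeeping the paper leaves implicit.
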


\begin{proof}
Since a morphism in $\smallint F_\textbf{t}$ can be described as a distinguished square
\[
\dsqua{a}{a'}{\partial a}{\partial a'}{g}{f}{}{}
\]
equipped with $x \in F(\partial a)$, $x' \in F(\partial a')$ such that $F(f)(x) = x'$, we have a canonical surjective map from the set of all $F$-zigzags to the set of arrows of $\gpd(\smallint F_\textbf{t})$; see Construction \ref{constr: gpd(K) using zig-zags}. By construction, for an $F$-zigzag as in Definition \ref{def: zigzag}, the automorphism $h_{2n} \cdots h_2h_1$ of $F(\textbf{p}_{a_1})^{-1}(x_1) = \overline{\Phi}^F(\textbf{p}_{a_1}, x_1)$ coincides with the image under $\overline{\Phi}^F$ of the arrow in $\gpd(\smallint F_\textbf{t})$ corresponding to the zigzag. Hence, $F$ sends every morphism to an identity map if and only if every $F$-zigzag acts trivially.
\end{proof}

We can now verify one direction of our characterization of strictifiability for set-models:

\begin{proposition}
\label{prop: strictifiable -> loop-free}
If a weak morphism $F:\mathcal A_w \rightarrow \Set$ is strictifiable, then it is loop-free.
\end{proposition}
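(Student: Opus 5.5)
Suppose $F$ is strictifiable. By Definition \ref{def: strictification} we have a strict morphism $F':\mathcal A \rightarrow \Fam$ and an isomorphism $\varphi:\Sigma \circ F' \Rightarrow F$; here we regard $F$ as a weak morphism into $\Fam_w$ through the equivalence $\Sigma:\Fam_w \simeq \Set$. The plan is to use Proposition \ref{prop: loop-freeness via zigzags} and show that every $F$-zigzag acts trivially. We do this in two stages. First we show that loop-freeness is invariant under natural isomorphism of set-models. Then we check that models of the form $\Sigma \circ F'$ with $F'$ strict are loop-free, because reindexing in $\Fam$ is given by literal equality of fibres.

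\emph{Invariance.} Let $\varphi:G \Rightarrow F$ be an isomorphism of weak models. Take a $G$-zigzag with data $(a_i,f_i,g_i,x_i)$. Applying $\varphi$ to the points gives an $F$-zigzag with the same diagram in $\mathcal A$ and points $\varphi_{\partial a_i}(x_i)$. Conversely, every $F$-zigzag arises in this way. Naturality of $\varphi$ on the squares $\textbf{p}_{a_i}$ and on the $g_i$ implies that $\varphi_{a_i}$ restricts to bijections between the fibres over $x_i$ and over $\varphi_{\partial a_i}(x_i)$. These bijections conjugate each $h_i$ for $G$ into the corresponding $h_i$ for $F$: for even $i$ this is immediate, and for odd $i$ it follows by taking inverses. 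The composite $h_{2n}\cdots h_1$ for $F$ is therefore conjugate to the one for $G$, so one is an identity exactly when the other is. Alternatively, $\varphi$ induces an isomorphism $\smallint G_\textbf{t} \cong \smallint F_\textbf{t}$ over which $\overline{\Phi}^G$ and $\overline{\Phi}^F$ agree up to a natural isomorphism. The property that automorphisms are sent to identities is preserved under conjugation, since $\alpha\,\mathrm{id}\,\alpha^{-1}=\mathrm{id}$.

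\emph{Strict models.} Let $G = \Sigma \circ F'$. For $a$ of length $\ge 1$, write $F'(a) = (\mathbf X_i)_{i\le n}$. Then $G(\textbf{p}_a)$ is the projection $\coprod_{x\in \Sigma F'(\partial a)} \mathbf X_n(x) \rightarrow \Sigma F'(\partial a)$, and the fibre over $x$ is $\{x\}\times \mathbf X_n(x)$. A distinguished square with bottom map $f$ is sent by $F'$ to a distinguished square of $\Fam$, which is the reindexing square. Its top map is therefore $(x,y)\mapsto (f(x),y)$, where $\mathbf X'_n(x) = \mathbf X_n(f(x))$ on the nose. Consequently, each map $h_i$ in a $G$-zigzag, and also its inverse $f_i^*$, acts as the identity on the second coordinate $y$ while changing the base point. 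Starting and ending at the same point $x_1$, the composite $h_{2n}\cdots h_1$ sends $(x_1,y)$ to $(x_1,y)$. Every $G$-zigzag thus acts trivially, so $G$ is loop-free by Proposition \ref{prop: loop-freeness via zigzags}, and hence so is $F$ by the invariance step.

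The main obstacle is bookkeeping rather than an idea. One has to make the identification of fibres in $\Sigma$ precise and confirm that the strict preservation of distinguished squares by $F'$ really forces the top maps to be $(x,y)\mapsto(f(x),y)$. This uses the explicit description of distinguished squares in $\Fam$ by reindexing. One must also check that the conjugation in the invariance step is compatible with the inverses $f_i^*$ that appear at odd steps.
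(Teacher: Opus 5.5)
Your proposal is correct and follows the paper's proof essentially step for step. The paper likewise uses Proposition \ref{prop: loop-freeness via zigzags} to reduce to showing that every zigzag of the strict model $\Sigma\circ F'$ acts trivially. It then computes in $\Fam$ that, because distinguished squares are reindexing squares, each $h_i$ is $(x_i,y)\mapsto(x_{i+1},y)$. The only difference is that you spell out why loop-freeness is invariant under isomorphism of set-models, whereas the paper asserts this in a single clause.
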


\begin{proof}
Assuming that $F$ is strictifiable, consider a strict morphism $G:\mathcal A \rightarrow \Fam$ such that the set-model $UG$ is isomorphic to $F$. By Proposition \ref{prop: loop-freeness via zigzags}, it suffices to prove that every $F$-zigzag acts trivially, which is equivalent to every $UG$-zigzag acting trivially. To verify that, consider a diagram
\[\begin{tikzcd}[ampersand replacement=\&]
	{a_1} \& {a_2} \& {a_3} \& \cdots \& {a_{2n-1}} \& {a_{2n}} \& {a_{2n+1} = a_1} \\
	{\partial a_1} \& {\partial a_2} \& {\partial a_3} \& \cdots \& {\partial a_{2n-1}} \& {\partial a_{2n}} \& {\partial a_{2n+1} = \partial a_1}
	\arrow[two heads, from=1-1, to=2-1]
	\arrow["{g_1}"', from=1-2, to=1-1]
	\arrow["{g_2}", from=1-2, to=1-3]
	\arrow[two heads, from=1-2, to=2-2]
	\arrow[two heads, from=1-3, to=2-3]
	\arrow["{g_3}"', from=1-4, to=1-3]
	\arrow["{g_{2n-2}}", from=1-4, to=1-5]
	\arrow[two heads, from=1-5, to=2-5]
	\arrow["{g_{2n-1}}"', from=1-6, to=1-5]
	\arrow["{g_{2n}}", from=1-6, to=1-7]
	\arrow[two heads, from=1-6, to=2-6]
	\arrow[two heads, from=1-7, to=2-7]
	\arrow["{f_1}", from=2-2, to=2-1]
	\arrow["{f_2}"', from=2-2, to=2-3]
	\arrow["{f_3}", from=2-4, to=2-3]
	\arrow["{f_{2n-2}}"', from=2-4, to=2-5]
	\arrow["{f_{2n-1}}", from=2-6, to=2-5]
	\arrow["{f_{2n}}"', from=2-6, to=2-7]
\end{tikzcd}\]
of distinguished squares in $\mathcal A$. Applying $G$, we obtain a diagram
% https://q.uiver.app/#q=WzAsMTQsWzAsMCwiRyhhXzEpIl0sWzAsMSwiRyhcXHBhcnRpYWwgYV8xKSJdLFsxLDAsIkcoYV8yKSJdLFsxLDEsIkcoXFxwYXJ0aWFsIGFfMikiXSxbMiwwLCJHKGFfMykiXSxbMiwxLCJHKFxccGFydGlhbCBhXzMpIl0sWzMsMCwiXFxjZG90cyJdLFszLDEsIlxcY2RvdHMiXSxbNCwwLCJHKGFfezJuLTF9KSJdLFs0LDEsIkcoXFxwYXJ0aWFsIGFfezJuLTF9KSJdLFs1LDAsIkcoYV97Mm59KSJdLFs2LDAsIkcoYV97Mm4rMX0pID0gRyhhXzApIl0sWzYsMSwiRyhcXHBhcnRpYWwgYV97Mm4rMX0pID0gRyhcXHBhcnRpYWwgYV8wKSJdLFs1LDEsIkcoXFxwYXJ0aWFsIGFfezJufSkiXSxbMiwwLCJHKGdfMSkiLDJdLFszLDEsIkcoZl8xKSJdLFswLDEsIiIsMSx7InN0eWxlIjp7ImhlYWQiOnsibmFtZSI6ImVwaSJ9fX1dLFsyLDMsIiIsMSx7InN0eWxlIjp7ImhlYWQiOnsibmFtZSI6ImVwaSJ9fX1dLFsyLDQsIkcoZ18yKSJdLFszLDUsIkcoZl8yKSIsMl0sWzYsNCwiRyhnXzMpIiwyXSxbNCw1LCIiLDEseyJzdHlsZSI6eyJoZWFkIjp7Im5hbWUiOiJlcGkifX19XSxbNyw1LCJHKGZfMykiXSxbNiw4LCJHKGdfezJuLTJ9KSJdLFs3LDksIkcoZl97Mm4tMn0pIiwyXSxbOCw5LCIiLDEseyJzdHlsZSI6eyJoZWFkIjp7Im5hbWUiOiJlcGkifX19XSxbMTAsOCwiRyhnX3sybi0xfSkiLDJdLFsxMCwxMSwiRyhnX3sybn0pIl0sWzEzLDEyLCJHKGZfezJufSkiLDJdLFsxMyw5LCJHKGZfezJuLTF9KSJdLFsxMCwxMywiIiwyLHsic3R5bGUiOnsiaGVhZCI6eyJuYW1lIjoiZXBpIn19fV0sWzExLDEyLCIiLDEseyJzdHlsZSI6eyJoZWFkIjp7Im5hbWUiOiJlcGkifX19XV0=
\[\begin{tikzcd}[ampersand replacement=\&]
	{G(a_1)} \& {G(a_2)} \& {G(a_3)} \& \cdots \& {G(a_{2n-1})} \& {G(a_{2n})} \& {G(a_{2n+1}) = G(a_1)} \\
	{G(\partial a_1)} \& {G(\partial a_2)} \& {G(\partial a_3)} \& \cdots \& {G(\partial a_{2n-1})} \& {G(\partial a_{2n})} \& {G(\partial a_{2n+1}) = G(\partial a_1)}
	\arrow[two heads, from=1-1, to=2-1]
	\arrow["{G(g_1)}"', from=1-2, to=1-1]
	\arrow["{G(g_2)}", from=1-2, to=1-3]
	\arrow[two heads, from=1-2, to=2-2]
	\arrow[two heads, from=1-3, to=2-3]
	\arrow["{G(g_3)}"', from=1-4, to=1-3]
	\arrow["{G(g_{2n-2})}", from=1-4, to=1-5]
	\arrow[two heads, from=1-5, to=2-5]
	\arrow["{G(g_{2n-1})}"', from=1-6, to=1-5]
	\arrow["{G(g_{2n})}", from=1-6, to=1-7]
	\arrow[two heads, from=1-6, to=2-6]
	\arrow[two heads, from=1-7, to=2-7]
	\arrow["{G(f_1)}", from=2-2, to=2-1]
	\arrow["{G(f_2)}"', from=2-2, to=2-3]
	\arrow["{G(f_3)}", from=2-4, to=2-3]
	\arrow["{G(f_{2n-2})}"', from=2-4, to=2-5]
	\arrow["{G(f_{2n-1})}", from=2-6, to=2-5]
	\arrow["{G(f_{2n})}"', from=2-6, to=2-7]
\end{tikzcd}\]
of distinguished squares in $\Fam$. For $i = 1$, ..., $2n+1$, let
$$
Y_i:UG(\partial a_i) \longrightarrow \mathscr U
$$
be the family of small sets that classifies $G(a_i)$. Note that
\begin{itemize}
	\item if $i$ is even, then $G(g_i):G(a_i) \rightarrow G(a_{i+1})$ is the map
	$$
	\coprod_{x \in UG(\partial a_i)}Y_i(x) = \coprod_{x \in UG(\partial a_i)}Y_{i+1}(G(f_i)(x)) \longrightarrow \coprod_{x' \in UG(\partial a_{i+1})}Y_{i+1}(x')
	$$
	that sends the fiber over $x$ to the one over $G(f_i)(x)$ via the identity map, that is, $(x,y) \mapsto (G(f_i)(x), y)$ for all $x \in UG(\partial a_i)$, $y \in Y_i(x)$;
	
	\item if $i$ is odd, then $G(g_i):G(a_{i+1}) \rightarrow G(a_i)$ is the map
	$$
	\coprod_{x \in UG(\partial a_{i+1})}Y_{i+1}(x) = \coprod_{x \in UG(\partial a_{i+1})}Y_i(G(f_i)(x)) \longrightarrow \coprod_{x' \in UG(\partial a_i)}Y_i(x')
	$$
	given by $(x,y) \mapsto (G(f_i)(x), y)$.
\end{itemize}

Now, consider a tuple $(x_i)_i \in \prod_{i \le 2n+1}UG(\partial a_i)$ as in Definition \ref{def: zigzag}.

For $1 \le i \le 2n$, the map $h_i:UG(\textbf{p}_{a_i})^{-1}(x_i) \rightarrow UG(\textbf{p}_{a_{i+1}})^{-1}(x_{i+1})$ is given by
$$
h_i(x_i,y) =
\begin{cases}
	G(g_i)(x_i,y) = (x_{i+1},y), & \text{if } i \text{ is even},\\
	f_i^*(x_i,y) = (x_{i+1}, y) & \text{if } i \text{ is odd},
\end{cases}
$$
from which it follows that the composite $h_{2n} \cdots h_2 h_1$ is given by $(x_1,y) \mapsto (x_{2n+1}, y) = (x_1,y)$, as required.
\end{proof}

To prove the converse statement, we will use the characterization of loop freeness given in Proposition \ref{prop: characterization loop-free natural transformation} applied to the cartesian natural transformation $\pi^F:F_\textbf{s} \Rightarrow F_\textbf{t}$.

\begin{proposition}
\label{prop: loop-free -> strictifiable}
If a set-model is loop-free, then it is strictifiable.
\end{proposition}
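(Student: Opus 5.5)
The plan is to reduce, via Proposition \ref{prop: local strictification iff strictifiable}, to building a local strictification of $F$, viewed as a weak morphism $\mathcal A_w \rightarrow \Fam_w$ through the equivalence $\Sigma:\Fam_w \simeq \Set$. Concretely, I will produce a functor $S:\textbf{D}(\mathcal A) \rightarrow \textbf{D}(\Fam)$ together with a natural isomorphism $\sigma:F_* \Rightarrow \iota S$. I will take the target part of $S$ to be the family-object whose total set is (isomorphic to) $F_\textbf{t}(p)=F(\partial a)$. For the source part, the task is to choose, for each $p=\textbf{p}_a$ and each $x\in F(\partial a)$, a small set $Y_p(x)$ together with bijections $F(\textbf{p}_a)^{-1}(x)\cong Y_p(x)$. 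These must satisfy a compatibility condition: for every distinguished square $(f,g):p\rightarrow q$ we need $Y_p(x)=Y_q(F(f)(x))$ \emph{on the nose}, and the bijections must intertwine the fiber map induced by $F(g)$. Then $S$ sends such a square to the reindexing square in $\Fam$, which is a morphism of $\textbf{D}(\Fam)$ precisely because of these strict equalities.

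The key input is the groupoid $\gpd(\smallint F_\textbf{t})$ and the functor $\overline{\Phi}^F$ of Construction \ref{constr: discrete opfibration associated with a set-valued model}, whose value at $(p,x)$ is the fiber $F(\textbf{p}_a)^{-1}(x)$. Loop-freeness means that $\overline{\Phi}^F$ sends all parallel arrows to the same map (Definition \ref{def: loop-free model}). So I will choose one representative object $(p_0,x_0)$ in each connected component of $\gpd(\smallint F_\textbf{t})$ and set $Y_p(x):=\overline{\Phi}^F(p_0,x_0)$ for every $(p,x)$ in that component. The required bijection $F(\textbf{p}_a)^{-1}(x)\rightarrow Y_p(x)$ is then $\overline{\Phi}^F(\gamma)$ for any arrow $\gamma:(p,x)\rightarrow(p_0,x_0)$, which is well defined by loop-freeness. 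Equivalently, and in the spirit of the appendix's Proposition \ref{prop: characterization loop-free natural transformation}, $\overline{\Phi}^F$ factors through the codiscrete (indiscrete) replacement of each component, i.e. through a groupoid equivalent to a set. Compatibility with a distinguished square $(f,g)$ then holds for two reasons: $(p,x)$ and $(q,F(f)(x))$ lie in the same component, and functoriality of $\overline{\Phi}^F$ identifies the fiber map induced by $F(g)$ with $\overline{\Phi}^F$ applied to the corresponding arrow, so the triangle of bijections commutes.

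The remaining steps are routine. First, define $S(p)$ as the length-$(\ell+1)$ object of $\Fam$ obtained by extending (a fixed $\Fam$-object over) $F(\partial a)$ by the family $Y_p$. Second, define $\sigma_p$ as the pair consisting of the identity (or fixed) iso on bases and the total map assembled from the fiber bijections. Third, check naturality of $\sigma$ and functoriality of $S$, which follow from the compatibility just established; pasting with $\Sigma$ then gives the local strictification.

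The main obstacle is the base. $S^\textbf{t}(p)$ must be an object of $\Fam$ of the right length with total set related to $F(\partial a)$, and $S$ must send the square $(f,g)$ to a distinguished square in $\Fam$. This requires that the target of $S(p)$ be exactly $\partial S(p)$, so the base object cannot be chosen arbitrarily per $p$. I would try to handle this by noting that only $\textbf{D}(\Fam)$ is needed, not strict preservation by $F$ of the tower. So I fix, for each set $X$ in the image of $F$, one length-one $\Fam$-object with total set $X$; reindexing along any map between such objects is then strict in $\Fam$. Sizes are harmless after passing to $\mathscr U^+$ if necessary.
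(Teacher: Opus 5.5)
Your argument is correct, but it takes a different route from the paper's.

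The paper also reduces to Proposition~\ref{prop: local strictification iff strictifiable}, but it exploits the fact that the components of a local strictification only need to be cartesian squares, not isomorphisms. It chooses a single strict display map $\textbf{p}_K$ in $\Fam$ isomorphic to the induced map $\colim F_\textbf{s} \rightarrow \colim F_\textbf{t}$ and takes $S$ to be constant at $\textbf{p}_K$. The transformation $\sigma$ is given by the colimit cocones, so naturality is automatic. The only thing left to check is that each cocone square is a pullback, which is exactly condition (c) of Proposition~\ref{prop: characterization loop-free natural transformation}; the appendix proves this equivalent to loop-freeness.

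Your construction is essentially the reindexing of that one. The connected components of $\gpd(\smallint F_\textbf{t})$ are the elements of $\colim F_\textbf{t}$. Your $Y_p$ is the family ``fiber at the chosen representative'' on $\colim F_\textbf{t}$, precomposed with the coprojection $F(\partial a) \rightarrow \colim F_\textbf{t}$. Your compatibility check is a hands-on proof of (a)$\Rightarrow$(c).

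What your route buys is a self-contained argument working directly from Definition~\ref{def: loop-free model}, with no appeal to the appendix, and a $\sigma$ with invertible components. What it costs is the choice of representatives and the functoriality and naturality bookkeeping. The paper avoids both because its $S$ has a single object in its image.

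Incidentally, the ``main obstacle'' you flag is not really one. Morphisms in $\Fam$ are arbitrary maps of total sets, and a distinguished square is determined by its bottom arrow. So any base object $B_p$ with a bijection $\Sigma B_p \cong F(\partial a)$ would do: the equality $Y_p(x) = Y_q(F(f)(x))$ already makes $S(p)$ the strict reindexing of $S(q)$ along the transported map. Fixing one length-one object per set is harmless but unnecessary.
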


\begin{proof}
Consider a set-model $F:\mathcal A_w \rightarrow \Fam_w \simeq \Set$. As every arrow in $\Fam$ is isomorphic to a strict display map, we can choose $K \in \Ob(\Fam)$ of length $\ge 1$ and a commutative square
\[
\tag{\texttt{*}}
\squa{F_\textbf{s}}{\Delta K}{F_\textbf{t}}{\Delta \partial K}{\alpha}{\beta}{\pi^F}{\Delta \textbf{p}_K}
\]
in $|\Fam|^{\textbf{D}(\mathcal A)}$ such that $\alpha$, $\beta$ are colimit cocones. If $F$ is loop-free, Proposition \ref{prop: characterization loop-free natural transformation} implies that evaluating (\texttt{*}) at each $p \in \textbf{D}(\mathcal A)$ yields a pullback square in $\Fam$. Thus we have a natural transformation
% https://q.uiver.app/#q=WzAsNCxbMCwwLCJcXHRleHRiZntEfShcXG1hdGhjYWwgQSkiXSxbMywwLCJcXHRleHRiZntEfV93KFxcRmFtKSJdLFsyLDEsIlxcdGV4dGJme0R9KFxcRmFtKSJdLFsxLDEsIlxcdGV4dGJmezF9Il0sWzAsMSwiRl8qIiwwLHsiY3VydmUiOi01fV0sWzIsMSwiIiwyLHsiY3VydmUiOjEsInN0eWxlIjp7InRhaWwiOnsibmFtZSI6Imhvb2siLCJzaWRlIjoidG9wIn19fV0sWzMsMl0sWzAsMywiIiwyLHsiY3VydmUiOjF9XSxbMCwxLCIiLDEseyJjdXJ2ZSI6NSwic3R5bGUiOnsiYm9keSI6eyJuYW1lIjoibm9uZSJ9LCJoZWFkIjp7Im5hbWUiOiJub25lIn19fV0sWzQsOCwiXFxldGEiLDAseyJzaG9ydGVuIjp7InNvdXJjZSI6MjAsInRhcmdldCI6MjB9fV1d
\[\begin{tikzcd}[ampersand replacement=\&, row sep=small]
	{\textbf{D}(\mathcal A)} \&\&\& {\textbf{D}_w(\Fam)} \\
	\& {\textbf{1}} \& {\textbf{D}(\Fam)}
	\arrow[""{name=0, anchor=center, inner sep=0}, "{F_*}", curve={height=-30pt}, from=1-1, to=1-4]
	\arrow[""{name=1, anchor=center, inner sep=0}, curve={height=30pt}, draw=none, from=1-1, to=1-4]
	\arrow["{!}"',curve={height=6pt}, from=1-1, to=2-2]
	\arrow[from=2-2, to=2-3]
	\arrow[curve={height=6pt}, hook, from=2-3, to=1-4]
	\arrow["\eta", shorten=10pt, Rightarrow, from=0, to=1]
\end{tikzcd}\]
where $\textbf{1} \rightarrow \textbf{D}(\Fam)$ sends the single object to $\textbf{p}_K:K \twoheadrightarrow \partial K$, and for $p \in \textbf{D}(\mathcal A)$, $\eta_p$ is the morphism obtained by evaluating (\texttt{*}) at $p$. We conclude from Proposition \ref{prop: local strictification iff strictifiable} that $F$ is strictifiable.
\end{proof}

\begin{theorem}
\label{th: characterization strictifiable set-model}
A set-model $F$ is strictifiable if and only if it is loop-free (hence, if and only if $\pi_*^F:\smallint F_\textbf{s} \Rightarrow \smallint F_\textbf{t}$ satisfies any of the equivalent conditions from Proposition \ref{prop: characterization loop-free natural transformation}).
\end{theorem}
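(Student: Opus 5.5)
The theorem combines Proposition \ref{prop: strictifiable -> loop-free} and Proposition \ref{prop: loop-free -> strictifiable}, so the plan is to check that these two results together give exactly the equivalence, and that the parenthetical reformulation follows from the appendix.

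For the forward direction, suppose $F:\mathcal A_w \rightarrow \Set$ is strictifiable, i.e.\ isomorphic to $\Sigma \circ G$ for a strict $G:\mathcal A \rightarrow \Fam$. Proposition \ref{prop: strictifiable -> loop-free} then gives loop freeness. Its proof goes through Proposition \ref{prop: loop-freeness via zigzags}: every $F$-zigzag acts trivially, because in $\Fam$ distinguished squares reindex fibers by identities. I would add one remark here. Loop freeness is invariant under isomorphism of set-models, since an isomorphism $F \cong F'$ induces an isomorphism of the cartesian transformations $\pi^F \cong \pi^{F'}$. Hence it induces compatible isomorphisms of the fiber functors $\overline{\Phi}^F \cong \overline{\Phi}^{F'}$, and conjugation preserves the property that automorphisms act by identities.

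For the converse, let $F$ be loop-free and apply Proposition \ref{prop: loop-free -> strictifiable}. The idea is to pass through $\Fam_w \simeq \Set$, choose a colimit cocone of $\pi^F$ realized as a strict display map $\textbf{p}_K$ in $\Fam$, and use Proposition \ref{prop: characterization loop-free natural transformation} to see that each comparison square into $\textbf{p}_K$ is a pullback. This produces a local strictification factoring through $\textbf{1}$, and Proposition \ref{prop: local strictification iff strictifiable} then yields a strict morphism. The main point to verify is the size question. The colimit of $\pi^F$ over $\textbf{D}(\mathcal A)$ must land in $\Fam$, i.e.\ be small. This holds because $\mathcal A$ is small and $F$ is $\Set$-valued, so the colimit is a small set. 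The family over its base is obtained by choosing fibers, and these are small as well. One must also check that the strictification lands in $\iiCont^+(\mathcal A,\Fam)$.

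Finally, the parenthetical clause is immediate from the appendix. Proposition \ref{prop: characterization loop-free natural transformation} states that loop freeness of $F$, i.e.\ triviality of automorphism actions under $\overline{\Phi}^F$, is equivalent to each of the listed conditions on the discrete opfibration $\pi^F_*$. Chaining these equivalences completes the proof.
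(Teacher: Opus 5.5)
Your proposal is correct and follows the paper's route. The theorem is exactly the conjunction of Proposition~\ref{prop: strictifiable -> loop-free} and Proposition~\ref{prop: loop-free -> strictifiable}, and the parenthetical comes from Proposition~\ref{prop: characterization loop-free natural transformation} applied to $\pi^F$. Your remarks on invariance of loop freeness under isomorphism and on smallness of the colimit in $\Fam$ are sound; they make explicit points the paper leaves implicit.
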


\subsection{Some (counter)examples}
\label{subsec: examples}

For a model $F:\mathcal A_w \rightarrow \Set$, if $\gpd(F_\textbf{t})$ is equivalent to a set, it is immediate that $F$ is loop-free; hence, it is strictifiable. On the other hand, $\gpd(F_\textbf{t})$ being equivalent to a set means precisely that the colimit of $F_\textbf{t}:\textbf{D}(\mathcal A) \rightarrow \Set$ is van Kampen (see Proposition \ref{prop: characterization loop-free natural transformation} and Remark \ref{rem: vK colimits}). This happens, in particular, if \emph{all} colimits of functors $\textbf{D}(\mathcal A) \rightarrow \Set$ are van Kampen, such as when $\textbf{D}(\mathcal A)$ is a coproduct of filtered categories.\footnote{We don't know whether this is actually equivalent to all colimits of functors $\textbf{D}(\mathcal A) \rightarrow \Set$ being van Kampen} An instance of this is Proposition \ref{prop: strictification - out of cont cat with a basis} with $\mathcal B = \Set$.

In Example \ref{ex: E(K)}, we will associate with a category $K$ a gat $\bbE K$ such that $\mathcal C(\bbE K)$ initial among all contextual categories $\mathcal A$ equipped with a functor $K \rightarrow \textbf{D}(\mathcal A)$ sending every object of $K$ to a strict display map whose codomain has length-$1$. Strictifiability of all set-models of $\bbE K$ will be equivalent to all colimits of functors $K \rightarrow \Set$ being van Kampen. See there for further discussion.

On the other hand, we will see in Example \ref{ex: sort+sort equality - strictifiable} a contextual category $\mathcal A$ for which every set-model is strictifiable but $\textbf{D}(\mathcal A)$ is not a coproduct of filtered categories (in particular, $\mathcal A$ is not cofibrant). It also shows (see Remark \ref{rem: strict vs weak presentation via precontextual category}) that diagrammatic presentations of contextual categories via precontextual categories, which are $1$-categorical but also have the expected $\Cat$-enriched universal property (see Remark \ref{rem: use of precontextual categories}), generally fail to classify maps out of the corresponding object of the (weak) $2$-category $\iiDMC_r \simeq \iiHo(\Cont)$.

\begin{example}
\label{ex: E(K)}
For a small category $K$, let $\bbE K$ be the gat consisting of:
\begin{itemize}[noitemsep]
	\item for each object $a \in K$, sort symbols $B_a$ and $E_a$ introduced by
	$$
	\vdash B_a \tp, \qquad x:B_a \vdash E_a(x) \tp;
	$$
	
	\item for each arrow $f \in K(a,b)$, a term symbol $f_*$ introduced by
	$$
	x:B_a \vdash f_*(x):B_b,
	$$
	and a sort equality axiom
	$$
	x:B_a \vdash E_a(x) \equiv E_b(f_*(x)) \tp;
	$$
	
	\item axioms
	\begin{align*}
		x:B_a & \vdash (gf)_*(x) \equiv g_*(f_*(x)):B_c \tag{for $f \in K(a,b)$, $g \in K(b,c)$}\\
		x:B_a & \vdash id_{a*}(x) \equiv x:B_a \tag{for $a \in K$}
	\end{align*}
\end{itemize}
Observe that a family-valued model of $\bbE K$ consists of a functor $X:K \rightarrow \Set$ (given by interpreting the axioms that don't involve the symbols $E_a$) equipped with a family of small sets $Y(a):X(a) \rightarrow \mathscr U$ for each $a \in \Ob(K)$ such that
% https://q.uiver.app/#q=WzAsMyxbMCwwLCJYKGEpIl0sWzAsMiwiWChiKSJdLFsyLDEsIlxcbWF0aHNjciBVIl0sWzAsMiwiWShhKSJdLFsxLDIsIlkoYikiLDJdLFswLDEsIlgoZikiLDJdXQ==
\[\begin{tikzcd}[ampersand replacement=\&,cramped,row sep=tiny]
	{X(a)} \&\& \\
	\&\& {\mathscr U} \\
	{X(b)}
	\arrow["{Y(a)}", from=1-1, to=2-3]
	\arrow["{X(f)}"', from=1-1, to=3-1]
	\arrow["{Y(b)}"', from=3-1, to=2-3]
\end{tikzcd}\]
commutes for $f \in K(a,b)$. In other words, we have a cocone from $X$ to $\mathscr U$. This, in turn, corresponds to a map $\colim X \rightarrow \mathscr U$. It follows that $\iiMod_s(\bbE K)$ is equivalent to the comma category $\Set \downarrow \colim$ where $\colim$ is the colimit functor $\Set^K \rightarrow K$. We will now exhibit a cofibrant replacement of $\bbE K$ and use it to describe the category of set-models of $K$. Let $\bbE' K$ be the gat obtained from $\bbE K$ as follows:
\begin{enumerate}[label=(\arabic*),noitemsep]
	\item for $f \in K(a,b)$, replace
	$$
	x:B_a \vdash E_a(x) \equiv E_b(f_*(x)) \tp
	$$
	by the following four axioms:
	\begin{align*}
		x:B_a,\; y:E_a(x) & \vdash \textbf{tr}_f(x,y): E_b(f_*(x))\\
		x:B_a,\; y:E_b(f_*(x)) & \vdash \textbf{tr}'_f(x,y) : E_a(x)\\
		x:B_a,\; y:E_a(x) & \vdash \textbf{tr}'_f(x,\textbf{tr}_f(x,y)) \equiv y: E_a(x)\\
		x:B_a,\; y:E_b(f_*(x)) & \vdash \textbf{tr}_f(x,\textbf{tr}'_f(x,y)) \equiv y: E_b(f_*(x))
	\end{align*}
	
	\item add axioms
	\begin{align*}
		x:B_a,\; y:E_a(x) & \vdash \textbf{tr}_g(f_*(x),\textbf{tr}_f(x,y)) \equiv \textbf{tr}_{gf}(x,y): E_c(g_*f_*(x)) \tag{for $f \in K(a,b)$, $g \in K(b,c)$}\\
		x:B_a,\; y:E_a(x) & \vdash \textbf{tr}_{id_a}(x,y) \equiv x : E_a(x) \tag{for $a \in K$}
	\end{align*}
\end{enumerate}
It is not difficult to check that we have an interpretation of $\bbE' K$ in $\bbE K$ that sends $B_a$, $E_a$, $f_*$ to themselves, and $\textbf{tr}_f(x,y)$, $\textbf{tr}'_f(x,y)$ to $y$ (informally, $\textbf{tr}_f(x,-)$, $\textbf{tr}'_f(x,-)$ are interpreted as identity maps). Let us sketch a proof that $\bbE' K \rightarrow \bbE K$ is an acyclic fibration. We start by noting that for any context $\textbf{X} = (x_1:X_1, ..., x_n:X_n)$ in $\bbE' K$, every sort derivable in $\textbf{X}$ is derivably equal to exactly one of the following:
\begin{itemize}[noitemsep]
	\item $B_a$ for some $a \in K$.
	
	\item $E_b(f_*(x_i))$ for some $f \in K(a,b)$ and $i \in \{1, ..., n\}$ such that $X_i = B_a$.
\end{itemize}
If, instead, $\textbf{X}$ is a context in $\bbE K$, a sort in $\textbf{X}$ is derivably equal to exactly one of the following:
\begin{itemize}[noitemsep]
	\item $B_a$ for some $a \in K$.
	
	\item $E_a(x_i)$ for some $a \in K$ such that $X_i = B_a$.
\end{itemize}
It follows from this description that $\bbE' K \rightarrow \bbE K$ has the right lifting property with respect to $\iota_n^S:\bbO_{n-1} \rightarrow \bbO_n$ for all $n \ge 1$. It remains to check that for every derivable judgment $\textbf{X} \vdash U \tp$ in $\bbE' K$, equivalence classes of terms $\textbf{X} \vdash u:U$ are in bijective correspondence, via the above morphism, with equivalence classes of terms of the same form in $\bbE K$. We have the following cases:
\begin{itemize}
	\item Suppose that $U = B_a$. In both $\bbE K$ and $\bbE' K$, if $\textbf{X} \vdash u:U$ is derivable, then $u$ is provably equal to $f_*(x_i)$ for unique $i \in \{1, ..., n\}$ and $f \in K(b,a)$ such that $X_i = B_b$.
	
	\item Suppose that $U \equiv E_b(f_*(x_i))$ for $f \in K(a,b)$ and $i \in \{1, ..., n\}$ such that $X_i = B_a$. If $\textbf{X} \vdash u:U$ is derivable in $\bbE' K$, then there exist a commutative diagram
	% https://q.uiver.app/#q=WzAsOCxbMywwLCJhIl0sWzYsMiwiY19OID0gYiJdLFswLDIsImNfMCJdLFsxLDIsImNfMSJdLFsyLDIsImNfMiJdLFszLDIsIlxcY2RvdHMiXSxbNSwyLCJjX3tOLTF9Il0sWzQsMiwiY197Ti0yfSJdLFsyLDMsIlxcYWxwaGFfMSIsMl0sWzQsMywiXFxhbHBoYV8yIl0sWzEsNiwiXFxhbHBoYV9OIl0sWzcsNiwiXFxhbHBoYV97Ti0xfSIsMl0sWzcsNSwiXFxhbHBoYV97Ti0yfSJdLFs0LDUsIlxcYWxwaGFfMyIsMl0sWzAsMiwiXFx2YXJwaGlfMCIsMV0sWzAsMywiXFx2YXJwaGlfMSIsMV0sWzAsNCwiXFx2YXJwaGlfMiIsMV0sWzAsNywiXFx2YXJwaGlfe04tMn0iLDFdLFswLDYsIlxcdmFycGhpX3tOLTF9IiwxXSxbMCwxLCJcXHZhcnBoaV9OID0gZiIsMV1d
	\[\begin{tikzcd}[ampersand replacement=\&,cramped,column sep=huge]
		\&\&\& a \&\&\& \\
		\\
		{c_0} \& {c_1} \& {c_2} \& \cdots \& {c_{N-2}} \& {c_{N-1}} \& {c_N = b}
		\arrow["{\varphi_0}"{description}, from=1-4, to=3-1]
		\arrow["{\varphi_1}"{description}, from=1-4, to=3-2]
		\arrow["{\varphi_2}"{description}, from=1-4, to=3-3]
		\arrow["{\varphi_{N-2}}"{description}, from=1-4, to=3-5]
		\arrow["{\varphi_{N-1}}"{description}, from=1-4, to=3-6]
		\arrow["{\varphi_N = f}"{description}, from=1-4, to=3-7]
		\arrow["{\alpha_1}"', from=3-1, to=3-2]
		\arrow["{\alpha_2}", from=3-3, to=3-2]
		\arrow["{\alpha_3}"', from=3-3, to=3-4]
		\arrow["{\alpha_{N-2}}", from=3-5, to=3-4]
		\arrow["{\alpha_{N-1}}"', from=3-5, to=3-6]
		\arrow["{\alpha_N}", from=3-7, to=3-6]
	\end{tikzcd}\]
	in $K$ and $j \in \{1, ..., n\}$ such that, omitting the first argument of $\textbf{tr}$ and $\textbf{tr}'$,
	$$
	X_j = E_{c_0}(\varphi_{0\;*}(x_i)), \qquad u \equiv \textbf{tr}'_{\alpha_N}\textbf{tr}_{\alpha_{N-1}} \cdots \textbf{tr}'_{\alpha_2}\textbf{tr}_{\alpha_1}(x_j).
	$$
	
	Observe that $u$ is derivably equal in $\textbf{X}$ to $\textbf{tr}_f(\textbf{tr}'_{\varphi_0}(x_j))$.
	
	On the other hand, if $\textbf{X} \vdash v:U$ is derivable in $\bbE K$, then $v = x_j$ for some $j$, $\varphi_0$, ..., $\varphi_N$, $\alpha_1$, ..., $\alpha_N$ as above. In this notation, $\bbE' K \rightarrow \bbE K$ sends $u$ to $v$. In particular, the induced map
	$$
	\tm_\textbf{X}^{\bbE' K}(U) \longrightarrow \tm_\textbf{X}^{\bbE K}(U)
	$$
	is surjective. For injectivity, consider derivable judgments $\textbf{X} \vdash u:U$ and $\textbf{X} \vdash u':U$ in $\bbE' K$ such that, letting $v$, $v'$ be the images of $u$, $u'$ in $\bbE K$, the judgment $\textbf{X} \vdash v \equiv v':U$ is derivable in $\bbE K$. Note that as it is not possible to derive equalities between variables in $\bbE K$, we must have $v = v' = x_j$ for some $j \in \{1, ..., n\}$ and $\varphi \in K(a,c)$ such that $X_j = E_c(\varphi_*(x_i))$ (here, $c$, $\varphi$ play the same role as $c_0$, $\varphi_0$ in the above diagram). But then
	$$
	u \equiv \textbf{tr}_f\textbf{tr}'_\varphi(x_j) \equiv u'
	$$
	is derivable in $\textbf{X}$, as required.
\end{itemize}
We conclude that $\bbE' K \rightarrow \bbE K$ is a weak equivalence of gats. Let us use that to calculate the category of set-models of $\bbE K$. Let $\mathcal Q$ be the full subcategory of $\mathcal C(\bbE' K)$, regarded as a precontextual category as in Cor. \ref{cor: weak morphisms full precontextual subcategory}, spanned by the following three kinds of objects:
\begin{itemize}
	\item the length-$0$ object $1_{\mathcal C(\bbE' K)} = []$;
	
	\item all length-$1$ objects, that is, $[x:B_a]$ for $a \in \Ob(K)$;
	
	\item length-$2$ objects $[x:B_a,\; y:E_b(f_*(x))]$ for $a$, $b \in \Ob(K)$ and $f \in K(a,b)$.
\end{itemize}
It is not difficult to verify from the universal properties of $\mathcal Q$ and $\bbE' K$ (the main point being that the axioms of $\bbE'$ only involve objects of $\mathcal Q$) that $L\mathcal Q \cong \mathcal C(\bbE' K)$, so we can use Cor. \ref{cor: weak morphisms full precontextual subcategory}. Similarly, the image of the composite
$$
\mathcal Q \overset{\iota_\mathcal Q}{\longrightarrow} L\mathcal Q \cong \mathcal C(\bbE' K) \longrightarrow \mathcal C(\bbE K)
$$
inherits from $\mathcal C(\bbE K)$ a precontextual structure; denote it by $\mathcal Q'$. This precontextual category admits the following description via generators and relations:
\begin{itemize}[noitemsep]
	\item It contains $1_{\mathcal C(\bbE K)} = []$, which is also terminal in $\mathcal Q'$, as well as $[x:B_a]$ and $[x:B_a,\; y:E_a(x)]$ for each $a \in \Ob(K)$. We have strict display maps
	$$
	[x:B_a,\; y:E_a(x)] \twoheadrightarrow [x:B_a] \twoheadrightarrow [].
	$$
	
	\item It contains morphisms
	$$
	[f_*(x)]:[x:B_a] \rightarrow [x':B_b], \qquad [f_*(x),y]:[x:B_a,\; y:E_a(y)] \rightarrow [x':B_b,\; y':E_b(y')]
	$$
	for each $f \in K(a,b)$, and these are composed as
	$$
	[g_*(x)] \circ [f_*(x)] = [(gf)_*(x)], \qquad [g_*(x),y] \circ [f_*(x),y] = [(gf)_*(x),y].
	$$
	
	\item For $f \in K(a,b)$, the diagram
	% https://q.uiver.app/#q=WzAsNCxbMCwwLCJbeDpCX2EsIHk6RV9hKHgpXSJdLFsyLDAsIlt4JzpCX2IsIHknOkVfYih4JyldIl0sWzAsMSwiW3g6Ql9hXSJdLFsyLDEsIlt4JzpCX2JdIl0sWzAsMiwiIiwwLHsic3R5bGUiOnsiaGVhZCI6eyJuYW1lIjoiZXBpIn19fV0sWzEsMywiIiwwLHsic3R5bGUiOnsiaGVhZCI6eyJuYW1lIjoiZXBpIn19fV0sWzIsMywiW2ZfKih4KV0iLDJdLFswLDEsIltmXyooeCkseV0iXV0=
	\[\begin{tikzcd}[ampersand replacement=\&,cramped]
		{[x:B_a, y:E_a(x)]} \&\& {[x':B_b, y':E_b(x')]} \\
		{[x:B_a]} \&\& {[x':B_b]}
		\arrow["{[f_*(x),y]}", from=1-1, to=1-3]
		\arrow[two heads, from=1-1, to=2-1]
		\arrow[two heads, from=1-3, to=2-3]
		\arrow["{[f_*(x)]}"', from=2-1, to=2-3]
	\end{tikzcd}\]
	commutes and is distinguished.
\end{itemize}
It follows that set-models of $\mathcal Q'$ -- thus of $\bbE K$, by Cor. \ref{cor: weak morphisms full precontextual subcategory} -- correspond to cartesian natural transformations $F' \Rightarrow F$ for $F$, $F' \in \Set^K$. We conclude that it is not the case in general that every set-model of $\bbE K$ is strictifiable: the model corresponding to $\eta:F' \Rightarrow F$ is strictifiable if and only if $\eta$ is loop-free in the sense of Definition \ref{def: loop-free natural transformation}, and every set-model being strictifiable is equivalent to colimits of shape $K$ in $\Set$ being van Kampen. See Proposition \ref{prop: characterization loop-free natural transformation} and Remark \ref{rem: vK colimits}.

It is not difficult to verify that the gat $\bbK$ from \S\ref{sec: introduction} is isomorphic to $\bbE(B \mathbb N)$.\footnote{In fact, the definition of $\bbE K$ can be extended so that $K$ can be a presentation of a category by generators and relations, and this gives a gat isomorphic to the one obtained by applying $\bbE(-)$ to the generated category.} More generally, whenever $\gpd(K)$ is not equivalent to a set (as with $K = B\bbN$), it is simple to construct a non-strictifiable model of $\bbE K$: we obtain a non-loop-free cartesian natural transformation in $\Set^K$ by restricting along the localization functor $K \rightarrow \gpd(K)$ any non-loop-free natural transformation in $\Set^{\gpd(K)}$ of the form $S \Rightarrow 1$; we can take, e.g., $S = \gpd(K)(a,-)$ where $a$ is an object with a non-trivial automorphism.
\end{example}

\begin{remark}
The failure of strictifiability of set-models of $\bbE K$ can't always be detected from whether $\gpd(K)$ is equivalent to a set. For instance, let $K$ be obtained from any category $J$ by freely adding an initial object. While $\gpd(K)$ is contractible, any (possibly non-loop-free) cartesian natural transformation in $\Set^J$ extends to one in $\Set^K$ by sending the initial object to $\varnothing$. As another example, we have a non-strictifiable model $F$ of $\bbE K$ for $K = \{\ell \leftarrow \bullet \rightarrow r\}$ specified by the following data:
% https://q.uiver.app/#q=WzAsMTIsWzAsNCwiXFxzdWJzdGFja3sxIFxcXFwgMH0iXSxbNiw0LCJcXHN1YnN0YWNrezEgXFxcXCAwfSJdLFs2LDYsIlxcc3Vic3RhY2t7MH0iXSxbMCw2LCJcXHN1YnN0YWNrezB9Il0sWzMsNiwiXFxzdWJzdGFja3swIFxcO1xcO1xcO1xcOyAxfSJdLFszLDQsIlxcc3Vic3RhY2t7KDAsMSkgXFwgKDEsMSkgXFxcXCAoMCwwKSBcXCAoMSwwKX0iXSxbMywwLCJbeDpCX1xcYnVsbGV0LCB5OkVfXFxidWxsZXQoeCldIl0sWzMsMiwiW3g6Ql9cXGJ1bGxldF0iXSxbMCwyLCJbeDpCX1xcZWxsXSJdLFs2LDIsIlt4OkJfcl0iXSxbMCwwLCJbeDpCX1xcZWxsLCB5OkVfXFxlbGwoeCldIl0sWzYsMCwiW3g6Ql9yLCB5OkVfcih4KV0iXSxbNCwzLCIhIl0sWzQsMiwiISIsMl0sWzUsMSwiKHgseSkgXFxtYXBzdG8geCt5IFxcbW9kIDIiXSxbNSwwLCJcXHRleHR7cHJvamVjdGlvbn0iLDJdLFswLDMsIiEiLDJdLFsxLDIsIiEiXSxbNSw0LCJcXHRleHR7cHJvamVjdGlvbn0iLDFdLFsxMSw5LCIiLDAseyJzdHlsZSI6eyJoZWFkIjp7Im5hbWUiOiJlcGkifX19XSxbMTAsOCwiIiwwLHsic3R5bGUiOnsiaGVhZCI6eyJuYW1lIjoiZXBpIn19fV0sWzcsOF0sWzYsMTBdLFs2LDExXSxbNiw3LCIiLDEseyJzdHlsZSI6eyJoZWFkIjp7Im5hbWUiOiJlcGkifX19XSxbNyw5XSxbNyw1LCIiLDAseyJzaG9ydGVuIjp7InRhcmdldCI6NDB9LCJzdHlsZSI6eyJ0YWlsIjp7Im5hbWUiOiJtYXBzIHRvIn19fV1d
\[\begin{tikzcd}[ampersand replacement=\&,cramped]
	{[x:B_\ell, y:E_\ell(x)]} \&\&\& {[x:B_\bullet, y:E_\bullet(x)]} \&\&\& {[x:B_r, y:E_r(x)]} \\
	\\
	{[x:B_\ell]} \&\&\& {[x:B_\bullet]} \&\&\& {[x:B_r]} \\
	\\
	\begin{array}{c} \substack{1 \\ 0} \end{array} \&\&\& \begin{array}{c} \substack{(0,1) \ (1,1) \\ (0,0) \ (1,0)} \end{array} \&\&\& \begin{array}{c} \substack{1 \\ 0} \end{array} \\
	\\
	{\substack{0}} \&\&\& {\substack{0 \;\;\;\; 1}} \&\&\& {\substack{0}}
	\arrow[two heads, from=1-1, to=3-1]
	\arrow[from=1-4, to=1-1]
	\arrow[from=1-4, to=1-7]
	\arrow[two heads, from=1-4, to=3-4]
	\arrow[two heads, from=1-7, to=3-7]
	\arrow[from=3-4, to=3-1]
	\arrow[from=3-4, to=3-7]
	\arrow[color={rgb,255:red,92;green,92;blue,214},"F",shorten=5pt, maps to, from=3-4, to=5-4]
	\arrow["{!}"', from=5-1, to=7-1]
	\arrow["{\text{projection}}"', from=5-4, to=5-1]
	\arrow["{(x,y) \mapsto x+y \mod 2}", from=5-4, to=5-7]
	\arrow["{\text{projection}}"{description}, from=5-4, to=7-4]
	\arrow["{!}", from=5-7, to=7-7]
	\arrow["{!}", from=7-4, to=7-1]
	\arrow["{!}"', from=7-4, to=7-7]
\end{tikzcd}\]
Letting $R_1$, $R_2:K \rightarrow \Set$ be the top and bottom rows of this diagram, the induced map $\gpd(\smallint R_1) \rightarrow \gpd(\smallint R_2)$ is the double cover of the circle, and this witnesses the fact that the colimit of $\{0\} \leftarrow \{0,1\} \rightarrow \{0\}$ is not van Kampen. These examples also show that strictifiability might not hold for $\mathcal A \in \Cont$ even if $\gpd(\textbf{D}(\mathcal A))$ is equivalent to a set.
\end{remark}

\begin{example}
\label{ex: sort+sort equality - strictifiable}
Let $\bbA$ be the generalized algebraic theory given by
$$
\vdash A \tp \qquad \vdash B \tp \qquad \vdash E \tp \qquad e:E \vdash A \equiv B \tp.
$$
A family-valued model of $\bbA$ corresponds to a function $M:\{A,B,E\} \rightarrow \mathscr U$ where
\begin{itemize}[noitemsep]
	\item[-] either $M(E) \neq \varnothing$ and $M(A) = M(B)$,
	
	\item[-] or $M(E) = \varnothing$.
\end{itemize}
Contexts in $\bbA$ are the sequences $x:X_1, ..., x_n:X_n$ where $x_1$, ..., $x_n$ are distinct variables and $X_i \in \{A, B, E\}$ for each $i$. Note that two contexts $(x_i:X_i)_{1 \le i \le n}$ and $(y_i:Y_i)_{1 \le i \le n}$ are provably equal if and only one of the following is satisfied:
\begin{itemize}[noitemsep]
	\item[-] $\{i \mid X_i = E\}$ and $\{i \mid Y_i = E\}$ are equal and non-empty.
	
	\item[-] $X_i = Y_i$ for all $i = 1$, ..., $n$.
\end{itemize}
Writing $[S]$ for $[x:S]$ where $S = A$, $B$ or $E$, consider the commutative diagram
% https://q.uiver.app/#q=WzAsNSxbMiwyLCIxX1xcbWF0aGNhbCBBIl0sWzIsMSwiXFx0ZXh0YmZ7QX0iXSxbMywxLCJcXHRleHRiZntCfSJdLFswLDEsIlxcdGV4dGJme0V9Il0sWzAsMCwiXFx0ZXh0YmZ7RX0gXFx0aW1lcyBcXHRleHRiZntBfSA9IFxcdGV4dGJme0V9IFxcdGltZXMgXFx0ZXh0YmZ7Qn0iXSxbMywwXSxbMSwwLCIiLDIseyJzdHlsZSI6eyJoZWFkIjp7Im5hbWUiOiJlcGkifX19XSxbMiwwLCIiLDIseyJzdHlsZSI6eyJoZWFkIjp7Im5hbWUiOiJlcGkifX19XSxbNCwzLCIiLDAseyJzdHlsZSI6eyJoZWFkIjp7Im5hbWUiOiJlcGkifX19XSxbNCwxXSxbNCwyXV0=
\[
\tag{\texttt{*}}
\begin{tikzcd}[ampersand replacement=\&,cramped]
	{[E] \times [A] = [E] \times [B]} \&\&\& \\
	{[E]} \&\& {[A]} \& {[B]} \\
	\&\& {1_{\mathcal C(\bbA)}}
	\arrow[two heads, from=1-1, to=2-1]
	\arrow[from=1-1, to=2-3]
	\arrow[from=1-1, to=2-4]
	\arrow[from=2-1, to=3-3]
	\arrow[two heads, from=2-3, to=3-3]
	\arrow[two heads, from=2-4, to=3-3]
\end{tikzcd}\]
in $\mathcal C(\bbA)$ where the two squares with upper-left corner $[E] \times [A]$ are distinguished. The corresponding span
% https://q.uiver.app/#q=WzAsMyxbMCwxLCJcXHRleHRiZntwfV97XFx0ZXh0YmYgQX0iXSxbMSwwLCJcXHRleHRiZntwfV97XFx0ZXh0YmYgRX0iXSxbMiwxLCJcXHRleHRiZntwfV97XFx0ZXh0YmYgQn0iXSxbMSwwXSxbMSwyXV0=
\[\begin{tikzcd}[ampersand replacement=\&,row sep=tiny]
	\& {\textbf{p}_{[E] \times [A]}} \& \\
	{\textbf{p}_{[A]}} \&\& {\textbf{p}_{[B]}}
	\arrow[from=1-2, to=2-1]
	\arrow[from=1-2, to=2-3]
\end{tikzcd}\]
in $\textbf{D}(\mathcal C(\bbA))$ cannot be extended into a commutative square (note that there are no non-identity morphisms out of $1_{\mathcal C(\bbA)}$), which implies that $\bbA$ is not cofibrant. However, as we will now verify, every set-model of $\bbA$ is strictifiable. We will construct a cofibrant replacement $\varphi:\bbA' \rightarrow \bbA$ and check that in the diagram of comparison functors
% https://q.uiver.app/#q=WzAsNCxbMCwwLCJcXGlpTW9kX3MoXFxtYXRoY2FsIEEpIl0sWzAsMSwiXFxpaU1vZF9zKFxcbWF0aGNhbCBBJykiXSxbMSwwLCJcXGlpTW9kX3coXFxtYXRoY2FsIEEpIl0sWzEsMSwiXFxpaU1vZF93KFxcbWF0aGNhbCBBJykiXSxbMCwyXSxbMiwzLCJcXHNpbWVxIl0sWzEsMywiXFxzaW1lcSIsMl0sWzAsMSwiXFx2YXJwaGleKiIsMl1d
\[\begin{tikzcd}[ampersand replacement=\&,cramped]
	{\iiMod_s(\bbA)} \& {\iiMod_w(\bbA)} \\
	{\iiMod_s(\bbA')} \& {\iiMod_w(\bbA'),}
	\arrow[from=1-1, to=1-2]
	\arrow["{\varphi^*}"', from=1-1, to=2-1]
	\arrow["\simeq", from=1-2, to=2-2]
	\arrow["\simeq"', from=2-1, to=2-2]
\end{tikzcd}\]
the left vertical arrow is an equivalence of categories; the $2$-out-of-$3$ property will then allow us to conclude that $\iiMod_s(\bbA) \rightarrow \iiMod_w(\bbA)$ is also an equivalence. As a first approximation, we can replace the sort equality $e:E \vdash A \equiv B \tp$ in $\bbA$ by an isomorphism $[E] \times [A] \cong [E] \times [B]$ compatible with the projections to $[E]$. Diagrammatically, this is achieved by taking the contextual category freely generated by (the precontextual category)
% https://q.uiver.app/#q=WzAsNixbMiwyLCIxX1xcbWF0aGNhbCBBIl0sWzIsMSwiXFx0ZXh0YmZ7QX0iXSxbMywxLCJcXHRleHRiZntCfSJdLFswLDEsIlxcdGV4dGJme0V9Il0sWzAsMCwiXFx0ZXh0YmZ7RX1cXHRleHRiZntBfSJdLFsxLDAsIlxcdGV4dGJme0V9XFx0ZXh0YmZ7Qn0iXSxbMywwXSxbMSwwLCIiLDIseyJzdHlsZSI6eyJoZWFkIjp7Im5hbWUiOiJlcGkifX19XSxbMiwwLCIiLDIseyJzdHlsZSI6eyJoZWFkIjp7Im5hbWUiOiJlcGkifX19XSxbNCwzLCIiLDAseyJzdHlsZSI6eyJoZWFkIjp7Im5hbWUiOiJlcGkifX19XSxbNSwzLCIiLDAseyJzdHlsZSI6eyJoZWFkIjp7Im5hbWUiOiJlcGkifX19XSxbNCw1LCIiLDEseyJvZmZzZXQiOi0xfV0sWzUsNCwiIiwxLHsib2Zmc2V0IjotMX1dLFs0LDFdLFs1LDJdXQ==
\[\begin{tikzcd}[ampersand replacement=\&,cramped]
	{[E] \times [A]} \& {[E] \times [B]} \&\& \\
	{[E]} \&\& {[A]} \& {[B]} \\
	\&\& {1_{\mathcal C(\bbA)}}
	\arrow[shift left, from=1-1, to=1-2]
	\arrow[two heads, from=1-1, to=2-1]
	\arrow[from=1-1, to=2-3]
	\arrow[shift left, from=1-2, to=1-1]
	\arrow[two heads, from=1-2, to=2-1]
	\arrow[from=1-2, to=2-4]
	\arrow[from=2-1, to=3-3]
	\arrow[two heads, from=2-3, to=3-3]
	\arrow[two heads, from=2-4, to=3-3]
\end{tikzcd}\]
where the arrows between $[E] \times [A]$ and $[E] \times [B]$ are isomorphisms inverse to each other; syntactically, we consider the gat $\bbT$ given by
$$
\vdash A \tp \qquad \vdash B \tp \qquad \vdash E \tp
$$
\vspace{-0.6cm}
\begin{align*}
	e:E,\; a:A & \vdash f(e,a):B\\
	e:E,\; b:B & \vdash g(e,b):A\\
	e:E,\; a:A & \vdash g(e,f(e,a)) \equiv a:A\\
	e:E,\; b:B & \vdash f(e,g(e,b)) \equiv b:B.
\end{align*}
Now, note that the interpretation $\bbT \rightarrow \bbA$ that collapses $f$, $g$ onto the identity morphism (that is, $f(e,a) \mapsto a$ and $g(e,b) \mapsto b$) is not a weak equivalence: the (well-formed) judgment
\[
\tag{\texttt{**}}
e, e': E,\; a:A \vdash f(e,a) \equiv f(e',a): B
\]
is not derivable in $\bbT$, but it corresponds via $\bbT \rightarrow \bbA$ to the derivable judgment
$$
e,e':E,\; a:A \vdash a \equiv a:B.
$$
Informally, $\bbT$ specifies an isomorphism between $A$ and $B$ depending on the parameter $e$, but this dependency disappears in $\bbA$. To fix this problem, we let $\bbA'$ be the theory obtained from $\bbA$ by adding (\texttt{**}) as an axiom. It is not difficult to prove that the interpretation $I:\bbA' \rightarrow \bbA$ (acting on judgments exactly as $\bbT \rightarrow \bbA$) in an acyclic fibration; this can be done by fully describing the contexts, sorts and terms of $\bbA$, $\bbA'$, and using Proposition \ref{prop: characterization full-and-faithful} to check that $\mathcal C(\bbA') \rightarrow \mathcal C(\bbA)$ is full-and-faithful.

We can now verify that $I^*:\iiMod_s(\bbA) \rightarrow \iiMod_s(\bbA')$ is essentially surjective, thus an equivalence of categories. There are two kinds of strict models $M:\mathcal C(\bbA') \rightarrow \Fam$:
\begin{itemize}
	\item those such that $M[E] = \varnothing$, in which case $M[A]$, $M[B]$ can be any small sets, and
	
	\item those such that $M[E] \neq \varnothing$, in which case $M[A]$, $M[B]$ are small sets equipped with an isomorphism $M[A] \cong M[B]$.
\end{itemize}
A model of the first kind is in the (strict) image of $I^*$, and one of the second kind is isomorphic to the model $M'$ given by $M'[E] = M[E]$, $M'[A] = M'[B] = M[A]$, and $id_{M[A]}$ as the isomorphism $M'[A] \cong M'[B]$. Since $M'$ is in the image of $I^*$, we conclude that the latter is essentially surjective, as desired.
\end{example}

\begin{remark}
\label{rem: strict vs weak presentation via precontextual category}
An interesting phenomenon happens in the above example: $\mathcal C(\bbA)$ can be presented by applying $L:\Precont \rightarrow \Cont$ to a precontextual category $\mathcal P$ (the one specified by diagram (\texttt{*})) such that no cofibrant replacement of $\mathcal C(\bbA)$ can be presented by $\mathcal P' \rightarrow \mathcal P$ for some precontextual category $\mathcal P'$.
\end{remark}

\section{Remarks on the closed monoidal structure on $\iiDMC_r$ and the set-valued semantics of gats}
\label{sec: remarks closed monoidal structure and set-valued semantics}

The monoidal model structure on $\Cont$ presents a closed monoidal structure on the homotopy $(2,1)$-category $\iiHo(\Cont) \simeq \iiDMC_r^{(2,1)}$ (see \cite{Lur17}[\S4.1.7]). We will now describe more explicitly the induced tensor product and exponentiation of rooted display map categories; after that, we will check that $\otimes$ has the expected behaviour at the level of categories of set-valued models, namely, that it preserves the tensor product of locally finitely presentable categories (possibly equipped with a weak factorization system).

The operations on rooted dmcs discussed below are directly adapted from the corresponding ones on contextual categories (\cite{Alm26}), and similar structures have been considered in the literature; see, e.g., the (cofibration part of the) closed monoidal structures from \cite{Hen16, Bar20}, and the wfs on the category of models of a clan from \cite{Hen16, Fre25, BarHen25}.

\subsection{Exponentials and tensor products in $\iiDMC_r \simeq \iiHo(\Cont)$}

\begin{definition}
	\label{def: relative morphism}
	Let $\mathcal A$, $\mathcal S$ be rooted dmcs. As a preliminary terminology, we will call a functor $F:\mathcal A \rightarrow \mathcal S$ \emph{admissible} if it sends every display map to an arrow that admits pullbacks along arbitrary arrows.
	
	We define a \emph{relative morphism} from $\mathcal A$ to $\mathcal S$ as a natural transformation $\pi:F \Rightarrow B$ between functors $\mathcal A \rightarrow \mathcal S$ such that $B$ is admissible and
	\begin{enumerate}[label=\textbf{RM(\roman*)}, noitemsep]
		\item $\pi_{1}:F(1) \rightarrow B(1)$ is an isomorphism for a terminal object $1 \in \mathcal A$;
		
		\item for every display map $p:a' \rightarrow a$ in $\mathcal A$, the unique filler arrow in
		% https://q.uiver.app/#q=WzAsNSxbMiwxLCJCKGEnKSJdLFsyLDIsIkIoYSkiXSxbMSwyLCJGKGEpIl0sWzAsMCwiRihhJykiXSxbMSwxLCJGKGEpIFxcdGltZXNfe0IoYSl9IEIoYScpIl0sWzMsMiwiRihwKSIsMix7ImN1cnZlIjozfV0sWzMsMCwiXFxwaV97YSd9IiwwLHsiY3VydmUiOi0zfV0sWzIsMSwiXFxwaV9hIiwyXSxbMCwxLCJCKHApIl0sWzMsNCwiIiwwLHsic3R5bGUiOnsiYm9keSI6eyJuYW1lIjoiZGFzaGVkIn19fV0sWzQsMl0sWzQsMF1d
		\[\begin{tikzcd}[ampersand replacement=\&,cramped]
			{F(a')} \&\& \\
			\& {F(a) \times_{B(a)} B(a')} \& {B(a')} \\
			\& {F(a)} \& {B(a)}
			\arrow[dashed, from=1-1, to=2-2]
			\arrow["{\pi_{a'}}", curve={height=-18pt}, from=1-1, to=2-3]
			\arrow["{F(p)}"', curve={height=18pt}, from=1-1, to=3-2]
			\arrow[from=2-2, to=2-3]
			\arrow[from=2-2, to=3-2]
			\arrow["{B(p)}", from=2-3, to=3-3]
			\arrow["{\pi_a}"', from=3-2, to=3-3]
		\end{tikzcd}\]
		is a display map.
		
		\item For every pullback square
		\[
		\squa{a'}{b'}{a}{b}{f'}{f}{p}{p'}
		\]
		in $\mathcal A$ where $p$, $p'$ are display maps, the induced commutative square
		\[
		\widesqua{F(a')}{F(b')}{F(a) \times_{B(a)} B(a')}{F(b) \times_{B(b)} B(b')}{F(f')}{F(f) \times_{B(f)} B(f')}{}{}
		\]
		is cartesian.
	\end{enumerate}
	It is not difficult to check that these conditions imply that $F$ is also admissible.
\end{definition}

\begin{proposition}
	Consider rooted dmcs $\mathcal A$ and $\mathcal S$, and natural transformations $B' \overset{\beta}{\longrightarrow} B \overset{\pi}{\longleftarrow} F$ between functors $\mathcal A \rightarrow \mathcal S$ where $B$, $B'$ are admissible and $\pi$ is a relative morphism. Then there exists a pullback diagram
	% https://q.uiver.app/#q=WzAsNCxbMSwwLCJGIl0sWzEsMSwiQiJdLFswLDEsIkInIl0sWzAsMCwiRiciXSxbMiwxLCJcXGJldGEiLDJdLFswLDEsIlxccGkiXSxbMywwLCJcXGJldGEnIl0sWzMsMiwiXFxwaSciLDJdXQ==
	\[
	\begin{tikzcd}[ampersand replacement=\&,cramped]
		{F'} \& F \\
		{B'} \& B
		\arrow["{\beta'}", from=1-1, to=1-2]
		\arrow["{\pi'}"', from=1-1, to=2-1]
		\arrow["\pi", from=1-2, to=2-2]
		\arrow["\beta"', from=2-1, to=2-2]
	\end{tikzcd}\]
	and (for any such pullback) $\pi'$ is a relative morphism.
\end{proposition}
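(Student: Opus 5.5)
The plan is to build $F'$ objectwise by pullback and then check \textbf{RM(i)}--\textbf{RM(iii)} by pasting lemmas for pullbacks. For each $a \in \mathcal A$, I first show that $\pi_a:F(a) \rightarrow B(a)$ admits pullbacks along arbitrary arrows, and then set
$$
F'(a) = B'(a) \times_{B(a)} F(a),
$$
with $\pi'_a$ and $\beta'_a$ the two projections. Functoriality of $F'$ and naturality of $\pi'$, $\beta'$ follow from the universal property of pullbacks, and the square in the statement is then a pullback in the functor category because it is a pullback pointwise. Uniqueness up to isomorphism means it suffices to verify the conditions for this particular choice.

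For the existence of the pullbacks of $\pi_a$, I use that $\mathcal A$ is rooted and induct on the length of a chain $a = a_n \twoheadrightarrow \cdots \twoheadrightarrow a_0$ with $a_0$ terminal. In the base case, $\pi_{a_0}$ is an isomorphism by \textbf{RM(i)}; this transfers to every terminal object, since all terminal objects are isomorphic and $\pi$ is natural. For the inductive step with $p:a' \twoheadrightarrow a$, I factor
$$
\pi_{a'} = \bigl(F(a) \times_{B(a)} B(a') \rightarrow B(a')\bigr) \circ \delta_p,
$$
where $\delta_p$ is the filler from \textbf{RM(ii)}. The second factor is a pullback of $\pi_a$, so by induction and pasting it admits pullbacks along any arrow. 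The first factor $\delta_p$ is a display map in $\mathcal S$, so it admits pullbacks as well. A composite of two arrows that each admit all pullbacks admits all pullbacks. Here I am reading the pullback $F(a) \times_{B(a)} B(a')$ as existing because $B(p)$ admits pullbacks, $B$ being admissible.

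Next I verify the three conditions for $\pi'$. \textbf{RM(i)} holds because a pullback of an isomorphism is an isomorphism. For \textbf{RM(ii)}, I form the cube whose top face is $F'(p)$, $F(p)$ and bottom face is $B'(p)$, $B(p)$, and identify $F'(a) \times_{B'(a)} B'(a')$ with $F(a) \times_{B(a)} B'(a')$ by pasting. Under this identification, the filler $\delta'_p$ is the pullback of $\delta_p$ along the map
$$
F(a) \times_{B(a)} B'(a') \rightarrow F(a) \times_{B(a)} B(a').
$$
That map is in turn a pullback because $F'(a')$ is the pullback of $F(a')$ along $B'(a') \rightarrow B(a')$. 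Since display maps in $\mathcal S$ are stable under pullback, $\delta'_p$ is a display map. For \textbf{RM(iii)}, the square of fillers for $\pi'$ is obtained from the corresponding cartesian square for $\pi$ by pulling back along the comparison maps. By pasting, a pullback of a cartesian square stays cartesian.

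I expect the main obstacle to be the bookkeeping in \textbf{RM(ii)}: showing that $\delta'_p$ really is a pullback of $\delta_p$. Concretely, this requires an iterated pasting argument in a $3\times 3$ grid, namely that $F'(a') \cong F(a') \times_{F(a)\times_{B(a)}B(a')} \bigl(F(a)\times_{B(a)}B'(a')\bigr)$. I would establish this by writing both sides as limits of the same finite diagram, checking that each of these limits exists in $\mathcal S$ using the pullback-existence results from the second paragraph.
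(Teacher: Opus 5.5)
Your proof is correct, but it secures the pointwise pullbacks by a different route from the paper. The paper first uses Proposition~\ref{prop: strictification for a rooted dmc} to reduce to $\mathcal A=\mathcal B_w$ for a contextual category $\mathcal B$. It then builds $F'(a)$, $\pi'_a$, $\beta'_a$ by induction along the tree of strict display maps, in the style of Construction 3.8 of \cite{Alm26}, extends to arrows by functoriality of pullbacks, and leaves \textbf{RM(i)}--\textbf{RM(iii)} as routine.

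You instead isolate a lemma: every component $\pi_a$ admits pullbacks along arbitrary arrows. You prove it by induction on chain length through the factorization $\pi_{a'}=q\circ\delta_p$, and then define $F'$ pointwise on an arbitrary rooted dmc. This gives a self-contained argument with no strictification step. It avoids transporting admissibility, relative morphisms and pullbacks in $\iiCat(\mathcal A,\mathcal S)$ along an equivalence $\mathcal B_w\simeq\mathcal A$, a reduction the paper asserts without detail. It also yields the by-product that components of a relative morphism are pullback-stable arrows. The paper's route, in exchange, gets canonical choices from the strict tree and matches the strict construction used for $\mathcal S^{\mathcal A}$.

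Your pasting steps hold up.
\begin{itemize}
\item For \textbf{RM(ii)}, the map $F(a)\times_{B(a)}B'(a')\to F(a)\times_{B(a)}B(a')$ is a pullback of $\beta_{a'}$. Stacking the $\delta'_p/\delta_p$ square on top of it gives the defining pullback of $F'(a')$, so the pasting lemma directly makes $\delta'_p$ a pullback of $\delta_p$. The detour through limits of a finite diagram is unnecessary.
\item For \textbf{RM(iii)}, consider the rectangle from $F'(a')$ to $F(b')$ over $F(a)\times_{B(a)}B'(a')\to F(b)\times_{B(b)}B(b')$. It factors both as the $\pi'$-square followed by the pullback face at $b'$, and as the pullback face at $a'$ followed by the cartesian $\pi$-square. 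Pasting then gives the claim.
\end{itemize}
One point should be stated explicitly: being a relative morphism is invariant under isomorphism over $B'$, because display maps are closed under isomorphism in $\iiAr(\mathcal S)$. This is what justifies reducing ``any such pullback'' to your chosen pointwise one.
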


\begin{proof}[Sketch of proof]
	By Proposition \ref{prop: strictification for a rooted dmc}, it suffices to prove the claim in the case where $\mathcal A$ is the dmc of some contextual category. This allows us construct by induction on the tree of strict display maps of $\mathcal A$ the object part of $F'$ and families of arrows $\pi'$, $\beta'$ such that for all $a \in \mathcal A$,
	\[
	\squa{F'(a)}{F(a)}{B'(a)}{B(a)}{\beta'_a}{\beta_a}{\pi'_a}{\pi_a}
	\]
	is cartesian. By functoriality of pullbacks, we can uniquely define the action $F'$ on arrows so that it becomes a functor and $\pi'$, $\beta'$ become natural transformations. It is not difficult to verify that $\pi':F' \Rightarrow B'$ is a relative morphism. (This argument is very similar to Construction 3.8 from \cite{Alm26}.)
\end{proof}

\begin{definition}
	\label{def: exponential rooted dmcs}
	For rooted dmcs $\mathcal A$, $\mathcal S$, we let $\langle \mathcal A,\mathcal S\rangle$ be the following rooted dmc:
	\begin{itemize}[noitemsep]
		\item Its underlying category is the full subcategory of $\iiCat(\mathcal A,\mathcal S)$ spanned by those $F:\mathcal A \rightarrow \mathcal S$ that fit into a sequence
		$$
		F = F_n \Rightarrow F_{n-1} \Rightarrow \cdots \Rightarrow F_1 \Rightarrow F_0
		$$
		for $n \ge 0$ where $F_0$ is terminal and, for $0 \le i \le n-1$, $F_i$ is admissible and $F_{i+1} \Rightarrow F_i$ is a relative morphism (this implies that $F$ is admissible). If a functor $F$ can be obtained in this way, we will call it \emph{$n$-constructible}.
		
		\item Display maps are the relative morphisms.
	\end{itemize}
\end{definition}

\begin{definition}
\label{def: tensor product dmcs}
For rooted dmcs $\mathcal A$, $\mathcal B$, $\mathcal S$, a functor $F:\mathcal A \times \mathcal B \rightarrow \mathcal S$ is a \emph{bimorphism} from $(\mathcal A,\mathcal B)$ to $\mathcal S$ if it has the following properties:
\begin{itemize}
	\item For $a \in \mathcal A$ and $b \in \mathcal B$, the functors $H(a,-): \mathcal B \rightarrow \mathcal C$ and $H(-,b):\mathcal A \rightarrow \mathcal S$ are morphisms between the associated clans -- that is, they send display maps to finite composites of display maps, pullbacks of display maps to pullbacks, and preserve terminal objects.
	
	\item For display maps $p:a' \rightarrow a$ and $q:b' \rightarrow b$ in $\mathcal A$ and $\mathcal B$, respectively, the arrow
	$$
	(H(p,id),\; H(id,q)): H(a',b') \longrightarrow H(a,b') \times_{H(a,b)} H(a',b)
	$$
	is a display map.
\end{itemize}
A \emph{tensor product} of $\mathcal A$, $\mathcal B$ is a representing object for the $2$-functor $\iiDMC_r \rightarrow \iiCat$ sending $\mathcal S$ to the category of bimorphisms $(\mathcal A,\mathcal B) \rightarrow \mathcal S$. If it exists (which is always the case, as we will check), we denote it by $\mathcal A \otimes^{\text{dmc}} \mathcal B$; we then have a universal bimorphism $\mathcal A \times \mathcal B \rightarrow \mathcal A \otimes^{\text{dmc}} \mathcal B$.
\end{definition}

The following result can be proved in a straightforward way by comparing Definitions \ref{def: exponential rooted dmcs} and \ref{def: tensor product dmcs}:

\begin{proposition}
\label{prop: bimorphisms vs maps to exponential - rooted dmcs}
For rooted dmcs $\mathcal A$, $\mathcal B$, $\mathcal S$, a functor $H:\mathcal A \times \mathcal B \rightarrow \mathcal S$ is a bimorphism if and only if it belongs to the essential image of
$$
\iiDMC_r(\mathcal A,\langle \mathcal B, \mathcal S\rangle) \hookrightarrow \iiCat(\mathcal A, \iiCat(\mathcal B,\mathcal S)) \cong \iiCat(\mathcal A \times \mathcal B, \mathcal S).
$$
\end{proposition}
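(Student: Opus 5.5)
The plan is to prove both directions by unwinding, under the currying isomorphism $\iiCat(\mathcal A,\iiCat(\mathcal B,\mathcal S)) \cong \iiCat(\mathcal A\times\mathcal B,\mathcal S)$, what it means for $\widetilde H:\mathcal A\to\iiCat(\mathcal B,\mathcal S)$, $a\mapsto H(a,-)$, to factor through $\langle\mathcal B,\mathcal S\rangle$ as a morphism of rooted dmcs. Since $\langle\mathcal B,\mathcal S\rangle$ is a full subcategory of $\iiCat(\mathcal B,\mathcal S)$ whose display maps are closed under isomorphism, being in the essential image amounts to the following. $H$ is a bimorphism if and only if
\begin{enumerate}[label=(\arabic*)]
\item each $H(a,-)$ is constructible;
\item $\widetilde H$ sends display maps to relative morphisms;
\item $\widetilde H$ sends a terminal object to a terminal object of $\langle\mathcal B,\mathcal S\rangle$;
\item $\widetilde H$ sends pullbacks of display maps to pullbacks in $\langle\mathcal B,\mathcal S\rangle$.
\end{enumerate}

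First I check what terminal objects and pullbacks in $\langle\mathcal B,\mathcal S\rangle$ look like. A terminal object is the constant functor at a terminal object of $\mathcal S$. For pullbacks of relative morphisms, I use the preceding Proposition: a pullback of a relative morphism $\pi:F\Rightarrow B$ along $\beta:B'\Rightarrow B$ exists pointwise and is again relative. Hence pullbacks in $\langle\mathcal B,\mathcal S\rangle$ of display maps are computed pointwise, provided the pullback object is again constructible. That holds by pulling back the whole constructing tower of $F$ along $\beta$, using the Proposition at each stage. Consequently, condition (3) reduces to $H(1,b)$ being terminal, and condition (4) reduces to $H(-,b)$ preserving pullbacks of display maps for each $b$.

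Next I compare condition (2) with the bimorphism axioms. For a display map $p:a'\to a$, the natural transformation $H(p,-):H(a',-)\Rightarrow H(a,-)$ is relative exactly when three things hold. RM(i) says $H(p,1)$ is an isomorphism, which follows from $H(-,1)$ preserving terminal objects. RM(ii), applied to each display map $q$ of $\mathcal B$, is precisely the pushout-product display condition in Definition \ref{def: tensor product dmcs}. RM(iii) is a pullback-pasting statement. It follows from $H(a',-)$ and $H(a,-)$ preserving pullbacks of display maps: pasting the cube faces shows that the comparison square into the fibre products is cartesian.

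The main obstacle is the interplay between single display maps and finite composites of them, together with constructibility. The bimorphism definition asks $H(-,b)$ and $H(a,-)$ to be clan morphisms, so display maps go to finite composites of display maps, whereas condition (2) only speaks of relative morphisms. For the direction from bimorphism to factorization, I would get constructibility of $H(a,-)$ by choosing a chain $a=a_n\twoheadrightarrow\cdots\twoheadrightarrow a_0$ with $a_0$ terminal. This yields a tower $H(a_n,-)\Rightarrow\cdots\Rightarrow H(a_0,-)$ in which each step is relative by the previous paragraph and $H(a_0,-)$ is terminal. For the converse, I would show that evaluation at $b$ sends relative morphisms to finite composites of display maps. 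I would do this by induction on a display chain of $b$: RM(ii) factors $\pi_b$ through the display map into $F(b)\times_{B(b)}B(b')$, followed by a pullback of $\pi$ at a shorter object, and RM(i) handles the base case. The same induction, combined with admissibility, should give preservation of display maps by $H(a,-)$, via the constructing tower of $H(a,-)$. This is the step I expect to require the most care.
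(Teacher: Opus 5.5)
Your proposal is correct and is the same definition-unwinding the paper intends; the paper gives no argument beyond saying the statement follows by comparing Definitions \ref{def: exponential rooted dmcs} and \ref{def: tensor product dmcs}. Three small repairs are needed. RM(i) for $H(p,-)$ comes from $H(a,-)$ and $H(a',-)$ preserving terminal objects, not from $H(-,1)$. The pointwise pullback $F'$ is constructible simply because $F'\Rightarrow B'$ is relative by the preceding Proposition and $B'$ already has a tower (the tower of $F$ does not lie over $B$, so it cannot be pulled back along $\beta$). Finally, in the direction from a factorization to a bimorphism you must also check that a constructible $H(a,-)$ preserves terminal objects and pullbacks of display maps; this follows by induction along its tower, using RM(i) and then RM(iii) with the same pasting computation you use in the other direction, counting isomorphisms among the clan's fibrations as your base case already does.
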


\begin{proposition}
\label{prop: 2-categorical universal property of Cont tensor product}
For cofibrant contextual categories $\mathcal A$, $\mathcal B$, the functor $\otimes_{\mathcal A,\mathcal B}:\mathcal A \times \mathcal B \longrightarrow \mathcal A \otimes \mathcal B$ as in \cite{Alm25, Alm26} is a universal bimorphism in the sense of Definition \ref{def: tensor product dmcs}; hence, $\mathcal A \otimes \mathcal B$ is a tensor product $\mathcal A_w \otimes^{\text{dmc}} \mathcal B_w$.
\end{proposition}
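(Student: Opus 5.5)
We reduce to the closed structure: by the defining (1-categorical, in fact $\Cat$-enriched) property of the monoidal structure from \cite{Alm25, Alm26}, strict morphisms $\mathcal A \otimes \mathcal B \rightarrow \mathcal S$ correspond to strict morphisms $\mathcal A \rightarrow \underline{\Cont}(\mathcal B,\mathcal S)$, where $\underline{\Cont}(\mathcal B,\mathcal S)$ is the internal hom. Equivalently, they are the functors $\mathcal A \times \mathcal B \rightarrow \mathcal S$ that are ``strict bimorphisms'' (strict in each variable, with the pushout-product comparison sending pairs of strict display maps to strict display maps, up to the relevant length bookkeeping), and the universal one is $\otimes_{\mathcal A,\mathcal B}$. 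The plan is to compare this strict universal property with the weak one of Definition \ref{def: tensor product dmcs}. We must show that for every locally small contextual category $\mathcal S$ (which suffices by Proposition \ref{prop: strictification for a rooted dmc}, since every rooted dmc is equivalent to some $\mathcal S_w$), precomposition with $\otimes_{\mathcal A,\mathcal B}$ gives an equivalence
$$
\iiDMC_r((\mathcal A \otimes \mathcal B)_w,\mathcal S_w) \longrightarrow \textbf{Bimor}((\mathcal A_w,\mathcal B_w),\mathcal S_w).
$$

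First, by the monoidal model structure (Proposition \ref{prop: Cont is a monoidal model cat}, condition (i) applied to $\mathcal O_0 \rightarrow \mathcal A$ and $\mathcal O_0 \rightarrow \mathcal B$, noting $\mathcal O_0 \otimes - \cong \mathcal O_0$), $\mathcal A \otimes \mathcal B$ is cofibrant. So, by Proposition \ref{prop: strictification - cofibrant domain}, the left side is equivalent to $\iiCont(\mathcal A \otimes \mathcal B,\mathcal S) \cong \iiCont(\mathcal A,\underline{\Cont}(\mathcal B,\mathcal S))$, using the $\Cat$-enrichment of the closed structure \cite[\S10]{Alm26}. Applying Proposition \ref{prop: strictification - cofibrant domain} again to the cofibrant $\mathcal A$, this becomes $\iiDMC_r(\mathcal A_w,\underline{\Cont}(\mathcal B,\mathcal S)_w)$.

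Next, we identify $\underline{\Cont}(\mathcal B,\mathcal S)_w$ with $\langle \mathcal B_w,\mathcal S_w\rangle$ from Definition \ref{def: exponential rooted dmcs}. The objects of the internal hom are towers of strict ``relative'' morphisms built by iterated distinguished pullback, so there is a forgetful comparison functor into $\langle \mathcal B_w,\mathcal S_w\rangle$. We show that it is an equivalence of rooted dmcs. Full faithfulness is clear since both sides are full subcategories of functor categories. For essential surjectivity, and for reflection of display maps, we strictify: given an $n$-constructible $F$ with relative morphisms $F_{i+1}\Rightarrow F_i$, we proceed by induction on $i$. Replacing $F_i$ by an isomorphic strict object, we then replace $F_{i+1}$ by its pullback along this isomorphism, using the preceding proposition on pullbacks of relative morphisms. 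It then remains to make the relative morphism $F_{i+1}\Rightarrow F_i$ strict, that is, componentwise given by distinguished squares. This is a strictification of a weak morphism out of $\mathcal B$ into a slice-like contextual category, and we handle it with the local-strictification machinery of Proposition \ref{prop: strictification - out of cont cat with a basis}, applied to the cofibrant $\mathcal B$ and passing through a cellular $\mathcal B'$ via retracts. This step is the main obstacle: making RM(ii)/(iii) strict needs a basis-indexed choice of strict display maps compatible with all distinguished squares of $\mathcal B$, in the style of Construction \ref{constr: functor associated with a local strictification}.

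Finally, Proposition \ref{prop: bimorphisms vs maps to exponential - rooted dmcs} identifies $\iiDMC_r(\mathcal A_w,\langle\mathcal B_w,\mathcal S_w\rangle)$ with the category of bimorphisms. It remains to check that the composite of all these equivalences is precomposition with $\otimes_{\mathcal A,\mathcal B}$; this follows by tracing the identity of $\mathcal A\otimes\mathcal B$ through the adjunction. Naturality in $\mathcal S$ is routine, so $\otimes_{\mathcal A,\mathcal B}$ is a universal bimorphism and $\mathcal A\otimes\mathcal B$ is a tensor product $\mathcal A_w \otimes^{\text{dmc}} \mathcal B_w$.
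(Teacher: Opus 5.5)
Your outer chain of equivalences is the paper's own: $\iiCont(\mathcal A\otimes\mathcal B,\mathcal S)\cong\iiCont(\mathcal A,\mathcal S^{\mathcal B})\simeq\iiDMC_r(\mathcal A_w,(\mathcal S^{\mathcal B})_w)\simeq\iiDMC_r(\mathcal A_w,\langle\mathcal B_w,\mathcal S_w\rangle)\simeq\{\text{bimorphisms}\}$. Your observation that $\mathcal A\otimes\mathcal B$ is cofibrant (pushout-product of $\mathcal O_0\to\mathcal A$ and $\mathcal O_0\to\mathcal B$, with $\mathcal O_0\otimes\mathcal B\cong\mathcal O_0$) is correct. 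It is exactly what lets you replace $\iiDMC_r((\mathcal A\otimes\mathcal B)_w,\mathcal S_w)$ by $\iiCont(\mathcal A\otimes\mathcal B,\mathcal S)$, a step the paper leaves implicit.

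The gap is the third equivalence, $(\mathcal S^{\mathcal B})_w\simeq\langle\mathcal B_w,\mathcal S_w\rangle$ for cofibrant $\mathcal B$. This is where the real content lies; the paper proves it separately, as Proposition~\ref{prop: strictification exponential}. Your plan is to fix a strict $F_i$ and make $F_{i+1}\Rightarrow F_i$ strict by applying Proposition~\ref{prop: strictification - out of cont cat with a basis} to ``a slice-like contextual category''. This does not go through as stated. That proposition turns a weak morphism $\mathcal B_w\to\mathcal C_w$, for a contextual category $\mathcal C$, into a strict morphism that is only \emph{isomorphic} to it. What you need is a strict indexed sort lying \emph{on the nose} over the $b$-dependent base $F_i$. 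You never exhibit a $\mathcal C$ whose strict morphisms out of $\mathcal B$ are exactly such indexed sorts. The natural candidate, the pullback along $F_i$ of the truncation $\mathcal S^{\mathcal O_{i+1}^{\pre}}\to\mathcal S^{\mathcal O_i^{\pre}}$, turns the problem into strictifying a weak \emph{section} while keeping it a section. The local-strictification machinery has no such relative form. You flag this yourself as the main obstacle, and it is left open.

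The missing idea is to transpose the whole tower instead of building it level by level. By the $\Cat$-enriched adjunctions, the length-$n$ objects of $\mathcal S^{\mathcal B}$ form $\iiCont(\mathcal O_n,\mathcal S^{\mathcal B})\cong\iiCont(\mathcal O_n^{\pre},\mathcal S^{\mathcal B})\cong\iiCont(\mathcal B,\mathcal S^{\mathcal O_n^{\pre}})$. Moreover, $(\mathcal S^{\mathcal O_n^{\pre}})_w$ is equivalent to an explicit dmc of chains $s_n\to\cdots\to s_0$ in $\mathcal S$. Its display maps are the chain-level relative morphisms: maps of chains, invertible at level $0$, whose fillers $s'_i\to s'_{i-1}\times_{s_{i-1}}s_i$ are display maps. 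Establishing this only requires replacing those fillers, inductively in $i$, by isomorphic strict display maps, an elementary construction inside $\mathcal S$ that needs no basis.

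One then checks that weak morphisms $\mathcal B_w\to(\mathcal S^{\mathcal O_n^{\pre}})_w$ are exactly the $n$-step towers $F_n\Rightarrow\cdots\Rightarrow F_0$ of relative morphisms. Proposition~\ref{prop: strictification - cofibrant domain}, applied to the cofibrant $\mathcal B$, then strictifies an $n$-constructible tower all at once. This gives essential surjectivity, and the same argument at length $n+1$ gives reflection of display maps. Strictifying absolutely like this makes your induction on $i$ unnecessary.

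If you insist on keeping $F_i$ fixed, you must add a transport step: strictify into $\mathcal S^{\mathcal O_{i+1}^{\pre}}$, then move the result along the induced isomorphism of truncations. This works because the truncation is a fibration (monoidal model structure, $\mathcal S$ fibrant). By the $\Cat$-model structure, $\iiCont(\mathcal B,-)$ turns it into an isofibration when $\mathcal B$ is cofibrant.
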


Proving this statement will require some preliminaries. Observe that it implies, following the discussion earlier in this section, that the closed monoidal structure on $\iiDMC_r \simeq \iiHo(\Cont)$ presented by the one on $\Cont$ is indeed as in Definition \ref{def: tensor product dmcs}. We will also see (and use in the proof of Prop. \ref{prop: 2-categorical universal property of Cont tensor product}) that exponentials of rooted dmcs as in Definition \ref{def: exponential rooted dmcs} can be computed as $\mathcal S^\mathcal A$ for $\mathcal A$, $\mathcal S \in \Cont$ with $\mathcal A$ cofibrant.

\begin{construction}
	For contextual categories $\mathcal A$, $\mathcal S$, the exponential $\mathcal S^\mathcal A$ from \cite[\S3]{Alm26} is equipped with a forgetful functor $|\mathcal S^\mathcal A| \rightarrow \iiCat(\mathcal A,\mathcal S)$ that assigns to each object
	$$
	\textbf{F} = (F_n \Rightarrow F_{n-1} \Rightarrow \cdots \Rightarrow F_1 \Rightarrow F_0) \in |\mathcal S^\mathcal A|
	$$
	its ``top level" $\textbf{F}_\varheartsuit = F_n$. Here, $F_0$ is constant on the distinguished terminal object of $\mathcal S$, and each $F_{i+1} \Rightarrow F_i$ is, in the terminology of \cite{Alm26}, an \emph{indexed sort}, which is a special kind of relative morphism where conditions RM(i)-RM(iii) are replaced by analogous ones involving the strict structure of $\mathcal A$. Hence, $\textbf{F}_\varheartsuit \in \langle \mathcal A_w, \mathcal S_w \rangle$ and we have a morphism of dmcs $(\mathcal S^\mathcal A)_w \rightarrow \langle \mathcal A_w,\mathcal S_w\rangle$.
\end{construction}

\begin{proposition}
\label{prop: strictification exponential}
Let $\mathcal A$, $\mathcal S \in \Cont$. If $\mathcal A$ is cofibrant, then $(-)_\varheartsuit: (\mathcal S^\mathcal A)_w \rightarrow \langle \mathcal A_w,\mathcal S_w \rangle$ is an equivalence in $\iiDMC_r$.
\end{proposition}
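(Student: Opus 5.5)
We need to show that $(-)_\varheartsuit:(\mathcal S^\mathcal A)_w \rightarrow \langle \mathcal A_w,\mathcal S_w\rangle$ is an equivalence of rooted dmcs. By the observation after Definition \ref{def: weak equivalence} (a morphism of dmcs is an equivalence iff its underlying functor is an equivalence and it is essentially surjective on display maps), there are three steps. First, $(-)_\varheartsuit$ is full and faithful; since morphisms in $|\mathcal S^\mathcal A|$ are by construction natural transformations between top levels, this is immediate. Second, it is essentially surjective on objects. Third, every relative morphism $\pi:G\Rightarrow \textbf{F}_\varheartsuit$ is isomorphic, in the arrow category, to the image of a strict display map $\textbf{G}\twoheadrightarrow\textbf{F}$ of $\mathcal S^\mathcal A$. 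Preservation of terminal objects, display maps and their pullbacks is already recorded in the preceding construction, since indexed sorts are relative morphisms.

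The second step follows from the third by induction on $n$ for $n$-constructible functors. Indeed, given $F_n\Rightarrow F_{n-1}\Rightarrow\cdots\Rightarrow F_0$, induction supplies $\textbf{F}'$ of length $n-1$ with $\textbf{F}'_\varheartsuit\cong F_{n-1}$. Transporting $F_n\Rightarrow F_{n-1}$ along this isomorphism gives a relative morphism into $\textbf{F}'_\varheartsuit$, and the third step then lifts it. So everything reduces to the third step: every relative morphism $\pi:G\Rightarrow B$ with $B=\textbf{F}_\varheartsuit$ is isomorphic over $B$ to an indexed sort.

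For the third step I would adapt the local strictification technique of \S\ref{subsec: what is needed to strictify}. Construction \ref{constr: functor associated with a local strictification} produces strict structure by choosing, over each strict display map $\textbf{p}_a$, a strict display map in the target and transporting along distinguished squares. This works uniformly whenever one has a coherent choice over a terminal object of each connected component of $\textbf{D}(\mathcal A)$. First assume $\mathcal A$ is cellular, so that it has a basis $\textbf{P}$ by Proposition \ref{prop: cellular presentation -> basis}. Proceed by recursion on the tree of strict display maps of $\mathcal A$ to build $G'(a)$ together with isomorphisms $G'(a)\cong G(a)$ over $B(a)$, arranged so that $G'(a)\to G'(\partial a)\times_{B(\partial a)}B(a)$ is a strict display map of $\mathcal S$ and the assignment is strictly compatible with distinguished squares. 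Condition RM(ii) makes the comparison map a display map, i.e.\ isomorphic to a strict one. For each $q\in\textbf{P}$ choose such a strict replacement once. For any other $a$, define the strict display map as the distinguished pullback along the image of the unique morphism $\textbf{p}_a\to U(\textbf{p}_a)$ in $\textbf{D}(\mathcal A)$. RM(iii) guarantees that this pullback agrees up to unique isomorphism with the comparison map for $a$. The uniqueness of morphisms to the basis element, together with strict functoriality of distinguished pullbacks in $\mathcal S$, gives strict preservation of distinguished squares, exactly as in Proposition \ref{prop: F_S is a contextual functor}. RM(i) handles the root. The result is an indexed sort $\textbf{G}\twoheadrightarrow\textbf{F}$ whose top level is isomorphic to $\pi$.

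Finally, to pass from cellular to cofibrant $\mathcal A$, write $\mathcal A$ as a retract $\mathcal A\xrightarrow{i}\mathcal A'\xrightarrow{r}\mathcal A$ with $\mathcal A'$ cellular. Both constructions $\mathcal S^{(-)}$ and $\langle(-)_w,\mathcal S_w\rangle$ are contravariantly functorial, and $(-)_\varheartsuit$ is natural in $\mathcal A$. Hence the comparison for $\mathcal A$ is a retract of the comparison for $\mathcal A'$ in the arrow category of $\iiDMC_r$, and equivalences are closed under retracts; this mirrors Proposition \ref{prop: strictification - cofibrant domain}. I expect the main obstacle to be the bookkeeping in the third step. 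The hard part is not the existence of the isomorphisms but ensuring that the chosen strict display maps in $\mathcal S$ satisfy the strict indexed-sort axioms of \cite{Alm26} on the nose. A secondary point is checking that the retract argument is legitimate, that is, that $(-)_\varheartsuit$ is strictly natural in $\mathcal A$.
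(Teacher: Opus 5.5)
Your proposal is correct, but it takes a different route from the paper.

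\textbf{The paper's route.} The paper never strictifies a relative morphism by hand. It first identifies the category of length-$n$ objects of $\mathcal S^\mathcal A$ (with natural transformations of the underlying chains) with $\iiCont(\mathcal O_n,\mathcal S^\mathcal A)$. Using the $\Cat$-enriched adjunction between $\iiPrecont$ and $\iiCont$ together with the exponential adjunction, it rewrites this as $\iiCont(\mathcal O_n^\pre,\mathcal S^\mathcal A)\cong\iiCont(\mathcal A,\mathcal S^{\mathcal O_n^\pre})$. It then applies the already proved Proposition \ref{prop: strictification - cofibrant domain} with target $\mathcal S^{\mathcal O_n^\pre}$. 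All remaining work is on the target side: it describes $(\mathcal S^{\mathcal O_n^\pre})_w$ explicitly, and identifies weak morphisms $\mathcal A_w\to(\mathcal S^{\mathcal O_n^\pre})_w$ with chains $F_n\Rightarrow\cdots\Rightarrow F_0$ of relative morphisms. Cofibrancy enters only through that proposition, which already contains the retract argument. Whole chains, and hence display maps, are handled at once.

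\textbf{Your route.} You instead prove a fibred version of Proposition \ref{prop: strictification - out of cont cat with a basis} and Construction \ref{constr: functor associated with a local strictification}. You do this one level at a time, strictifying a single relative morphism over a fixed strict base $\textbf F_\varheartsuit$. You then pass from cellular to cofibrant by exhibiting the comparison functor for $\mathcal A$ as a retract of the one for $\mathcal A'$. This is more elementary: it avoids precontextual categories and the enriched adjunction, and it makes essential surjectivity on display maps explicit. The price is that you must work directly with the indexed-sort axioms of \cite{Alm26} and check strict naturality of $(-)_\varheartsuit$ in $\mathcal A$.

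\textbf{Two points of care in your cellular step.}
\begin{enumerate}
\item At a basis element $c$, choose the strict replacement over the strict pullback of $B(c)\to B(\partial c)$ along $\pi_{\partial c}:G(\partial c)\to B(\partial c)$. This uses the original $G$, just as the paper uses $F(\partial c)$. Do not build it from $G'(\partial c)$: $\ell(c)$ is unrelated to $\ell(a)$, so the recursion on length would not be well-founded.
\item Define $\psi_a$ by compatibility with the top arrows of the squares into $c$. Then the pasting argument of Proposition \ref{prop: F_S is a contextual functor} shows that $\psi_b^{-1}G(g')\psi_a$ equals the top arrow of the distinguished square. Lying over the same base map alone would not force this.
\end{enumerate}
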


\begin{proof}
	It is clear that $(-)_\varheartsuit$ is full-and-faithful. Now, it suffices to prove for all $n \ge 0$ that if $F \in \langle \mathcal A_w,\mathcal S_w\rangle$ is $n$-constructible, then it is isomorphic to $\textbf{G}_\varheartsuit$ for some length-$n$ object $\textbf{G} \in \mathcal S^\mathcal A$.
	
	Consider the category whose objects are the length-$n$ objects of $\mathcal S^\mathcal A$ and whose morphisms between two such objects are the natural transformations between their underlying length-$n$ chains of functors $\mathcal A \rightarrow \mathcal S$. In terms of the $\Cat$-enrichment of $\Cont$, this can be described as $\iiCont(\mathcal O_n,\mathcal S^\mathcal A)$. Assuming that $\mathcal A$ is cofibrant, we have
	\[
	\tag{\texttt{*}}
	\iiCont(\mathcal O_n,\mathcal S^\mathcal A) \cong \iiCont(\mathcal O_n^\pre, \mathcal S^\mathcal A) \cong \iiCont(\mathcal A, \mathcal S^{\mathcal O_n^\pre}) \simeq \iiDMC_w(\mathcal A_w,(\mathcal S^{\mathcal O_n^\pre})_w)
	\]
	where $\mathcal O_n^\text{\text{\text{\text{\text{\text{pre}}}}}}$ is the precontextual category $o_n \twoheadrightarrow o_{n-1} \twoheadrightarrow \cdots \twoheadrightarrow o_1 \twoheadrightarrow o_0$ discussed in Construction \ref{constr: O_n etc}; the first isomorphism above follows from the $\Cat$-enrichment of the adjunction between $\iiCont$ and $\iiPrecont$ (see Remark \ref{rem: use of precontextual categories}).
	
	The upshot is that the dmc of $\mathcal S^{\mathcal O_n^\pre}$ can be described quite explicitly. Indeed, consider a diagram
	$$
	s_* = (s_n \xrightarrow{p_n} \cdots \xrightarrow{p_2} s_1 \xrightarrow{p_1} s_0)
	$$
	in $\iiCat(\mathcal O_n^\pre,\mathcal S) \cong \iiCat(\{0 < \cdots < n\}, \mathcal S)$ where $s_0 = 1_\mathcal S$ and each $p_i$ is a composite of strict display maps (our case of interest being the top level of some object of $\mathcal S^{\mathcal O_n^\pre}$); also, suppose given a diagram
	\[\begin{tikzcd}[ampersand replacement=\&,cramped]
		{s'_n} \& {s'_{n-1}} \& \cdots \& {s'_1} \& {s'_0} \\
		{s_n} \& {s_{n-1}} \& \cdots \& {s_1} \& {s_0}
		\arrow["{p'_n }", from=1-1, to=1-2]
		\arrow["{\pi_n}"', from=1-1, to=2-1]
		\arrow["{p'_{n-1}}", from=1-2, to=1-3]
		\arrow["{\pi_{n-1}}"', from=1-2, to=2-2]
		\arrow["{p'_2}", from=1-3, to=1-4]
		\arrow["{p'_1}", from=1-4, to=1-5]
		\arrow["{\pi_1}"', from=1-4, to=2-4]
		\arrow["{\pi_0}", from=1-5, to=2-5]
		\arrow["{p_n}"', from=2-1, to=2-2]
		\arrow["{p_{n-1}}"', from=2-2, to=2-3]
		\arrow["{p_2}"', from=2-3, to=2-4]
		\arrow["{p_1}"', from=2-4, to=2-5]
	\end{tikzcd}\]
	such that for all $i$, the induced arrow $s'_i \rightarrow s'_{i-1} \times_{s_{i-1}} s_i$ is a (not necessarily strict) display map and $\pi_0$ is an isomorphism. Then we can, inductively, construct a diagram
	% https://q.uiver.app/#q=WzAsMTAsWzIsMSwiXFxjZG90cyJdLFszLDEsInNfMSJdLFs0LDEsInNfMCJdLFswLDAsInMnJ19uIl0sWzIsMCwiXFxjZG90cyJdLFszLDAsInMnJ18xIl0sWzQsMCwicycnXzAiXSxbMCwxLCJzX24iXSxbMSwxLCJzX3tuLTF9Il0sWzEsMCwicycnX3tuLTF9Il0sWzAsMSwicF8yIiwyXSxbMSwyLCJwXzEiLDJdLFs0LDUsInAnJ18yIl0sWzUsNiwicCcnXzEiXSxbNiwyLCJcXHBpJ18wIl0sWzUsMSwiXFxwaSdfMSIsMl0sWzMsNywiXFxwaSdfbiIsMl0sWzMsOSwicCcnX24gIl0sWzcsOCwicF9uIiwyXSxbOSw4LCJcXHBpJ197bi0xfSIsMl0sWzgsMCwicF97bi0xfSIsMl0sWzksNCwicCcnX3tuLTF9Il1d
	\[\begin{tikzcd}[ampersand replacement=\&,cramped]
		{s''_n} \& {s''_{n-1}} \& \cdots \& {s''_1} \& {s''_0} \\
		{s_n} \& {s_{n-1}} \& \cdots \& {s_1} \& {s_0}
		\arrow["{p''_n }", from=1-1, to=1-2]
		\arrow["{\pi'_n}"', from=1-1, to=2-1]
		\arrow["{p''_{n-1}}", from=1-2, to=1-3]
		\arrow["{\pi'_{n-1}}"', from=1-2, to=2-2]
		\arrow["{p''_2}", from=1-3, to=1-4]
		\arrow["{p''_1}", from=1-4, to=1-5]
		\arrow["{\pi'_1}"', from=1-4, to=2-4]
		\arrow["{\pi'_0}", from=1-5, to=2-5]
		\arrow["{p_n}"', from=2-1, to=2-2]
		\arrow["{p_{n-1}}"', from=2-2, to=2-3]
		\arrow["{p_2}"', from=2-3, to=2-4]
		\arrow["{p_1}"', from=2-4, to=2-5]
	\end{tikzcd}\]
	isomorphic to $(s'_*,\pi_*)$ in $\iiCat(\mathcal O_n^\pre,\mathcal S)_{/s_*}$ and such that $s'_0 = 1_\mathscr S$ and, for all $i$, $s'_i \rightarrow s'_{i-1} \times_{s_{i-1}} s_i$ is a strict display map; in particular, each $p''_i$ is a strict display map. It follows that $(\mathcal S^{\mathcal O_n^\pre})_w$ is equivalent to the following dmc, which we denote by $\mathcal E$:\footnote{We can think of $\mathcal E$ as $\langle \mathcal O_n^\pre, \mathcal S \rangle$ for a generalization of Definition \ref{def: exponential rooted dmcs} where the domain is allowed to be a suitable generating datum for a dmc. (This is analogous to how, in \cite{Alm26}, we define the exponential between a \emph{\text{\text{\text{\text{\text{pre}}}}}}contextual category and a contextual category.)}
	\begin{enumerate}[label=(\roman*),noitemsep]
		\item The underlying category is the full subcategory of $\iiCat(\mathcal O_n^\pre,\mathcal S)$ spanned by those chains $s:\{0 < \cdots < n\} \rightarrow \mathcal S$ that fit into a sequence of natural transformations
		$$
		s = s^m \Rightarrow s^{m-1} \Rightarrow \cdots \Rightarrow s^1 \Rightarrow s^0
		$$
		for $m \ge 0$ where $s^0$ is terminal and, for $0 \le i \le n-1$,
		\begin{itemize}[noitemsep]
			\item $s^i$ is (adapting Definition \ref{def: relative morphism}) ``admissible" in that for all $j$, $s^i_j \rightarrow s^i_{j-1}$ admits pullbacks along arbitrary arrows;
			
			\item $s^{i+1} \Rightarrow s^i$ is a ``relative morphism" in that $s^{i+1}_0 \rightarrow s^i_0$ is an isomorphism and the fillers $s^{i+1}_{j+1} \rightarrow s^{i+1}_j \times_{s^i_j} s^i_{j+1}$ are display maps.
		\end{itemize}
		\item Display maps are the ``relative morphisms", that is, the natural transformations occurring as $s^m \Rightarrow s^{m-1}$ for some sequence as in (i).
	\end{enumerate}
	Now, a routine calculation allows us to conclude that the essential image of the (full-and-faithful) composite functor
	\[
	\tag{\texttt{**}}
	\iiDMC_w(\mathcal A_w,(\mathcal S^{\mathcal O_n^\pre})_w) \hookrightarrow \iiCat(\mathcal A, \mathcal S^{\mathcal O_n^\pre}) \hookrightarrow \iiCat(\mathcal A, \iiCat(\mathcal O_n^\pre, \mathcal S)) \cong \iiCat(\{0 < \cdots < n\}, \iiCat(\mathcal A, \mathcal S))
	\]
	is spanned by the diagrams $F_n \Rightarrow F_{n-1} \Rightarrow \cdots \Rightarrow F_1 \Rightarrow F_0$ as in Definition \ref{def: relation on display maps, well-foundedness} for the dmcs $\mathcal A_w$, $\mathcal S_w$. It follows that if $F \in \langle \mathcal A_w,\mathcal S_w\rangle$ is $n$-constructible, then it is in the essential image of the composite
	$$
	\iiCont(\mathcal O_n, \mathcal S^{\mathcal A}) \longrightarrow \iiCat(\{0 < \cdots < n\}, \iiCat(\mathcal A,\mathcal S))
	$$
	of (\texttt{*}) and (\texttt{**}), which, up to isomorphism, sends a length-$n$ object of $\mathcal S^\mathcal A$ to its underlying chain of functors.
\end{proof}

\begin{proof}[Proof of Proposition \ref{prop: 2-categorical universal property of Cont tensor product}]
Suppose that $\mathcal A$, $\mathcal B$ are cofibrant. Then we have the following chain of equivalences natural in $\mathcal S \in \iiCont$:
\begin{align*}
	\iiCont(\mathcal A \otimes \mathcal B, \mathcal S) & \cong \iiCont(\mathcal A, \mathcal S^\mathcal B)\\
	 & \simeq \iiDMC_r(\mathcal A_w,(\mathcal S^\mathcal B)_w) \tag{Prop. \ref{prop: strictification - cofibrant domain}}\\
	 & \simeq \iiDMC_r(\mathcal A_w, \langle \mathcal B_w, \mathcal S_w \rangle) \tag{Prop. \ref{prop: strictification exponential}}\\
	 & \simeq \{\text{bimorphisms } (\mathcal A_w,\mathcal B_w) \rightarrow \mathcal S_w\}. \tag{Prop. \ref{prop: bimorphisms vs maps to exponential - rooted dmcs}}
\end{align*}
\end{proof}

\subsection{Connection with the set-valued semantics}

For a small rooted display map category $\mathcal A$, the category of set-models $\iiMod(\mathcal A) = \iiDMC^+_w(\mathcal A,\Set)$, being the category of $\Set$-valued models of a finite-limit sketch, is locally finitely presentable category and inclusion $\iiMod(\mathcal A) \rightarrow \Set^\mathcal A$ is an $\omega$-accessible right adjoint functor (see Definition \ref{def: family-valued and set-valued models} and \cite{AdaRos94}, 1.46, 1.51). Since co/limits in functor categories are computed objectwise, a map of rooted dmcs $F:\mathcal A \rightarrow \mathcal B$ induces, by precomposition, a small-limit-preserving $\omega$-accessible functor $F^*:\iiMod(\mathcal B) \rightarrow \iiMod(\mathcal A)$. Then, by the adjoint functor theorem for locally presentable categories (\cite{AdaRos94}, 1.66), $F^*$ has a left adjoint $F_!:\iiMod(\mathcal A) \rightarrow \iiMod(\mathcal B)$.

It follows that we have a pseudofunctor $\iiMod(-):\iiDMC_r \rightarrow \iiLFP$ where $\iiLFP$ is the $2$-category of locally finitely presentable categories, left adjoint functors and natural transformations. The restricted functor $\iiDMC_r^{(2,1)} \rightarrow \iiLFP^{(2,1)}$ between underlying $(2,1)$-categories is induced from the pseudofunctor $\iiMod_w(-):\Cont \rightarrow \iiLFP^{(2,1)}$ (which sends weak equivalences to equivalences) via the equivalence $\iiHo(\Cont) \simeq \iiDMC_r^{(2,1)}$ (Theorem \ref{th: homotopy bicategory of Cont}).

\begin{remark}
A routine calculation using the Yoneda lemma shows that the embedding $\mathscr Y_\mathcal A:\mathcal A^{\op} \rightarrow \Set^\mathcal A$ factors through $\iiMod(\mathcal A) \hookrightarrow \Set^\mathcal A$. In fact, the corestriction $\mathscr Y_\mathcal A:\mathcal A^\op \rightarrow \Set^\mathcal A$ has the following universal property: for all $M \in \iiLFP$, precomposing with $\mathscr Y_\mathcal A$ induces an equivalence
$$
\iiLFP(\iiMod(\mathcal A), M) \simeq \textbf{Comod}(\mathcal A,M)
$$
where the latter is the category of \emph{comodels} of $\mathcal A$ in $M$, that is, functors $F:\mathcal A^\op \rightarrow M$ that send pullbacks of display maps to pushouts, and $1$ to $0$. This is an instance of a classical result on limit sketches; an early reference is \cite[Th. 2.5]{Pul70}, and a more recent version that applies directly to our setting is \cite[Th. 2.2.4]{ChiFre13}.

Moreover, for $F \in \iiDMC_r(\mathcal A,\mathcal B)$, the diagram
\[
\tag{\texttt{*}}
\squa{\mathcal A^{\op}}{\mathcal B^{\op}}{\iiMod(\mathcal A)}{\iiMod(\mathcal B)}{F^\op}{F_!}{\mathscr Y_\mathcal A}{\mathscr Y_\mathcal B}
\]
commutes up to isomorphism.
\end{remark}

We can refine the functor $\iiMod(-)$ by keeping track of which arrows in $\iiMod(\mathcal A)$ come from display maps.

\begin{definition}
	Let $\underline{\iiLFP}$ be the $2$-category consisting of: pairs $(M,D)$ where $M \in \iiLFP$ and $D \subset \Ar(M)$ is a set of morphisms closed under pushouts along arbitrary arrows in $M$; left adjoint functors that preserve the specified set of arrows; and natural transformations.
	
	For $(M,D) \in \underline{\iiLFP}$, we denote by $(M,D)^{\text{dmc}}$ the full subcategory of $M^\op$, with the induced (rooted) dmc structure, spanned by those $X$ that fit into a finite sequence $0_{\iiMod(\mathcal A)} = X_0 \longrightarrow X_1 \longrightarrow \cdots \longrightarrow X_n = X$ with each $X_i \rightarrow X_{i+1}$ in $D$.
\end{definition}

\begin{construction}
For each $\mathcal A \in \iiDMC_r$, let $\underline{\iiMod}(\mathcal A) = (\iiMod(\mathcal A), D_\mathcal A) \in \underline{\iiLFP}$ where $D_\mathcal A$ is the set of all arrows in $\iiMod(\mathcal A)$ that occur as a pushout of $\mathscr Y_\mathcal A(p)$ for some display map $p$ in $\mathcal A$.\footnote{By the pasting lemma for pushouts, $D_\mathcal A$ is closed under pushouts.} For $F \in \iiDMC_r(\mathcal A,\mathcal B)$, commutativity of (\texttt{*}) and the fact that $F_!$ preserves pushouts imply that we actually have $F_!:\underline{\iiMod}(\mathcal A) \rightarrow \underline{\iiMod}(\mathcal B)$.
\end{construction}

\begin{remark}
\label{rem: Mod(A) with class of display maps}
With this additional structure, we can recover $\mathcal A \in \iiDMC_r$ up to equivalence from its category of set-models: we have $\mathcal A \simeq \underline{\iiMod}(\mathcal A)^{\text{dmc}}$ in $\iiDMC^+_r$. This is a consequence of $\mathscr Y_\mathcal A:\mathcal A^{\op} \rightarrow \iiMod(\mathcal A)$ sending pullbacks of display maps to pushouts and $1$ to $0$.

The aforementioned universal property of $\mathscr Y_\mathcal A:\mathcal A^\op \rightarrow \iiMod(\mathcal A)$ can now be refined to
\[
\tag{\texttt{*}}
\underline{\iiLFP}(\underline{\iiMod}(\mathcal A), (M,D)) \simeq \textbf{Comod}(\mathcal A, (M,D))
\]
for $(M,D) \in \underline{\iiLFP}$, where, here, a comodel is required to send display maps to arrows in $D$. Note that this category of comodels is equivalent to the opposite to that of rooted dmc morphisms from $\mathcal A$ to $(M,D)^\dmc$. Hence, (\texttt{*}) shows that $\iiDMC_r$ can be identified with a coreflective $2$-subcategory of $\underline{\iiLFP}$ up to size issues; more precisely, $\underline{\iiMod}$ is a $J$-left adjoint (\cite[Def. 2.2]{Ulm68}) where $J$ is the inclusion $2$-functor $\iiDMC_r \rightarrow \iiDMC_r^+$.
\end{remark}

The $2$-category $\iiLFP$ has a monoidal structure (see \cite{Bir84}) where $M \otimes M'$ is equipped with a functor $M \times M' \rightarrow M \otimes M'$ that is ($2$-categorically) initial among all functors $M \times M' \rightarrow N$, with $N \in \iiLFP$, that preserve colimits in each variable separately. It can be naturally lifted to a monoidal structure on $\underline{\iiLFP}$, which is analogous to the tensor product of combinatorial premodel categories from \cite{Bar20}:
\[
\tag{\texttt{*}}
(M,D) \otimes (M',D') := (M \otimes N,\; (D \hatotimes D')_{\text{push}})
\]
where $(D \hatotimes D')_\text{push}$ is the closure under pushouts of the set of all pushout-product maps (analogous to those from Definition \ref{def: pushout-product map}) $f \hatotimes g: a \otimes b' \sqcup_{a \otimes b} a' \otimes b \rightarrow a' \otimes b'$ where $f \in M(a,a')$ and $g \in M'(b,b')$ are in $D$ and $D'$, respectively.

\begin{proposition}
\label{prop: tensor product of categories of models - dmc}
For $\mathcal A$, $\mathcal B \in \iiDMC_r$, we have
$$
\underline{\iiMod}(\mathcal A \otimes^\dmc \mathcal B) \simeq \underline{\iiMod}(\mathcal A) \otimes \underline{\iiMod}(\mathcal B), \qquad\qquad \mathcal A \otimes \mathcal B \simeq (\underline{\iiMod}(\mathcal A) \otimes \underline{\iiMod}(\mathcal B))^{\text{dmc}}.
$$
\end{proposition}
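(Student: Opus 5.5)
The plan is to prove the first equivalence by comparing universal properties in $\underline{\iiLFP}$ via the comodel description of Remark \ref{rem: Mod(A) with class of display maps}, and then to deduce the second from the recovery statement $\mathcal C \simeq \underline{\iiMod}(\mathcal C)^{\dmc}$ applied to $\mathcal C = \mathcal A \otimes^\dmc \mathcal B$. The tensor $\mathcal A \otimes^\dmc \mathcal B$ exists by Proposition \ref{prop: 2-categorical universal property of Cont tensor product}: replace $\mathcal A,\mathcal B$ by cofibrant contextual categories via Proposition \ref{prop: hom-category out of cofibrant cont cat}. So the second claim (where $\mathcal A\otimes\mathcal B$ means $\mathcal A\otimes^\dmc\mathcal B$ computed this way) is immediate once the first is known.

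For the first equivalence, fix $(M,D) \in \underline{\iiLFP}$ and compute both sides of $\underline{\iiLFP}(-,(M,D))$. On the left, by (\texttt{*}) of Remark \ref{rem: Mod(A) with class of display maps}, maps $\underline{\iiMod}(\mathcal A\otimes^\dmc\mathcal B)\to(M,D)$ are comodels, i.e. (opposite of) dmc morphisms $\mathcal A\otimes^\dmc\mathcal B \to (M,D)^\dmc$. By the universal property of Definition \ref{def: tensor product dmcs}, these are bimorphisms $(\mathcal A,\mathcal B)\to (M,D)^\dmc$, i.e. functors $H:\mathcal A^\op\times\mathcal B^\op\to M$ which are comodels in each variable and which send pairs of display maps $(p,q)$ to arrows whose pushout-corner map $H(a,b')\sqcup_{H(a,b)}H(a',b) \to H(a',b')$ lies in $D$. (A subtlety: bimorphisms ask for clan morphisms, i.e. finite composites of display maps in each variable, and a display map in $(M,D)^\dmc$ is an arrow in $D$; since $D$ is closed under pushouts and finite composites of display maps are handled by the pasting lemma, I would check that composites land in the class generated by $D$ — possibly one should first replace $D$ by its closure under composition, which doesn't change $(M,D)^\dmc$ up to equivalence.) On the right, by Birkhoff's universal property of $\otimes$ in $\iiLFP$ plus the two-variable version of (\texttt{*}), maps $\underline{\iiMod}(\mathcal A)\otimes\underline{\iiMod}(\mathcal B)\to (M,D)$ are functors $\iiMod(\mathcal A)\times\iiMod(\mathcal B)\to M$ cocontinuous in each variable, i.e. (currying and using the one-variable universal property twice) functors $\mathcal A^\op\times\mathcal B^\op\to M$ which are comodels in each variable, subject to the condition that the pushout-products of generators of $D_\mathcal A$, $D_\mathcal B$ land in $D$.

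The key step, and the main obstacle, is matching the $D$-conditions: one must show that a separately cocontinuous $\Phi$ sends $(D_\mathcal A\hatotimes D_\mathcal B)_{\text{push}}$ into $D$ iff its restriction along $\mathscr Y_\mathcal A\times\mathscr Y_\mathcal B$ satisfies the bimorphism corner condition. The direction ``$\Rightarrow$'' is restriction to representables. For ``$\Leftarrow$'' I would use that $D_\mathcal A$ consists of pushouts of $\mathscr Y_\mathcal A(p)$, and that the pushout-product of a pushout of $f$ with $g$ is a pushout of $f\hatotimes g$ (standard for separately cocontinuous bifunctors); since $D$ is closed under pushouts, it suffices to check pushout-products of representable generators, which is exactly the corner condition. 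Finally, I would confirm naturality in $(M,D)$ and invoke the 2-Yoneda lemma in $\underline{\iiLFP}$ (with size handled via $\iiDMC_r^+$ as in Remark \ref{rem: Mod(A) with class of display maps}) to conclude the first equivalence, and then apply $(-)^\dmc$.
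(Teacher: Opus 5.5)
Your proposal follows the paper's route: compute maps out of both sides using Birkhoff's currying and the comodel universal property of $\mathscr Y$, identify them with bimorphisms, conclude by Yoneda, and deduce the second equivalence from $\mathcal C \simeq \underline{\iiMod}(\mathcal C)^{\dmc}$. The only difference is that the paper runs the comparison in $\iiLFP$ against targets $M^{\op}$ (every arrow a display map) and merely asserts that the upgrade to $\underline{\iiLFP}$ is ``not difficult'', whereas you test directly against arbitrary $(M,D)$ and make the matching of $D$-classes explicit via the fact that a pushout-product with a pushout is a pushout of the pushout-product.

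On the subtlety you flag, do not replace $D$ by its closure under composition: that changes the display maps of $(M,D)^{\dmc}$, and hence the hom-categories you are comparing. Instead, note that the corner condition already forces the per-variable clan conditions. Take a chain $a=a_n\twoheadrightarrow\cdots\twoheadrightarrow a_0=1$. Each $H(a_i,q)$ factors as a pushout of $H(a_{i-1},q)$ followed by the corner map for $(a_i\twoheadrightarrow a_{i-1},q)$, which lies in $D$, and $H(a_0,q)$ is an isomorphism between initial objects. By induction, $H(a,q)$ is a finite composite of maps in $D$, and $H$ lands in $(M,D)^{\dmc}$.
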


\begin{proof}[Sketch of proof]
Let us prove that $\iiMod(\mathcal A) \otimes \iiMod(\mathcal B) \simeq \iiMod(\mathcal A \otimes^\dmc \mathcal B)$. It is not difficult to upgrade $\iiMod$ to $\underline{\iiMod}$ in this equivalence using the universal property of tensor products in $\iiDMC_r$ and in $\underline{\iiLFP}$. The second equivalence will then follow from Remark \ref{rem: Mod(A) with class of display maps}.

For $M \in \iiLFP$, we have
$$
\iiLFP(\iiMod(\mathcal A) \otimes \iiMod(\mathcal B),M) \simeq \iiLFP(\iiMod(\mathcal A), \iiLFP(\iiMod(\mathcal B),M)) \simeq \textbf{Comod}(\mathcal A, \textbf{Comod}(\mathcal B, M)).
$$
Since comodels are characterized among all functors $\mathcal B^\op \rightarrow M$ by a colimit-preservation condition, $\textbf{Comod}(\mathcal B,M)$ is closed under colimits in $\iiCat^+(\mathcal B,M)$. As colimits in the latter are computed objectwise, $\textbf{Comod}(\mathcal A, \textbf{Comod}(\mathcal B, M))$ is equivalent to the full subcategory of $\iiCat^+(\mathcal A^\op \times \mathcal B^\op, M)$ spanned by the functors $H:\mathcal A^\op \times \mathcal B^\op \rightarrow M$ such that $H(a,-)$ and $H(-,b)$ are comodels for all $a \in \mathcal A$ and $b \in \mathcal B$. But such an $H$ corresponds to a bimorphism, as in Definition \ref{def: tensor product dmcs}, from $(\mathcal A,\mathcal B)$ to $M^\op$ viewed as a dmc with every arrow as a display map. Hence, $H$ corresponds to a dmc morphism $\mathcal A \otimes^\dmc \mathcal B \rightarrow M^\op$, which is a comodel of $\mathcal A \otimes^\dmc \mathcal B$ in $M$.
\end{proof}

Combining Propositions \ref{prop: 2-categorical universal property of Cont tensor product} and \ref{prop: tensor product of categories of models - dmc}, we conclude that:

\begin{proposition}
\label{prop: tensor product of categories of models - cofibrant gats}
If $\bbA$, $\bbB \in \GAT$ are cofibrant, then\footnote{For a gat $\bbK$, we write $\underline{\iiMod}(\bbK)$ for $\underline{\iiMod}(\mathcal C(\bbK)_w)$.}
	$$
	\underline{\iiMod}(\bbA \otimes \bbB) \simeq \underline{\iiMod}(\bbA) \otimes \underline{\iiMod}(\bbB).
	$$
\end{proposition}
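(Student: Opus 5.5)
The plan is to chain together the two preceding propositions, passing through the equivalence $\GAT \simeq \Cont$. Write $\mathcal A = \mathcal C(\bbA)$ and $\mathcal B = \mathcal C(\bbB)$. These are cofibrant in $\Cont$, since the model structure on $\GAT$ is transferred along $\mathcal C(-)$. The tensor product of gats is the one corresponding to $\otimes$ on $\Cont$ from \cite{Alm25, Alm26}, so $\mathcal C(\bbA \otimes \bbB) \cong \mathcal A \otimes \mathcal B$. By the footnote convention, the left-hand side of the statement is then $\underline{\iiMod}((\mathcal A \otimes \mathcal B)_w)$.

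The first step applies Proposition \ref{prop: 2-categorical universal property of Cont tensor product}. Because $\mathcal A$ and $\mathcal B$ are cofibrant, $(\mathcal A \otimes \mathcal B)_w$ is a tensor product $\mathcal A_w \otimes^{\dmc} \mathcal B_w$. More precisely, it represents the $2$-functor of bimorphisms, so it is equivalent in $\iiDMC_r$ to any other representing object.

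The second step uses that $\underline{\iiMod}$ is a pseudofunctor $\iiDMC_r \rightarrow \underline{\iiLFP}$. Any such pseudofunctor sends equivalences to equivalences. Hence
$$
\underline{\iiMod}((\mathcal A\otimes\mathcal B)_w) \simeq \underline{\iiMod}(\mathcal A_w \otimes^{\dmc} \mathcal B_w).
$$

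The third step invokes Proposition \ref{prop: tensor product of categories of models - dmc} with $\mathcal A_w$ and $\mathcal B_w$. This identifies the right-hand side with $\underline{\iiMod}(\mathcal A_w) \otimes \underline{\iiMod}(\mathcal B_w)$, which by the convention is exactly $\underline{\iiMod}(\bbA) \otimes \underline{\iiMod}(\bbB)$.

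I do not expect a serious obstacle. The only point needing care is the bookkeeping in the first step: the tensor product of gats must be the one matching $\otimes$ on $\Cont$, and cofibrancy of $\bbA$ and $\bbB$ must genuinely mean cofibrancy of $\mathcal C(\bbA)$ and $\mathcal C(\bbB)$. Both hold by construction of the transferred model and monoidal structures on $\GAT$.
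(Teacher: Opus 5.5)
Your proposal is correct and is exactly the paper's argument: the paper derives this statement by combining Proposition~\ref{prop: 2-categorical universal property of Cont tensor product} (for cofibrant $\mathcal C(\bbA)$ and $\mathcal C(\bbB)$, their tensor in $\Cont$ is a tensor product $\otimes^{\dmc}$ of the weakenings) with Proposition~\ref{prop: tensor product of categories of models - dmc}. Your extra bookkeeping just makes explicit what the paper leaves implicit, namely the transport along $\GAT \simeq \Cont$ and the invariance of $\underline{\iiMod}$ under equivalences in $\iiDMC_r$.
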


It follows that if $\bbA$ and $\bbB$ are cofibrant, then, as expected, we have an equivalence between set-valued models of $\bbA \otimes \bbB$ and $\iiMod_w(\bbB)$-valued (resp. $\iiMod_w(\bbA)$-valued) models of $\bbA$ (resp. of $\bbB$).

\begin{remark}
We can also consider, instead of $\underline{\iiLFP}$, the $2$-category $\iiLFP_{\text{wfs}}$ consisting of
	\begin{itemize}[noitemsep]
		\item[-] pairs $(M,(L,R))$ consisting of a locally finitely presentable category $M$ equipped with a weak factorization system $(L,R)$ cofibrantly generated by some small set $I \subset L$ of arrows whose domain and codomain are finitely presentable -- we then write $I_\text{wfs}$ for $(L,R)$;
		
		\item[-] left adjoint functors that preserve the left class of the wfs (``left Quillen functors");
		
		\item[-] natural transformations.
	\end{itemize}
	We have a monoidal structure on $\iiLFP_{\text{wfs}}$\footnote{It is present in \cite{Bar20} as the (cofibrations, anodyne fibrations)-part of the tensor product of combinatorial premodel categories, and also as a special case corresponding to premodel categories where the two wfs are equal.} given on objects by
	$$
	(M,I_\text{wfs}) \otimes (M', I'_\text{wfs}) = (M \otimes M', (I \hatotimes I')_\text{wfs}).
	$$
	For $\mathcal A \in \iiDMC_r$, let $\iiMod(\mathcal A)_{\text{wfs}} \in \iiLFP_{\text{wfs}}$ be $\iiMod(\mathcal A)$ equipped with the wfs generated by $\{\mathscr Y_\mathcal A(p) \mid p \text{ is a display map}\}$. This construction has been considered in, e.g., \cite{Hen16, Fre25, BarHen25}.\footnote{As discussed in \cite{Fre25}, it is not possible in general to recover $\mathcal A$ from $\iiMod(\mathcal A)_{\text{cof}}$.} This extends into a $2$-functor $\iiMod(-)_\text{wfs}:\iiDMC_r \rightarrow \iiLFP_\text{wfs}$, which can be proved to be strong monoidal using Proposition \ref{prop: tensor product of categories of models - dmc} and the fact that the left class of a wfs is closed under pushouts.
\end{remark}

\appendix

\section{Cartesian natural transformations between set-valued functors}
\label{sec: appendix}

Construction \ref{constr: discrete opfibration associated with a set-valued model} and Definition \ref{def: loop-free model} can be naturally presented in a more general setting, which we now discuss. All the results that follow are likely well-known (see Remark \ref{rem: vK colimits}), but we are not aware of a reference that presents them in a form that directly fits our intended applications.

\begin{construction}
	\label{constr: discrete opfibration associated with a cartesian natural transformation}
	Consider a category $D$, functors $S$, $T:D \rightarrow \Set$, and a cartesian natural transformation $\eta:S \Rightarrow T$. Then we have a strictly commutative diagram of discrete opfibrations
	\[
	\begin{tikzcd}[row sep=small]
		\smallint S \arrow[]{rr}{\eta_*} \arrow[]{dr}{} & & \smallint T \arrow[]{dl}{} \\
		& D. & 
	\end{tikzcd}
	\]
	Since $\eta$ is cartesian, the fiber functor of $\eta_*$, say $\Phi:\smallint T \rightarrow \Set$, sends every arrow to an isomorphism. Letting $\Lambda:\smallint T \rightarrow \gpd(\smallint T)$ be the localization functor (see Notation \ref{not: gpd}), we obtain a unique (strict) extension $\overline{\Phi}$ as in
	\[
	\begin{tikzcd}
		\smallint T \arrow[]{r}{\Phi} \arrow[swap]{d}{\Lambda} & \Set\\
		\gpd(\smallint T) \arrow[dashed,swap]{ur}{\overline{\Phi}} &
	\end{tikzcd}
	\]
\end{construction}

Since $\smallint \overline{\Phi}$ is a groupoid, the comparison functor $\smallint S \cong \smallint \Phi \rightarrow \smallint \overline{\Phi}$ induces a functor $I:\gpd(\smallint \Phi) \rightarrow \smallint \overline{\Phi}$, which, as we will now prove, is an isomorphism:

\begin{proposition}
\label{prop: comparison functor is an isomorphism}
Let $K$ be a category, and $\Phi:K \rightarrow \Set$ a functor that sends every arrow to an isomorphism. As in Construction \ref{constr: discrete opfibration associated with a cartesian natural transformation} (replacing $\smallint T$ by $K$), we obtain functors $\overline{\Phi}:\gpd(K) \rightarrow \Set$ and $I:\gpd(\smallint \Phi) \rightarrow \smallint \overline{\Phi}$. The latter is an isomorphism.
\end{proposition}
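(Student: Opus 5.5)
The plan is to prove that $I$ is an isomorphism by showing that both $\gpd(\smallint \Phi)$ and $\smallint \overline{\Phi}$ solve the same universal problem. The first step is to identify the relevant maps of categories. Write $\Lambda:K \rightarrow \gpd(K)$ for the localization and $\Lambda':\smallint \Phi \rightarrow \gpd(\smallint \Phi)$ for the localization of the category of elements. Since $\overline{\Phi}\Lambda = \Phi$, there is a strictly commutative square
\[
\squa{\smallint \Phi}{\smallint \overline{\Phi}}{K}{\gpd(K)}{J}{\Lambda}{\pi}{\overline{\pi}}
\]
whose vertical maps are discrete opfibrations. Moreover, $\smallint \Phi \cong K \times_{\gpd(K)} \smallint \overline{\Phi}$, which is immediate from the definition of categories of elements. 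The functor $I$ is then the unique functor with $I\Lambda' = J$.

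The second step is to construct an inverse. I would use the equivalence between discrete opfibrations over a groupoid $G$ and functors $G \rightarrow \Set$. To give a functor $\smallint \overline{\Phi} \rightarrow \gpd(\smallint \Phi)$, I first observe that $\gpd(\smallint\Phi)$ sits over $\gpd(K)$ via $\gpd(\pi)$. It then suffices to describe the action on objects and on generating arrows of $\smallint \overline{\Phi}$. Objects are pairs $(k,x)$ with $x \in \Phi(k)$, which are the same as the objects of $\smallint \Phi$, so they map to themselves. An arrow $(k,x) \rightarrow (k',x')$ in $\smallint \overline{\Phi}$ is an arrow $u:k \rightarrow k'$ in $\gpd(K)$ with $\overline{\Phi}(u)(x) = x'$. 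By the zigzag description of $\gpd(K)$ (Construction \ref{constr: gpd(K) using zig-zags}), $u$ is represented by a zigzag $k = k_0 \rightarrow k_1 \leftarrow k_2 \rightarrow \cdots k_m = k'$ in $K$. Because $\Phi$ inverts every arrow, this zigzag lifts uniquely to a zigzag in $\smallint \Phi$ starting at $x$: forward arrows transport elements by $\Phi(f)$, and backward arrows by $\Phi(f)^{-1}$. By the condition $\overline{\Phi}(u)(x)=x'$, the lifted zigzag ends at $x'$. I then send the arrow to the image of this lifted zigzag in $\gpd(\smallint \Phi)$.

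The third step, which I expect to be the main obstacle, is well-definedness: zigzags that are equivalent in $\gpd(K)$ must lift to equivalent zigzags in $\gpd(\smallint \Phi)$. The generating relations of the localization are of two kinds. The first is composition of consecutive forward (or backward) arrows, $f$ followed by $g$ versus $gf$. The second is cancellation of $f$ against its formal inverse $f^{-1}$, together with identities. Each relation lifts to the corresponding relation in $\smallint \Phi$, because lifting is unique and $\Phi$ is functorial. Hence the assignment respects the relations generating $\gpd(K)$, so it is well defined and functorial.

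Finally, the two composites are identities. On objects both functors are the identity. On arrows, the composite $\gpd(\smallint \Phi) \rightarrow \smallint \overline{\Phi} \rightarrow \gpd(\smallint \Phi)$ fixes the generators $\Lambda'(f)$, since the lift of the image of a single arrow is that arrow. By the universal property of localization, this composite is therefore the identity. The other composite fixes every arrow $u$ because $\gpd(\pi)$ of the lifted zigzag is $u$ itself. An alternative route, which I would use if the zigzag bookkeeping became messy, is to show that $I$ is bijective on objects and verify that it is full and faithful fiberwise over $\gpd(K)$, again using unique lifting along the discrete opfibration $\overline{\pi}$.
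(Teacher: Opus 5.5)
Your proof is correct, but it is organized differently from the paper's.

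\textbf{The paper's route.} It first proves Lemma~\ref{lem: gpd of discrete opfibration}: $\gpd(P):\gpd(\smallint\Phi)\rightarrow\gpd(K)$ is a discrete opfibration. Lifts exist because a representing zigzag can be lifted along the bijections $\Phi(f)^{\pm 1}$. They are unique because a sequence of rewrites between underlying chains in $K$ lifts to rewrites in $\smallint\Phi$, and a chain in $\smallint\Phi$ is determined by its starting object and its underlying chain. The proposition then follows formally: $I$ is a functor over $\gpd(K)$ between two discrete opfibrations, hence itself a discrete opfibration, and being bijective on objects it is an isomorphism. This is essentially the alternative you mention at the end.

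\textbf{Your route.} You instead write down the inverse $R:\smallint\overline{\Phi}\rightarrow\gpd(\smallint\Phi)$ explicitly. The combinatorial core is the same: your well-definedness check is exactly the paper's rewrite-lifting step.

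\textbf{Comparison.} Your route gets the uniqueness half more cheaply. $RI=\Id$ follows from the universal property of $\Lambda'$ once you know $RI\Lambda'=\Lambda'$, so you never have to analyze arbitrary chains in $\smallint\Phi$. In exchange, the paper's route isolates the structural statement that $\gpd$ preserves this discrete opfibration, and makes the final step a one-line argument.

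\textbf{One implicit point.} You do not spell out why $R$ is a functor. It holds because composition in $\gpd(K)$ is concatenation of chains, and the lift of $u$ starting at $x$ ends at $\overline{\Phi}(u)(x)=x'$, which is where the lift of the next chain begins.
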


Proving this will require some preliminaries.\footnote{Alternatively, we expect to be able to conclude that $I$ is full-and-faithful (thus an isomorphism of categories) by exhibiting a common $2$-categorical universal property of $\gpd(\smallint \Phi)$ and $\smallint \overline{\Phi}$. As sketched, for example, in \cite[Exposé VI]{ArtGroVer72}, if $P:E \rightarrow K$ is a (Grothendieck) fibration and $\varphi:K^{\op} \rightarrow \iiCat$ is a fiber pseudofunctor for $P$, then the localization $E[S^{-1}]$ of $E$ at the set $S$ of all arrows that are cartesian with respect to $P$ is a pseudocolimit of $\varphi$. That is, we have equivalences of categories
	$$
	\iiCat(E[S^{-1}],C) \simeq \lim_{k \in K}\iiCat(\varphi(k),C)
	$$
	natural in $C \in \iiCat$, where on the right we consider the pseudolimit in $\iiCat$. But if $\varphi$ is strict and sends every arrow in $K$ to an isomorphism of categories, then $\lim_{k \in K}\iiCat(\varphi(k),C) \simeq \lim_{k \in \gpd(K)}\iiCat(\overline{\varphi}(k),C)$ where $\overline{\varphi}:\gpd(K) \rightarrow \iiCat$ is the canonical (strict) extension of $\varphi$. Proposition \ref{prop: comparison functor is an isomorphism} would follow by considering an analogous result for opfibrations and using that every arrow in $\smallint \Phi$ is cartesian if $\Phi$ is set-valued.}

\begin{construction}
\label{constr: gpd(K) using zig-zags}
For a category $K$, let us recall a more explicit description of the groupoid $\gpd(K)$. Let $GK$ be the directed graph whose set of vertices is $\Ob(K)$, and whose set of arrows from $a$ to $b$ is $GK(a,b) = K(a,b) \sqcup K(b,a)$. We will write $K^*(a,b)$ for the image of $K(b,a) \hookrightarrow GK(a,b)$. The free category $\mathscr F(GK)$ has as objects those of $K$, and as arrows $a \rightarrow b$ all chains of the form
% https://q.uiver.app/#q=WzAsNSxbMCwwLCJhID0gYV8wIl0sWzEsMCwiYV8xIl0sWzIsMCwiXFxjZG90cyJdLFszLDAsImFfe24tMX0iXSxbNCwwLCJhX24gPSBiIl0sWzAsMSwiZl8xIiwwLHsic3R5bGUiOnsiaGVhZCI6eyJuYW1lIjoibm9uZSJ9fX1dLFsxLDIsImZfMiIsMCx7InN0eWxlIjp7ImhlYWQiOnsibmFtZSI6Im5vbmUifX19XSxbMiwzLCJmX3tuLTF9IiwwLHsic3R5bGUiOnsiaGVhZCI6eyJuYW1lIjoibm9uZSJ9fX1dLFszLDQsImZfbiIsMCx7InN0eWxlIjp7ImhlYWQiOnsibmFtZSI6Im5vbmUifX19XV0=
\[\begin{tikzcd}[ampersand replacement=\&]
	\tag{\texttt{*}}
	{a = a_0} \& {a_1} \& \cdots \& {a_{n-1}} \& {a_n = b}
	\arrow["{f_1}", no head, from=1-1, to=1-2]
	\arrow["{f_2}", no head, from=1-2, to=1-3]
	\arrow["{f_{n-1}}", no head, from=1-3, to=1-4]
	\arrow["{f_n}", no head, from=1-4, to=1-5]
\end{tikzcd}\]
where $n \ge 0$ and $f_i \in GK(a_{i-1}, a_i)$ for $1 \le i \le n$. Composition is given by concatenation of chains. Informally, we picture each $f_i$ either as $a_{i-1} \xrightarrow{f_i} a_i$ or as $a_{i-1} \xleftarrow{f_i} a_i$ depending on whether $f_i \in K(a,b)$ or $f_i \in K^*(a,b)$. We can present $\gpd(K)$ as the following quotient of $\mathscr F(GK)$:
\begin{itemize}[noitemsep]
	\item $\Ob(\gpd(K)) = \Ob(\mathscr F(GK)) = \Ob(K)$.
	
	\item $\gpd(K)(a,b)$ is the quotient of $K(a,b)$ by the equivalence relation generated by the pairs of the forms listed below, where $a_0 = a$ and $a_n = b$:
	\[
	\begin{aligned}
		& \begin{tikzcd}[ampersand replacement=\&, column sep=tiny]
			{\big(a_0} \& \cdots \& {a_{i-1}} \& {a_i} \& {a_{i+1}} \& \cdots \& {a_n\big)} \& {\big(a_0} \& \cdots \& {a_{i-1}} \& {a_{i+1}} \& \cdots \& {a_n\big)}
			\arrow["{f_1}", no head, from=1-1, to=1-2]
			\arrow["{f_{i-1}}", no head, from=1-2, to=1-3]
			\arrow["{f_i}", from=1-3, to=1-4]
			\arrow["{f_{i+1}}", from=1-4, to=1-5]
			\arrow["{f_{i+2}}", no head, from=1-5, to=1-6]
			\arrow["{f_n}", no head, from=1-6, to=1-7]
			\arrow["\sim"{description}, draw=none, from=1-7, to=1-8]
			\arrow["{f_1}", no head, from=1-8, to=1-9]
			\arrow["{f_{i-1}}",no head, from=1-9, to=1-10]
			\arrow["{f_{i+1}f_i}", from=1-10, to=1-11]
			\arrow["{f_{i+2}}", no head, from=1-11, to=1-12]
			\arrow["{f_n}", no head, from=1-12, to=1-13]
		\end{tikzcd}\\[-5pt]
		& \begin{tikzcd}[ampersand replacement=\&,column sep=tiny]
			{\big(a_0} \& \cdots \& {a_{i-1}} \& {a_i} \& {a_{i+1}} \& \cdots \& {a_n\big)} \& {\big(a_0} \& \cdots \& {a_{i-1}} \& {a_{i+1}} \& \cdots \& {a_n\big)}
			\arrow["{f_1}", no head, from=1-1, to=1-2]
			\arrow["{f_{i-1}}",no head, from=1-2, to=1-3]
			\arrow["{f_i}"', from=1-4, to=1-3]
			\arrow["{f_{i+1}}"', from=1-5, to=1-4]
			\arrow["{f_{i+2}}", no head, from=1-5, to=1-6]
			\arrow["{f_n}", no head, from=1-6, to=1-7]
			\arrow["\sim"{description}, draw=none, from=1-7, to=1-8]
			\arrow["{f_1}", no head, from=1-8, to=1-9]
			\arrow["{f_{i-1}}",no head, from=1-9, to=1-10]
			\arrow["{f_i f_{i+1}}"', from=1-11, to=1-10]
			\arrow["{f_{i+2}}", no head, from=1-11, to=1-12]
			\arrow["{f_n}", no head, from=1-12, to=1-13]
		\end{tikzcd}\\[-5pt]
		& \begin{tikzcd}[ampersand replacement=\&,column sep=tiny]
			{\big(a_0} \& \cdots \& {a_i} \& \cdots \& {a_n\big)} \& {\big(a_0} \& \cdots \& {a_i} \& {a_i} \& \cdots \& {a_n\big)}
			\arrow["{f_1}", no head, from=1-1, to=1-2]
			\arrow["{f_i}", no head, from=1-2, to=1-3]
			\arrow["{f_{i+1}}", no head, from=1-3, to=1-4]
			\arrow["{f_n}", no head, from=1-4, to=1-5]
			\arrow["\sim"{description}, draw=none, from=1-5, to=1-6]
			\arrow["{f_1}", no head, from=1-6, to=1-7]
			\arrow["{f_i}", no head, from=1-7, to=1-8]
			\arrow["id", from=1-8, to=1-9]
			\arrow["{f_{i+1}}", no head, from=1-9, to=1-10]
			\arrow["{f_n}", no head, from=1-10, to=1-11]
		\end{tikzcd}\\[-5pt]
		& \begin{tikzcd}[ampersand replacement=\&,column sep=tiny]
			{\big(a_0} \& \cdots \& {a_i} \& \cdots \& {a_n\big)} \& {\big(a_0} \& \cdots \& {a_i} \& {a_i} \& \cdots \& {a_n\big)}
			\arrow["{f_1}", no head, from=1-1, to=1-2]
			\arrow["{f_i}", no head, from=1-2, to=1-3]
			\arrow["{f_{i+1}}", no head, from=1-3, to=1-4]
			\arrow["{f_n}", no head, from=1-4, to=1-5]
			\arrow["\sim"{description}, draw=none, from=1-5, to=1-6]
			\arrow["{f_1}", no head, from=1-6, to=1-7]
			\arrow["{f_i}", no head, from=1-7, to=1-8]
			\arrow["id"', from=1-9, to=1-8]
			\arrow["{f_{i+1}}", no head, from=1-9, to=1-10]
			\arrow["{f_n}", no head, from=1-10, to=1-11]
		\end{tikzcd}\\[-5pt]
		& \begin{tikzcd}[ampersand replacement=\&,column sep=tiny]
			{\big(a_0} \& \cdots \& {a_{i-1}} \& {a_i} \& {a_{i-1} = a_{i+1}} \& \cdots \& {a_n\big)} \& {\big(a_0} \& \cdots \& {a_{i-1} = a_{i+1}} \& \cdots \& {a_n\big)}
			\arrow["{f_1}", no head, from=1-1, to=1-2]
			\arrow["{f_{i-1}}", no head, from=1-2, to=1-3]
			\arrow["{f_i}", from=1-3, to=1-4]
			\arrow["{f_i}"', from=1-5, to=1-4]
			\arrow["{f_{i+2}}", no head, from=1-5, to=1-6]
			\arrow["{f_n}", no head, from=1-6, to=1-7]
			\arrow["\sim"{description}, draw=none, from=1-7, to=1-8]
			\arrow["{f_1}", no head, from=1-8, to=1-9]
			\arrow["{f_{i-1}}", no head, from=1-9, to=1-10]
			\arrow["{f_{i+2}}", no head, from=1-10, to=1-11]
			\arrow["{f_n}", no head, from=1-11, to=1-12]
		\end{tikzcd}\\[-5pt]
		& \begin{tikzcd}[ampersand replacement=\&,column sep=tiny]
			{\big(a_0} \& \cdots \& {a_{i-1}} \& {a_i} \& {a_{i-1} = a_{i+1}} \& \cdots \& {a_n\big)} \& {\big(a_0} \& \cdots \& {a_{i-1} = a_{i+1}} \& \cdots \& {a_n\big)}
			\arrow["{f_1}", no head, from=1-1, to=1-2]
			\arrow["{f_{i-1}}", no head, from=1-2, to=1-3]
			\arrow["{f_i}"', from=1-4, to=1-3]
			\arrow["{f_i}", from=1-4, to=1-5]
			\arrow["{f_{i+2}}", no head, from=1-5, to=1-6]
			\arrow["{f_n}", no head, from=1-6, to=1-7]
			\arrow["\sim"{description}, draw=none, from=1-7, to=1-8]
			\arrow["{f_1}", no head, from=1-8, to=1-9]
			\arrow["{f_{i-1}}", no head, from=1-9, to=1-10]
			\arrow["{f_{i+2}}", no head, from=1-10, to=1-11]
			\arrow["{f_n}", no head, from=1-11, to=1-12]
		\end{tikzcd}
	\end{aligned}
	\]
\end{itemize}
\end{construction}

\begin{lemma}
\label{lem: gpd of discrete opfibration}
In the notation of Proposition \ref{prop: comparison functor is an isomorphism}, letting $P:\smallint \Phi \rightarrow K$ be the projection functor, $\gpd(P):\gpd(\smallint \Phi) \rightarrow \gpd(K)$ is a discrete opfibration.
\end{lemma}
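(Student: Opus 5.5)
We want to show that $\gpd(P):\gpd(\smallint \Phi) \rightarrow \gpd(K)$ is a discrete opfibration. That is, for every object $(a,x)$ of $\gpd(\smallint \Phi)$ with $x \in \Phi(a)$, and every arrow $\gamma:a \rightarrow b$ in $\gpd(K)$, there is a unique arrow in $\gpd(\smallint \Phi)$ with domain $(a,x)$ lying over $\gamma$. Since $\gpd(K)$ is a groupoid, this is equivalent to unique path lifting in both directions. The plan is to build lifts explicitly using the zigzag presentation of Construction \ref{constr: gpd(K) using zig-zags}, and to use the fact that $\Phi$ inverts all arrows.

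\textbf{Existence.} Represent $\gamma$ by a chain $a=a_0 \,\text{---}\, a_1 \,\text{---}\, \cdots \,\text{---}\, a_n=b$ in $\mathscr F(GK)$. Lift it step by step, starting from $x_0=x$:
\begin{itemize}
\item if $f_i \in K(a_{i-1},a_i)$, set $x_i=\Phi(f_i)(x_{i-1})$ and use the arrow $f_i:(a_{i-1},x_{i-1}) \rightarrow (a_i,x_i)$ of $\smallint\Phi$;
\item if $f_i \in K^*(a_{i-1},a_i)$, i.e.\ $f_i:a_i\rightarrow a_{i-1}$, set $x_i=\Phi(f_i)^{-1}(x_{i-1})$. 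This uses that $\Phi(f_i)$ is bijective, and the arrow $f_i:(a_i,x_i)\rightarrow(a_{i-1},x_{i-1})$ lies over $f_i$.
\end{itemize}
This defines a lifting function $\widetilde{(-)}$ from chains starting at $a$ to chains in $\mathscr F(G\smallint\Phi)$ starting at $(a,x)$. The next step is to check that it respects each of the six generating relations of Construction \ref{constr: gpd(K) using zig-zags}:
\begin{itemize}
\item composition in either direction follows from functoriality of $\Phi$ (and of $\Phi^{-1}$ on inverted arrows);
\item insertion of identities holds because $\Phi(id)=id$;
\item back-and-forth cancellations hold because the endpoint returns to the same $x_{i-1}$, so the lifted pair is again of cancellable form.
\end{itemize}
Since lifting is compatible with concatenation, equivalent chains lift to equivalent chains. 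Hence we get a well-defined arrow $\tilde\gamma$ of $\gpd(\smallint\Phi)$ with domain $(a,x)$ and $\gpd(P)(\tilde\gamma)=\gamma$.

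\textbf{Uniqueness.} This is the main obstacle, since arrows of $\gpd(\smallint\Phi)$ are only equivalence classes of chains. Let $\delta:(a,x)\rightarrow(b,y)$ in $\gpd(\smallint\Phi)$ satisfy $\gpd(P)(\delta)=\gamma$, and represent $\delta$ by a chain $c$ in $G\smallint\Phi$. Then $P(c)$ is a chain in $GK$ representing $\gamma$. By construction, $c$ is exactly the lift $\widetilde{P(c)}$ from $(a,x)$: in $\smallint\Phi$, each edge is determined by its image in $K$ together with its source (forward case) or its target (backward case). Lifting starts at $x$ and recovers each intermediate element, because for a backward edge the element at $a_i$ is forced by bijectivity of $\Phi(f_i)$.

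Since $P(c)$ and any chosen representative of $\gamma$ are equivalent in $\mathscr F(GK)$, the compatibility of lifting with the relations shown above gives $\delta=[\widetilde{P(c)}]=\tilde\gamma$. This also shows that $y$ is forced, namely $y$ is the endpoint of the lift. Therefore $\gpd(P)$ is a discrete opfibration.
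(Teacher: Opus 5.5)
Your proposal is correct and takes essentially the same route as the paper. Existence comes from recursively lifting a representing zigzag, using bijectivity of $\Phi(f)$ on backward edges. Uniqueness comes from lifting the generating rewrites of Construction~\ref{constr: gpd(K) using zig-zags}, together with the observation that a chain in $G(\smallint \Phi)$ is determined by its starting object and its image in $GK$; treating the lift as a function on chains that respects the six relations is only a more explicit version of the paper's step of lifting a sequence of rewrites.
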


\begin{proof}[Sketch of proof]
	Consider $f:a \rightarrow b$ in $\gpd(K)$ and $x \in F(a)$. We must prove that there exists a unique arrow of the form $f':(a,x) \rightarrow (b,y)$ in $\gpd(\smallint \Phi)$ such that $\gpd(P)(f') = f$.
	
	For existence, let $\textbf{f}:a \rightarrow b$ be a morphism in $\mathscr F(GK)$ (hence a chain as in (\texttt{*})) whose equivalence class in $\gpd(K)$ is $f$. The fact that $\Phi$ sends every arrow to an isomorphism allows us to lift $\textbf{f}$, in a recursive way, to a morphism $\textbf{f}':(a,x) \rightarrow (b,y)$ in $\mathscr F(G(\smallint K))$. Letting $f'$ be the image of $f$ under $\mathscr F(G(\smallint K))$, we have $\gpd(P)(f') = f$.
	
	For uniqueness, suppose given $f':(a,x) \rightarrow (b,y)$ and $f'':(a,x) \rightarrow (b,z)$ in $\gpd(\smallint \Phi)$ such that $\gpd(P)(f') = \gpd(P)(f') = f$. Express $f$, $f'$ as equivalence classes of, respectively, chains $\textbf{f}:(a,x) \rightarrow (b,y)$ and $\textbf{f}':(a,x) \rightarrow (b,z)$ in $\mathscr F(G(\smallint \Phi))$. As the underlying chains $u\textbf{f}$, $u\textbf{f}':a \rightarrow b$ in $\mathscr F(GK)$ belong to the same equivalence class in $\gpd(K)$, they are connected by a finite sequence of pairs among the six kinds listed above and their symmetric ones. Now, it can be checked by induction that such a sequence of ``rewrites" can be uniquely lifted to a sequence of rewrites between $\textbf{f}$ and an arrow $\textbf{g}:(a,x) \rightarrow (b,w)$ in $\mathscr F(G(\smallint \Phi))$ such that $u\textbf{f}' = u\textbf{g}$. Now, since the chains $\textbf{f}'$, $\textbf{g}$ have the same starting object -- namely, $(a,x)$ -- and the same underlying chain in $\mathscr F(GK)$, we have $\textbf{f}' = \textbf{g}$. We conclude that $\textbf{f}$ and $\textbf{f}'$ determine the same morphism in $\gpd(\smallint \Phi)$, as required.
\end{proof}

\begin{proof}[Proof of Proposition \ref{prop: comparison functor is an isomorphism}]
In the (strictly) commutative diagram
\[
\begin{tikzcd}[row sep=small]
	\gpd(\smallint \Phi) \arrow[]{rr}{I} \arrow[]{dr}{} & & \smallint \overline{\Phi} \arrow[]{dl}{} \\
	& \gpd(K), & 
\end{tikzcd}
\]
the projections $\gpd(\smallint \Phi) \rightarrow \gpd(K)$ (by Lemma \ref{lem: gpd of discrete opfibration}) and $\smallint \overline{\Phi} \rightarrow \gpd(K)$ are discrete opfibrations; hence, so is $I$. But $I$ is bijective on objects, which implies that it is an isomorphism of categories.
\end{proof}

\begin{definition}
\label{def: loop-free natural transformation}
In the notation of Construction \ref{constr: discrete opfibration associated with a cartesian natural transformation}, we say that $\eta$ is \emph{loop-free} if $\overline{\Phi}$ (or, by Proposition \ref{prop: comparison functor is an isomorphism}, the fiber functor of $\gpd\; \eta_*:\gpd \smallint S \rightarrow \gpd \smallint T$) sends every arrow to an identity map.
\end{definition}

\begin{proposition}
\label{prop: characterization loop-free natural transformation}
In the notation of Construction \ref{constr: discrete opfibration associated with a cartesian natural transformation}, the following conditions are equivalent:
\begin{enumerate}[label=(\alph*)]
	\item $\eta$ is loop-free.
	
	\item The commutative square
	\[
	\squa{\gpd \smallint S}{\pi_0( \gpd \smallint S)}{\gpd \smallint T}{\pi_0( \gpd \smallint T)}{q'}{q}{\gpd \; \eta_*}{\pi_0(\gpd \; \eta_*)}
	\]
	in $\Cat$ is cartesian, where the sets of connected components $\pi_0(-)$ are regarded as discrete categories, and $q$, $q'$ are the quotient maps.
	
	\item The commutative square
	\[
	\squa{S}{\Delta \; \colim S}{T}{\Delta \; \colim T}{\alpha}{\beta}{\eta}{\Delta \; \colim \eta}
	\]
	in $\Set^K$ is cartesian, where $\Delta$ indicates a constant diagram or natural transformation, and $\alpha$, $\beta$ are the universal cocones.
\end{enumerate}
\end{proposition}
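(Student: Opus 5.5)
The plan is to prove (a)$\Leftrightarrow$(b) by working in groupoids, using Proposition \ref{prop: comparison functor is an isomorphism} and Lemma \ref{lem: gpd of discrete opfibration}, and then (b)$\Leftrightarrow$(c) by the standard translation between categories of elements and colimits in $\Set$.

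For (a)$\Leftrightarrow$(b): by Proposition \ref{prop: comparison functor is an isomorphism}, $\gpd\;\eta_*$ is isomorphic over $\gpd \smallint T$ to the discrete opfibration $\smallint \overline{\Phi} \rightarrow \gpd \smallint T$, whose fiber over $t$ is $\overline{\Phi}(t)$. The pullback $P$ of $q$ along $\pi_0(\gpd\;\eta_*)$ is also a discrete opfibration over $\gpd \smallint T$. Its fiber over $t$ is the set of components of $\gpd \smallint S$ lying over the component of $t$, and these are the orbits of $\overline{\Phi}(t)$ under the action of $\mathrm{Aut}(t)$. The comparison map $\gpd \smallint S \rightarrow P$ is a map of discrete opfibrations, so it is an isomorphism if and only if it is bijective on each fiber. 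On the fiber over $t$, the map is $\overline{\Phi}(t) \rightarrow \overline{\Phi}(t)/\mathrm{Aut}(t)$. This map is always surjective, and it is injective exactly when every automorphism of $t$ acts trivially. This last condition is equivalent to $\overline{\Phi}$ sending every arrow to an identity, as noted after Definition \ref{def: loop-free model}: a parallel pair $f,g$ gives the automorphism $g^{-1}f$.

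For (b)$\Leftrightarrow$(c): recall that $\colim S \cong \pi_0(\smallint S) \cong \pi_0(\gpd \smallint S)$, and likewise for $T$. A square of set-valued functors on $D$ is cartesian if and only if it is cartesian pointwise. Evaluating the square of (c) at $d$ shows that it is cartesian exactly when, for every $d$ and every $y \in T(d)$, the map from $\eta_d^{-1}(y)$ to the set of components of $\smallint S$ over the component of $(d,y)$ is bijective. Since $\Phi(d,y) = \eta_d^{-1}(y) = \overline{\Phi}(d,y)$, this is exactly the fiberwise criterion found above, applied at objects $t = (d,y)$. The objects of $\gpd \smallint T$ are precisely such pairs, so (c) coincides with the fiberwise bijectivity in (b). One can also avoid the orbit description: observe that (b) holds iff it holds after restricting along the bijective-on-objects functor $\smallint T \rightarrow \gpd \smallint T$, and that pullbacks of discrete opfibrations correspond to cartesian squares of their classifying functors.

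The main obstacle is the precise identification of the fiber of $P$ over $t$ with the orbit set $\overline{\Phi}(t)/\mathrm{Aut}(t)$. This requires that every arrow of $\gpd \smallint S$ lying over an automorphism of $t$ be a lift of it, which is the unique lifting property from Lemma \ref{lem: gpd of discrete opfibration}. It also requires that two elements of $\overline{\Phi}(t)$ lie in the same component of $\gpd\smallint S$ iff they are related by some automorphism. To see this, project a connecting zigzag down to $\gpd\smallint T$, where it becomes a loop at $t$, and then use discreteness of the opfibration to identify the zigzag with the unique lift of that loop.
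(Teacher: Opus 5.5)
Your proposal is correct and follows essentially the same route as the paper. Both arguments reduce (b) to bijectivity, over each object $t$ of $\gpd\smallint T$, of the comparison map between discrete (op)fibrations. Surjectivity is automatic, and injectivity is exactly trivial action of the automorphisms of $t$; (c), evaluated pointwise, is then matched with the same fiberwise condition. The paper does this last step through extensivity of $\Set$ and the square on objects, whereas you do it fiber by fiber.

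One wording point: read loop-freeness as $\overline{\Phi}$ sending every \emph{automorphism} to an identity, as in Definition \ref{def: loop-free model} and as the paper's proof uses. ``Every arrow'' taken literally would be strictly stronger than (b).
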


\begin{proof}
Before comparing the three conditions, we observe that (b) is equivalent to
\[
\squa{\Ob(\gpd \smallint S)}{\pi_0(\gpd \smallint S)}{\Ob(\gpd \smallint T)}{\pi_0(\gpd \smallint T)}{\Ob(q')}{\Ob(q)}{\Ob(\gpd \; \eta_*)}{\pi_0(\gpd \; \eta_*)}
\]
being a pullback square of sets. Indeed, consider the commutative diagram
% https://q.uiver.app/#q=WzAsNSxbMiwxLCJcXHBpXzAoXFxncGQgXFxpbnQgRl9cXHRleHRiZntzfSkiXSxbMiwyLCJcXHBpXzAoXFxncGQgXFxpbnQgRl9cXHRleHRiZnt0fSkiXSxbMSwyLCJcXGdwZCBcXGludCBGX1xcdGV4dGJme3R9Il0sWzAsMCwiXFxncGQgXFxpbnQgRl9cXHRleHRiZntzfSJdLFsxLDEsIksiXSxbMCwxLCJcXHBpXzAoXFxncGQgXFw7IFxccGleRl8qKSJdLFsyLDFdLFszLDIsIlxcZ3BkIFxcOyBcXHBpXypeRiIsMix7ImN1cnZlIjozfV0sWzQsMF0sWzQsMiwiayJdLFszLDAsIiIsMCx7ImN1cnZlIjotM31dLFszLDQsImkiLDAseyJzdHlsZSI6eyJib2R5Ijp7Im5hbWUiOiJkYXNoZWQifX19XV0=
\[\begin{tikzcd}[ampersand replacement=\&]
	{\gpd \smallint S} \&\& \\
	\& P \& {\pi_0(\gpd \smallint S)} \\
	\& {\gpd \smallint T} \& {\pi_0(\gpd \smallint T)}
	\arrow["i", from=1-1, to=2-2]
	\arrow[curve={height=-18pt}, from=1-1, to=2-3]
	\arrow["{\gpd \; \eta_*}"', curve={height=18pt}, from=1-1, to=3-2]
	\arrow[from=2-2, to=2-3]
	\arrow["p", from=2-2, to=3-2]
	\arrow["{\pi_0(\gpd \; \eta_*)}", from=2-3, to=3-3]
	\arrow[from=3-2, to=3-3]
\end{tikzcd}\]
in $\Cat$ where the bottom right square is cartesian and $i$ is induced by the universal property of $P$. Since $\pi_0(\gpd \; \eta_*)$ is a discrete fibration, so is $p$, from which it follows that $i$ is an isomorphism if and only if it is bijective on objects; equivalently, the outer square is cartesian if and only if it is sent to a pullback square by $\Ob:\Cat \rightarrow \Set$.

\vspace{0.5em}

\textbf{(a) $\Leftrightarrow$ (b)}

\vspace{0.3em}

Using the above remark, (b) is equivalent to the following: for each $x \in \gpd \smallint T$, the map
$$
q'_x:(\gpd \; \eta_*)^{-1}(x) \longrightarrow \pi_0(\gpd \; \eta_*)^{-1}(qx)
$$
obtained by restricting $q'$ is bijective.

For surjectivity (which doesn't involve loop freeness), express an arbitrary element of $\pi_0(\gpd \; \eta_*)^{-1}(qx)$ as $q'(y)$ with $y \in (\gpd \; \eta_*)^{-1}(x')$. Then $qx = qx'$, so there exists an isomorphism $f:x \rightarrow x'$ in $\gpd \smallint T$. As $\gpd \; \eta_*$ is a discrete fibration, we obtain $f^*(y) \in (\gpd \; \eta_*)^{-1}(x)$ equipped with an isomorphism $f^*(y) \rightarrow y$. Hence $q'_x(f^*(y)) = q'(y)$.

Now, it remains to check that $\eta$ is loop-free if and only if $q'_x$ is injective. This follows from the fact that for $y$, $y' \in (\gpd \; \eta_*)^{-1}(x)$, we have $q'(y) = q'(y')$ if and only if there exists an arrow $y' \rightarrow y$ in $\gpd \smallint S$, if and only if $x$ has an automorphism in $\gpd \smallint T$ whose action sends $y$ to $y'$.

\vspace{0.5em}

\textbf{(b) $\Leftrightarrow$ (c)}

\vspace{0.3em}

Recalling that $\colim S \cong \pi_0(\smallint S) \cong \pi_0(\gpd \smallint S)$ and $\colim T \cong \pi_0(\smallint T) \cong \pi_0(\gpd \smallint T)$, note that (b) is equivalent to the following statement: for all $k \in K$, the commutative square of sets
\[
\squa{S(k)}{\pi_0(\gpd \smallint S)}{T(k)}{\pi_0(\gpd \smallint T)}{\alpha_p}{\beta_p}{\eta_k}{\pi_0(\gpd \; \eta_*)}
\]
is cartesian. It is not difficult to check that this, in turn, is equivalent to
\[
\widesqua{\coprod_{k \in K}S(k)}{\pi_0(\gpd \smallint S)}{\coprod_{k \in K}T(k)}{\pi_0(\gpd \smallint T)}{(\alpha_k)_k}{(\beta_k)_k}{\coprod_{k \in K}\eta_k}{\pi_0(\gpd \; \eta_*)}
\]
being a pullback square in $\Set$.\footnote{This is an instance of the fact that $\Set$ is an infinitary extensive category.} But the latter diagram is isomorphic to
\[
\squa{\Ob(\gpd \smallint S)}{\pi_0(\gpd \smallint S)}{\Ob(\gpd \smallint T)}{\pi_0(\gpd \smallint T),}{}{}{\Ob(\gpd \; \eta_*)}{\pi_0(\gpd \; \eta_*)}
\]
which, as previously remarked, is cartesian if and only if condition (b) holds.
\end{proof}

\begin{remark}
\label{rem: vK colimits}
Condition (c) plays an important role in the theory of van Kampen colimits (see \cite[Prop. 4.3]{HeiSob11}), and a characterization of van Kampen colimits in presheaf categories in terms of a path uniqueness condition is given in \cite{KonWol17}. We note that $\beta:T \Rightarrow \Delta \; \colim T$ being van Kampen is equivalent to (c) holding for all $S$ and $\eta$, while our goal in Proposition \ref{prop: characterization loop-free natural transformation} is to study a given $\eta:S \Rightarrow T$ for which that the colimit of $T$ may not be van Kampen.
\end{remark}

\nocite{*}

\printbibliography

\end{document}